\let\bbordermatrix\bordermatrix
\patchcmd{\bbordermatrix}{8.75}{4.75}{}{}
\patchcmd{\bbordermatrix}{\left(}{\left[}{}{}
\patchcmd{\bbordermatrix}{\right)}{\right]}{}{}
\newcommand{\N}{\mathbb{N}}
\newcommand{\Z}{\mathbb{Z}}
\newcommand{\R}{\mathbb{R}}
\newcommand{\Sp}{\mathbb{S}}
\newcommand{\seta}{\mathbf{A}}
\newcommand{\sete}{\mathbf{E}}
\newcommand{\seth}{\mathbf{H}}
\DeclareFontFamily{U}{tipa}{}
\DeclareFontShape{U}{tipa}{m}{n}{<->tipa10}{}
\newcommand{\arc@char}{{\usefont{U}{tipa}{m}{n}\symbol{62}}}%
\newcommand{\arc}[1]{\mathpalette\arc@arc{#1}}
\newcommand{\arc@arc}[2]{%
  \sbox0{$\m@th#1#2$}%
  \vbox{
    \hbox{\resizebox{\wd0}{\height}{\arc@char}}
    \nointerlineskip
    \box0
  }%
}
\newcommand{\cov}[2]{{N}_{#2}\left({#1}\right)}  
\newcommand{\capa}[2]{C_{#2}(#1)} 
\newcommand{\volmeas}[2]{{\mathbf{vol}_{#1}}\left({#2}\right)}             
\newcommand{\vol}[1]{{\mathbf{Vol}}\left({#1}\right)}
\renewcommand{\emptyset}{\varnothing}
\newcommand{\diam}{\operatorname{diam}}
\DeclareRobustCommand\longtwoheadrightarrow
\newcommand{\dis}{\operatorname{dis}}	
\newcommand{\codis}{\operatorname{codis}}	
\newcommand{\dgh}{d_{\operatorname{GH}}}		
\renewcommand{\dh}{d_{\operatorname{H}}}
\newtheorem{theorem}{Theorem}
\newtheorem{question}{Question}
\newtheorem{proposition}{Proposition}[section]
\newtheorem{lemma}[proposition]{Lemma}
\newtheorem{corollary}[proposition]{Corollary}
\theoremstyle{definition}
\newtheorem{example}[proposition]{Example}
\newtheorem{remark}[proposition]{Remark}
\newtheorem{conjecture}{Conjecture}
\newtheorem{defn}{Definition}
\newtheorem{claim}{Claim}
\newtheorem{fact}{Fact}
\title{The Gromov-Hausdorff distance between spheres}
\begin{document}
\date{\today}

\begin{abstract}
We provide general upper and lower bounds for the Gromov-Hausdorff distance $\dgh(\Sp^m,\Sp^n)$ between spheres $\Sp^m$ and $\Sp^n$ (endowed with the round metric) for $0\leq m< n\leq \infty$. Some of these lower bounds are based on certain topological ideas related to the Borsuk-Ulam theorem. Via explicit constructions of (optimal) correspondences we prove that our lower bounds are tight in the cases of  $\dgh(\Sp^0,\Sp^n)$, $\dgh(\Sp^m,\Sp^\infty)$, $\dgh(\Sp^1,\Sp^2)$, $\dgh(\Sp^1,\Sp^3)$ and $\dgh(\Sp^2,\Sp^3)$. We also formulate a number of open  questions.
\end{abstract}

\author{Sunhyuk Lim}
\address{Max Planck Institute for Mathematics in the Sciences, Leipzig.}
\email{sulim@mis.mpg.de}

\author{Facundo M\'emoli}
\address{Department of Mathematics,
The Ohio State University.}
\email{facundo.memoli@gmail.com}

\author{Zane Smith}
\address{Department of Computer Science,
The University of Minnesota.}
\email{smit9474@umn.edu}

\maketitle
\tableofcontents
\section{Introduction}

Despite being widely used in Riemannian geometry \cite{burago-book,petersen-book}, very little is known in terms of the \emph{exact} value of the Gromov-Hausdorff distance between two given spaces.  In a closely related vein, \cite[p.141]{gromov-book} Gromov poses the question of computing/estimating the value of the \emph{box distance}  $\underline{\Box}_1(\Sp^m,\Sp^n)$ (a close relative of $\dgh$) between spheres (viewed as metric measure spaces). In \cite{funano-estimates}, Funano provides asymptotic bounds for this distance via an idea due to Colding (see the discussion preceding Proposition \ref{prop:colding} below).

The Gromov-Hausdorff distance is also a natural choice for expressing the stability of invariants in applied algebraic topology \cite{carlsson-09,carlsson2010characterization,carlsson_2014} and has also been invoked in applications related to shape matching \cite{bbk-book,ms,memoli-dghlp-long} as a notion of dissimilarity between shapes.

In this paper, we consider the problem of estimating the Gromov-Hausdorff distance $\dgh(\Sp^m,\Sp^n)$ between spheres (endowed with their round/geodesic distance). In particular we show that in some cases, topological ideas related to the Borsuk-Ulam theorem yield lower bounds which turn out to be tight.

\subsection{Basic definitions} The Gromov-Hausdorff distance \cite{edwards1975structure,gromov-book} between two bounded metric spaces $(X,d_X)$ and $(Y,d_Y)$ is defined as $$\dgh(X,Y) := \inf\dh(f(X),g(Y)),$$
where $\dh$ denotes the Hausdorff distance between subsets of the ambient space $Z$ and the infimum is taken over all $f,g$ isometric embeddings of $X$ and $Y$ into $Z$, respectively, and over all metric spaces $Z.$ We will henceforth denote by $\mathcal{M}_b$ the collection of all bounded metric spaces.

It is known that $\dgh$ defines a metric on compact metric spaces up to isometry \cite{gromov-book}.  A standard reference is \cite{burago-book}.   A useful property is that whenever $(X,d_X)$ is a compact metric space and for some $\delta>0$ a subset $A\subseteq X$ is a $\delta$-net for $X$, then $\dgh\big((X,d_X),(A,d_X|_{A\times A})\big)\leq \delta.$

Given two sets $X$ and $Y$, a correspondence between them is any relation $R\subseteq X\times Y$ such that $\pi_X(R)=X$ and $\pi_Y(R)=Y$ where $\pi_X:X\times Y\rightarrow X$ and $\pi_Y:X\times Y\rightarrow Y$ are the canonical projections. Given two bounded metric spaces $(X,d_X)$ and $(Y,d_Y)$, and any non-empty relation $R\subseteq X\times Y$, its distortion is defined as $$\dis(R):=\sup_{(x,y),(x',y')\in R}\big|d_X(x,x')-d_Y(y,y')\big|.$$

\begin{remark}\label{rem:dgh-dis}
In particular,  the graph of any map $\psi:X\rightarrow Y$ is a relation $\mathrm{graph}(\psi)$ between $X$ and $Y$ and this relation is a correspondence whenever $\psi$ is surjective. The distortion of the relation induced by $\psi$ will be denoted by $\dis(\psi)$.
\end{remark}

A theorem of Kalton and Ostrovskii \cite{banach} proves that the Gromov-Hausdorff distance  between any two bounded metric spaces $(X,d_Y)$ and $(Y,d_Y)$ is equal to
\begin{equation}\label{eqn:dghaltdef}
    \dgh(X,Y):=\frac{1}{2}\inf_R \dis(R),
\end{equation}
where $R$ ranges over all correspondences between $X$ and $Y$. It was also observed in \cite{banach} that \begin{equation}\label{eq:dgh-functions}\dgh(X,Y) = \frac{1}{2}\inf_{\varphi,\psi}\max\big\{\dis(\varphi),\dis(\psi),\codis(\varphi,\psi)\big\},
\end{equation} 
where $\varphi:X\rightarrow Y$ and $\psi:Y\rightarrow X$ are any (not necessarily continuous) maps, and $$\codis(\varphi,\psi):=\sup_{x\in X, y\in Y}\big|d_X(x,\psi(y))-d_Y(\varphi(x),y)\big|$$ is the \emph{codistortion} of the pair $(\varphi,\psi).$

\subsection*{Known results on  $\dgh(\Sp^m,\Sp^n)$.}
We will find it useful to refer to the infinite matrix $\mathfrak{g}$ such that for $m,n\in\overline{\N}:=\N\cup\{\infty\}$,  $$\mathfrak{g}_{m,n}:=\dgh(\Sp^m,\Sp^n);$$ see Figure \ref{fig:g}.

The following lower bound for $\mathfrak{g}_{m,n}$, obtained via simple estimates for covering and packing numbers based on volumes of  balls, is in the same spirit as a result by Colding, \cite[Lemma 5.10]{colding}.\footnote{Funano used a similar idea in \cite{funano-estimates} to estimate Gromov's box distance between metric measure space representations of spheres.}  By $v_n(\rho)$ we denote the \emph{normalized volume}  of an open ball of radius $\rho\in(0,\pi]$ on $\Sp^n$ (so that the entire sphere has volume $1$). Colding's approach yields:

\begin{proposition}\label{prop:colding}
For all integers $0<m<n$, we have
$$\dgh(\Sp^m,\Sp^n)\geq\mu_{m,n}:=\frac{1}{2} \sup_{\rho\in(0,\pi]}\left(v_n^{-1}\circ v_m\left(\frac{\rho}{2}\right)-\rho\right).$$
\end{proposition}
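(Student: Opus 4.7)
The plan is to combine the correspondence characterization \eqref{eqn:dghaltdef} of $\dgh$ with a volumetric packing/covering comparison in the spirit of Colding's argument. Fix $\eta>\dgh(\Sp^m,\Sp^n)$, so by \eqref{eqn:dghaltdef} there exists a correspondence $R\subseteq \Sp^m\times\Sp^n$ with $\dis(R)<2\eta$. The strategy is to manufacture a large separated set in $\Sp^n$, transport it to $\Sp^m$ through $R$, and then force a volume inequality between a ball in $\Sp^m$ and a ball in $\Sp^n$.

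Concretely, fix a parameter $\varepsilon\in(2\eta,\pi]$ and choose a \emph{maximal} $\varepsilon$-separated subset $\{y_1,\dots,y_N\}\subseteq \Sp^n$. By maximality this set is also an $\varepsilon$-net, so the open balls of radius $\varepsilon$ centered at the $y_i$'s cover $\Sp^n$, which gives the covering lower bound
\[
N\cdot v_n(\varepsilon)\ \geq\ 1.
\]
For each $i$, pick $x_i\in\Sp^m$ with $(x_i,y_i)\in R$; the distortion bound yields $d_{\Sp^m}(x_i,x_j)>d_{\Sp^n}(y_i,y_j)-2\eta\geq \varepsilon-2\eta>0$ for $i\neq j$. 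Hence the $x_i$ are distinct and pairwise at distance strictly greater than $\varepsilon-2\eta$, so the open balls of radius $(\varepsilon-2\eta)/2$ centered at them are pairwise disjoint in $\Sp^m$, which gives the packing upper bound
\[
N\cdot v_m\!\left(\tfrac{\varepsilon-2\eta}{2}\right)\ \leq\ 1.
\]

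Eliminating $N$ between these two inequalities yields $v_m\!\left(\tfrac{\varepsilon-2\eta}{2}\right)\leq v_n(\varepsilon)$. Setting $\rho:=\varepsilon-2\eta\in(0,\pi-2\eta]$ and using that $v_n$ is a strictly increasing bijection $[0,\pi]\to[0,1]$, we may apply $v_n^{-1}$ and rearrange to obtain $v_n^{-1}\!\circ v_m\!\left(\tfrac{\rho}{2}\right)-\rho\leq 2\eta$. For the remaining range $\rho\in(\pi-2\eta,\pi]$ the inequality is automatic, since $v_n^{-1}\!\circ v_m(\rho/2)\leq \pi\leq \rho+2\eta$. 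Taking the supremum over $\rho\in(0,\pi]$ gives $2\mu_{m,n}\leq 2\eta$, and letting $\eta\searrow\dgh(\Sp^m,\Sp^n)$ yields the claim.

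The only real obstacle is keeping track of ranges and of the strict-versus-non-strict inequalities: we need $v_n^{-1}$ to be applied to a value in $[0,1]$ (which holds since $v_m(\rho/2)\leq 1$), and we need the $\varepsilon$-separated set to still yield a nontrivial packing in $\Sp^m$ (which forced the restriction $\varepsilon>2\eta$, handled above by splitting the range of $\rho$). Everything else is routine given the correspondence description of $\dgh$ and the monotonicity of the normalized ball-volume function $v_n$.
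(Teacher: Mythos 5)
Your proof is correct and follows essentially the same Colding-style route as the paper: both arguments reduce the claim to the volume comparison $v_m(\rho/2)\leq v_n(\rho+2\eta)$, obtained by playing a packing bound on $\Sp^m$ against a covering bound on $\Sp^n$ across a correspondence of distortion less than $2\eta$, and then rearranging via $v_n^{-1}$. The only differences are presentational: you argue directly rather than by contradiction, and you inline the stability of covering/packing numbers by transporting a single maximal $\varepsilon$-separated set through the correspondence, whereas the paper invokes Proposition \ref{prop:cov-cap} together with the inequality $N_X(\rho)\leq C_X(\rho)$ and its two volume claims.
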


We relegate the proof of this proposition to \S\ref{sec:general}. 

    \begin{example}[Lower bound for $\mathfrak{g}_{1,2}$ via Colding's idea]\label{ex:s1-s2}
In this case, $m=1$ and $n=2$, the lower bound provided by Proposition \ref{prop:colding} above is  $\sup_{\rho\in (0,\pi]}\big(\arccos(1-\frac{\rho}{\pi})-\rho\big)$, which is approximately equal to and bounded below by $0.1605$. Thus,  $\mathfrak{g}_{1,2}\geq 0.0802$. See Remark \ref{rem:new-comp} for a comparison with a new lower bound which also arises from covering/packing arguments via the Lyusternik-Schnirelmann theorem.
\end{example}

In contrast, in this paper, via techniques which include both certain topological ideas leading to lower bounds and the precise construction of correspondences with matching (and hence optimal) distortion, we prove results which imply (see Proposition \ref{prop:ub} below) that  in particular $\mathfrak{g}_{1,2} = \frac{\pi}{3}\simeq 1.0472$ which is about 13 times larger than the value obtained by the method above. In \cite[Example 5.3]{dgh-props} the lower bound $\mathfrak{g}_{1,2}\geq \frac{\pi}{12}$ was obtained via a calculation involving Gromov's \emph{curvature sets} $\mathbf{K}_3(\Sp^1)$ and $\mathbf{K}_3(\Sp^2)$. Finally, via considerations based on Katz's precise calculation \cite{katz1983filling} of the filling radius of spheres \cite[Corollary 9.3]{vrfr} yields that $\mathfrak{g}_{1,n}\geq \frac{\pi}{6}$ for all $n\geq 2$ as well as other lower bounds for $\mathfrak{g}_{m,n}$ for general $m< n$ which are non-tight. In a related vein, in \cite{ji2021gromov} the authors determine the precise value of $\dgh([0,\lambda],\Sp^1)$ between an interval of length $\lambda>0$ and the circle (with geodesic distance).

\subsection{Overview of our results}

The diameter of a bounded metric space $(X,d_X)$ is the number $\diam(X):=\sup_{x,x'\in X}d_X(x,x').$

For $m\in\overline{\N}$ we view the $m$-dimensional sphere $$\Sp^m:=\{(x_1,\ldots,x_{m+1})\in \R^{m+1},\,x_1^2+\cdots+x_{m+1}^2=1\}$$ as a metric space  by endowing it with the geodesic distance: for any two points $x,x'\in \Sp^m$,
$$d_{\Sp^m}(x,x'):=\arccos\left(\langle x,x'\rangle\right) =2\arcsin\left(\frac{d_{\mathrm{E}}(x,x')}{2}\right)$$
where $d_{\mathrm{E}}$ denotes the canonical Euclidean metric inherited from $\R^{m+1}$.

Note that for $m=0$ this definition yields that $\Sp^0$ consists of two points at distance $\pi$, and that $\Sp^\infty$ is the unit sphere in $\ell^2$ with distance given in the expression above.

\begin{remark}\label{remark:ub}
First recall \cite[Chapter 7]{burago-book} that for any two  bounded metric spaces $X$ and $Y$ one always has $\dgh(X,Y)\leq \frac{1}{2}\max\{\diam(X),\diam(Y)\}.$ This means that 
\begin{equation}
\label{eq:easy-ub}
\dgh(\Sp^m,\Sp^n)\leq \frac{\pi}{2}\,\,\mbox{for all $0\leq m\leq n\leq\infty$.}  
\end{equation}

\end{remark}

We first prove the following two propositions which establish that the above upper bound is tight in certain extremal cases:

\begin{proposition}[{Distance to $\Sp^0$, \cite[Prop. 1.2]{dgh-geodesics}}]\label{prop:s0-sd}
For any integer $n\geq  1$,
$$\dgh(\Sp^0,\Sp^n)=\frac{\pi}{2}.$$
\end{proposition}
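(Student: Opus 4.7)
The plan is to establish matching upper and lower bounds of $\pi/2$. The upper bound $\dgh(\Sp^0,\Sp^n)\leq \pi/2$ is immediate from Remark~\ref{remark:ub}, since $\diam(\Sp^n)=\pi$ for $n\geq 1$.

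For the lower bound I would use the correspondence characterization \eqref{eqn:dghaltdef} combined with the Lyusternik-Schnirelmann theorem. Write $\Sp^0=\{p,q\}$ with $d_{\Sp^0}(p,q)=\pi$, and let $R\subseteq \Sp^0\times\Sp^n$ be any correspondence. Setting $A:=\{y\in\Sp^n:(p,y)\in R\}$ and $B:=\{y\in\Sp^n:(q,y)\in R\}$, both sets are nonempty and $A\cup B=\Sp^n$ by the correspondence property. The aim is to show $\dis(R)\geq \pi$, which via \eqref{eqn:dghaltdef} yields $\dgh(\Sp^0,\Sp^n)\geq \pi/2$.

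Suppose toward a contradiction that $\dis(R)<\pi$. Applying the definition of distortion to pairs $(p,y),(p,y')\in R$ with $y,y'\in A$ gives $d_{\Sp^n}(y,y')\leq \dis(R)<\pi$, so $\diam(A)<\pi$; likewise $\diam(B)<\pi$. Passing to closures preserves diameter, so $\bar A$ and $\bar B$ are two closed sets of diameter strictly less than $\pi$ that together cover $\Sp^n$.

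The heart of the argument is then to invoke the Lyusternik-Schnirelmann theorem in the form: for $n\geq 1$, any closed cover of $\Sp^n$ by two sets must have at least one member containing a pair of antipodal points (this follows from the standard $(n+1)$-set version by padding the cover with empty sets). Any such antipodal pair has distance $\pi$, contradicting the diameter bounds on $\bar A$ and $\bar B$. Hence $\dis(R)\geq \pi$, completing the lower bound. The only step of substance is this topological input from LS; the rest is routine bookkeeping with the definition of distortion and the correspondence property. A small point worth being careful about is the passage from $A,B$ to $\bar A,\bar B$ before invoking LS, since LS requires closed sets but $A,B$ a priori need not be closed.
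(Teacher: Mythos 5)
Your proof is correct and follows essentially the same route as the paper: the paper deduces the lower bound from its Lemma~\ref{lemma:ls} (the case of a general finite metric space $P$ with $|P|\leq n+1$), whose proof is exactly your argument --- the fibers of a correspondence give a closed cover of $\Sp^n$ (after taking closures), Lyusternik--Schnirelmann forces one fiber to contain an antipodal pair, hence $\dis(R)\geq\pi$ --- combined with the diameter upper bound of Remark~\ref{remark:ub}. Your care about passing to closures and padding the two-set cover with empty sets is exactly the bookkeeping implicit in the paper's proof.
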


\begin{proposition}[Distance to $\Sp^\infty$]\label{prop:sinfty-sd}
For any integer $m\geq 0$,
$$\dgh(\Sp^m,\Sp^\infty)=\frac{\pi}{2}.$$
\end{proposition}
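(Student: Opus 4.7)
The upper bound $\dgh(\Sp^m,\Sp^\infty)\leq\pi/2$ is immediate from Remark \ref{remark:ub}, since $\diam(\Sp^\infty)=\pi$. For the matching lower bound, the plan is to show that every correspondence $R\subseteq \Sp^m\times\Sp^\infty$ satisfies $\dis(R)\geq\pi$ via a Lyusternik-Schnirelmann-Borsuk (LSB) argument inside $\Sp^\infty$. First I would replace $R$ by its closure in $\Sp^m\times\Sp^\infty$; by continuity of both distances this does not alter $\dis(R)$ and produces a closed correspondence, so I will assume $R$ is already closed.

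Now fix $\epsilon>0$. Using compactness of $\Sp^m$, cover it by finitely many closed $\epsilon$-balls $\bar B_\epsilon(a_1),\ldots,\bar B_\epsilon(a_k)$, and for each $i$ set
\[
W_i := \{\,y\in\Sp^\infty : \exists\, a\in \bar B_\epsilon(a_i)\text{ with }(a,y)\in R\,\}.
\]
Compactness of $\bar B_\epsilon(a_i)$ combined with closedness of $R$ forces each $W_i$ to be closed in $\Sp^\infty$, while the correspondence property of $R$ gives $\Sp^\infty=\bigcup_{i=1}^k W_i$. The crux is to locate an antipodal pair contained in a single $W_{i_0}$: I would invoke the classical LSB theorem on a sufficiently large finite-dimensional subsphere $\Sp^N\subset\Sp^\infty$ (an isometric subspace, closed under antipodes). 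The family $\{W_i\cap\Sp^N\}_{i=1}^k$ is a closed cover of $\Sp^N$, and taking $N\geq k-1$ ensures $k\leq N+1$, so LSB produces some $i_0$ and an antipodal pair $\{y,-y\}\subset W_{i_0}\cap\Sp^N$. Choosing $a,a'\in \bar B_\epsilon(a_{i_0})$ with $(a,y),(a',-y)\in R$ then gives $d_{\Sp^m}(a,a')\leq 2\epsilon$ while $d_{\Sp^\infty}(y,-y)=\pi$, whence $\dis(R)\geq \pi-2\epsilon$. Letting $\epsilon\downarrow 0$ yields $\dis(R)\geq\pi$ as required.

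The main point requiring care is the closedness of each $W_i$, which is needed in order for the classical LSB theorem to apply cleanly; this is the reason for the initial passage to the closure of $R$. The extension of LSB from $\Sp^N$ to the infinite-dimensional sphere is not itself an obstacle, since the scheme above reduces it to a finite-dimensional statement on a single subsphere of sufficiently high dimension. I expect the argument to go through uniformly in $m\geq 0$ (including $m=0$, where the $\bar B_\epsilon(a_i)$ are simply the two singletons of $\Sp^0$ when $\epsilon$ is small).
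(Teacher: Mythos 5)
Your proof is correct and follows essentially the same route as the paper: both obtain the lower bound by discretizing $\Sp^m$ with an $\epsilon$-net and applying the Lyusternik--Schnirelmann theorem to the induced closed cover of a sufficiently high-dimensional subsphere $\Sp^N\subset\Sp^\infty$, then combine with the trivial upper bound $\frac{\pi}{2}$ from Remark \ref{remark:ub}. The only difference is organizational: the paper packages the $\epsilon$-net step through the triangle inequality for $\dgh$ (Corollary \ref{coro:sinfty-finite} and Proposition \ref{prop:tb-sinfty}), whereas you inline it as a direct distortion estimate on a single correspondence, handling closedness via the closure of $R$ and the compact-projection argument (which is sound, and could alternatively be avoided by taking closures of the sets $W_i$, as in the proof of Lemma \ref{lemma:ls}).
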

Proposition \ref{prop:s0-sd} can be proved as follows: any correspondence between $\Sp^0$ and $\Sp^n$ induces a closed cover  of $\Sp^n$ by two sets. Then, necessarily, by the Lyusternik-Schnirelmann theorem, one of these blocks must contain two antipodal points. Proposition \ref{prop:sinfty-sd} can be proved in a similar manner. See Figure \ref{fig:equilateral}. 

\begin{figure}
    \centering
    \includegraphics[width=0.5\linewidth]{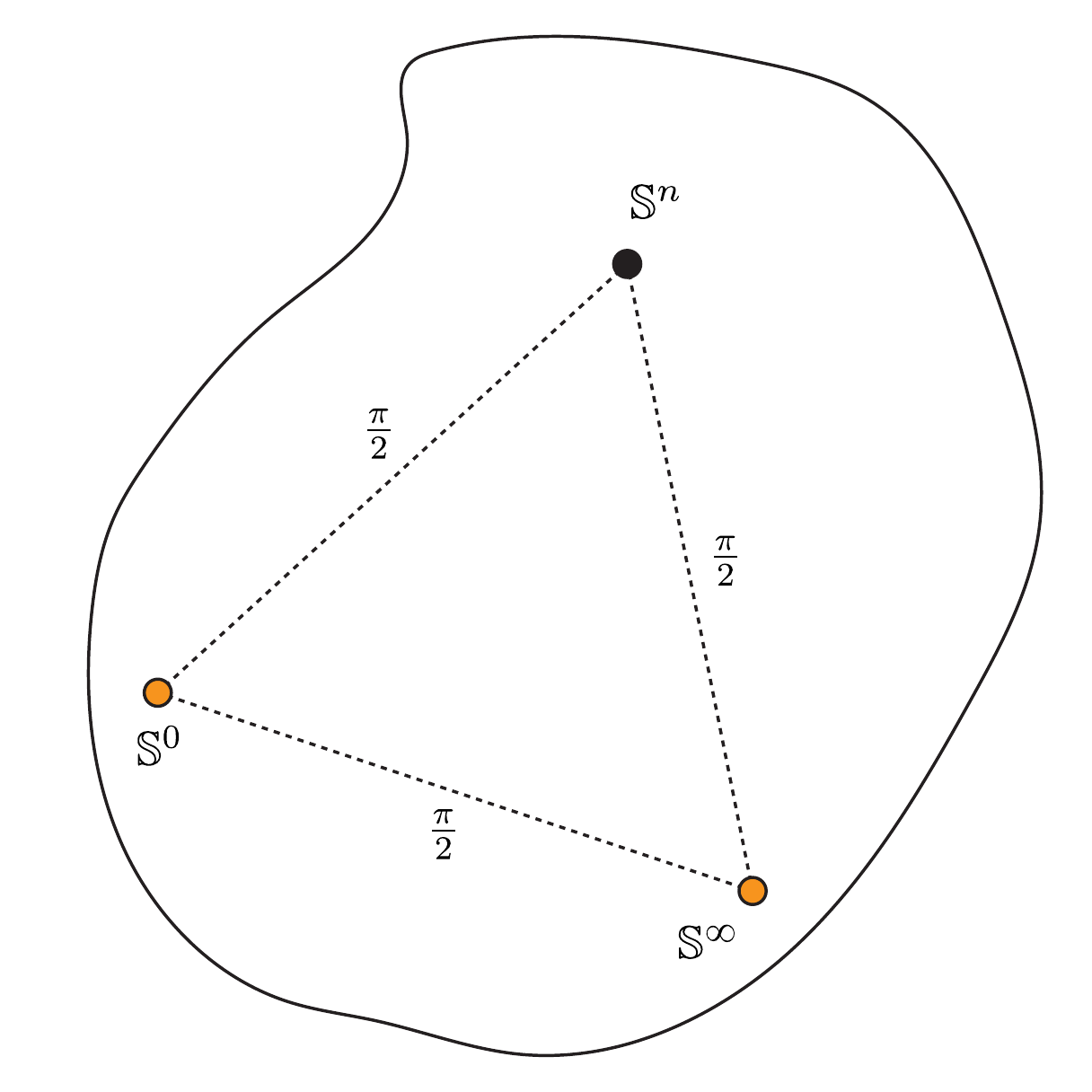}
    \caption{Propositions \ref{prop:s0-sd} and \ref{prop:sinfty-sd} encode the peculiar fact that all triangles in $(\mathcal{M}_b,\dgh)$ with vertices $\Sp^0,\Sp^\infty$, and $\Sp^n$ (for $0<n<\infty)$ are equilateral.}
    \label{fig:equilateral}
\end{figure}

\begin{remark}
When taken together, Remark \ref{remark:ub}, Propositions \ref{prop:s0-sd} and \ref{prop:sinfty-sd} above might suggest that the Gromov-Hausdorff distance between \emph{any} two spheres of different dimension is $\frac{\pi}{2}$. In fact, this is true for the following \emph{continuous} version of $\dgh$:
$$\dgh^\mathrm{cont}(X,Y):=\frac{1}{2}\inf_{\varphi',\psi'}\max\big\{\dis(\varphi'),\dis(\psi'),\codis(\varphi',\psi')\big\},$$
where $\varphi':X\rightarrow Y$ and $\psi':Y\rightarrow X'$ are \emph{continuous} maps.

Indeed, suppose that $n>m\geq 1$. Then,  the Borsuk-Ulam theorem   (cf. \cite[Theorem 1]{munkholm1969borsuk} or \cite[p. 29]{matousek2003using}), it must be that for any $\varphi':\Sp^n\rightarrow \Sp^m$ continuous there must be two antipodal points with the same image under $\varphi'$: that is, there is $x\in \Sp^n$ such that $\varphi'(x)=\varphi'(-x)$.  This implies that $\dis(\varphi')=\pi$ and consequently $\dgh^\mathrm{cont}(\Sp^n,\Sp^m)\geq \frac{\pi}{2}.$ The reverse inequality can be obtained by choosing constant maps $\varphi'$ and $\psi'$ in the above definition, thus implying that $$\dgh^\mathrm{cont}(\Sp^m,\Sp^n) = \frac{\pi}{2}.$$
\end{remark}

In contrast, we prove the following result for the standard Gromov-Hausdorff distance:
\begin{theorem}\label{thm:sn-sm-ub}
 $\dgh(\Sp^m,\Sp^n)<\frac{\pi}{2}$, for all $0<m<n<\infty$.
\end{theorem}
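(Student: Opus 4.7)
By the characterization \eqref{eq:dgh-functions} of $\dgh$, it suffices to exhibit, for each $0 < m < n < \infty$, a pair of (possibly discontinuous) maps $\varphi \colon \Sp^m \to \Sp^n$ and $\psi \colon \Sp^n \to \Sp^m$ with $\max\{\dis(\varphi), \dis(\psi), \codis(\varphi,\psi)\} < \pi$. The preceding remark on $\dgh^{\mathrm{cont}}$ already shows that the bound $\pi$ is forced when $\psi$ is required to be continuous: Borsuk--Ulam then collapses some antipodal pair of $\Sp^n$, giving $\dis(\psi) \ge \pi$. The strict inequality in the theorem must therefore be obtained from a genuinely discontinuous construction, and my plan is to exhibit one directly.

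I would take $\varphi = \iota \colon \Sp^m \hookrightarrow \Sp^n$ to be the standard isometric equatorial embedding into the first $m+1$ coordinates, so $\dis(\iota) = 0$, and define $\psi$ piecewise using the join decomposition $\Sp^n \cong \Sp^m * \Sp^{n-m-1}$: every $p \in \Sp^n$ can be written $p = \cos\theta \cdot x + \sin\theta \cdot w$ with $\theta \in [0, \pi/2]$, $x \in \Sp^m$, and $w$ in the perpendicular sphere $\Sp^{n-m-1}$. Fix a threshold $\theta_0 \in (0, \pi/2)$ and an antipodally equivariant map $f \colon \Sp^{n-m-1} \to \Sp^m$ (i.e.\ $f(-w) = -f(w)$, in general discontinuous when $n-m-1 > m$), and set
\[
\psi(p) \;=\; \begin{cases} x, & \theta \le \theta_0, \\ f(w), & \theta > \theta_0. \end{cases}
\]
The first branch is the nearest-point projection to the equator; the second branch is a controlled perturbation whose role is precisely to send nearly-antipodal points of $\Sp^n$ in the ``far'' region to nearly-antipodal points of $\Sp^m$, thereby bypassing the Borsuk--Ulam obstruction at the cost of continuity.

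The distortion analysis then splits into three regimes. For $p_1, p_2$ both in the near-equator region, the triangle inequality through $\iota(x_1), \iota(x_2)$ yields $|d_{\Sp^n}(p_1, p_2) - d_{\Sp^m}(x_1, x_2)| \le \theta_1 + \theta_2 \le 2\theta_0$. For $p_1, p_2$ both in the far region, the antipodal equivariance of $f$ neutralizes the worst offenders, namely pairs with $x_1 = x_2$, $w_1 = -w_2$ and $\theta_i$ close to $\pi/2$, which in $\Sp^n$ approach antipodal configurations but under a naive projection would collapse. Mixed pairs and the codistortion $\codis(\iota, \psi)$ are handled by combining the two regimes.

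The main obstacle is constructing $f$ and optimizing $\theta_0$ so that $\dis(\psi)$ and $\codis(\iota, \psi)$ are \emph{simultaneously} strictly below $\pi$. When $n - m - 1 \le m$ one can take $f$ to be a continuous isometric embedding, and the balance is straightforward. When $n - m - 1 > m$, Borsuk--Ulam prevents continuity of any antipodally equivariant $f$, so $f$ must be built discontinuously: either inductively (invoking the theorem for the smaller pair $(m, n{-}m{-}1)$) or directly, by selecting a measurable fundamental domain for the antipodal action on $\Sp^{n-m-1}$ and assigning values to each orbit. A careful trade-off between $\theta_0$ and $\dis(f)$ then produces a correspondence whose distortion is strictly below $\pi$, establishing $\dgh(\Sp^m, \Sp^n) < \pi/2$.
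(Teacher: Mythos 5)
Your reduction via \eqref{eq:dgh-functions} is fine, but the threshold construction you propose for $\psi$ cannot achieve $\dis(\psi)<\pi$, for \emph{any} choice of $\theta_0\in(0,\tfrac{\pi}{2})$ and \emph{any} antipodally equivariant $f:\Sp^{n-m-1}\to\Sp^m$. Write points of $\Sp^n$ as $p=(x\cos\theta,\,w\sin\theta)$ with $x\in\Sp^m$, $w\in\Sp^{n-m-1}$, $\theta\in[0,\tfrac{\pi}{2}]$. Fix any $w_0$ and set $x_0:=-f(w_0)$. For small $\epsilon>0$ let $p_1=(x_0\cos\theta_0,\,w_0\sin\theta_0)$, which lies in the near region, so $\psi(p_1)=x_0=-f(w_0)$, and let $p_2=(-x_0\cos(\theta_0+\epsilon),\,-w_0\sin(\theta_0+\epsilon))$, which lies in the far region, so $\psi(p_2)=f(-w_0)=-f(w_0)$. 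Then $\langle p_1,p_2\rangle=-\cos\epsilon$, hence $d_{\Sp^n}(p_1,p_2)=\pi-\epsilon$, while $\psi(p_1)=\psi(p_2)$, so this single pair contributes $\pi-\epsilon$ to the distortion. Letting $\epsilon\to0$ gives $\dis(\psi)=\pi$ exactly. The equivariance of $f$ protects antipodal pairs lying entirely in the far region, but not nearly antipodal pairs straddling the discontinuity locus $\{\theta=\theta_0\}$: across that locus the value of $\psi$ jumps from a quantity depending on $x$ to one depending only on $w$, and nothing prevents choosing $x=-f(w)$. The same defect occurs when $n-m-1\le m$ and $f$ is a continuous isometric embedding, so the ``careful trade-off'' deferred to your last paragraph does not exist for this family of maps. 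This is precisely the subtlety the theorem turns on: for a discontinuous map, knowing that each individual pair contributes strictly less than $\pi$ does not bound the supremum away from $\pi$; one needs either a uniform quantitative estimate or a compactness mechanism that turns the supremum into a maximum.

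The paper sidesteps this by working in the opposite direction: Theorem \ref{thm:surj'} produces a \emph{continuous}, surjective, antipode preserving map $\psi_{m,n}:\Sp^m\longtwoheadrightarrow\Sp^n$ (space-filling curves on the facets of the cross-polytope for $\Sp^1\to\Sp^2$, then spherical suspensions and compositions), and the correspondence is its graph. Continuity plus compactness makes the supremum defining $\dis(\psi_{m,n})$ an attained maximum, and antipode preservation forces the maximizing pair to contribute strictly less than $\pi$ (an antipodal pair contributes $0$, a non-antipodal pair less than $\pi$). If you insist on the direction $\Sp^n\to\Sp^m$, you would need explicit uniform bounds of the kind proved in Proposition \ref{prop:ub-n-m}, where a Voronoi-type partition yields $\dis(\phi_{(m+1),m})\le\eta_m<\pi$; but composing those maps only bounds the distortion by $\eta_m+\cdots+\eta_{n-1}$, which exceeds $\pi$ for general $n>m$, so even that route does not immediately give the theorem.
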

The Borsuk-Ulam theorem implies that, for any positive integers $n>m$ and for any given continuous function $\varphi:\Sp^n\rightarrow \Sp^m$, there exist two antipodal points in the higher dimensional sphere which are mapped to the \emph{same} point in the lower dimensional sphere. This   forces the distortion of any such continuous map to be $\pi$.  
In contrast, in order to prove  Theorem \ref{thm:sn-sm-ub}, we exhibit, for every positive numbers $m$ and $n$ with $m<n$, a  \emph{continuous antipode preserving surjection} from $\Sp^m$ to $\Sp^n$ with distortion \emph{strictly} bounded above by $\pi$, which implies the claim since the graph of any such surjection is a correspondence between $\Sp^m$ and $\Sp^n$ (cf. Remark \ref{rem:dgh-dis}). 
The proof relies on ideas related to space filling curves and spherical suspensions.

The standard Borsuk-Ulam theorem is however still useful for obtaining additional information about the Gromov-Hausdorff distance between spheres. Indeed, via Lemma \ref{lemma:ls} and the triangle inequality for $\dgh$, one can prove the following general lower bound:

\begin{proposition}\label{piS2-nm}
For any $1\leq m<n<\infty$,
$$\dgh(\Sp^m,\Sp^n)\geq \nu_{m,n}:=\frac{\pi}{2}-\mathrm{cov}_{\Sp^m}(n+1).$$
\end{proposition}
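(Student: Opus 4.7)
The plan is to squeeze $\dgh(\Sp^m,\Sp^n)$ from below using the triangle inequality, comparing to a carefully chosen finite subset of $\Sp^m$. First I would set $\rho := \mathrm{cov}_{\Sp^m}(n+1)$ and fix an $(n+1)$-point subset $A \subseteq \Sp^m$ realizing this minimal covering radius, so that $A$ is a $\rho$-net in $\Sp^m$. By the standard net estimate recalled in the preliminaries (namely $\dgh\big((X,d_X),(A,d_X|_{A\times A})\big)\leq \rho$ whenever $A$ is a $\rho$-net of $X$), this immediately gives $\dgh(A,\Sp^m) \leq \rho$.

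Next I would invoke Lemma~\ref{lemma:ls} to establish the key estimate $\dgh(A,\Sp^n) \geq \pi/2$. The point is that any correspondence $R \subseteq A \times \Sp^n$ yields a cover of $\Sp^n$ by at most $n+1$ ``fibers'' $B_a := \{y \in \Sp^n : (a,y)\in R\}$, $a\in A$; passing to closures gives a closed cover of $\Sp^n$ by $n+1$ sets. The Lyusternik-Schnirelmann theorem then forces some closure $\overline{B_a}$ to contain a pair of antipodal points $y, -y$. A short limiting argument, approximating $y,-y$ by sequences in $B_a$, then yields
$$\dis(R) \;\geq\; d_{\Sp^n}(y,-y) - d_{\Sp^m}(a,a) \;=\; \pi,$$
so that $\dgh(A,\Sp^n) \geq \pi/2$.

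Combining these two estimates via the triangle inequality for $\dgh$ gives
$$\frac{\pi}{2} \;\leq\; \dgh(A,\Sp^n) \;\leq\; \dgh(A,\Sp^m) + \dgh(\Sp^m,\Sp^n) \;\leq\; \rho + \dgh(\Sp^m,\Sp^n),$$
and rearranging produces $\dgh(\Sp^m,\Sp^n) \geq \pi/2 - \rho = \nu_{m,n}$, as desired.

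The only delicate step is the invocation of Lemma~\ref{lemma:ls}: a correspondence is merely a set-theoretic relation, so its fibers need not be closed, and some care is required to apply the classical Lyusternik-Schnirelmann theorem, which is usually phrased for closed (or open) covers. This is the one place where a genuine topological input enters, and it is handled by the closure-plus-continuity argument sketched above. Everything else --- choosing the optimal $(n+1)$-point net, and applying the triangle inequality --- is entirely routine.
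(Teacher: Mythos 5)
Your proposal is correct and follows essentially the same route as the paper's proof: both hinge on Lemma~\ref{lemma:ls} (i.e.\ the Lyusternik--Schnirelmann theorem applied to the closures of the fibers of a correspondence) to get $\dgh(A,\Sp^n)\geq\frac{\pi}{2}$ for a set $A\subset\Sp^m$ with $|A|\leq n+1$, and then combine this with the triangle inequality and the bound $\dgh(A,\Sp^m)\leq\dh(A,\Sp^m)$ coming from the covering radius. The only cosmetic difference is that you fix a net attaining $\mathrm{cov}_{\Sp^m}(n+1)$ (attainment does hold, by compactness of $(\Sp^m)^{n+1}$ and continuity of $P\mapsto\dh(P,\Sp^m)$, though you should say so), whereas the paper works with an arbitrary $P$ of cardinality at most $n+1$ and infimizes at the very end, which sidesteps the attainment question altogether.
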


Above, for any integer $k\geq 1$, and any compact metric space $X$, $\mathrm{cov}_X(k)$ denotes the $k$-th \emph{covering radius} of $X$: 
\begin{equation}\label{eq:cov}
\mathrm{cov}_X(k):=\inf\{\dh(X,P)|\,P\subset X\,\mbox{s.t.}\,|P|\leq k\}.
\end{equation}

\begin{remark}\label{rem:new-comp} Both the lower bound $\mu_{m,n}$ from Proposition \ref{prop:colding} and $\nu_{m,n}$ from Proposition \ref{piS2-nm}  implement covering/packing ideas and as such it is interesting to compare them:
\begin{enumerate}
    \item Note that, since $\mathrm{cov}_{\Sp^1}(3)=\frac{\pi}{3}$, we have $\nu_{1,2} = \frac{\pi}{6}$ which is about 6.5 times larger than $\mu_{1,2}\approx 0.0802$ (cf. Example \ref{ex:s1-s2}).
    
    \item Computing $\nu_{m,n}$ in general requires  knowledge of the covering radius $\mathrm{cov}_{\Sp^m}(k)$ of spheres which is currently only  known for $k\leq m+2$ \cite[Theorem 3.2]{cho1997optimal}. In contrast, computing $\mu_{m,n}$ can be done (in principle) for any $m<n$ given that we have the explicit formula  $v_m(\rho)=\frac{\vol{\Sp^{m-1}}}{\vol{\Sp^m}}\int_0^\rho (\sin\theta)^{m-1}\,d\theta$ which is valid for every positive integer $m$ and $\rho\in[0,\pi]; see $\cite{tubes}. 
    
    \item The lower bound  $\mu_{m,n}$ is more widely applicable than $\nu_{m,n}$, which originates from the  Lyusternik-Schnirelman theorem (see below)  and the underlying ideas are in principle only applicable when one of the two metric spaces is a sphere.\footnote{This can be ascertained by inspecting the proof Proposition \ref{piS2-nm} in \S\ref{sec:other-lbs}.} Indeed, see \cite{colding,funano-estimates} for estimates of the Gromov-Hausdorff distance between Riemmanian manifolds satisfying upper and lower bounds on curvature obtained by combining volume comparison theorems with techniques similar to those used in proving Proposition \ref{prop:colding}.
    \item Through \cite[Theorem 3.2]{cho1997optimal} it is known that $\mathrm{cov}_{\Sp^m}(m+2)=\pi-\arccos(-1/(m+1))$ for  $m\geq 1$. Therefore, when $n=m+1$, the lower bound  $\nu_{m,m+1}$ given by Proposition \ref{piS2-nm} becomes $\arccos(-1/(m+1))-\frac{\pi}{2}$ for $m\geq 1$ which  tends to zero as $m$ goes to infinity. It is not known whether or not $\mu_{m,m+1}$ has the same behaviour.
\end{enumerate}
\end{remark}

As an immediate corollary, we obtain the following result which complements both  Proposition \ref{prop:sinfty-sd} and Theorem \ref{thm:sn-sm-ub}:
\begin{corollary}
Given any positive integer $m$ and $\epsilon>0$, there exists an integer $n = n(m,\epsilon)>m$ such that $$\dgh(\Sp^m,\Sp^n)\geq \frac{\pi}{2}-\epsilon.$$
\end{corollary}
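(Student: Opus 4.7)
The plan is to combine Proposition \ref{piS2-nm} with the fact that $\Sp^m$ is a compact (hence totally bounded) metric space, so that its $k$-th covering radius tends to $0$ as $k\to\infty$. Concretely, by Proposition \ref{piS2-nm} we have, for every $n>m$,
$$\dgh(\Sp^m,\Sp^n)\geq \frac{\pi}{2}-\mathrm{cov}_{\Sp^m}(n+1),$$
so it suffices to exhibit, for the given $\epsilon>0$, some integer $n>m$ with $\mathrm{cov}_{\Sp^m}(n+1)\leq \epsilon$.

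To do so, first I would observe that by compactness of $\Sp^m$, one can cover $\Sp^m$ with finitely many open balls of radius $\epsilon$. Let $N=N(m,\epsilon)$ be the minimal number of such balls, and let $P\subset \Sp^m$ be the collection of their centers, so that $|P|=N$ and $\dh(\Sp^m,P)\leq \epsilon$. From the definition \eqref{eq:cov} this yields $\mathrm{cov}_{\Sp^m}(N)\leq \epsilon$. Since $\mathrm{cov}_{\Sp^m}(k)$ is non-increasing in $k$ (adding more points can only decrease the Hausdorff distance), for any integer $k\geq N$ we still have $\mathrm{cov}_{\Sp^m}(k)\leq \epsilon$.

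Now I would simply set $n:=\max\{m+1,N-1\}$ (so that both $n>m$ and $n+1\geq N$) and apply Proposition \ref{piS2-nm} to conclude
$$\dgh(\Sp^m,\Sp^n)\geq \frac{\pi}{2}-\mathrm{cov}_{\Sp^m}(n+1)\geq \frac{\pi}{2}-\epsilon,$$
as claimed. There is no real obstacle; the only step that requires a small sanity check is the monotonicity of $k\mapsto \mathrm{cov}_{\Sp^m}(k)$, which is immediate from \eqref{eq:cov} since any set of size $\leq k$ is also of size $\leq k+1$.
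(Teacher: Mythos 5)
Your proof is correct and follows exactly the route the paper intends: the corollary is stated as an immediate consequence of Proposition \ref{piS2-nm}, with the (implicit) observation that $\mathrm{cov}_{\Sp^m}(k)\to 0$ as $k\to\infty$ by compactness, which is precisely your argument. Your explicit choice of $n$ and the monotonicity check for $k\mapsto\mathrm{cov}_{\Sp^m}(k)$ are fine, and they are consistent with the paper's remark that $n(m,\epsilon)=O(\epsilon^{-m})$.
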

\begin{remark}
For small $\epsilon>0$  one can estimate the value of $n$ above as $n=n(m,\epsilon) = O(\epsilon^{-m}).$
\end{remark}
The results above motivate the following two questions:

\begin{question}\label{question:mono}
Is it true that for fixed $m\geq 1$, $\dgh(\Sp^m,\Sp^n)$ is non-decreasing for all $n\geq m$?
\end{question}

\begin{figure}
    \centering
    \includegraphics[width=0.8\linewidth]{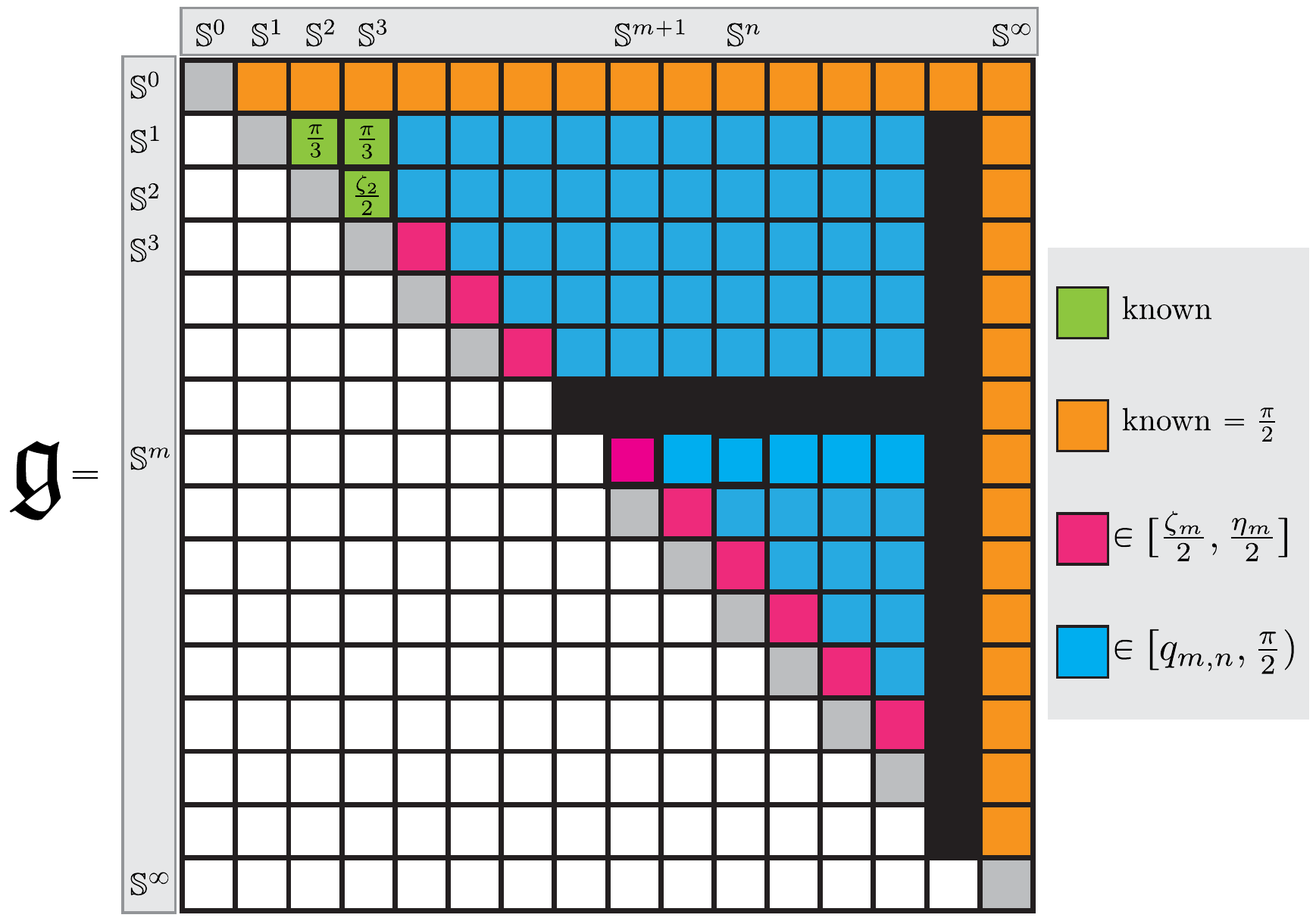}
    \caption{\textbf{The matrix $\mathfrak{g}$ such that $\mathfrak{g}_{m,n}:=\dgh(\Sp^m,\Sp^n).$}
    According to Remark \ref{remark:ub} and Corollary \ref{cor:sn-sm-univlb}, all non-zero entries of the matrix $\mathfrak{g}$ are in the range $[\frac{\pi}{4},\frac{\pi}{2}]$. In the figure, $\zeta_m = \arccos\left(\frac{-1}{m+1}\right)$ is the edge length of the regular geodesic simplex inscribed in $\Sp^m$, $\eta_m$ is the diameter of a face of the regular geodesic simplex in $\Sp^m$ (see equation (\ref{eq:etam})), and $q_{m,n} = \max\left\{\frac{\zeta_m}{2},\frac{\pi}{2}-\mathrm{cov}_{\Sp^m}(n+1)\right\}$.}
    \label{fig:g}
\end{figure}

\begin{question}
Fix $m\geq 1$ and $\epsilon>0$. Find (optimal) estimates for:
$$k_m(\epsilon):=\inf\left\{k\geq 1\big|\, \dgh(\Sp^m,\Sp^{m+k})\geq \frac{\pi}{2}-\epsilon\right\}.$$ 
\end{question}

Via the Lyusternik-Schnirelmann theorem, Proposition \ref{piS2-nm} above depends on the classical Borsuk-Ulam theorem which, in one of its guises \cite[Theorem 2.1.1]{matousek2003using}, states that there is no \emph{continuous} antipode preserving map $g:\Sp^n\rightarrow \Sp^{n-1}$. As a consequence, if  $g:\Sp^n\rightarrow \Sp^{n-1}$ is any antipode preserving map as above, then $g$ cannot be continuous. 
A natural question is \emph{how discontinuous} is any such $g$  forced to be. This question was actually tackled in 1981 by Dubins and Schwarz \cite{dubins1981equidiscontinuity} who proved that the \emph{modulus of discontinuity} $\delta(g)$ of any such $g$ needs to be suitably bounded below. These results are instrumental for proving Theorem \ref{thm:sn-sm-lb-DS} below; see \S\ref{sec:lb} and Appendix \ref{app:proof-BU} for details and for a concise proof of the main theorem from \cite{dubins1981equidiscontinuity} (following a strategy outlined by Matou\v{s}ek in  \cite{matousek2003using}).

For each $m\in\N$ let $\zeta_m$ denote the edge length (with respect to the geodesic distance) of a regular  $m+1$ simplex inscribed in $\Sp^m$:
$$\zeta_m := \arccos\left(\frac{-1}{m+1}\right),$$
which is monotonically decreasing in $m$. For example $\zeta_0 = \pi$, $\zeta_1 = \frac{2\pi}{3}$, $\zeta_2 = \arccos\big(\frac{-1}{3}\big)\simeq 0.608 \pi,$ and $\lim_{m\rightarrow\infty}\zeta_m=\frac{\pi}{2}.$  Then, we have the following lower bound which will turn out to be optimal in some cases:

\begin{theorem}[Lower bound via geodesic simplices]\label{thm:sn-sm-lb-DS}
For all integers $0<m<n$, $$\dgh(\Sp^m,\Sp^n)\geq \frac{1}{2}\zeta_m.$$
Moreover, for any map $\varphi:\Sp^n\rightarrow\Sp^m$, we have that $\dis(\varphi)\geq \zeta_m.$
\end{theorem}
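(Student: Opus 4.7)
By the reformulation \eqref{eq:dgh-functions} of $\dgh$, the ``moreover'' statement implies the displayed inequality: if every $\psi:\Sp^n\to\Sp^m$ has $\dis(\psi)\ge\zeta_m$, then $\max\{\dis(\varphi),\dis(\psi),\codis(\varphi,\psi)\}\ge\zeta_m$ for every pair $(\varphi,\psi)$, and dividing by two yields the Gromov--Hausdorff bound. The plan is therefore to prove $\dis(\varphi)\ge\zeta_m$ for an arbitrary (not necessarily continuous) $\varphi:\Sp^n\to\Sp^m$, by first \emph{antipodalizing} $\varphi$, then \emph{restricting} to a codimension-one subsphere, and finally invoking the quantitative Borsuk--Ulam theorem of Dubins--Schwarz proved in Appendix~\ref{app:proof-BU}.

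For the antipodalization step, let $H\subset\Sp^n$ be any set containing exactly one point of each antipodal pair, so $\Sp^n=H\sqcup(-H)$, and define $\tilde\varphi:\Sp^n\to\Sp^m$ by $\tilde\varphi(x)=\varphi(x)$ for $x\in H$ and $\tilde\varphi(x)=-\varphi(-x)$ for $x\in -H$. By construction $\tilde\varphi(-x)=-\tilde\varphi(x)$ for every $x\in\Sp^n$. A four-case analysis on the positions of $x$ and $y$ relative to $H$ and $-H$, using only the sphere identity $d_{\Sp^k}(-a,b)=\pi-d_{\Sp^k}(a,b)$, shows $\dis(\tilde\varphi)\le\dis(\varphi)$: in every case the discrepancy $|d_{\Sp^n}(x,y)-d_{\Sp^m}(\tilde\varphi(x),\tilde\varphi(y))|$ reduces to one of the form $|d_{\Sp^n}(x',y')-d_{\Sp^m}(\varphi(x'),\varphi(y'))|$ for appropriate $x',y'\in H$.

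Next, choose any $(m+2)$-dimensional linear subspace of $\R^{n+1}$ and intersect with $\Sp^n$ to obtain an antipode-invariant equatorial subsphere $\Sp^{m+1}\subset\Sp^n$. The restriction $g:=\tilde\varphi|_{\Sp^{m+1}}:\Sp^{m+1}\to\Sp^m$ remains antipode-preserving and trivially satisfies $\dis(g)\le\dis(\tilde\varphi)\le\dis(\varphi)$. The Dubins--Schwarz theorem guarantees that the modulus of discontinuity $\delta(g):=\inf_{\epsilon>0}\sup\{d_{\Sp^m}(g(x),g(y)):d_{\Sp^{m+1}}(x,y)<\epsilon\}$ of any odd map $g:\Sp^{m+1}\to\Sp^m$ is at least $\zeta_m$. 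Since the bound $d_{\Sp^m}(g(x),g(y))\le d_{\Sp^{m+1}}(x,y)+\dis(g)$ immediately forces $\delta(g)\le\dis(g)$ (take $\sup$ over $d_{\Sp^{m+1}}(x,y)<\epsilon$ and then $\inf$ over $\epsilon>0$), we conclude $\dis(\varphi)\ge\dis(g)\ge\delta(g)\ge\zeta_m$, completing the proof.

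The principal obstacle is the Dubins--Schwarz input itself: extracting the precise constant $\zeta_m=\arccos(-1/(m+1))$ requires more than the classical Borsuk--Ulam theorem and is the technical heart of the argument; everything else is bookkeeping engineered to reduce the problem of an arbitrary map between spheres of arbitrarily different dimensions to the codimension-one antipode-preserving case where Dubins--Schwarz applies.
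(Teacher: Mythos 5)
Your proposal is correct and follows essentially the same route as the paper: reduce the Gromov--Hausdorff bound to a distortion lower bound for maps $\Sp^n\to\Sp^m$, antipodalize by choosing one point from each antipodal pair (the paper's ``helmet'' Lemma \ref{lemma:distortion}), pass to an equatorial $\Sp^{m+1}$, and conclude via $\delta(g)\le\dis(g)$ (Proposition \ref{prop:moddis}) together with the Dubins--Schwarz bound $\delta(g)\ge\zeta_m$ (Corollary \ref{cor:strBU}). The only deviations are cosmetic: you antipodalize before restricting rather than after, and you use the map-pair formulation \eqref{eq:dgh-functions} instead of selecting a map from a correspondence.
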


From the above, we have the following general lower bound:
\begin{corollary}\label{cor:sn-sm-univlb}
For all integers $0<m<n$, $\dgh(\Sp^m,\Sp^n)\geq \frac{\pi}{4}.$
\end{corollary}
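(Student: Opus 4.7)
The plan is to deduce the corollary directly from Theorem \ref{thm:sn-sm-lb-DS}, which already provides the lower bound $\dgh(\Sp^m,\Sp^n)\geq \tfrac{1}{2}\zeta_m$ for all $0<m<n$. So all that remains is the elementary observation that $\zeta_m\geq \tfrac{\pi}{2}$ for every integer $m\geq 1$.

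First, I would recall that $\zeta_m=\arccos\!\big(\tfrac{-1}{m+1}\big)$. For $m\geq 1$ we have $\tfrac{-1}{m+1}\in[-\tfrac{1}{2},0)$, hence $\arccos\!\big(\tfrac{-1}{m+1}\big)\in(\tfrac{\pi}{2},\tfrac{2\pi}{3}]$. In particular, $\zeta_m>\tfrac{\pi}{2}$ for every finite $m\geq 1$, and the sequence $(\zeta_m)_{m\geq 1}$ is monotonically decreasing with $\lim_{m\to\infty}\zeta_m=\tfrac{\pi}{2}$ (as noted immediately before the statement of Theorem \ref{thm:sn-sm-lb-DS}).

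Combining these two facts, for any $0<m<n$,
\[
\dgh(\Sp^m,\Sp^n)\;\geq\;\tfrac{1}{2}\zeta_m\;\geq\;\tfrac{1}{2}\cdot\tfrac{\pi}{2}\;=\;\tfrac{\pi}{4},
\]
which is the desired conclusion. There is no real obstacle here, since the work has been done by Theorem \ref{thm:sn-sm-lb-DS}; the only thing to verify is the (purely numerical) monotonicity and limit of $\zeta_m$, and I would simply cite the formula for $\zeta_m$ given in the excerpt.
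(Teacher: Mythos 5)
Your proposal is correct and follows exactly the paper's intended argument: the corollary is an immediate consequence of Theorem \ref{thm:sn-sm-lb-DS} together with the observation that $\zeta_m=\arccos\left(\frac{-1}{m+1}\right)>\frac{\pi}{2}$ for every integer $m\geq 1$. Nothing is missing.
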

This corollary  of course implies that the sequence of compact metric spaces $(\Sp^n)_{n\in\overline{\N}}$ is not Cauchy.

\begin{remark}
Note that the lower bound for $\dgh(\Sp^m,\Sp^n)$ given by Theorem \ref{thm:sn-sm-lb-DS} coincides with the \emph{filling radius} of $\Sp^m$; cf. \cite[Theorem 2]{katz1983filling}.  This lower bound is twice the one obtained via the  stability of Vietoris-Rips persistent homology \cite[Corollary 9.3]{vrfr}.
\end{remark}

Note that $\mathrm{cov}_{\Sp^1}(k) \leq  \frac{\pi}{k}$, which can be seen by considering the vertices of a regular polygon inscribed in $\Sp^1$ with $k$ sides.  Combining this fact with Proposition \ref{piS2-nm}, Theorem \ref{thm:sn-sm-lb-DS}, and the fact that $\zeta_1=\frac{2\pi}{3}$ one obtains the following special claim for the entries in the first row of the matrix $\mathfrak{g}$:

\begin{corollary}\label{coro:s1-sn}
For all $n>1$, $\dgh(\Sp^1,\Sp^n)\geq \pi\cdot \max\big\{\frac{1}{3},\frac{1}{2}\frac{n-1}{n+1}\big\}.$
\end{corollary}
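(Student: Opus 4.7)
The plan is to simply combine the two general lower bounds already established (Theorem \ref{thm:sn-sm-lb-DS} and Proposition \ref{piS2-nm}) in the special case $m=1$, together with the stated bound $\mathrm{cov}_{\Sp^1}(k)\leq \pi/k$, and take the maximum of the two resulting lower bounds.

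First, I would apply Theorem \ref{thm:sn-sm-lb-DS} with $m=1$. Since $\zeta_1=\arccos(-1/2)=\tfrac{2\pi}{3}$, this yields immediately
\[
\dgh(\Sp^1,\Sp^n)\;\geq\;\tfrac{1}{2}\zeta_1\;=\;\tfrac{\pi}{3},
\]
valid for every $n>1$. This accounts for the first term in the maximum.

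Next, I would verify the elementary estimate $\mathrm{cov}_{\Sp^1}(k)\leq \pi/k$: taking $P\subset\Sp^1$ to be the vertices of a regular $k$-gon (so consecutive vertices are at geodesic distance $2\pi/k$), every point of $\Sp^1$ lies within geodesic distance $\pi/k$ of $P$, which by the definition \eqref{eq:cov} gives the bound. Feeding this into Proposition \ref{piS2-nm} with $m=1$ and $k=n+1$ gives
\[
\dgh(\Sp^1,\Sp^n)\;\geq\;\frac{\pi}{2}-\mathrm{cov}_{\Sp^1}(n+1)\;\geq\;\frac{\pi}{2}-\frac{\pi}{n+1}\;=\;\frac{\pi}{2}\cdot\frac{n-1}{n+1}.
\]

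Combining the two lower bounds yields the claimed inequality
\[
\dgh(\Sp^1,\Sp^n)\;\geq\;\pi\cdot\max\Bigl\{\tfrac{1}{3},\,\tfrac{1}{2}\tfrac{n-1}{n+1}\Bigr\}.
\]
There is no real obstacle here — the corollary is purely a combination of results already in hand; the only substantive step is the one-line verification of $\mathrm{cov}_{\Sp^1}(n+1)\leq \pi/(n+1)$ via inscribed regular polygons, and the observation that the two bounds cross precisely when $\tfrac{n-1}{n+1}=\tfrac{2}{3}$, i.e.\ near $n=5$, so that each term of the maximum is the binding one in a different regime of $n$.
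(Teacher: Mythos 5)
Your proposal is correct and follows exactly the paper's argument: the bound $\mathrm{cov}_{\Sp^1}(n+1)\leq \pi/(n+1)$ via the vertices of an inscribed regular $(n+1)$-gon, fed into Proposition \ref{piS2-nm}, combined with Theorem \ref{thm:sn-sm-lb-DS} and $\zeta_1=\frac{2\pi}{3}$. Nothing is missing.
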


\begin{remark}\label{rem:s1-sn}
Notice that this implies that, whereas $\dgh(\Sp^1,\Sp^n)\geq \frac{\pi}{3}$ for $n\in\{2,3,4,5\}$, one has the larger lower bound $\dgh(\Sp^1,\Sp^6)\geq \frac{5\pi}{14}>\frac{\pi}{3}.$ Propositions \ref{prop:ub} and \ref{prop:ub13} below establish that actually $\dgh(\Sp^1,\Sp^2) = \dgh(\Sp^1,\Sp^3) = \frac{\pi}{3}.$
\end{remark}

Finally, in order to prove that $\dgh(\Sp^1,\Sp^2)=\frac{\pi}{3}$, we combine Theorem \ref{thm:sn-sm-lb-DS} with an explicit construction of a correspondence between $\Sp^1$ and $\Sp^2$ as follows. Let $\mathbf{H}_{\geq0}(\Sp^2)$ denote the closed upper   hemisphere of $\Sp^2$. Then, the following proposition shows that there exists a correspondence between $\Sp^1$ and $\mathbf{H}_{\geq0}(\Sp^2)$ with distortion at most $\frac{2\pi}{3}$. A correspondence between $\Sp^1$ and $\Sp^2$ (see Figure \ref{fig:phi21}) with the same distortion is then obtained via a certain \emph{odd} (i.e. antipode preserving) extension of the aforementioned correspondence (cf. Lemma \ref{lemma:distortion}): 
\begin{restatable}{proposition}{ub}\label{prop:ub}
There exists (1) a correspondence between $\Sp^1$ and $\mathbf{H}_{\geq0}(\Sp^2)$, and (2) a correspondence between $\Sp^1$ and $\Sp^2$, both of which have distortion at most $\frac{2\pi}{3}$. In particular, together with Theorem \ref{thm:sn-sm-lb-DS}, this implies $\dgh(\Sp^1,\Sp^2) = \frac{\pi}{3}$.
\end{restatable}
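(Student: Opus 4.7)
The plan is to construct an explicit correspondence $R \subseteq \Sp^1 \times H$ (with $H := \mathbf{H}_{\geq 0}(\Sp^2)$) of distortion $\frac{2\pi}{3}$, and then antipodally extend it to a correspondence between $\Sp^1$ and $\Sp^2$ with the same distortion. Parametrize $\Sp^1$ by $\theta \in [0, 2\pi)$ and $H$ by colatitude/longitude $(\phi, \alpha) \in [0, \pi/2] \times [0, 2\pi)$ from the north pole. The construction exploits the shared three-fold rotational symmetry: partition $\Sp^1$ into three arcs of length $\frac{2\pi}{3}$ with midpoints $a_j := (2j+1)\pi/3$ for $j \in \{0,1,2\}$, and partition $H$ into three lunes $S_j := \{(\phi, \alpha) \in H : \alpha \in [2\pi j/3, 2\pi(j+1)/3]\}$, each of diameter $\frac{2\pi}{3}$.

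For part (1), define the isometric equator embedding $\varphi \colon \Sp^1 \to H$, $\varphi(\theta) := (\pi/2, \theta)$, and the piecewise constant map $\psi \colon H \to \Sp^1$ with $\psi(y) := a_j$ for $y \in S_j$ (with a consistent choice on shared meridians). Set
\begin{equation*}
R := \{(\theta, \varphi(\theta)) : \theta \in \Sp^1\} \cup \{(\psi(y), y) : y \in H\},
\end{equation*}
which is a correspondence because the first family of pairs covers $\Sp^1$ and the second covers $H$. To verify $\dis(R) \leq \frac{2\pi}{3}$, analyze the pairs by type: two $\varphi$-pairs contribute distortion $0$; two $\psi$-pairs $(a_i, y_1), (a_j, y_2)$ give distortion $|d_{\Sp^1}(a_i, a_j) - d_{\Sp^2}(y_1, y_2)|$, bounded by $\frac{2\pi}{3}$ using $\diam(S_j) = \frac{2\pi}{3}$ when $i = j$, and the fact that $d_{\Sp^1}(a_i, a_j) = \frac{2\pi}{3}$ with $d_{\Sp^2}(y_1, y_2) \in [0, \pi]$ when $i \neq j$; the mixed pair $(\theta, \varphi(\theta)), (\psi(y), y)$ requires bounding $\bigl|d_{\Sp^1}(\theta, a_j) - \arccos(\sin\phi \cos(\theta - \alpha))\bigr|$ for $y = (\phi, \alpha) \in S_j$, which by the substitution $v := \theta - a_j$ and $w := \alpha - a_j \in [-\pi/3, \pi/3]$ reduces to a uniform trigonometric bound.

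For part (2), define the antipodal extension $R' := R \cup \{(\theta + \pi, -y) : (\theta, y) \in R\} \subseteq \Sp^1 \times \Sp^2$; the second projection surjects onto $\Sp^2$ because $H \cup (-H) = \Sp^2$. The involution $(\theta, y) \mapsto (\theta + \pi, -y)$ is an isometry coordinate-wise, so pairs either both in $R$ or both in the antipodal half have distortion at most $\dis(R)$. For a cross pair $(\theta_1, y_1) \in R$ and $(\theta_2 + \pi, -y_2)$ with $(\theta_2, y_2) \in R$, the antipodal identities $d_{\Sp^1}(\theta_1, \theta_2 + \pi) = \pi - d_{\Sp^1}(\theta_1, \theta_2)$ and $d_{\Sp^2}(y_1, -y_2) = \pi - d_{\Sp^2}(y_1, y_2)$ yield
\begin{equation*}
\bigl|d_{\Sp^1}(\theta_1, \theta_2 + \pi) - d_{\Sp^2}(y_1, -y_2)\bigr| = \bigl|d_{\Sp^1}(\theta_1, \theta_2) - d_{\Sp^2}(y_1, y_2)\bigr| \leq \tfrac{2\pi}{3},
\end{equation*}
which is the content of Lemma \ref{lemma:distortion}. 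Combining with the lower bound $\dgh(\Sp^1, \Sp^2) \geq \frac{1}{2}\zeta_1 = \frac{\pi}{3}$ from Theorem \ref{thm:sn-sm-lb-DS} gives equality.

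The main obstacle is the verification of the mixed-pair bound in part (1), which requires a careful case analysis over the parameters $(v, w, \phi)$; the bound is tight, with extremal behavior arising at the vertices of the inscribed equilateral-triangle structure on $\Sp^1$ and the corresponding lune-corner configurations on $H$.
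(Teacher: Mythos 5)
Your construction is essentially the one in the paper: your three lunes $S_j$ are exactly the closures of the Voronoi cells of an inscribed equilateral triangle $\{a_0,a_1,a_2\}\subset\sete(\Sp^2)$ intersected with the upper hemisphere (the paper's $\overline{N_i}=\mathcal{C}(\mathrm{Conv}_{\Sp^1}(\{-u_j:j\neq i\}))$), your map collapsing each lune to its center together with the isometric equator embedding is the paper's $\widetilde{\phi}_{2,1}$, and your antipodal extension in part (2) is precisely the content of Lemma \ref{lemma:distortion} (your relation-level version is fine, and the cross-pair computation is correct). The lower-bound combination with Theorem \ref{thm:sn-sm-lb-DS} is also as in the paper.

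However, as written the proposal has a genuine gap: the mixed-pair estimate, which you yourself flag as ``the main obstacle,'' is never proved --- you only assert that $\bigl|d_{\Sp^1}(\theta,a_j)-\arccos(\sin\phi\cos(\theta-\alpha))\bigr|\leq\tfrac{2\pi}{3}$ ``reduces to a uniform trigonometric bound'' after a substitution, and defer a ``careful case analysis'' that is not carried out. In fact no case analysis is needed: since $\varphi(a_j)\in S_j$ and $\varphi$ is an isometric embedding of $\Sp^1$ onto the equator, the triangle inequality gives $\bigl|d_{\Sp^1}(\theta,a_j)-d_{\Sp^2}(\varphi(\theta),y)\bigr|=\bigl|d_{\Sp^2}(\varphi(\theta),\varphi(a_j))-d_{\Sp^2}(\varphi(\theta),y)\bigr|\leq d_{\Sp^2}(\varphi(a_j),y)\leq\diam(S_j)=\tfrac{2\pi}{3}$, which is exactly how the paper disposes of this case (Case 2 in the proof of Proposition \ref{prop:ub}). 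Relatedly, you assert $\diam(S_j)=\tfrac{2\pi}{3}$ without justification; this is not completely obvious (a priori two points on the two bounding meridians at intermediate colatitudes could be farther apart than the equatorial endpoints) and should be justified, e.g.\ by Lemma \ref{lemma:diamofcone}, since $S_j$ is the cone over an equatorial arc of length $\tfrac{2\pi}{3}\geq\tfrac{\pi}{2}$. With these two points supplied, your argument is complete and coincides with the paper's.
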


Even though we do not state it explicitly, in a manner similar to Proposition \ref{prop:ub}, all correspondences constructed in Propositions \ref{prop:ub13}, \ref{prop:ub23} and \ref{prop:ub-n-m} below also arise from odd extensions of correspondences between the lower dimensional sphere and the upper hemisphere of the larger dimensional sphere (cf. their respective proofs).

\begin{remark}\label{rem:compareS1HS^2} 
Also, by combining the first claim of Proposition \ref{prop:ub} and item (4) of Example \ref{example:rstofstrongthm} below (which is analogous to the claim of Theorem \ref{thm:sn-sm-lb-DS} but tailored to the case of $\Sp^m$ versus $\mathbf{H}_{\geq 0}(\Sp^m)$), one  concludes that $\dgh(\Sp^1,\mathbf{H}_{\geq 0}(\Sp^2))=\frac{1}{2}\zeta_1=\frac{\pi}{3}$.
\end{remark}

Via a construction somewhat reminiscent of the Hopf fibration, we prove that there exists a correspondence between the 3-dimensional sphere and the 1-dimensional sphere with distortion at most $\frac{2\pi}{3}$. By applying suitable rotations in $\R^4$, the proof of the following proposition extends the (a posteriori) optimal correspondence between $\Sp^1$ and $\Sp^2$ constructed in the proof of Proposition \ref{prop:ub} (see Figure \ref{fig:hopf}):

\begin{restatable}{proposition}{ub13}\label{prop:ub13}
There exists a correspondence between $\Sp^1$ and $\Sp^3$ with distortion at most $\frac{2\pi}{3}$. In particular, together with Theorem \ref{thm:sn-sm-lb-DS}, this implies $\dgh(\Sp^1,\Sp^3) = \frac{\pi}{3}$.
\end{restatable}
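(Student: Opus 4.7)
The plan is to explicitly construct a correspondence $\tilde R\subset\Sp^1\times\Sp^3$ with $\dis(\tilde R)\leq\frac{2\pi}{3}$; combined with the lower bound $\dgh(\Sp^1,\Sp^3)\geq\frac{1}{2}\zeta_1=\frac{\pi}{3}$ from Theorem \ref{thm:sn-sm-lb-DS}, this yields $\dgh(\Sp^1,\Sp^3)=\frac{\pi}{3}$. The construction lifts the correspondence from Proposition \ref{prop:ub} via the Hopf $\Sp^1$-action on $\Sp^3$: identifying $\R^4\cong\mathbb{C}^2$, write $\Sp^3=\{(u,v)\in\mathbb{C}^2:|u|^2+|v|^2=1\}$ and set $\rho_t(u,v):=(e^{it}u,e^{it}v)$ for $t\in\R/2\pi\Z$; each $\rho_t$ is an element of $SO(4)$ and hence an isometry of $\Sp^3$, with orbits the Hopf great circles.

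Fix the totally geodesic 2-sphere $\Sigma:=\{(u,v)\in\Sp^3:\Im v=0\}\cong\Sp^2$, so that every $p\in\Sp^3$ admits (generically unique) representations $p=\rho_t(q)$ with $q\in\Sigma$ and $t\in[0,2\pi)$. Identifying the correspondence $R^+\subset\Sp^1\times\seth_{\geq 0}(\Sp^2)$ from the first part of Proposition \ref{prop:ub} with one between $\Sp^1$ and $\seth_{\geq 0}(\Sigma)$, I would define
$$\tilde R^+:=\bigl\{(\theta+t,\rho_t(q)):(\theta,q)\in R^+,\ t\in[0,\pi]\bigr\}\subset\Sp^1\times\Sp^3,$$
and extend it antipodally to $\tilde R$ via Lemma \ref{lemma:distortion}. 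Surjectivity onto $\Sp^1$ is inherited from $R^+$, while surjectivity onto $\Sp^3$ follows because the Hopf orbits sweep $\Sp^3$ out of $\seth_{\geq 0}(\Sigma)$ together with its antipode.

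The main step is the distortion estimate. For pairs $(\theta_i+t_i,\rho_{t_i}(q_i))\in\tilde R^+$ with $(\theta_i,q_i)\in R^+$, setting $s:=t_2-t_1$, isometry invariance gives $d_{\Sp^3}(\rho_{t_1}q_1,\rho_{t_2}q_2)=d_{\Sp^3}(q_1,\rho_s q_2)$, and in spherical coordinates $q_i=(\sin\psi_i\,e^{i\alpha_i},\cos\psi_i)$ on $\Sigma$ one derives the closed-form identity
$$\cos d_{\Sp^3}(q_1,\rho_s q_2)=\cos s\cdot\cos d_{\Sp^2}(q_1,q_2)+\sin\psi_1\sin\psi_2\sin(\alpha_1-\alpha_2)\sin s.$$
The goal is to compare this to $d_{\Sp^1}(\theta_1+t_1,\theta_2+t_2)$, the circular distance of $(\theta_1-\theta_2)+(t_1-t_2)$ from $0$, using the bound $|d_{\Sp^2}(q_1,q_2)-d_{\Sp^1}(\theta_1,\theta_2)|\leq\frac{2\pi}{3}$ coming from $\dis(R^+)\leq\frac{2\pi}{3}$. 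The main obstacle is precisely this trigonometric verification: a naive triangle-inequality bound leaves a slack of order $|s|$ which can be as large as $\pi$, so one must exploit the $\cos s$-factor in the identity above together with the three-arc structure of $\Sp^1$ underlying $R^+$, so that the shift $\theta\mapsto\theta+t$ on the $\Sp^1$-factor is exactly compensated by the Hopf rotation $\rho_t$ on the $\Sp^3$-factor. Checking this at the extremal configurations where $R^+$ saturates the $\frac{2\pi}{3}$-distortion is the crux of the argument.
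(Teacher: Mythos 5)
Your construction coincides with the paper's: the paper writes each $q\in\Sp^3\setminus\Sp^1$ uniquely as $T_{\alpha_q}p_q$ with $p_q\in\Sp^2\setminus\Sp^1$ and $\alpha_q\in[0,\pi)$, where $T_\alpha$ is exactly your Hopf rotation $\rho_\alpha$, and it sets $\phi_{3,1}(q)=T_{\alpha_q}\phi_{2,1}(p_q)$; your closed-form identity for $\cos d_{\Sp^3}(q_1,\rho_s q_2)$ is the paper's $E_{p,q}(\alpha)=\langle p,q\rangle\cos\alpha+(bx-ay)\sin\alpha$. The trouble is that you stop precisely where the paper's proof begins in earnest: the distortion estimate is labelled ``the crux'' but never carried out, and the route you sketch for it does not work as stated. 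The bound cannot be extracted from $\dis(R^+)\leq\frac{2\pi}{3}$ alone; it hinges on the orientation compatibility between the cyclic arrangement of the three Voronoi cells $N_1,N_2,N_3$ and the sense of the Hopf rotation, i.e.\ on the sign of your cross term (the paper's $bx-ay$, Remark \ref{rmk:propsof on:hi21}). Concretely, composing $\phi_{2,1}$ with a reflection of $\Sp^1$ leaves the distortion of $R^+$ unchanged, yet the lifted relation then fails the bound badly: for $p,q$ near the equator in adjacent cells at equatorial distance $\beta=s\leq\frac{\pi}{3}$ one has $d_{\Sp^3}(T_s p,q)\approx 0$ while the shifted image distance becomes $\frac{2\pi}{3}+s$, so the lifted distortion approaches $\pi$. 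Moreover, ``checking the extremal configurations where $R^+$ saturates its distortion'' is not a proof: the inequality must hold for every pair of points and every $t$, and nothing in your argument reduces it to finitely many extremal cases.

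What is missing is exactly the content of the paper's \S\ref{sec:S1S3uppbdd}: the monotonicity of $F_{p,q}(\alpha)=d_{\Sp^3}(T_\alpha p,q)-\alpha$ and $G_{p,q}(\alpha)=d_{\Sp^3}(T_\alpha p,q)+\alpha$ (Lemma \ref{lemma:propsofEFG}, which is where the constraint $(E_{p,q}')^2+E_{p,q}^2\leq 1$ enters), the orientation-dependent estimates $d_{\Sp^3}(T_{2\pi/3}p,q)\leq\frac{2\pi}{3}$ and $d_{\Sp^3}(T_{\pi/3}p,q)\geq\frac{\pi}{3}$ for cyclically ordered cells (Lemma \ref{lemma:rotationdistprops}), and the ensuing case analysis over which cells $p$ and $q$ lie in. A secondary issue: with $q$ ranging over the closed hemisphere and $t\in[0,\pi]$, your domain is not disjoint from its antipodal image, so Lemma \ref{lemma:distortion} does not apply verbatim to produce the antipodal extension; the paper avoids this by restricting to $\alpha_q\in[0,\pi)$ and $p_q\in\Sp^2\setminus\Sp^1$, treating $\Sp^2$ separately, and using that $\phi_{2,1}$ is itself antipode preserving. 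This last point is fixable, but the distortion verification is a genuine gap, and it constitutes the bulk of the actual proof.
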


Finally, we were able to compute the exact value of the distance between $\Sp^2$ and $\Sp^3$ by producing a correspondence whose distortion matches the one implied by the lower bound in Theorem \ref{thm:sn-sm-lb-DS}. This correspondence is structurally different from the ones constructed in Propositions \ref{prop:ub} and \ref{prop:ub13} and arises by partitioning $\Sp^3$ into 32 regions whose  diameter is (necessarily) bounded above by $\zeta_2$ and also satisfy suitable pairwise constraints (cf. \S\ref{sec:gen-corresp}):

\begin{restatable}{proposition}{ub23}\label{prop:ub23}
There exists a correspondence between $\Sp^2$ and $\Sp^3$ with distortion at most $\zeta_2$. In particular, together with Theorem \ref{thm:sn-sm-lb-DS}, this implies $\dgh(\Sp^2,\Sp^3) = \frac{1}{2}\zeta_2$.
\end{restatable}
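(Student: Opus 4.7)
Once the lower bound $\dgh(\Sp^2,\Sp^3)\geq \tfrac{1}{2}\zeta_2$ is invoked from Theorem \ref{thm:sn-sm-lb-DS}, the remaining task is to construct a correspondence $R \subseteq \Sp^2 \times \Sp^3$ with $\dis(R)\leq \zeta_2$. The organizing principle is that $\zeta_2 = \arccos\bigl(-\tfrac{1}{3}\bigr)$ equals the edge length of the regular tetrahedron $T=\{v_1,v_2,v_3,v_4\}$ inscribed in $\Sp^2$, and its Voronoi decomposition partitions $\Sp^2$ into four closed spherical triangles $V_1,\dots,V_4$, each of diameter exactly $\zeta_2$.

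I propose to build $R$ in the block form
\[
R \;=\; \bigcup_{i=1}^{4} V_i \times A_i,
\]
where $\{A_1,A_2,A_3,A_4\}$ is a partition of $\Sp^3$, with each $A_i$ thought of as ``assigned'' to the tetrahedral vertex $v_i$. A direct computation shows that $\dis(R)\leq \zeta_2$ reduces to two conditions: (a) $\diam_{\Sp^3}(A_i) \leq \zeta_2$ for every $i$, and (b) for each pair $i\neq j$ and every $v\in V_i,\, v'\in V_j,\, x\in A_i,\, x'\in A_j$,
\[
\bigl|d_{\Sp^2}(v,v') - d_{\Sp^3}(x,x')\bigr| \;\leq\; \zeta_2.
\]
Since $\Sp^3$ cannot be covered by four sets of diameter $\leq \zeta_2$, each cluster $A_i$ must be a disjoint union of smaller pieces; the construction uses $8$ pieces per cluster, producing the $32$ regions in the statement.

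The partition will be constructed antipodally: $16$ regions in the closed upper hemisphere $\mathbf{H}_{\geq 0}(\Sp^3)$, plus their antipodal images in the lower hemisphere. A natural combinatorial scaffold is the Voronoi decomposition of $\Sp^3$ induced by a highly symmetric finite point configuration in $\R^4$ (the $16$-cell or $24$-cell being natural candidates), whose cells automatically have small diameter and whose symmetry group admits a $4$-to-$1$ quotient compatible with the tetrahedral orbit of $T$ in $\Sp^2$; such a quotient furnishes a $4$-coloring of the $32$ regions, giving the clusters $A_1,\dots,A_4$. The main obstacle is the simultaneous verification of (a) and (b): condition (a) requires that the $8$ pieces comprising each $A_i$ lie within a common geodesic ball of radius $\tfrac{1}{2}\zeta_2$, while condition (b) is a finite but intricate family of inequalities relating geodesic distances in $\Sp^3$ to those in $\Sp^2$, amenable to explicit computation from the polytope coordinates. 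Once these are verified, combining $\dis(R)\leq \zeta_2$ with the lower bound from Theorem \ref{thm:sn-sm-lb-DS} yields the equality $\dgh(\Sp^2,\Sp^3)=\tfrac{1}{2}\zeta_2$.
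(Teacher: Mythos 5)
There is a genuine gap, and it is fatal to the proposed block structure. First, your premise that the four Voronoi cells $V_1,\dots,V_4$ of the inscribed regular tetrahedron ``each have diameter exactly $\zeta_2$'' is false: $\zeta_2$ is only the \emph{edge length} of these spherical triangles, while their diameter is $\eta_2=\arccos\left(-\tfrac{1}{\sqrt{3}}\right)\approx 2.186>\zeta_2\approx 1.911$, realized by the distance from a vertex to the midpoint of the opposite edge (this is Santal\'o's computation, recorded in Remark \ref{rmk:diamoneface}, and it is exactly why the paper says a construction different from Proposition \ref{prop:ub-n-m} is needed for $m=2$). In a correspondence of the form $R=\bigcup_i V_i\times A_i$, any two points $v,v'\in V_i$ are both matched to a common $x\in A_i$, so $\dis(R)\geq\max_i\diam(V_i)=\eta_2>\zeta_2$ no matter how cleverly the $A_i$ are chosen. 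Your list of conditions omits the symmetric requirement $\diam(V_i)\leq\zeta_2$, and that requirement cannot be met by the full Voronoi cells.

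Second, the four-cluster structure is internally inconsistent on the $\Sp^3$ side as well. Your condition (a) forces $\diam(A_i)\leq\zeta_2$, yet the $A_i$ must cover $\Sp^3$, and by Lyusternik--Schnirelmann (Theorem \ref{thm:ls}, as used in Lemma \ref{lemma:ls}) one of the closures $\overline{A_i}$ contains a pair of antipodal points, which already gives $\dis(R)\geq\pi$. Subdividing each $A_i$ into $8$ pieces does not help: the diameter of the union $A_i$ is what enters the distortion within the block $V_i\times A_i$, so the $32$ regions only become useful if different pieces are paired with \emph{different} parts of $\Sp^2$, i.e.\ if you abandon the four-block form altogether. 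That is what the paper does: its correspondence is the graph of a surjection $\phi_{3,2}:\Sp^3\twoheadrightarrow\Sp^2$ whose $\Sp^2$-side ``blocks'' are sixteen single points ($u_i$ and the auxiliary points $u_{i,j}$ at distance $r=\arccos(2\sqrt2/3)$ from $u_i$) together with the identity on an equatorial copy of $\Sp^2$ (which yields surjectivity), the $\Sp^3$-side regions being cones over the $V_i$ truncated at a latitude $\alpha$ with $\cos^2\alpha\in\left[\tfrac{\sqrt3-1}{3+\sqrt3},\tfrac{7}{9}\right]$ and further split by the Voronoi cells of the $u_{i,j}$; the whole map is then doubled antipodally via the helmet trick of Lemma \ref{lemma:distortion}. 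The finite family of cross-block inequalities you anticipate is real, but it is verified in the paper through Lemmas \ref{lemma:distuijs} and \ref{lemma:S3S2techineq}, and it only closes because the target on the $\Sp^2$ side is this finer $16$-point-plus-equator set, not the four Voronoi cells.
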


Keeping in mind Remark \ref{rem:s1-sn} and Propositions \ref{prop:ub} and \ref{prop:ub13} we pose the following:
\begin{question} \label{question:s1-sn}
Is it true that $\dgh(\Sp^1,\Sp^n)=\frac{\pi}{3}$ for $n\in\{4,5\}$?
\end{question}

Theorem \ref{thm:sn-sm-lb-DS} and Propositions \ref{prop:ub} and \ref{prop:ub23} lead to formulating the following conjecture: 

\begin{conjecture}\label{conj:eq}
For all $m\in\N$, $\dgh(\Sp^m,\Sp^{m+1})=\frac{1}{2}\zeta_m$.
\end{conjecture}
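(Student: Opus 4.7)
The lower bound $\dgh(\Sp^m,\Sp^{m+1}) \geq \frac{1}{2}\zeta_m$ is already supplied by Theorem \ref{thm:sn-sm-lb-DS}, so the conjecture reduces to producing, for each $m \geq 1$, a correspondence $R \subseteq \Sp^m \times \Sp^{m+1}$ with $\dis(R) \leq \zeta_m$. The plan is to generalize the hemisphere-plus-odd-extension paradigm used for $m = 1$ (Proposition \ref{prop:ub}) and $m = 2$ (Proposition \ref{prop:ub23}). By Lemma \ref{lemma:distortion}, it suffices to build a correspondence $R' \subseteq \Sp^m \times \mathbf{H}_{\geq 0}(\Sp^{m+1})$ with $\dis(R') \leq \zeta_m$, since its odd antipodal extension preserves the distortion bound and yields the desired $R$ on the full sphere.

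To construct such an $R'$, I would fix the vertices $V = \{v_0,\ldots,v_{m+1}\} \subseteq \Sp^m$ of a regular $(m+1)$-simplex (so $d_{\Sp^m}(v_i,v_j) = \zeta_m$ for $i \neq j$) together with the equatorial isometric embedding $g \colon \Sp^m \hookrightarrow \Sp^{m+1}$, $x \mapsto (x,0)$. Then set $R' := \mathrm{graph}(g) \cup \{(v_{i(y)},y) : y \in \mathbf{H}_{\geq 0}(\Sp^{m+1})\}$, where $y \mapsto i(y)$ is induced by a partition $\mathbf{H}_{\geq 0}(\Sp^{m+1}) = P_0 \sqcup \cdots \sqcup P_{m+1}$ to be designed. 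Both projections are then automatically surjective.

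The distortion analysis splits into three cases. Pairs inside $\mathrm{graph}(g)$ contribute zero. Pairs $(v_i,y),(v_j,y')$ with $y \in P_i$, $y' \in P_j$ contribute $\bigl|d_{\Sp^m}(v_i,v_j) - d_{\Sp^{m+1}}(y,y')\bigr|$, which is at most $\zeta_m$ provided $\diam(P_i) \leq \zeta_m$ for every $i$ (the case $i \neq j$ is automatic, since $2\zeta_m > \pi$ for $m \geq 1$). Mixed pairs $(x, g(x)),(v_i,y)$ with $y \in P_i$ force the uniform bound $\bigl|d_{\Sp^m}(x,v_i) - d_{\Sp^{m+1}}(g(x),y)\bigr| \leq \zeta_m$ in $x \in \Sp^m$ and $y \in P_i$. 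The partition must thus be engineered to meet both the diameter bound and this mixed-pair estimate, and to be compatible with the antipodal extension across the equator.

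The main obstacle is designing such a partition for general $m$. The naive radial Voronoi decomposition of $\mathbf{H}_{\geq 0}(\Sp^{m+1})$ with respect to $g(V)$ has the right symmetry but its cells exceed diameter $\zeta_m$ near the north pole and must be refined. A more promising route is a recursive construction exploiting the suspension structure $\Sp^{m+1} \cong \Sigma \Sp^m$, refining a partition inherited from the $(m-1,m)$ case layer-by-layer in the height coordinate, and possibly guided by a higher-dimensional analogue of the 32-cell partition of $\Sp^3$ used in Proposition \ref{prop:ub23}. Making the diameter bound exactly $\zeta_m$ at every stage while simultaneously controlling the mixed-pair term is the crux of the difficulty, and is presumably the reason Conjecture \ref{conj:eq} remains open.
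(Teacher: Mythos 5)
You have not proved this statement, and neither does the paper: it is stated as Conjecture \ref{conj:eq} precisely because no one has produced the required correspondence for general $m$. Your reduction is the same one the paper is built on --- the lower bound $\frac{1}{2}\zeta_m$ comes from Theorem \ref{thm:sn-sm-lb-DS}, so everything hinges on exhibiting a correspondence between $\Sp^m$ and $\Sp^{m+1}$ (equivalently, via the odd-extension trick, between $\Sp^m$ and a ``half'' of $\Sp^{m+1}$) of distortion at most $\zeta_m$. But the step you defer, ``the partition must be engineered to meet both the diameter bound and the mixed-pair estimate,'' is the entire content of the conjecture, and you acknowledge as much in your last sentence. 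The paper's evidence shows why this is genuinely hard: the natural Voronoi-cone partition into $m+2$ cells (your ``radial Voronoi decomposition,'' which the paper carries out in Proposition \ref{prop:ub-n-m}) has cells of diameter exactly $\eta_m$, and by Remark \ref{rem:eta-distinct} the resulting map has distortion exactly $\eta_m>\zeta_m$ for all $m\geq 2$; already for $m=2$ the paper needed the bespoke $32$-region partition of Proposition \ref{prop:ub23}, whose generalization to higher $m$ is not known. So your proposal establishes only what is already known ($m=1,2$ plus the reduction), not the conjecture.

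Two technical points in the sketch also need care if you pursue it. First, your mixed-pair case forces more than a diameter bound: since $\bigl|d_{\Sp^m}(x,v_i)-d_{\Sp^{m+1}}(g(x),y)\bigr|$ can only be controlled via $d_{\Sp^{m+1}}(g(v_i),y)$, you need every point of $P_i$ to lie within distance $\zeta_m$ of the embedded vertex $g(v_i)$ (a containment condition relative to a fixed center), in addition to $\diam(P_i)\leq\zeta_m$; the paper's $\Sp^2$--$\Sp^3$ construction satisfies analogous pairwise constraints only after a careful case analysis (Lemma \ref{lemma:S3S2techineq}), and it is unclear these constraints are simultaneously satisfiable in general. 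Second, Lemma \ref{lemma:distortion} applies to a map defined on a set $C$ with $C\cap(-C)=\emptyset$, so the odd extension must be performed from the ``helmet'' $\seta(\Sp^{m+1})$ rather than from the closed hemisphere $\mathbf{H}_{\geq 0}(\Sp^{m+1})$, whose equator contains antipodal pairs; this is easily repaired (define the map to be the identity on the helmet's equatorial part, as the paper does), but as written your extension step is not licensed by the lemma.
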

Note that when $m=1$ and $m=2$, Conjecture \ref{conj:eq} reduces to Propositions \ref{prop:ub} and \ref{prop:ub23}, respectively. Moreover, the conjecture would imply that $\lim_{m\rightarrow\infty}\dgh(\Sp^m,\Sp^{m+1})=\frac{\pi}{4}$.

While trying to prove Conjecture \ref{conj:eq}, we were able to prove the following weaker result via an explicit construction of a certain correspondence  generalizing the one constructed in the proof of Proposition \ref{prop:ub}:

\begin{proposition}\label{prop:ub-n-m}
For any positive integer $m>0$, there exists a correspondence between $\Sp^m$ and $\Sp^{m+1}$ with distortion at most $\eta_m$, where 
\begin{equation}\label{eq:etam}
\eta_m:=\begin{cases}\arccos\left(-\frac{m+1}{m+3}\right)&\text{for }m\text{ odd}\\\arccos\left(-\sqrt{\frac{m}{m+4}}\right)&\text{for }m\text{ even}.\end{cases}\end{equation}
Here, $\eta_m$ is the diameter of a face of the regular geodesic  $m$-simplex in $\Sp^m$; see Figure \ref{fig:eta-m} and the discussion in \S\ref{sec:proof-gen-ub}.
\end{proposition}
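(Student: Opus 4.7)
The plan is to construct a correspondence achieving the bound $\eta_m$ explicitly, generalizing the construction from the proof of Proposition \ref{prop:ub}. The overarching strategy is the odd-extension technique: I would first build a correspondence $R \subseteq \Sp^m \times \mathbf{H}_{\geq 0}(\Sp^{m+1})$ of distortion at most $\eta_m$, and then invoke Lemma \ref{lemma:distortion} to extend $R$ to a correspondence between $\Sp^m$ and all of $\Sp^{m+1}$ with the same distortion.

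To construct $R$, I would inscribe a regular $(m+1)$-simplex $\Delta \subset \Sp^{m+1}$ with its $m+2$ vertices arranged so that one vertex $q_0$ sits at the north pole of $\Sp^{m+1}$ and the remaining $m+1$ vertices $q_1, \ldots, q_{m+1}$ lie on a common latitude $-1/(m+2)$ below the equator. The radial projections $p_1, \ldots, p_{m+1}$ of these non-polar vertices onto the equator $\Sp^m \subset \Sp^{m+1}$ form a symmetric configuration with pairwise inner product $-1/(m+1)$, hence pairwise geodesic distance $\zeta_m$. I would then partition the hemisphere $\mathbf{H}_{\geq 0}(\Sp^{m+1})$ into $m+1$ cells $A_1, \ldots, A_{m+1}$ by vertical lifting of the Voronoi decomposition of $\Sp^m$ induced by $\{p_1, \ldots, p_{m+1}\}$, and define $R$ by pairing each cell $A_i$ with a ``complementary face'' region $F_i \subset \Sp^m$ built from the projected vertices $\{p_j\}_{j \neq i}$. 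The precise pairing scheme differs based on the parity of $m$: for odd $m$, the $m+1$ projected vertices admit a perfect matching into $(m+1)/2$ disjoint pairs, yielding a symmetric pairing, whereas for even $m$ one vertex is necessarily unmatched, leading to a slightly asymmetric $F_i$.

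For the distortion estimate, a case analysis over pairs $(x,y), (x',y') \in R$ (based on whether $y,y'$ lie in the same cell, in adjacent cells, or in non-adjacent cells) reduces the bound to geodesic distances on $\Sp^m$ between normalized midpoints of subsets of $\{p_1, \ldots, p_{m+1}\}$. The extremal configuration realizing the maximum distortion corresponds to the distance between normalized midpoints of two nearly-complementary subsets of the projected vertices: for odd $m$, the optimum is the split into two equal halves of size $(m+1)/2$, which by direct spherical trigonometry evaluates to $\arccos\!\left(-\tfrac{m+1}{m+3}\right)$; for even $m$, it is the split into sizes $m/2$ and $m/2+1$, which evaluates to $\arccos\!\left(-\sqrt{m/(m+4)}\right)$, matching (\ref{eq:etam}). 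Finally, by design, the restriction of $R$ to the equator $\Sp^m \subset \Sp^{m+1}$ is compatible with the antipodal action, so that Lemma \ref{lemma:distortion} produces the desired correspondence with $\Sp^{m+1}$ preserving the distortion $\eta_m$.

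The main obstacle is the distortion verification in the middle step: one must design the pairing subsets $F_i$ so that \emph{every} pair of points across $R$ has distance difference at most $\eta_m$, and simultaneously ensure compatibility with the antipodal action on the equator. The lengthy case analysis and the parity-split in the formula reflect the geometric asymmetry inherent in arranging $m+1$ symmetric vertices into two nearly-equal groups, and verifying that no configuration outside the claimed extremum exceeds $\eta_m$ is the principal technical challenge.
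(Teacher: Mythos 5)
There is a genuine gap, and it is in the very first step of your construction rather than in the case analysis. Your Voronoi sites on the equator are only the $m+1$ points $p_1,\dots,p_{m+1}$ with pairwise inner product $-\frac{1}{m+1}$, i.e.\ all but one vertex of a regular $(m+1)$-simplex inscribed in $\Sp^m$. The omitted vertex $p_0:=-\frac{\sum_i p_i}{\Vert\sum_i p_i\Vert}$ and its antipode $-p_0$ are \emph{both} equidistant from all $m+1$ sites (at distances $\zeta_m$ and $\pi-\zeta_m$ respectively), so every Voronoi cell of this site set has both $p_0$ and $-p_0$ in its closure; consequently each cell, and hence each vertically lifted cell $A_i\subset\mathbf{H}_{\geq 0}(\Sp^{m+1})$, contains pairs of points at distance arbitrarily close to $\pi$. (For $m=1$ this is transparent: two sites at distance $\tfrac{2\pi}{3}$ split $\Sp^1$ into two half-circles.) But any correspondence containing a block $F_i\times A_i$ with $F_i\neq\emptyset$ satisfies $\dis(R)\geq\diam(A_i)$: take $x=x'\in F_i$ and $y,y'\in A_i$ far apart. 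Hence your $R$ has distortion at least $\pi>\eta_m$ for every $m$, no matter how cleverly the regions $F_i$ are chosen, and the subsequent extremal analysis never gets off the ground. A secondary, fixable issue is that pairing each cell with a whole ``complementary face'' region (rather than a single point) and the unspecified treatment of the equator make the compatibility with the odd-extension Lemma \ref{lemma:distortion} (which needs a domain $C$ with $C\cap(-C)=\emptyset$, the ``helmet'') only a gesture.

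The paper's construction differs at exactly this point. It takes as sites \emph{all} $m+2$ vertices $u_1,\dots,u_{m+2}$ of a regular $(m+1)$-simplex inscribed in the equator $\sete(\Sp^{m+1})$; then each Voronoi cell on $\Sp^m$ equals $\mathrm{Conv}_{\Sp^m}(\{-u_j:j\neq i\})$, a face of the regular geodesic simplex whose diameter is exactly $\eta_m$ by Santal\'o's lemma (Remarks \ref{rmk:diamoneface} and \ref{rem:Voronoi-cvx}); the cone over such a cell keeps diameter $\eta_m$ (Lemma \ref{lemma:diamofcone}, since $\eta_m\geq\frac{\pi}{2}$); each cone is collapsed to its single site $u_i$ while the equator is kept pointwise fixed; and only then is the map extended antipodally from the helmet via Lemma \ref{lemma:distortion}, exactly as in Proposition \ref{prop:ub}. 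Your computation identifying $\eta_m$ as the distance between normalized sums of two near-equal halves of $\{p_1,\dots,p_{m+1}\}$ is correct---it is precisely Santal\'o's description of where the face diameter is attained, which explains the parity split in (\ref{eq:etam})---but in a correct proof this quantity enters as the \emph{diameter of each cell}, not as an extremal cross-cell configuration; collapsing cells to points (so that same-cell pairs cost $\leq\eta_m$, distinct-cell pairs cost $\leq\zeta_m\leq\eta_m$, and cell-to-equator pairs cost $\leq d_{\Sp^{m+1}}(p,u_i)\leq\eta_m$) is what makes the distortion estimate close.
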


This correspondence arises from a partition of $\Sp^{m+1}$ into $2(m+2)$ regions which are induced by two antipodal regular simplices inscribed in $\Sp^m$, the equator of $\Sp^{m+1}$ (see Figure \ref{fig:phi21} for the case $m=1$, a case in which this correspondence turns out to be optimal). 

\begin{corollary}\label{coro:general-m}
For any positive integer $m>0$, $\dgh(\Sp^m,\Sp^{m+1}) \leq\frac{1}{2}\eta_m.$
\end{corollary}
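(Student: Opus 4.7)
The plan is extremely short: this corollary is essentially a one-line consequence of Proposition \ref{prop:ub-n-m} combined with the correspondence-based characterization of $\dgh$ given in equation \eqref{eqn:dghaltdef}.

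More precisely, I would first invoke Proposition \ref{prop:ub-n-m} to obtain, for each positive integer $m$, an explicit correspondence $R \subseteq \Sp^m \times \Sp^{m+1}$ with $\dis(R) \leq \eta_m$. Then I would appeal to the Kalton--Ostrovskii reformulation
$$\dgh(\Sp^m,\Sp^{m+1}) = \frac{1}{2}\inf_{R'} \dis(R'),$$
where $R'$ ranges over all correspondences between $\Sp^m$ and $\Sp^{m+1}$. Since the particular correspondence $R$ produced in Proposition \ref{prop:ub-n-m} is one admissible choice, the infimum is bounded above by $\dis(R) \leq \eta_m$, which upon dividing by $2$ yields $\dgh(\Sp^m,\Sp^{m+1}) \leq \frac{1}{2}\eta_m$.

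There is no genuine obstacle here: all the real work is performed in Proposition \ref{prop:ub-n-m} itself (the construction of the correspondence via the partition of $\Sp^{m+1}$ into $2(m+2)$ pieces induced by two antipodal regular geodesic simplices and the equator). The corollary is purely a repackaging of that proposition in the language of Gromov--Hausdorff distance, and its proof should just consist of citing Proposition \ref{prop:ub-n-m} and \eqref{eqn:dghaltdef}.
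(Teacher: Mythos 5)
Your proposal is correct and coincides with the paper's treatment: the corollary is obtained exactly by taking the surjection/correspondence of distortion at most $\eta_m$ from Proposition \ref{prop:ub-n-m} and applying the characterization of $\dgh$ via correspondences in \eqref{eqn:dghaltdef} (indeed, the paper's proof of that proposition already concludes with this very step). No gap; nothing further is needed.
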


\begin{remark}
Note that $\eta_m\geq\zeta_m$ for any $m>0$ and the equality holds for $m=1$, namely: $\eta_1 = \zeta_1$, so Proposition \ref{prop:ub-n-m} generalizes Proposition \ref{prop:ub}. However, by Proposition \ref{prop:ub23} we see that, since $1.9106\approx \zeta_2 < \eta_2 \approx    2.1863$, Corollary \ref{coro:general-m} is not tight when $m=2$.  Also, since $\eta_m < \pi$, Corollary \ref{coro:general-m} gives a quantitative version of the claim in Theorem \ref{thm:sn-sm-ub} when $n=m+1$. 
\end{remark}

\begin{remark}
Note that by combining Theorem \ref{thm:sn-sm-lb-DS} and Proposition \ref{piS2-nm} we obtain a generalization of the bound given in Corollary \ref{coro:s1-sn}: for all $1\leq m < n$,
\begin{equation}\label{eq:qmn}
    \dgh(\Sp^m,\Sp^n)\geq \max\left\{\frac{\zeta_m}{2},\frac{\pi}{2}-\mathrm{cov}_{\Sp^m}(n+1)\right\} =: q_{m,n}.\end{equation}
\end{remark}

\begin{question}
Formula (\ref{eq:qmn}) and Remark \ref{rem:s1-sn} motivate the following question: For $m\geq 1$ large, find the \emph{rate} at which the number\footnote{Note that $\zeta_m = \pi - \arccos\big(\frac{1}{m+1}\big)$.}
$$n_\mathrm{diag}(m):=\max\left\{n>m\bigg|\,\mathrm{cov}_{\Sp^m}(n+1)\geq \frac{1}{2}\arccos\left(\frac{1}{m+1}\right)\,\right\}$$
grows with $m$. The reason for the notation $n_\mathrm{diag}(m)$ is that this number provides an estimate for a band around the principal diagonal of the matrix $\mathfrak{g}$ (see Figure \ref{fig:g}) inside of which one would hope to prove that $$\dgh(\Sp^m,\Sp^n) = \frac{\zeta_m}{2} \,\,\,\mbox{for all $n\in\{m+1,\ldots,n_\mathrm{diag}(m)$}\}.$$
\end{question}

\subsection{Additional results and questions.}
Besides what we have described so far, the paper includes a number of
other results about Gromov-Hausdorff distances between spaces closely related to spheres.

\subsubsection{Spheres with Euclidean distance} \label{sec:euclidean} Some of the ideas described above (for spheres with geodesic distance) can be easily adapted to provide bounds for the distance between half spheres with geodesic distance, and between spheres with Euclidean distance. However, there is evidence that this phenomenon is subtle and to the best of our knowledge, there is no complete translation between the geodesic and Euclidean cases. This is exemplified by the following.

Let $\Sp^n_{\mathrm{E}}$ denote the unit sphere with the Euclidean metric $d_{\mathrm{E}}$ inherited from $\R^{n+1}$. Then, via Remark \ref{rem:compareS1HS^2} and item (2) of Corollary \ref{cor:EucGeo} (which provides a bridge between geodesic distortion and Euclidean distortion via the $\sin(\cdot)$ function) we have that $$\dgh\big(\Sp^1_\mathrm{E},\mathbf{H}_{\geq 0}(\Sp^2_\mathrm{E})\big)\leq \sin\big(\dgh(\Sp^1,\mathbf{H}_{\geq 0}(\Sp^2)\big) = \frac{\sqrt{3}}{2}.$$ Despite this, in Proposition \ref{prop:dgh-s12-s2pe} we were able to construct a correspondence between these two spaces with distortion \emph{strictly smaller} than $\sqrt{3}$. This suggests that Euclidean analogues of Theorem \ref{thm:sn-sm-lb-DS} may not be direct consequences; see  \S\ref{sec:dgh-eucl} for other related results.

This motivates posing the following question:  

\begin{question}
Determine $\mathfrak{g}^\mathrm{E}_{m,n}:=\dgh(\Sp^m_{\mathrm{E}},\Sp^n_{\mathrm{E}})$ for all integers $1\leq m<n$.
\end{question}

It should however be noted that by  Corollary \ref{cor:EucGeo} we have 
$\mathfrak{g}_{m,n}^\mathrm{E}\leq \sin\big(\mathfrak{g}_{m,n}\big)$, which renders Proposition \ref{prop:ub-n-m} immediately applicable, yielding $\mathfrak{g}_{m,m+1}^\mathrm{E}\leq \sin\left(\frac{\eta_m}{2}\right)$.

\subsubsection{A stronger version of Theorems \ref{thm:sn-sm-lb-DS}}

By inspecting the proof of Theorems \ref{thm:sn-sm-lb-DS}, we actually have Theorem \ref{thm:stronggeneralization} which subsumes these results in a much greater degree of generality. Indeed, via this theorem one can obtain the following estimates:

\begin{example}\label{example:rstofstrongthm}
The following lower bounds hold:
\begin{enumerate}

    \item $\dgh([0,\pi],\Sp^n)\geq \frac{\pi}{3}$ for any $n\geq 2$.

    \item $\dgh(\Sp^1, \Sp^2\times \cdots \times \Sp^2 )\geq \frac{\pi}{3}$ for any number of $\Sp^2$ factors.

    \item $\dgh(\Sp^m,\mathbf{H}_{\geq 0}(\Sp^n))\geq \frac{1}{2}\zeta_m$ whenever $0<m<n<\infty$.
    
    \item $\dgh\big(\mathbf{H}_{\geq 0}(\Sp^m),\mathbf{H}_{\geq 0}(\Sp^n)\big)\geq\frac{1}{2}\zeta_m$ whenever $0<m<n<\infty$.

    \item $\dgh(P,\Sp^2)\geq \frac{\pi}{3}$ for any finite $P\subset \Sp^1$. Compare to the $\frac{\pi}{2}$ lower bound given in Lemma \ref{lemma:ls}.

    \item $\dgh(P_3,\seth_{\geq0}(\Sp^2))=\frac{\pi}{3}$ where $P_3$ is the 3 point metric space with all interpoint distances equal to  $\frac{2\pi}{3}$. Also $\dgh(P_6,\Sp^2)=\frac{\pi}{3}$, where $P_6$ is the six point metric space corresponding to a regular hexagon inscribed in $\Sp^1$. These are consequences of item (5) and  small modifications of the correspondences constructed in Proposition \ref{prop:ub}.
    
\end{enumerate}
\end{example}

\begin{theorem}\label{thm:stronggeneralization}
Let bounded metric spaces $X$ and $Y$ be such that for some positive integer $m$: (i)  $X$ can be isometrically embedded into $\Sp^m$ and (ii) $\seth_{\geq 0}(\Sp^{m+1})$ can be isometrically embedded into $Y$.
Then, 

\begin{enumerate}
    \item $\dgh(X,Y)\geq\frac{1}{2}\zeta_m$. 
    
    \item Moreover,  $\dis(\phi)\geq\zeta_m$ for any map $\phi:Y\longrightarrow X$.
\end{enumerate}
\end{theorem}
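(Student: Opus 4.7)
The plan is to deduce Theorem \ref{thm:stronggeneralization} from the ``moreover'' clause of Theorem \ref{thm:sn-sm-lb-DS} via an antipodal extension trick. First, I observe that statement (1) follows from statement (2) via the correspondence characterization (\ref{eqn:dghaltdef}): given any correspondence $R\subseteq X\times Y$, the axiom of choice supplies a map $\phi:Y\to X$ with $(\phi(y),y)\in R$ for every $y\in Y$, so $\dis(\phi)\leq \dis(R)$; granting statement (2), every correspondence then satisfies $\dis(R)\geq \zeta_m$, whence $\dgh(X,Y)\geq \tfrac{1}{2}\zeta_m$.

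To prove statement (2), let $\phi:Y\to X$ be arbitrary, and write the hypothesized isometric embeddings as $j:X\hookrightarrow \Sp^m$ and $\iota:\seth_{\geq 0}(\Sp^{m+1})\hookrightarrow Y$, where the hemisphere carries the restriction of the round metric. Form $\psi:=j\circ\phi\circ\iota:\seth_{\geq 0}(\Sp^{m+1})\to \Sp^m$; the isometry of $j$ and $\iota$ yields $\dis(\psi)\leq \dis(\phi)$ immediately. Next, extend $\psi$ antipodally to all of $\Sp^{m+1}$ by setting
$$\tilde\psi(y):=\begin{cases} \psi(y), & y\in \seth_{\geq 0}(\Sp^{m+1}),\\ -\psi(-y), & y\in \seth_{<0}(\Sp^{m+1}).\end{cases}$$

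The key technical step is to show that $\dis(\tilde\psi)\leq \dis(\psi)$ by a three-case analysis on whether a pair $y_1,y_2\in\Sp^{m+1}$ lies both in the closed upper hemisphere, both in the open lower hemisphere, or one in each. The two same-hemisphere cases reduce to $\dis(\psi)$ at once via the antipodal isometry $y\mapsto -y$ on $\Sp^{m+1}$. The mixed case uses the identity $d(p,-q)=\pi-d(p,q)$, valid on any round sphere, applied in both $\Sp^{m+1}$ and $\Sp^m$; the two $\pi$ contributions cancel, reducing the bound to $|d_{\Sp^{m+1}}(y_1,-y_2)-d_{\Sp^m}(\psi(y_1),\psi(-y_2))|\leq \dis(\psi)$, which holds because $y_1$ and $-y_2$ both lie in $\seth_{\geq 0}(\Sp^{m+1})$, where $\psi$ is defined.

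Applying the second assertion of Theorem \ref{thm:sn-sm-lb-DS} to the map $\tilde\psi:\Sp^{m+1}\to \Sp^m$ then gives $\dis(\tilde\psi)\geq \zeta_m$, and chaining inequalities produces $\dis(\phi)\geq \dis(\psi)\geq \dis(\tilde\psi)\geq \zeta_m$, as required. The main obstacle is precisely the mixed-hemisphere case of the distortion check for $\tilde\psi$; everything else is a routine unwinding of definitions, and the argument reveals that the role of $\Sp^n$ in Theorem \ref{thm:sn-sm-lb-DS} can be played by any bounded metric space merely containing an isometric copy of the closed hemisphere $\seth_{\geq 0}(\Sp^{m+1})$.
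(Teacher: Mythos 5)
Your proof is correct, but it follows a genuinely different route from the paper's. The paper proves the theorem by re-running the machinery behind Theorem \ref{thm:sn-sm-lb-DS}: from a map $\tilde g:Y\to X$ of distortion $<\zeta_m$ it manufactures a map $g:\seta(\Sp^{m+1})\to\Sp^m$ (using only the helmet $\seta(\Sp^{m+1})\subset\seth_{\geq 0}(\Sp^{m+1})$, so the paper in fact proves a marginally stronger statement than hypothesis (ii) requires), extends it via Lemma \ref{lemma:distortion} to a genuinely antipode preserving map $\widehat g:\Sp^{m+1}\to\Sp^m$ of the same distortion, and contradicts Corollary \ref{cor:strBU2}. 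You instead treat the second claim of Theorem \ref{thm:sn-sm-lb-DS} as a black box and extend $\psi$ across the partition $\seth_{\geq 0}(\Sp^{m+1})\cup\seth_{<0}(\Sp^{m+1})$. Note that your $\tilde\psi$ need not be antipode preserving on the equator (for equatorial $y$ both $y$ and $-y$ lie in the domain of $\psi$, so $\tilde\psi(-y)=\psi(-y)$ need not equal $-\psi(y)$), so Corollary \ref{cor:strBU2} could not be applied to it directly; but you never need antipodality, since Theorem \ref{thm:sn-sm-lb-DS}(2) bounds the distortion of \emph{arbitrary} maps $\Sp^{m+1}\to\Sp^m$, and your three-case verification of $\dis(\tilde\psi)\leq\dis(\psi)$ is sound (it is the same $d(p,-q)=\pi-d(p,q)$ computation as in Lemma \ref{lemma:distortion}; the hypothesis $C\cap(-C)=\emptyset$ there is needed for well-definedness and antipodality, which your disjoint decomposition into the closed upper and open lower hemispheres replaces). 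There is also no circularity, as Theorem \ref{thm:sn-sm-lb-DS} is established earlier and independently of the present statement. What the paper's route buys is the slightly sharper hypothesis (only the helmet need embed in $Y$); what yours buys is economy, reducing the generalization to the already-proved theorem with one short direct distortion estimate, and your (1)-from-(2) reduction via choosing $\phi$ inside a correspondence coincides with the paper's.
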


\begin{remark}
Item (1) in Example \ref{example:rstofstrongthm} also holds for $n=1$ albeit this is not implied by Theorem \ref{thm:stronggeneralization}. The fact that $\dgh([0,\pi],\Sp^1)\geq \frac{\pi}{3}$  follows from \cite[Theorem 4.10]{ji2021gromov} and it also follows from the proof of \cite[Lemma 2.3]{katz2020torus}; see Appendix \ref{app:katz-lb}. 
\end{remark}

\subsection{Organization}
In \S\ref{sec:preliminaries} we review some preliminaries.

The proof of Proposition \ref{prop:colding} on a lower bound for $\mathfrak{g}_{m,n}$ involving the normalized volume of open balls is given in \S\ref{sec:colding}, whereas those of Propositions \ref{prop:s0-sd} (establishing the precise value of $\mathfrak{g}_{0,n}$), \ref{prop:sinfty-sd} (establishing the precise value of $\mathfrak{g}_{m,\infty}$), and \ref{piS2-nm} (on a lower bound for $\mathfrak{g}_{m,n}$ involving the covering radius) are given in \S\ref{sec:other-lbs}.

The proof of Theorem \ref{thm:sn-sm-ub} establishing that $\mathfrak{g}_{m,n}<\frac{\pi}{2}$ (for any $0<m<n<\infty$) is given in \S\ref{sec:proof-univ-ub} whereas that of Theorem \ref{thm:sn-sm-lb-DS} on a lower bound for $\mathfrak{g}_{m,n}$ deduced from a discontinuous version of the Borsuk-Ulam theorem and Theorem \ref{thm:stronggeneralization} (a generalization of Theorem \ref{thm:sn-sm-lb-DS}) are given in \S\ref{sec:lb}.

The proofs of Propositions \ref{prop:ub} establishing the precise value of $\mathfrak{g}_{1,2}$ and  \ref{prop:ub-n-m} on an upper bound involving the diameter of a face of a geodesic simplex are given in \S\ref{sec:proofs-ub}.

Proposition \ref{prop:ub13} establishing the precise value of $\mathfrak{g}_{1,3}$ is proved in \S\ref{sec:S1S3uppbdd} whereas Proposition \ref{prop:ub23} establishing the precise value of $\mathfrak{g}_{2,3}$ is proved in \S\ref{sec:proof-prop-ub23}.

The case of spheres with Euclidean distance  is discussed  in \S\ref{sec:dgh-eucl}.

Finally, this paper has four appendices. Appendix \S\ref{app:proof-BU} provides a succinct and self contained proof of the version of Borsuk-Ulam's theorem due to Dubins and Schwarz \cite{dubins1981equidiscontinuity} which is instrumental for proving Theorem  \ref{thm:sn-sm-lb-DS} and related results. Appendix \S\ref{app:katz-lb} establishes that the Gromov-Hausdorff distance between the $n$-dimensional sphere and an interval is always bounded below by $\frac{\pi}{3}$ whereas Appendix \S\ref{sec:polys} provides some results about the Gromov-Hausdorff distance between regular polygons. Finally, Appendix \S\ref{sec:alternative-method} constructs an alternative optimal correspondence between $\Sp^1$ and $\Sp^2$.

Computational experiments of this project are described in \cite{dgh-spheres-github}.

\subsection{Acknowledgements}
We are grateful to Henry Adams for helpful conversations related to this work. We thank 
Gunnar Carlsson and Tigran Ishkhanov for encouraging F.M. to tackle the question about the Gromov-Hausdorff distance between spheres via topological methods.
This work was supported by NSF grants DMS-1547357, CCF-1526513, IIS-1422400, and  CCF-1740761.

\section{Preliminaries}\label{sec:preliminaries}

Given a metric space $(X,d_X)$ and $\delta>0$, a $\delta$-net for $X$ is any $A\subset X$ such that for all $x\in X$ there exists $a\in A$ with $d_X(x,a)\leq \delta$. The diameter of $X$ is $\diam(X):=\sup_{x,x'\in X} d_X(x,x').$

Recall \cite[Chapter 2]{burago-book} that complete metric space $(X,d_X)$ is a \emph{geodesic space} if and only if it admits midpoints: for all $x,x'\in X$ there exists $z\in X$ such that $$d_X(x,z)=d_X(x',z)=\frac{1}{2}d_X(x,x').$$

We henceforth use the symbol $\ast$ to denote the one point metric space. It is easy to check that $\dgh(\ast,X)=\frac{1}{2}\diam(X)$ for any bounded metric space $X$. From this, and the triangle inequality for the Gromov-Hausdorff distance, it then follows that for all bounded metric spaces $X$ and $Y$,
\begin{equation}\label{eq:easy}
\dgh(X,Y)\geq \frac{1}{2}\big|\diam(X)-\diam(Y)\big|.
\end{equation}

\subsection{Notation and conventions about spheres.} \label{sec:conventions}
Finally, let us collect and introduce  important notation and conventions which will be used throughout this paper (except for \S\ref{sec:S1S3uppbdd}). For each nonnegative integer $m\in\N$,

\begin{itemize}
    \item $\Sp^m:=\{(x_1,\dots,x_{m+1})\in\R^{m+1}:x_1^2+\dots+x_{m+1}^2=1\}$ ($m$-sphere).
    \item $\mathbf{H}_{\geq 0}(\Sp^m):=\{(x_1,\dots,x_{m+1})\in \Sp^m: x_{m+1}\geq 0\}$ (closed upper hemisphere).
    \item $\mathbf{H}_{>0}(\Sp^m):=\{(x_1,\dots,x_{m+1})\in \Sp^m: x_{m+1}> 0\}$ (open upper hemisphere).
    \item $\mathbf{H}_{\leq 0}(\Sp^m):=\{(x_1,\dots,x_{m+1})\in \Sp^m: x_{m+1}\leq 0\}$ (closed lower hemisphere).
    \item $\mathbf{H}_{<0}(\Sp^m):=\{(x_1,\dots,x_{m+1})\in \Sp^m: x_{m+1}< 0\}$ (open lower hemisphere).
    \item $\sete(\Sp^m):=\{(x_1,\dots,x_{m+1})\in \Sp^m: x_{m+1}=0\}$ (equator of sphere).
    \item $\mathbb{B}^{m+1}:=\{(x_1,\dots,x_{m+1})\in\R^{m+1} : x_1^2+\dots+x_{m+1}^2\leq1\}$ (unit closed ball).
    \item $\widehat{\mathbb{B}}^{m+1}:=\{(x_1,\dots,x_{m+1})\in\R^{m+1} : \vert x_1 \vert+\dots+\vert x_{m+1} \vert\leq1\}$ (unit cross-polytope).
\end{itemize}

Also, $\Sp^m,\mathbf{H}_{\geq 0}(\Sp^m),\mathbf{H}_{> 0}(\Sp^m),\mathbf{H}_{\leq 0}(\Sp^m),\mathbf{H}_{<0}(\Sp^m)$ and $\sete(\Sp^m)$ are all equipped with the geodesic metric $d_{\Sp^m}$. Observe that $\Sp^m$ and $\sete(\Sp^{m+1})$ are isometric. We will denote by
\begin{align}
    \iota_m:\Sp^m&\longrightarrow\Sp^{m+1}\label{eq:iota-m}\\
    (x_1,\dots,x_{m+1})&\longmapsto (x_1,\dots,x_{m+1},0)\nonumber
\end{align}
 the canonical isometric embedding from $\Sp^m$ into $\Sp^{m+1}$.
 
\subsection{A general construction of  correspondences}\label{sec:gen-corresp}
Assume $X$ and $Y$ are compact metric spaces such that $X\stackrel{\phi}{\hookrightarrow} Y$ isometrically, e.g. $\Sp^m\hookrightarrow \Sp^n$ for $m\leq n$.

As mentioned in Remark \ref{rem:dgh-dis} any surjection $\psi:Y\twoheadrightarrow X$ gives rise to a correspondence between $X$ and $Y$. The following simple construction of such a $\psi$ will be used throughout this paper. Given $k\in\N$, assume  $P_k=\{B_1,\ldots,B_i,\ldots,B_k\}$ is any partition of $Y\backslash \phi(X)$ and 
$\mathbb{X}_k=\{x_1,\ldots,x_i,\ldots,x_k\}$ are any $k$ points in $X$. Then, define $\psi:Y\twoheadrightarrow X$ by 
$\psi|_{\phi(X)} := \phi^{-1}$ and $\psi|_{B_i} := x_i$ for each $1\leq i \leq k$. It then follows that the distortion of this correspondence is:

$$\dis(\psi)=\max\{A,B,C\}$$
where
\begin{itemize}
    \item $A:=\max\limits_i\diam(B_i)$,
    
    \item $B:=\max\limits_{i\neq j}\max\limits_{\substack{y\in B_i\\y'\in B_j}}\big|d_X(x_i,x_j)-d_Y(y,y')\big|$, and
    
    \item $C:=\max\limits_i\max\limits_{\substack{x\in X\\y\in B_i}}\big|d_X(x,x_i) - d_Y(\phi(x),y)\big|$.
\end{itemize}

This pattern will be used several times in this paper.

\section{Some general lower bounds}\label{sec:general}

\subsection{The proof of Proposition \ref{prop:colding}}\label{sec:colding}

For a metric space $X$ and $\rho>0$, let $N_X(\rho)$ denote the minimal number of open balls of radius $\rho$ needed to cover $X$. Also, let $C_X(\rho)$ denote the maximal number of pairwise disjoint open balls of radius $\frac{\rho}{2}$ that can be placed in $X$. $N_X$ and $C_X$ are usually referred to as the \emph{covering number} and the \emph{packing number}, respectively.

Note that the covering radius $\mathrm{cov}_X$ (cf. equation (\ref{eq:cov})) and the covering number $N_X$ are related in the following manner: $$\mathrm{cov}_X(k)=\inf\{\rho>0:N_X(\rho)\leq k\}.$$

The following \emph{stability} property of $N_X(\cdot)$ and $C_X(\cdot)$ is classical and can be used to obtain estimates for the Gromov-Hausdorff distance between spheres:

\begin{proposition}[{\cite[pp. 299]{petersen-book}}]\label{prop:cov-cap}
If $X$ and $Y$ are metric spaces and $\dgh(X,Y)<\eta$ for some $\eta>0$, then  for all $\rho\geq 0,$
\begin{itemize}
    \item[(1)] $N_X(\rho)\geq N_Y(\rho+2\eta)$,  and
    \item[(2)] $C_X(\rho)\geq C_Y(\rho+2\eta).$
\end{itemize}
\end{proposition}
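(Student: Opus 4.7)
The plan is to use the Kalton--Ostrovskii reformulation \eqref{eqn:dghaltdef} to turn the bound $\dgh(X,Y)<\eta$ into a correspondence $R\subseteq X\times Y$ with $\dis(R)<2\eta$, and then to transport an optimal cover of $X$ (respectively, an optimal packing of $Y$) to $Y$ (respectively, $X$) by pushing centers across $R$.

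For item (1), I would start from an optimal cover $\{B_X(x_i,\rho)\}_{i=1}^{k}$ of $X$ with $k=N_X(\rho)$, and select, for each $i$, a partner $y_i\in Y$ with $(x_i,y_i)\in R$. Given any $y\in Y$, the surjectivity $\pi_Y(R)=Y$ yields some $x\in X$ with $(x,y)\in R$; choosing $i$ such that $d_X(x,x_i)<\rho$, the distortion bound forces
$$d_Y(y,y_i)\;<\;d_X(x,x_i)+2\eta\;<\;\rho+2\eta.$$
Thus the $k$ enlarged balls $\{B_Y(y_i,\rho+2\eta)\}$ cover $Y$, which gives $N_Y(\rho+2\eta)\leq N_X(\rho)$.

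Item (2) is dual. Starting from a maximal family of $l=C_Y(\rho+2\eta)$ pairwise disjoint open balls $\{B_Y(y_j,(\rho+2\eta)/2)\}$ in $Y$, I would pick $x_j\in X$ with $(x_j,y_j)\in R$ for each $j$. The key chain of implications runs as follows: pairwise disjointness of the $Y$-balls forces $d_Y(y_j,y_{j'})\geq \rho+2\eta$ for distinct centers, the distortion bound then gives $d_X(x_j,x_{j'})>\rho$, and this in turn implies that the balls $\{B_X(x_j,\rho/2)\}$ are pairwise disjoint in $X$, so $C_X(\rho)\geq l$.

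The only subtle link in that chain is the first one: in full generality, disjointness of open $r$-balls only forces $d(y_j,y_{j'})\geq r$ rather than $\geq 2r$. This is not a genuine obstacle in our setting, since the application is to spheres, which are geodesic and therefore admit midpoints that upgrade the bound to $\geq 2r$; equivalently, one can adopt the standard convention under which $C_X(\rho)$ counts maximal $\rho$-separated subsets, after which the argument goes through verbatim without any length-space hypothesis. Apart from this bookkeeping point, the proof is purely mechanical.
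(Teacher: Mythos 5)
The paper never proves Proposition \ref{prop:cov-cap}; it is quoted as classical with a pointer to \cite[pp.~299]{petersen-book}, so there is no internal argument to compare with, and your correspondence-based proof via (\ref{eqn:dghaltdef}) is the natural self-contained substitute. Part (1) is correct as written. For part (2), the subtle link you flag is more than bookkeeping: with the paper's literal definition of $C_X(\rho)$ (maximal number of pairwise disjoint open $\frac{\rho}{2}$-balls) and arbitrary metric spaces, item (2) is in fact false. For instance, for $\rho>\eta$ let $Y$ be two points at distance $\frac{\rho}{2}+\eta$ and $X$ two points at distance $\frac{\rho}{2}-\frac{\eta}{2}$; then $\dgh(X,Y)=\frac{3\eta}{4}<\eta$, yet $C_Y(\rho+2\eta)=2$ (each open ball of radius $\frac{\rho}{2}+\eta$ is a singleton) while $C_X(\rho)=1$ (each open $\frac{\rho}{2}$-ball is all of $X$). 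So an amendment of exactly the kind you propose is genuinely needed: either take the packed space to be geodesic --- true for the spheres to which the proposition is applied, in which case disjointness of open $r$-balls does force separation at least $2r$ via midpoints --- or define $C_X(\rho)$ through $\rho$-separated subsets (the two readings coincide on geodesic spaces). Under either reading your chain $d_Y(y_j,y_{j'})\geq\rho+2\eta\Rightarrow d_X(x_j,x_{j'})>\rho\Rightarrow$ disjointness of the open $\frac{\rho}{2}$-balls about the $x_j$ is valid, so your proof of (2) closes. The caveat is also harmless downstream: in the proof of Proposition \ref{prop:colding} only part (1) of Proposition \ref{prop:cov-cap} is used (packing enters there only through $N_X(\rho)\leq C_X(\rho)$ and Claim \ref{claim:cap}), and all spaces there are spheres.
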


Recall that $v_n(\rho)$ is the normalized volume of an open ball or radius $\rho$ on $\Sp^n$.

\begin{proof}[Proof of Proposition \ref{prop:colding}]
 The proof that $\dgh(\Sp^m,\Sp^n)\geq\mu_{m,n}:=\frac{1}{2} \sup_{\rho\in(0,\pi]}\left(v_n^{-1}\circ v_m\left(\frac{\rho}{2}\right)-\rho\right)$ for any $0<m<n<\infty$ is by contradiction. We first state two claims that we prove at the end.

\begin{claim}\label{claim:cap}
For any $\rho> 0$ and $n\geq 1$, the packing number $\capa{\rho}{\Sp^n}\leq \big(v_{n}(\frac{\rho}{2})\big)^{-1}$.
\end{claim}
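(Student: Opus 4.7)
The plan is to use a straightforward volume-packing argument, exploiting the fact that the round sphere is homogeneous under its isometry group, so that every open metric ball of a given radius has the same normalized volume.

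First, I would fix $\rho > 0$ and set $k := C_{\Sp^n}(\rho)$. By definition of the packing number, there exist points $x_1,\ldots,x_k \in \Sp^n$ such that the open balls $B_i := B(x_i, \rho/2)$ are pairwise disjoint. Since $\Sp^n$ (with the round metric) is a homogeneous space --- its isometry group $O(n{+}1)$ acts transitively --- each $B_i$ is the isometric image of $B(x_1, \rho/2)$ under a rotation, hence all of the $B_i$ have the same normalized volume, namely $v_n(\rho/2)$.

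Next, I would invoke additivity of the normalized volume measure on the disjoint sets $B_1,\ldots,B_k$ together with the normalization $\mathbf{vol}(\Sp^n) = 1$, obtaining
\[
k \cdot v_n(\rho/2) \;=\; \sum_{i=1}^{k}\mathbf{vol}(B_i) \;\leq\; \mathbf{vol}(\Sp^n) \;=\; 1.
\]
Dividing through by $v_n(\rho/2) > 0$ yields $k \leq \bigl(v_n(\rho/2)\bigr)^{-1}$, which is the claimed inequality.

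There is no real obstacle here: the only ingredients are transitivity of the isometry group of the round sphere (giving equality of the ball volumes) and monotonicity/additivity of volume on disjoint sets. The result then plugs into the proof of Proposition \ref{prop:colding} via Proposition \ref{prop:cov-cap}(2) to relate covering/packing numbers of $\Sp^m$ and $\Sp^n$ and hence extract the lower bound $\mu_{m,n}$ on $\dgh(\Sp^m,\Sp^n)$.
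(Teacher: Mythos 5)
Your argument is correct and is essentially the same as the paper's: pick a maximal family of pairwise disjoint open balls of radius $\rho/2$, note each has normalized volume $v_n(\rho/2)$ (the paper states this directly via the definition of $v_n$ rather than appealing explicitly to homogeneity), and compare the sum of their volumes with the total volume of $\Sp^n$. No meaningful difference in approach.
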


\begin{claim}\label{claim:cov}
For any $\rho> 0$ and $n\geq 1$, the covering number $\cov{\rho}{\Sp^n}$ satisfies $1\leq \cov{\rho}{\Sp^n}\cdot v_{n}(\rho)$.
\end{claim}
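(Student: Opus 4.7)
The proof plan for Claim \ref{claim:cov} is a direct volume-counting argument that exploits the definition of the covering number together with the homogeneity of $\Sp^n$ under the action of $O(n+1)$.

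First, I would unpack the definitions. Let $N:=\cov{\rho}{\Sp^n}$, so by definition there exist centers $x_1,\ldots,x_N\in\Sp^n$ such that the open balls $B(x_i,\rho)$ (with respect to the geodesic metric $d_{\Sp^n}$) cover $\Sp^n$. Write $\mathrm{vol}_n$ for the normalized volume on $\Sp^n$, so that $\mathrm{vol}_n(\Sp^n)=1$. The round sphere is homogeneous: for any two points $x,x'\in\Sp^n$ there is an isometry of $\Sp^n$ (the restriction of a rotation in $O(n+1)$) taking $x$ to $x'$, and the normalized volume is invariant under isometries. Consequently $\mathrm{vol}_n\bigl(B(x_i,\rho)\bigr)$ is independent of the center $x_i$ and equals exactly $v_n(\rho)$ by the very definition of $v_n$.

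Next, I would apply subadditivity of the measure $\mathrm{vol}_n$ to the covering $\Sp^n=\bigcup_{i=1}^N B(x_i,\rho)$:
\[
1 \;=\; \mathrm{vol}_n(\Sp^n) \;\leq\; \sum_{i=1}^N \mathrm{vol}_n\bigl(B(x_i,\rho)\bigr) \;=\; N\cdot v_n(\rho) \;=\; \cov{\rho}{\Sp^n}\cdot v_n(\rho),
\]
which is the desired inequality. There is no genuine obstacle here; the only subtle point is making sure the covering number, which is defined as an infimum over finite coverings, is actually attained by some finite covering whose total volume we can bound. For $\rho>0$ this is immediate from compactness of $\Sp^n$ (any open cover has a finite subcover), and if $\cov{\rho}{\Sp^n}=\infty$ the inequality is trivial.
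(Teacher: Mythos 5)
Your proposal is correct and follows essentially the same argument as the paper: take $N=\cov{\rho}{\Sp^n}$ balls of radius $\rho$ covering $\Sp^n$, apply subadditivity of the (normalized) volume, and use that each ball has volume $v_n(\rho)$ to get $1\leq N\cdot v_n(\rho)$. The extra remarks about homogeneity and attainment of the covering number are fine but not needed beyond what the paper's one-line proof already uses.
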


Assuming the claims above, suppose that $n>m\geq 1$ and $\eta:=\dgh(\Sp^m,\Sp^n)<\mu_{m,n}$. Pick $\varepsilon>0$ small enough such that $\eta+\frac{\varepsilon}{2} <\mu_{m,n}.$

Since $\dgh(\Sp^m,\Sp^n)<\eta+\frac{\varepsilon}{2}$, from Proposition \ref{prop:cov-cap}, the fact that for $N_X(\rho)\leq C_X(\rho)$ for any compact metric space $X$ and any $\rho> 0$, and Claim \ref{claim:cap} we have that 
$$\cov{\rho+2\eta+\varepsilon}{\Sp^n}\leq \cov{\rho}{\Sp^m}\leq\capa{\rho}{\Sp^m}\leq \left(v_{m}\left(\frac{\rho}{2}\right)\right)^{-1}.$$ Now, from Claim \ref{claim:cov} we obtain that for all $\rho\in[0,\pi]$
$$1\leq \cov{\rho+2\eta+\varepsilon}{\Sp^n}\cdot v_{n}(\rho+2\eta+\varepsilon)\leq \frac{v_{n}(\rho+2\eta+\varepsilon)}{v_{m}(\frac{\rho}{2})}.$$

Then,  for all $\rho\in[0,\pi]$ we must have 
$$\eta+\frac{\varepsilon}{2}\geq \frac{1}{2}\left(v_n^{-1}\circ v_m\left(\frac{\rho}{2}\right)-\rho\right).$$ Then, in particular, $\eta+\frac{\epsilon}{2}\geq \mu_{m,n}$, a contradiction.

\begin{proof}[Proof of Claim \ref{claim:cap}]
Let $k=\capa{\rho}{\Sp^n}$ and let $x_1,\ldots,x_k\in\Sp^n$ be s.t. $B(x_i,\frac{\rho}{2})\cap B(x_j,\frac{\rho}{2})=\emptyset$ for all $i\neq j$. Thus, $\bigcup_{i=1}^k B(x_i,\frac{\rho}{2})\subset \Sp^n$, and 
$$\vol{\Sp^n}\geq \volmeas{\Sp^n}{\bigcup_{i=1}^k B\left(x_i,\frac{\rho}{2}\right)} = k\cdot v_{n}\left(\frac{\rho}{2}\right)\cdot \vol{\Sp^n}.$$
\end{proof}

\begin{proof}[Proof of Claim \ref{claim:cov}]
Let $N=\cov{\rho}{\Sp^n}$ and $x_1,\ldots,x_N\in\Sp^n$ be s.t. $\bigcup_{i=1}^NB(x_i,\rho)=\Sp^n$. Then,
$$\vol{\Sp^n}\leq \volmeas{\Sp^n}{\bigcup_{i=1}^N B(x_i,\rho)} \leq  N\cdot v_{n}(\rho)\cdot \vol{\Sp^n}.$$
\end{proof}
\end{proof}

\subsection{Other lower bounds and the proofs of Propositions \ref{prop:s0-sd} and \ref{prop:sinfty-sd}}\label{sec:other-lbs}
Recall the following corollary to the Borsuk-Ulam theorem \cite{matousek2003using}:

\begin{theorem}[Lyusternik-Schnirelmann]\label{thm:ls}
Let $n\in\N$, and $\{U_1,\ldots, U_{n+1}\}$ be a closed cover of $\mathbb{S}^n$. Then there is $i_0\in\{1,\ldots,n+1\}$ such that $U_{i_0}$ contains two antipodal points.
\end{theorem}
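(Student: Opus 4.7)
The plan is to reduce Theorem \ref{thm:ls} to the standard Borsuk-Ulam theorem (which states that for any continuous $f:\Sp^n\to\R^n$ there exists $x\in\Sp^n$ with $f(x)=f(-x)$) by a proof by contradiction that encodes the covering data $\{U_1,\ldots,U_{n+1}\}$ into an $\R^n$-valued continuous function on $\Sp^n$.

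Concretely, I would argue as follows. Suppose toward a contradiction that none of the sets $U_i$ contains a pair of antipodal points. Define $f:\Sp^n\to\R^n$ by
\[
f(x):=\bigl(d_{\Sp^n}(x,U_1),\,d_{\Sp^n}(x,U_2),\,\ldots,\,d_{\Sp^n}(x,U_n)\bigr),
\]
where only the first $n$ sets of the cover are used. Each coordinate $x\mapsto d_{\Sp^n}(x,U_i)$ is $1$-Lipschitz, hence $f$ is continuous. By the Borsuk-Ulam theorem there exists $x_0\in\Sp^n$ with $f(x_0)=f(-x_0)$, i.e.\ $d_{\Sp^n}(x_0,U_i)=d_{\Sp^n}(-x_0,U_i)$ for every $i\in\{1,\ldots,n\}$.

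Now I would split into two cases. If there exists some $i\in\{1,\ldots,n\}$ with $x_0\in U_i$, then $d_{\Sp^n}(x_0,U_i)=0$, hence $d_{\Sp^n}(-x_0,U_i)=0$ as well; since $U_i$ is closed this forces $-x_0\in U_i$, so $U_i$ contains the antipodal pair $\{x_0,-x_0\}$, contradicting our assumption. Otherwise $x_0\notin U_i$ for every $i\leq n$, so from $\bigcup_{i=1}^{n+1}U_i=\Sp^n$ we must have $x_0\in U_{n+1}$; but the equalities above then give $d_{\Sp^n}(-x_0,U_i)=d_{\Sp^n}(x_0,U_i)>0$ for $i\leq n$, so $-x_0\notin U_i$ for $i\leq n$, forcing $-x_0\in U_{n+1}$ as well, again a contradiction.

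The only real step with any content is choosing the right map to feed into Borsuk-Ulam; writing down $f$ with $n$ coordinates (rather than $n+1$) is exactly what makes the coverage pigeonhole collapse into one of the two contradictory cases, so I expect no substantive obstacle beyond this choice. Continuity of the distance-to-a-closed-set functions and closedness of the $U_i$ (needed to promote $d_{\Sp^n}(-x_0,U_i)=0$ to $-x_0\in U_i$) are the two routine hypotheses that drive the argument.
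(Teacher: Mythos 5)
Your proof is correct. The paper does not prove Theorem \ref{thm:ls} at all---it simply recalls it as a known corollary of Borsuk--Ulam with a citation to Matou\v{s}ek---and your argument is the standard reduction: feed the $n$ distance functions $x\mapsto d_{\Sp^n}(x,U_i)$, $i\leq n$, into Borsuk--Ulam and run the two-case analysis at the coincidence point, which is exactly the classical textbook proof. The only pedantic caveat is that the distance functions are real-valued (and $1$-Lipschitz) only when the sets $U_1,\ldots,U_n$ are nonempty; the degenerate case of empty members of the cover is handled trivially, e.g.\ by replacing any empty $U_i$ with a copy of some nonempty $U_j$, so this does not affect the substance of the argument.
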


The lemma below will be useful in what follows:

\begin{lemma}\label{lemma:ls}
For any integer $m\geq 1$ and any finite metric space $P$ with cardinality at most $m+1$ we have $\dgh(\Sp^m,P)\geq\frac{\pi}{2}.$
\end{lemma}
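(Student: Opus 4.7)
The plan is to use the Lyusternik--Schnirelmann theorem (Theorem \ref{thm:ls}) applied to a cover of $\Sp^m$ induced by any correspondence between $\Sp^m$ and $P$. The key observation is that a correspondence with $P$ of cardinality at most $m+1$ decomposes $\Sp^m$ into at most $m+1$ pieces indexed by the points of $P$, which is exactly the setting where Lyusternik--Schnirelmann forces some piece to contain a pair of antipodal points; this in turn forces the distortion to be at least $\pi$.

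More concretely, let $R\subseteq \Sp^m\times P$ be any correspondence and, for each $p\in P$, set $U_p:=\{x\in\Sp^m:(x,p)\in R\}$. Since $R$ is a correspondence, $\{U_p\}_{p\in P}$ covers $\Sp^m$; replacing each $U_p$ by its closure $\overline{U_p}$ we still obtain a cover of $\Sp^m$ by at most $m+1$ closed sets (padding with the empty set if $|P|<m+1$). By Theorem \ref{thm:ls}, there exists some $p_0\in P$ and a point $x\in\Sp^m$ such that both $x$ and $-x$ belong to $\overline{U_{p_0}}$. Choose sequences $(x_n)\subset U_{p_0}$ and $(y_n)\subset U_{p_0}$ with $x_n\to x$ and $y_n\to -x$. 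Since $(x_n,p_0),(y_n,p_0)\in R$, we have
\[
\dis(R)\;\geq\;\bigl|d_{\Sp^m}(x_n,y_n)-d_P(p_0,p_0)\bigr|\;=\;d_{\Sp^m}(x_n,y_n),
\]
and letting $n\to\infty$ gives $\dis(R)\geq d_{\Sp^m}(x,-x)=\pi$. Taking the infimum over correspondences and invoking the Kalton--Ostrovskii formula (\ref{eqn:dghaltdef}) yields $\dgh(\Sp^m,P)\geq\frac{\pi}{2}$.

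The only subtle point, and the part most in need of care, is the passage from $U_p$ to its closure $\overline{U_p}$: Lyusternik--Schnirelmann requires a closed cover, but the fibers of a correspondence need not be closed. The argument above handles this by approximating the antipodal pair $(x,-x)\in\overline{U_{p_0}}\times \overline{U_{p_0}}$ by pairs genuinely in $U_{p_0}\times U_{p_0}$ and using that the distortion is a supremum, so the limit value $\pi$ is attained in the limit. Once this technicality is addressed, the lemma follows immediately.
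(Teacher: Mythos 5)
Your proof is correct and follows essentially the same route as the paper: pass to the closures of the fibers of the correspondence, apply Lyusternik--Schnirelmann to get a fiber whose closure contains an antipodal pair, and conclude $\dis(R)\geq\pi$. The approximation-by-sequences step you spell out is exactly the (implicit) justification the paper uses for asserting $\diam(R(p_0))=\pi$, so there is no substantive difference.
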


\begin{remark}\label{remark:sn-pnp1}
Lemma \ref{lemma:ls} and Remark \ref{remark:ub} imply that for each integer $n\geq 1$, $\dgh(\Sp^n,P) = \frac{\pi}{2}$ for any finite metric space $P$ with $|P|\leq n+1$ and $\diam(P)\leq \pi.$
\end{remark}

\begin{proof}[Proof of Lemma \ref{lemma:ls}]
 Suppose $m\geq 1$ is given. We prove that $\dgh(\Sp^m,P)\geq\frac{\pi}{2}$ for any finite set $P$ of size at most $m+1$. Assume that $R$ is an arbitrary correspondence between $\Sp^m$ and $P$.  We claim that $\dis(R)\geq \pi$ from which the proof will follow.  For each $p\in P$ let $R(p):=\{z\in \Sp^m|\,(z,p)\in R\}$. Then, $\big\{\overline{R(p)}\subseteq\Sp^m:p\in P\big\}$ is a closed cover of $\Sp^m$.  Since $|P|\leq m+1$, Theorem \ref{thm:ls} yields that for some $p_0\in P$, $\diam(R(p_0))=\pi$.  Finally, the claim follows since $\dis(R)\geq \max_{p\in P} \diam(R(p)).$ 
\end{proof}

By a refinement of the proof of Lemma \ref{lemma:ls} above one obtains:

\begin{corollary}\label{coro:sinfty-finite}
Let $R$ be any correspondence between a finite metric space $P$ and $\Sp^\infty$. Then, $\dis(R)\geq \pi.$ In particular, $\dgh(P,\Sp^\infty)\geq \frac{\pi}{2}.$
\end{corollary}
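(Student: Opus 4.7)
The plan is to essentially reduce to the finite-dimensional Lyusternik--Schnirelmann argument used in the proof of Lemma \ref{lemma:ls}, by exploiting the fact that every finite-dimensional sphere embeds isometrically and antipodally into $\Sp^\infty$.

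Concretely, let $P = \{p_1,\ldots,p_k\}$ be finite with $k = |P|$, and let $R \subseteq P \times \Sp^\infty$ be a correspondence. For each $p \in P$, set $R(p) := \{z \in \Sp^\infty : (p,z) \in R\}$, so that $\{\overline{R(p)} : p \in P\}$ is a closed cover of $\Sp^\infty$ consisting of $k$ sets. Next, consider the canonical isometric embedding
\[
\iota \colon \Sp^{k-1} \hookrightarrow \Sp^\infty, \qquad (x_1,\ldots,x_k) \longmapsto (x_1,\ldots,x_k,0,0,\ldots),
\]
which has the crucial property that it sends antipodal pairs in $\Sp^{k-1}$ to pairs of points in $\Sp^\infty$ at geodesic distance exactly $\pi$.

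Pulling back the cover yields $k$ closed sets $\iota^{-1}(\overline{R(p_i)})$ covering $\Sp^{k-1}$. Since $k = (k-1) + 1$, Theorem \ref{thm:ls} applies and furnishes some $p_0 \in P$ and an antipodal pair $x, -x \in \Sp^{k-1}$ with $\iota(x),\iota(-x) \in \overline{R(p_0)}$. Thus $\overline{R(p_0)}$ contains two points at distance $\pi$ in $\Sp^\infty$, and by approximating these two points by sequences in $R(p_0)$, we conclude that $\sup_{z,z' \in R(p_0)} d_{\Sp^\infty}(z,z') = \pi$.

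Finally, taking $(p_0,z),(p_0,z') \in R$ with $d_{\Sp^\infty}(z,z')$ approaching $\pi$ in the definition of distortion gives
\[
\dis(R) \geq \sup_{z,z' \in R(p_0)} \bigl|\, d_P(p_0,p_0) - d_{\Sp^\infty}(z,z') \,\bigr| = \pi,
\]
which establishes the main claim; the consequence $\dgh(P,\Sp^\infty) \geq \frac{\pi}{2}$ is then immediate from \eqref{eqn:dghaltdef}. The only mildly delicate point is handling the passage from the closure $\overline{R(p_0)}$ (where antipodal points live) back to $R(p_0)$ itself (where the distortion is measured), but this is a routine approximation argument; no infinite-dimensional version of Borsuk--Ulam is required because the finiteness of $P$ confines the combinatorics to a finite-dimensional subsphere.
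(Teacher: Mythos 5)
Your proof is correct and follows essentially the same route as the paper: restrict the closed cover $\{\overline{R(p)}\}_{p\in P}$ of $\Sp^\infty$ to the isometrically (and antipodally) embedded subsphere $\Sp^{|P|-1}$ and apply the Lyusternik--Schnirelmann theorem (Theorem \ref{thm:ls}), exactly as in the paper's proof, which cites Lemma \ref{lemma:ls} for the remaining details. The extra care you take with the closure-versus-$R(p_0)$ approximation is already implicit in the argument of Lemma \ref{lemma:ls} and is handled correctly.
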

\begin{proof}
As in the proof of Lemma \ref{lemma:ls}, the correspondence $R$ induces a closed cover of $\Sp^\infty$. Thus, it induces a closed cover of any finite dimensional sphere $\Sp^{|P|-1}\subset \Sp^\infty$. The claim follows from Theorem \ref{thm:ls}.
\end{proof}

Notice that if $P$ has diameter at most $\pi$, then $\dgh(P,\Sp^\infty)=\frac{\pi}{2}$ (cf. Remark \ref{remark:ub} and Remark \ref{remark:sn-pnp1}). In  Appendix \ref{sec:polys} we consider a scenario which is thematically connected with Remark \ref{remark:sn-pnp1} and Corollary \ref{coro:sinfty-finite}, namely the determination of the Gromov-Hausdorff distance between a finite metric space and a sphere. Appendix \ref{sec:polys} fully resolves this question for the case of $\Sp^1$ and (the vertex set of) inscribed regular polygons.

By a small modification of the proof of Corollary \ref{coro:sinfty-finite}, we obtain the following stronger claim: 
\begin{proposition}\label{prop:tb-sinfty}
Let $X$ be any totally bounded metric space. Then, $\dgh(X,\Sp^\infty)\geq \frac{\pi}{2}.$
\end{proposition}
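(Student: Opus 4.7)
The plan is to reduce the totally bounded case to the finite case already handled by Corollary \ref{coro:sinfty-finite}, via approximation by finite $\epsilon$-nets and the triangle inequality for $\dgh$ on $\mathcal{M}_b$. Since the paper has already noted that a $\delta$-net inside a compact (and the argument works verbatim for totally bounded) metric space sits within Gromov--Hausdorff distance $\delta$, this is really just stitching two standard facts together.

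First I would fix an arbitrary $\epsilon>0$. By total boundedness, there exists a finite $\epsilon$-net $P_\epsilon\subset X$, and viewing $(P_\epsilon,d_X|_{P_\epsilon\times P_\epsilon})$ as a finite metric space one has $\dgh(X,P_\epsilon)\leq \epsilon$. (This is the observation recalled in \S1.1 for the compact case; the same identity-type correspondence $R=\{(x,p_x): x\in X\}\cup\{(p,p):p\in P_\epsilon\}$, where $p_x\in P_\epsilon$ is any point with $d_X(x,p_x)\leq \epsilon$, has distortion at most $2\epsilon$, and works without modification in the totally bounded setting.)

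Next, I would apply the triangle inequality for $\dgh$ together with Corollary \ref{coro:sinfty-finite}:
$$\frac{\pi}{2}\;\leq\;\dgh(P_\epsilon,\Sp^\infty)\;\leq\;\dgh(P_\epsilon,X)+\dgh(X,\Sp^\infty)\;\leq\;\epsilon+\dgh(X,\Sp^\infty).$$
Rearranging gives $\dgh(X,\Sp^\infty)\geq \tfrac{\pi}{2}-\epsilon$, and since $\epsilon>0$ is arbitrary we may let $\epsilon\to 0$ to obtain $\dgh(X,\Sp^\infty)\geq \tfrac{\pi}{2}$.

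I do not expect any real obstacle here; the only thing to be slightly careful about is that Corollary \ref{coro:sinfty-finite} applies to finite metric spaces of \emph{arbitrary} cardinality (which it does, since its proof invokes Theorem \ref{thm:ls} on $\Sp^{|P|-1}\subset \Sp^\infty$), so the bound $\tfrac{\pi}{2}$ obtained does not degrade as $|P_\epsilon|\to\infty$ when $\epsilon\to 0$. An alternative and more direct route would be to take any correspondence $R$ between $X$ and $\Sp^\infty$, restrict to a finite $\epsilon$-net on the $X$-side to produce a correspondence between $P_\epsilon$ and $\Sp^\infty$ whose distortion is within $2\epsilon$ of $\dis(R)$, and then apply Corollary \ref{coro:sinfty-finite}; but this is essentially the same argument repackaged, so the triangle-inequality version above is the cleanest.
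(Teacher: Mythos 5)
Your proof is correct and follows essentially the same argument as the paper: fix $\epsilon>0$, pass to a finite $\epsilon$-net $P_\epsilon$ with $\dgh(X,P_\epsilon)\leq\epsilon$, apply the triangle inequality together with Corollary \ref{coro:sinfty-finite}, and let $\epsilon\to 0$. The remark that the bound from Corollary \ref{coro:sinfty-finite} does not depend on $|P_\epsilon|$ is a sensible sanity check but is exactly what the paper's proof also relies on.
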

\begin{proof}
Fix any $\varepsilon>0$ and let $P_\varepsilon\subset X$ be a finite $\varepsilon$-net for $X$. Then, by the triangle inequality for $\dgh$, and Corollary \ref{coro:sinfty-finite} we have $\dgh(X,\Sp^\infty)\geq \dgh(\Sp^\infty,P_\varepsilon)-\dgh(X,P_\varepsilon)\geq \frac{\pi}{2} -\varepsilon$ which implies the claim since $\varepsilon>0$ was arbitrary. \end{proof}

\begin{proof}[Proof of Proposition \ref{prop:s0-sd}]
That $\dgh(\Sp^0,\Sp^n)=\frac{\pi}{2}$ for any integer $n\geq 1$ follows from Lemma \ref{lemma:ls} and Remark \ref{remark:ub}.
\end{proof}

\begin{proof}[Proof of Proposition \ref{prop:sinfty-sd}]
That $\dgh(\Sp^m,\Sp^\infty)=\frac{\pi}{2}$ for any nonnegative integer $m<\infty$ follows from Proposition \ref{prop:tb-sinfty} and Remark \ref{remark:ub}.
\end{proof}

\begin{proof}[Proof of Proposition \ref{piS2-nm}]
We prove that $\dgh(\Sp^m,\Sp^n)\geq\nu_{m,n}:=\frac{\pi}{2} - \mathrm{cov}_{\Sp^m}(n+1)$ for any $1\leq m<n<\infty$.

Let $P$ be any subset $\Sp^{m}$ with cardinality not exceeding $n+1$. Since the Hausdorff distance satisfies $\dh(P,\Sp^m)\geq \dgh(P,\Sp^m)$, and  by the triangle inequality for the Gromov-Hausdorff distance, we have:
$$\dh(P,\Sp^m)+\dgh(\Sp^m,\Sp^n)\geq \dgh(P,\Sp^m)+\dgh(\Sp^m,\Sp^n)\geq \dgh(P,\Sp^n)  .$$
Since $\diam(P)\leq \pi$, by Remark \ref{remark:sn-pnp1} we have that $\dgh(P,\Sp^n) = \frac{\pi}{2}.$ Hence, from the above, 
$$\dh(P,\Sp^m)+\dgh(\Sp^m,\Sp^n)\geq \frac{\pi}{2}$$
for \emph{any} $P\subset \Sp^m$ with $|P|\leq n+1.$ By the definition of  the covering radius (see equation (\ref{eq:cov})), we obtain the claim by infimizing over all possible such choices of $P$. \end{proof}

\section{The proof of Theorem \ref{thm:sn-sm-ub}}\label{sec:proof-univ-ub}

The Borsuk-Ulam theorem implies that, for any positive integers $n>m$ and for any given continuous map $\varphi:\Sp^n\rightarrow \Sp^m$, there exists two antipodal points in the higher dimensional sphere which are mapped to the same point in the lower dimensional sphere.

We now prove that, in contrast, there always exists a \emph{surjective}, antipode preserving, and continuous map $\psi_{m,n}$ from the lower dimensional sphere to the higher dimensional sphere. 

\begin{theorem}\label{thm:surj'}
For all integers $0<m<n<\infty$, there exists an \emph{antipode preserving} continuous surjection $$\psi_{m,n}:\Sp^m\longtwoheadrightarrow \Sp^n,$$
i.e., $\psi_{m,n}(-x)=-\psi_{m,n}(x)$ for every $x\in\Sp^m$. 
\end{theorem}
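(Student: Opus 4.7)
The plan is to proceed by induction on $n-m$, with the base case $(m,n)=(1,2)$ handled by a space-filling curve, and the inductive step implemented by a spherical suspension construction. Once I have one-step antipode preserving continuous surjections $\psi_{k,k+1}:\Sp^k\twoheadrightarrow\Sp^{k+1}$ for every $k\geq 1$, the general map $\psi_{m,n}$ is obtained as the composition $\psi_{n-1,n}\circ\cdots\circ\psi_{m,m+1}$, which inherits continuity, surjectivity, and antipode preservation from its factors.

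For the base case, I first produce a continuous surjection $\gamma:[0,\pi]\to\mathbf{H}_{\geq 0}(\Sp^2)$ with $\gamma(0)=(1,0,0)$ and $\gamma(\pi)=(-1,0,0)$, i.e.\ a pair of antipodal points on the equator. Since $\mathbf{H}_{\geq 0}(\Sp^2)$ is homeomorphic to the closed disk, such a curve exists: start from any classical Hilbert- or Peano-type surjection of $[0,1]$ onto the hemisphere and concatenate short boundary arcs on either end to move the endpoints to the prescribed antipodal pair, then reparametrize on $[0,\pi]$. Identifying $\Sp^1$ with $\R/2\pi\Z$, I then define $\psi_{1,2}(e^{i\theta})$ to be $\gamma(\theta)$ for $\theta\in[0,\pi]$ and $-\gamma(\theta-\pi)$ for $\theta\in[\pi,2\pi]$. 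The two branches glue continuously at $\theta=\pi$ and at $\theta=0\equiv 2\pi$ precisely because $\gamma(\pi)=-\gamma(0)$; the resulting map is antipode preserving by construction, and surjective since its image is $\mathbf{H}_{\geq 0}(\Sp^2)\cup\bigl(-\mathbf{H}_{\geq 0}(\Sp^2)\bigr)=\Sp^2$.

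For the inductive step, given an antipode preserving continuous surjection $\phi:\Sp^k\to\Sp^{k+1}$, I define a spherical suspension $S\phi:\Sp^{k+1}\to\Sp^{k+2}$ via the parametrization of points in $\Sp^{k+1}$ as $(\cos\theta\cdot x,\sin\theta)$ with $x\in\Sp^k$ and $\theta\in[-\pi/2,\pi/2]$, and of points in $\Sp^{k+2}$ similarly as $(\cos\theta\cdot y,\sin\theta)$ with $y\in\Sp^{k+1}$: set $S\phi(\cos\theta\cdot x,\sin\theta):=(\cos\theta\cdot\phi(x),\sin\theta)$. Well-definedness at the poles is immediate because the factor $\cos\theta$ vanishes there; continuity at the poles follows because that same factor uniformly damps any ambiguity in $x$ as $\theta\to\pm\pi/2$; surjectivity reduces latitude by latitude to that of $\phi$; and antipode preservation follows by rewriting the antipode of $(\cos\theta\cdot x,\sin\theta)$ as $(\cos(-\theta)\cdot(-x),\sin(-\theta))$ and invoking $\phi(-x)=-\phi(x)$. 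The main obstacle in this whole approach is the base case, where one must arrange a space-filling curve with prescribed antipodal boundary endpoints; this is a minor but standard adaptation of classical Peano-type constructions, and once it is in place the suspension step is essentially automatic.
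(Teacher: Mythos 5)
Your proposal is correct and follows essentially the same route as the paper: a space-filling-curve construction of an antipode preserving continuous surjection $\Sp^1\twoheadrightarrow\Sp^2$, followed by spherical suspensions to obtain each one-step map $\psi_{k,k+1}$, and then composition of these maps. The only cosmetic difference is the base case, where you fill the closed upper hemisphere by a single Peano-type curve with antipodal endpoints and extend by oddness, while the paper fills the eight faces of $\partial\widehat{\mathbb{B}}^3$ arc by arc; both implement the same idea and both work.
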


With this theorem, the proof of Theorem \ref{thm:sn-sm-ub}, stating that $\dgh(\Sp^m,\Sp^n)<\frac{\pi}{2}$ for all $0<m<n<\infty$, now follows:

\begin{proof}[Proof of Theorem \ref{thm:sn-sm-ub}]
Let $\psi_{m,n}:\Sp^m\longtwoheadrightarrow \Sp^n$ be the map given in Theorem \ref{thm:surj'}. Recall that the graph of a surjective map can be seen as a correspondence and let $R_{m,n}:=\mathrm{graph}(\psi_{m,n})$. In order to prove the claim, it is enough to verify that  $\dis(R_{m,n})=\dis(\psi_{m,n})<\pi$.

Since $\psi_{m,n}$ is continuous and $\Sp^m$ is compact, the supremum in the definition of distortion is a maximum:
$$\dis(\psi_{m,n}) = \max_{x,x'\in \Sp^m}\big|d_{\Sp^m}(x,x')-d_{\Sp^n}(\psi_{m,n}(x),\psi_{m,n}(x'))\big|.$$ Let $x_0,x_0'\in\Sp^m$ attain the maximum above. Note that we may assume that $x_0\neq x_0'$ for otherwise, we would have $\dgh(\Sp^m,\Sp^n)\leq\frac{1}{2}\dis(R_{m,n})= \frac{1}{2}\dis(\psi_{m,n})=0$, which would imply that $\dgh(\Sp^m,\Sp^n)=0$, i.e. that $\Sp^m$ and $\Sp^m$ are isometric, which is a contradiction since $m\neq n$.

Assume first that $x_0'\neq -x_0$. In this case,
$$\mbox{$0< d_{\Sp^m}(x_0,x_0')<\pi$ \, and \, $0\leq d_{\Sp^n}(\psi_{m,n}(x_0),\psi_{m,n}(x_0'))\leq \pi.$}$$
Thus, $$|d_{\Sp^m}(x_0,x_0')-d_{\Sp^n}(\psi_{m,n}(x_0),\psi_{m,n}(x_0'))|<\pi.$$

Assume now that $x_0'= -x_0$. In this case, $d_{\Sp^m}(x_0,x_0')=d_{\Sp^n}(\psi_{m,n}(x_0),\psi_{m,n}(x_0'))=\pi$ since $\psi_{m,n}$ is antipode preserving. Thus, in this case we also have  $$0=|d_{\Sp^m}(x_0,x_0')-d_{\Sp^n}(\psi_{m,n}(x_0),\psi_{m,n}(x_0'))|<\pi.$$
\end{proof}

\begin{remark}
Observe that the antipode preserving property of $\psi_{m,n}$ given in Theorem \ref{thm:surj'} is stronger than what we need for the purpose of proving Theorem \ref{thm:sn-sm-ub}. Indeed, all one needs is that $\psi_{m,n}(x)\neq \psi_{m,n}(-x)$  for any $x\in \Sp^m$.
\end{remark}

The goal for the rest of this section is to prove Theorem \ref{thm:surj'}.

Spherical suspensions  and space-filling curves are  key technical tools which we now review.

\subsection*{Space filling curves.} The existence of the space-filling curves is  well known \cite{peano1890courbe}:

\begin{theorem}[Space-filling curve]\label{thm:Peano}
There exist a continuous and surjective map $$H:[0,1]\longtwoheadrightarrow [0,1]^2.$$
\end{theorem}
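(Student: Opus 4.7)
The plan is to construct the map $H$ as the uniform limit of a sequence of explicit continuous piecewise-linear approximations $H_n : [0,1] \to [0,1]^2$ in the style of Hilbert's construction.

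First, I would set up the recursive combinatorial scaffolding. For each $n\geq 1$, partition $[0,1]$ into $4^n$ closed intervals of length $4^{-n}$ and partition $[0,1]^2$ into $4^n$ closed dyadic subsquares of side $2^{-n}$. I would choose, by induction on $n$, an ordering of the level-$n$ subsquares such that (i) consecutive subsquares in the ordering share an edge, and (ii) the four level-$(n+1)$ subsquares contained in a fixed level-$n$ subsquare appear as four consecutive entries in the level-$(n+1)$ ordering. This is the classical Hilbert template: fix one L-shaped traversal of a $2\times 2$ grid together with its rotations/reflections, and recursively apply the appropriate rotated/reflected template inside every subsquare so that the exit edge of one subsquare matches the entry edge of the next. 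Once this ordering is fixed, define $H_n$ to be the piecewise-linear curve obtained by joining the centers of the subsquares in order, linearly interpolated over the corresponding subintervals of $[0,1]$.

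Next, I would prove uniform convergence. A direct consequence of the compatibility condition (ii) is that for every $t\in[0,1]$ and every $n$, the two points $H_n(t)$ and $H_{n+1}(t)$ both lie in a common level-$n$ subsquare, which has diameter $\sqrt{2}\cdot 2^{-n}$. Therefore
\[
\|H_{n+1}-H_n\|_\infty \;\leq\; \sqrt{2}\cdot 2^{-n},
\]
so $(H_n)$ is a Cauchy sequence in $C([0,1],\R^2)$ endowed with the sup norm, and converges uniformly to a continuous map $H:[0,1]\to[0,1]^2$.

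Finally, I would verify surjectivity. Given any $p\in[0,1]^2$, for each $n$ pick a level-$n$ subsquare $S_n$ containing $p$ and let $I_n\subseteq[0,1]$ be the corresponding subinterval, which has length $4^{-n}$. By construction $H_n(I_n)\subseteq S_n$. Choose $t_n\in I_n$ and pass to a subsequence so that $t_n\to t_*\in[0,1]$. Since $H_n\to H$ uniformly and $\mathrm{diam}(S_n)\to 0$, one obtains $H(t_*)=p$. The main obstacle in this plan is purely combinatorial: arranging the level-$n$ orderings so that property (ii) holds together with edge-adjacency of consecutive subsquares, i.e., formalizing the Hilbert template and checking that entry/exit edges glue correctly between sibling subsquares. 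Once that is done, the analytic steps (Cauchy estimate and surjectivity via nesting) are straightforward.
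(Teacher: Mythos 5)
The paper does not prove this statement at all: Theorem \ref{thm:Peano} is quoted as a classical fact with a citation to Peano's 1890 paper, and the existence of a space-filling curve is then used as a black box in the construction of $\phi_1$ in Proposition \ref{prop:constructionofpsi12}. Your proposal therefore supplies something the paper deliberately omits, namely a self-contained proof, and the route you choose (Hilbert's recursive subdivision, piecewise-linear approximants through the centers of the subsquares, a geometric Cauchy estimate in the sup norm, and surjectivity via nested squares of shrinking diameter) is the standard and correct one. Two small points to tighten when you formalize it: first, with $4^n$ centers you have only $4^n-1$ segments, and the interpolated curve $H_n$ restricted to the level-$n$ interval $I_k$ is generally contained in the union $S_k\cup S_{k+1}$ of two edge-adjacent level-$n$ squares rather than in $S_k$ alone, so the bounds $\|H_{n+1}-H_n\|_\infty\leq\sqrt2\cdot 2^{-n}$ and $H_n(I_n)\subseteq S_n$ should be relaxed to constants like $C\cdot 2^{-n}$ with $C$ absorbing the adjacent square; this changes nothing in the Cauchy argument or in the surjectivity limit $\vert H(t_*)-p\vert\leq\vert H(t_*)-H(t_{n_k})\vert+\Vert H-H_{n_k}\Vert_\infty+\mathrm{diam}(S_{n_k}\cup S_{n_k+1})\to 0$. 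Second, the combinatorial lemma you flag (existence of level-$n$ orderings satisfying edge-adjacency and the sibling-nesting property, via the rotated/reflected L-template) is genuinely the only nontrivial content and does hold by the usual induction on the template's entry/exit corners; as long as you carry out that induction explicitly, the proof is complete. Alternatively, one could avoid the combinatorics entirely by Peano's original ternary-digit definition or by surjecting the Cantor set onto $[0,1]^2$ via binary digits and extending affinely over complementary intervals, which is closer in spirit to the source the paper cites.
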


In the sequel, we will use the notation $\mathrm{Conv}(v_1,v_2,\dots,v_d)$ to denote the convex hull of vectors $v_1,v_2,\dots,v_d$.

By resorting to space-filling curves, one can prove the following  proposition, which will be crucial in the sequel. 

\begin{figure}
\begin{center}
\includegraphics[width=0.6\linewidth]{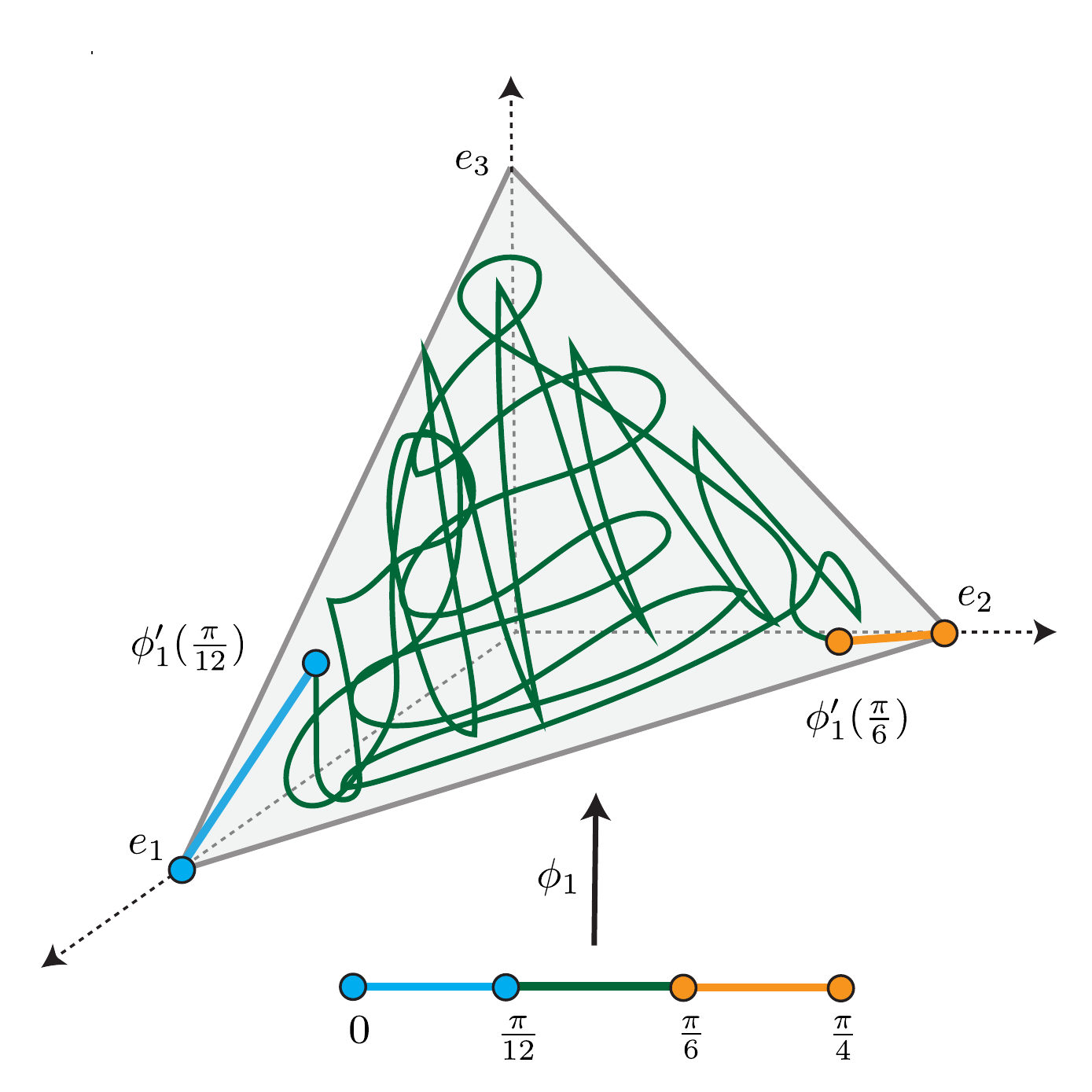}
\end{center}
\caption{The continuous surjection $\phi_1:[0,\frac{\pi}{4}]\longtwoheadrightarrow \mathrm{Conv}(e_1,e_2,e_3)$.  \label{fig:filling-curve}}
\end{figure}

\begin{proposition}\label{prop:constructionofpsi12}
There exists an antipode preserving continuous surjection $$\psi_{1,2}:\Sp^1\longtwoheadrightarrow \Sp^2.$$
\end{proposition}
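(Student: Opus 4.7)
The plan is to work with the boundary of the cross-polytope $\partial\widehat{\mathbb{B}}^3$, which is antipode-preservingly homeomorphic to $\Sp^2$ via the radial $\ell^2$-normalization $x\mapsto x/\|x\|_2$. I would construct an antipode-preserving continuous surjection $\widetilde\psi:\Sp^1\longtwoheadrightarrow\partial\widehat{\mathbb{B}}^3$ by dissecting $\Sp^1$ into $8$ arcs of length $\pi/4$, sending each onto one of the $8$ triangular faces of $\partial\widehat{\mathbb{B}}^3$ via a space-filling curve with prescribed endpoints, and then composing with the radial homeomorphism.

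The key building block is the following. For any triangle $T=\mathrm{Conv}(v_1,v_2,v_3)\subset\R^3$ and any two (possibly equal) chosen vertices $u_0,u_1\in\{v_1,v_2,v_3\}$, there exists a continuous surjection $\gamma:[0,\pi/4]\longtwoheadrightarrow T$ with $\gamma(0)=u_0$ and $\gamma(\pi/4)=u_1$. This is obtained from Theorem \ref{thm:Peano}: compose a Peano curve $[a,b]\longtwoheadrightarrow[0,1]^2$ (with $0<a<b<\pi/4$) with the barycentric surjection $(u,v)\mapsto (1-u)(1-v)\,v_1+u(1-v)\,v_2+v\,v_3$ from $[0,1]^2$ onto $T$, and then prepend and append line segments inside $T$ from $u_0$ to the starting point and from the endpoint to $u_1$.

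Now parametrize $\Sp^1$ by $\theta\in[0,2\pi)$ with antipodal map $\theta\mapsto\theta+\pi\pmod{2\pi}$, and label the four upper faces of $\partial\widehat{\mathbb{B}}^3$ cyclically as $T_0=\mathrm{Conv}(e_1,e_2,e_3)$, $T_1=\mathrm{Conv}(-e_1,e_2,e_3)$, $T_2=\mathrm{Conv}(-e_1,-e_2,e_3)$, and $T_3=\mathrm{Conv}(e_1,-e_2,e_3)$, so that $T_k\cap T_{k+1}$ (indices mod $4$) always contains the apex $e_3$. On each arc $I_k=[k\pi/4,(k+1)\pi/4]$, $k\in\{0,1,2,3\}$, define $\widetilde\psi|_{I_k}$ to be a rescaled copy of the building block mapping onto $T_k$, with prescribed endpoints $(e_2,e_3)$ on $I_0$, $(e_3,e_3)$ on $I_1$ and $I_2$, and $(e_3,-e_2)$ on $I_3$. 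These choices agree at consecutive arc boundaries, so $\widetilde\psi|_{[0,\pi]}$ is a continuous surjection onto the upper hemisphere of $\partial\widehat{\mathbb{B}}^3$ with $\widetilde\psi(0)=e_2$ and $\widetilde\psi(\pi)=-e_2$. Extend to all of $\Sp^1$ by declaring $\widetilde\psi(\theta+\pi):=-\widetilde\psi(\theta)$ for $\theta\in[0,\pi]$, and set $\psi_{1,2}:=(\cdot/\|\cdot\|_2)\circ\widetilde\psi$.

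Continuity of $\widetilde\psi$ at $\theta=\pi$ follows from $\widetilde\psi(\pi^-)=-e_2=-\widetilde\psi(0)=\widetilde\psi(\pi^+)$, and at $\theta=0\equiv2\pi$ analogously; the antipode-preserving property is built into the extension rule, and surjectivity holds since the antipodal extension of the upper hemisphere covers the lower hemisphere of $\partial\widehat{\mathbb{B}}^3$. The main technical subtlety I anticipate is the endpoint bookkeeping: the antipodal extension forces $\widetilde\psi(0)$ and $\widetilde\psi(\pi)$ to be an antipodal pair, so one cannot make the global basepoint the apex $e_3$ and must instead arrange for the upper-hemisphere portion to start and end at an antipodal pair $(e_2,-e_2)$ on the equator while all three internal arc transitions between faces occur at $e_3$; a Gray-code-style attempt that runs through all eight faces with edge-adjacent transitions fails by a parity obstruction (on the $3$-cube, antipodal vertices lie at odd distance and so cannot sit at distance $4$ in a Hamiltonian cycle).
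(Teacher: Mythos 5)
Your proposal is correct and follows essentially the same route as the paper: divide $\Sp^1$ into eight arcs of length $\frac{\pi}{4}$, fill the four faces of $\partial\widehat{\mathbb{B}}^3$ containing $e_3$ by Peano curves with prescribed vertex endpoints over a half-circle, extend antipodally to the other half, and radially project to $\Sp^2$. The only differences (your transitions occur at $e_3$ with the half-circle endpoints $e_2,-e_2$, versus the paper's chain $e_1\to e_2\to e_3\to -e_2\to -e_1$) are inessential bookkeeping choices.
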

\begin{proof}
Recall the definition of the $3$-dimensional cross-polytope:  $$\widehat{\mathbb{B}}^3:=\mathrm{Conv}(e_1,-e_1,e_2,-e_2,e_3,-e_3)\subset\mathbb{R}^3$$
where $e_1=(1,0,0),e_2=(0,1,0),$ and $e_3=(0,0,1)$. Then, its boundary $\partial\widehat{\mathbb{B}}^3$, which consists of eight triangles
$$\mathrm{Conv}(e_1,e_2,e_3),\mathrm{Conv}(e_1,e_2,-e_3),\dots,\mathrm{Conv}(-e_1,-e_2,-e_3)$$
is homeomorphic to $\Sp^2$.

Now, divide $\Sp^1$ into eight closed circular arcs with equal length $\frac{\pi}{4}$. In other words,  let
$$\left[0,\frac{\pi}{4}\right],\left[\frac{\pi}{4},\frac{\pi}{2}\right],\left[\frac{\pi}{2},\frac{3\pi}{4}\right],\left[\frac{3\pi}{4},\pi\right],\left[\pi,\frac{5\pi}{4}\right],\left[\frac{5\pi}{4},\frac{3\pi}{2}\right],\left[\frac{3\pi}{2},\frac{7\pi}{4}\right],\left[\frac{7\pi}{4},2\pi\right]$$
be those eight regions. Of course, we are identifying $0$ and $2\pi$ here.

Note that we are able to build a continuous and surjective map $$\phi_1:\left[0,\frac{\pi}{4}\right]\longtwoheadrightarrow\mathrm{Conv}(e_1,e_2,e_3)\text{ such that }\phi_1(0)=e_1,\phi_1\left(\frac{\pi}{4}\right)=e_2$$
as follows: Since $\mathrm{Conv}(e_1,e_2,e_3)$ is homeomorphic to $[0,1]^2$, by Theorem \ref{thm:Peano} there exists a continuous and surjective map $\phi_1'$ from $\left[\frac{\pi}{12},\frac{\pi}{6}\right]$ to $\mathrm{Conv}(e_1,e_2,e_3)$. Then, we extend its domain by using linear interpolation between $e_1$ and $\phi_1'(\frac{\pi}{12})$, and $e_2$ and $\phi_1'(\frac{\pi}{6})$ to give rise to $\phi_1$; see Figure \ref{fig:filling-curve}.

By using an analogous procedure, we construct continuous and surjective maps:
\begin{align*}
    &\phi_2:\left[\frac{\pi}{4},\frac{\pi}{2}\right]\longtwoheadrightarrow\mathrm{Conv}(-e_1,e_2,e_3)\text{ such that }\phi_2\left(\frac{\pi}{4}\right)=e_2,\phi_2\left(\frac{\pi}{2}\right)=e_3,\\
    &\phi_3:\left[\frac{\pi}{2},\frac{3\pi}{4}\right]\longtwoheadrightarrow\mathrm{Conv}(e_1,-e_2,e_3)\textrm{ such that }\phi_3\left(\frac{\pi}{2}\right)=e_3,\phi_3\left(\frac{3\pi}{4}\right)=-e_2,\\
    &\phi_4:\left[\frac{3\pi}{4},\pi\right]\longtwoheadrightarrow\mathrm{Conv}(-e_1,-e_2,e_3)\textrm{ such that }\phi_4\left(\frac{3\pi}{4}\right)=-e_2,\phi_4(\pi)=-e_1.
\end{align*}

Next, we construct the remaining continuous and surjective maps  by suitably reflecting the ones already constructed:
\begin{align*}
    &\phi_5:\left[\pi,\frac{5\pi}{4}\right]\longtwoheadrightarrow\mathrm{Conv}(-e_1,-e_2,-e_3)\textrm{ such that }\phi_5(x):=-\phi_1(-x),\\
    &\phi_6:\left[\frac{5\pi}{4},\frac{3\pi}{2}\right]\longtwoheadrightarrow\mathrm{Conv}(e_1,-e_2,-e_3)\textrm{ such that }\phi_6(x):=-\phi_2(-x),\\
    &\phi_7:\left[\frac{3\pi}{2},\frac{7\pi}{4}\right]\longtwoheadrightarrow\mathrm{Conv}(e_1,e_2,-e_3)\textrm{ such that }\phi_7(x):=-\phi_3(-x),\\
    &\phi_8:\left[\frac{7\pi}{4},2\pi\right]\longtwoheadrightarrow\mathrm{Conv}(-e_1,e_2,-e_3)\textrm{ such that }\phi_8(x):=-\phi_4(-x).
\end{align*}

Finally, by gluing all the eight maps $\phi_i$s, we  build an antipode preserving continuous and surjective map $\overline{\psi}_{1,2}:\Sp^1\longtwoheadrightarrow\partial\widehat{\mathbb{B}}^3$. Using the canonical (closest point projection) homeomorphism between $\partial\widehat{\mathbb{B}}^3$ and $\Sp^2$, we finally have the announced $\psi_{1,2}:\Sp^1\longtwoheadrightarrow\Sp^2$. It is clear from its construction that the map $\psi_{1,2}$ is continuous, surjective, and antipode preserving. Figure \ref{fig:psi-12} depicts the overall structure of the map $\psi_{1,2}$.
\end{proof}

\begin{figure}
    \centering
    \includegraphics[width=\linewidth]{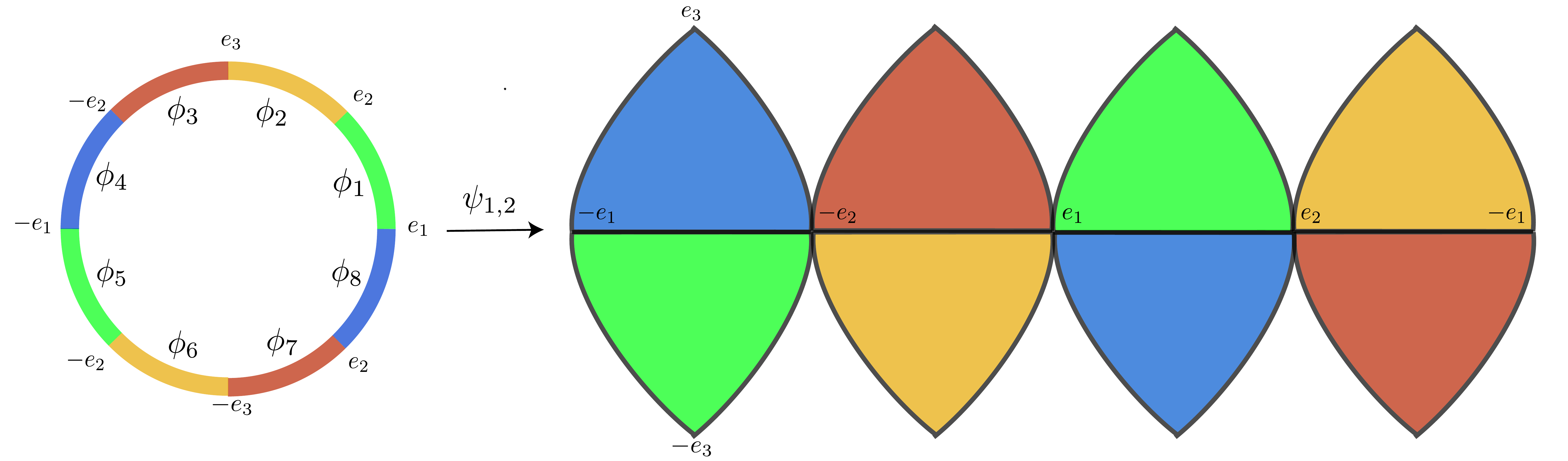}
    \caption{Structure of the map $\psi_{1,2}$ constructed in Proposition \ref{prop:constructionofpsi12}. Inside each arc, the map is defined via a space filling curve. For simplicity,  $\Sp^2$ is  ``cartographically" depicted.}
    \label{fig:psi-12}
\end{figure}

\subsection*{Spherical suspensions.} Suppose $m,n\in\N$ and a map $f:\Sp^m\longrightarrow\Sp^n$ are given. Then, one can lift this map $f$ to a map from $\Sp^{m+1}$ to $\Sp^{n+1}$ in the following way: Observe that an arbitrary point in $\Sp^{m+1}$ can be expressed as $(p\sin\theta,\cos\theta)$ for some $p\in\Sp^m$ and $\theta\in[0,\pi]$. Then,  the \emph{spherical suspension of} $f$ is the map
\begin{align*}
    Sf:\Sp^{m+1}&\longrightarrow\Sp^{n+1}\\
    (p\sin\theta,\cos\theta)&\longmapsto (f(p)\sin\theta,\cos\theta).
\end{align*}

\begin{lemma}\label{lemma:propsuspension}
If the map $f:\Sp^m\longtwoheadrightarrow\Sp^n$ is continuous, surjective, and antipode preserving, then $Sf:\Sp^{m+1}\longtwoheadrightarrow\Sp^{n+1}$ is also continuous, surjective, and antipode preserving.
\end{lemma}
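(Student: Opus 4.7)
The plan is to verify, in turn, that $Sf$ is well-defined, continuous, surjective, and antipode-preserving, each following from the corresponding property of $f$ together with elementary computations. For well-definedness, note that every $x\in\Sp^{m+1}$ admits a unique representation $x = (p\sin\theta,\cos\theta)$ with $p\in\Sp^m$ and $\theta\in[0,\pi]$ away from the two poles $(\vec 0,\pm 1)$ (take $\theta = \arccos(x_{m+2})$ and $p=(x_1,\ldots,x_{m+1})/\sin\theta$), whereas at the poles $p$ is arbitrary but $Sf$ manifestly returns the same pole since $f(p)\sin 0 = f(p)\sin\pi = 0$.

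For continuity, away from the poles the assignment $x\mapsto (p(x),\theta(x))$ is a homeomorphism onto $\Sp^m\times(0,\pi)$ and the map $(p,\theta)\mapsto (f(p)\sin\theta,\cos\theta)$ is continuous by continuity of $f$, so $Sf$ is continuous on $\Sp^{m+1}\setminus\{(\vec 0,\pm 1)\}$. At the poles, the estimate
\[
\bigl\|(f(p)\sin\theta,\cos\theta)-(\vec 0,\pm 1)\bigr\| \leq |\sin\theta| + |1\mp\cos\theta|
\]
is independent of $p\in\Sp^m$ and tends to $0$ as $\theta\to 0$ (resp.\ $\theta\to\pi$), which delivers continuity at both poles. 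Surjectivity is immediate: given $y = (q\sin\phi,\cos\phi)\in\Sp^{n+1}$, surjectivity of $f$ supplies $p\in\Sp^m$ with $f(p)=q$, and then $Sf(p\sin\phi,\cos\phi) = y$.

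For the antipode-preserving property, the key algebraic identity is
\[
-x \;=\; \bigl((-p)\sin(\pi-\theta),\,\cos(\pi-\theta)\bigr)
\]
for any $x = (p\sin\theta,\cos\theta)\in\Sp^{m+1}$, which follows from $\sin(\pi-\theta)=\sin\theta$ and $\cos(\pi-\theta)=-\cos\theta$. Applying the definition of $Sf$ to this representation of $-x$ and invoking the hypothesis $f(-p)=-f(p)$ gives
\[
Sf(-x) \;=\; \bigl(f(-p)\sin(\pi-\theta),\cos(\pi-\theta)\bigr) \;=\; \bigl(-f(p)\sin\theta,-\cos\theta\bigr) \;=\; -Sf(x);
\]
the pole case follows from $Sf(\vec 0,\pm 1) = (\vec 0,\pm 1)$. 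None of these steps presents a genuine obstacle, and the only mild subtlety is the behavior of $Sf$ across the poles, where the parametrization $(p,\theta)$ degenerates; this is precisely what the uniform estimate above is designed to handle.
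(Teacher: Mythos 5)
Your proof is correct and follows essentially the same route as the paper: the antipode-preserving computation via $-x=((-p)\sin(\pi-\theta),\cos(\pi-\theta))$ and $f(-p)=-f(p)$ is exactly the paper's argument, while the continuity and surjectivity checks you spell out (including the uniform estimate at the poles) are the details the paper dismisses as clear from the construction. No gaps.
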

\begin{proof}
Continuity and surjectivity are clear from the construction. Since $f$ is antipode preserving, we know that $f(-p)=-f(p)$ for every $p\in \Sp^m$. Hence,
\begin{align*}
    Sf(-p\sin\theta,-\cos\theta)&=Sf(-p\sin (\pi-\theta),\cos (\pi-\theta))\\
    &=(f(-p)\sin (\pi-\theta),\cos (\pi-\theta))\\
    &=(-f(p)\sin \theta,-\cos \theta)\\
    &=-(f(p)\sin \theta,\cos \theta)\\
    &=-Sf(p\sin\theta,\cos\theta)
\end{align*}
for any $p\in\Sp^m$ and $\theta\in [0,\pi]$. Thus, $Sf$ is also antipode preserving.
\end{proof}

We now use induction to obtain:
\begin{corollary}\label{cor:psim,m+1}
For any integer $m>0$, there exists a continuous, surjective, and antipode preserving map
$$\psi_{m,(m+1)}:\Sp^m\longtwoheadrightarrow\Sp^{m+1}.$$
\end{corollary}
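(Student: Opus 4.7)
The plan is to prove the corollary by induction on $m$, using Proposition \ref{prop:constructionofpsi12} as the base case and Lemma \ref{lemma:propsuspension} as the inductive step.

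For the base case $m=1$, Proposition \ref{prop:constructionofpsi12} directly provides an antipode preserving continuous surjection $\psi_{1,2}:\Sp^1 \twoheadrightarrow \Sp^2$, which satisfies all the required properties.

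For the inductive step, suppose that for some $m \geq 1$ we have constructed a continuous, surjective, and antipode preserving map $\psi_{m,(m+1)}:\Sp^m \twoheadrightarrow \Sp^{m+1}$. I would then define
$$\psi_{(m+1),(m+2)} := S\psi_{m,(m+1)}:\Sp^{m+1}\longrightarrow \Sp^{m+2},$$
where $S$ denotes the spherical suspension construction introduced before Lemma \ref{lemma:propsuspension}. By Lemma \ref{lemma:propsuspension}, applying $S$ preserves continuity, surjectivity, and the antipode preserving property, so $\psi_{(m+1),(m+2)}$ inherits all three properties from $\psi_{m,(m+1)}$. This completes the induction.

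There is no serious obstacle here: once the base case (Proposition \ref{prop:constructionofpsi12}) and the suspension lemma (Lemma \ref{lemma:propsuspension}) are in hand, the corollary follows by a one-line induction. The only nontrivial content already lies upstream, namely the space-filling-curve construction used to produce $\psi_{1,2}$ and the verification that suspending an odd map yields an odd map. Note further that Corollary \ref{cor:psim,m+1} immediately implies Theorem \ref{thm:surj'} for the case $n=m+1$; the general case $n > m+1$ follows by composing $\psi_{n-1,n}\circ \psi_{n-2,n-1}\circ \cdots \circ \psi_{m,m+1}$, since the composition of continuous, surjective, antipode preserving maps again has all three properties.
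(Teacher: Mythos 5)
Your proof is correct and follows exactly the paper's argument: the base case is Proposition \ref{prop:constructionofpsi12} and the inductive step applies the suspension Lemma \ref{lemma:propsuspension} to produce $\psi_{(m+1),(m+2)}=S\psi_{m,(m+1)}$. The closing remark about composing to get Theorem \ref{thm:surj'} likewise matches the paper's use of Lemma \ref{lemma:compprop}.
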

\begin{proof}
Proposition \ref{prop:constructionofpsi12} guarantees the  existence of such $\psi_{1,2}$. For general $m$, it suffices to apply Lemma \ref{lemma:propsuspension} inductively.
\end{proof}

The following lemma is obvious:
\begin{lemma}\label{lemma:compprop}
Suppose that numbers $l,m,n\in\N$, $f:\Sp^l\longtwoheadrightarrow\Sp^m$, and maps $g:\Sp^m\longtwoheadrightarrow\Sp^n$ are given such that both $f,g$ are continuous, surjective, and antipode preserving. Then, their composition $g\circ f:\Sp^l\longtwoheadrightarrow\Sp^n$ is also continuous, surjective, and antipode preserving.
\end{lemma}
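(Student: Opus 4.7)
The plan is to verify the three claimed properties of $g\circ f$ separately, each by a short direct argument. There is essentially no obstacle here; the statement is a routine composition check, which is why the authors label it "obvious". Nevertheless, the proof proposal is the following.

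First I would handle continuity: since $f:\Sp^l\rightarrow\Sp^m$ and $g:\Sp^m\rightarrow\Sp^n$ are both continuous by hypothesis, the composition $g\circ f:\Sp^l\rightarrow\Sp^n$ is continuous by the standard fact that compositions of continuous maps between topological spaces are continuous. Next I would verify surjectivity: given any $z\in \Sp^n$, surjectivity of $g$ yields $y\in\Sp^m$ with $g(y)=z$, and surjectivity of $f$ yields $x\in \Sp^l$ with $f(x)=y$, so $(g\circ f)(x)=z$. Hence $g\circ f$ is surjective.

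Finally I would check the antipode-preserving property by a one-line direct calculation. For any $x\in\Sp^l$, using the antipode-preserving property of $f$ and then of $g$:
\[
(g\circ f)(-x)=g(f(-x))=g(-f(x))=-g(f(x))=-(g\circ f)(x).
\]
This completes the verification of all three properties, so $g\circ f:\Sp^l\longtwoheadrightarrow \Sp^n$ is a continuous, surjective, antipode preserving map, as claimed.
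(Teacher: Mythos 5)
Your verification is correct, and it is exactly the routine argument the paper has in mind: the paper states this lemma without proof, declaring it obvious, and your three checks (composition of continuous maps, composition of surjections, and the one-line antipode computation) are precisely the intended justification.
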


\subsection*{The proof of Theorem \ref{thm:surj'}.} We are now ready to prove Theorem \ref{thm:surj'} which states that there exists an antipode preserving continuous surjection $\psi_{m,n}:\Sp^m\longtwoheadrightarrow \Sp^n$ for any $0<m<n<\infty$:
\begin{proof}[Proof of Theorem \ref{thm:surj'}]
By Corollary \ref{cor:psim,m+1}, there are continuous, surjective, and antipode preserving maps

$\psi_{m,(m+1)},\psi_{(m+1),(m+2)},\dots$, and $\psi_{(n-1),n}$. Then, by Lemma \ref{lemma:compprop}, the map
$$\psi_{m,n}:=\psi_{(n-1),n}\circ\cdots\circ\psi_{(m+1),(m+2)}\circ\psi_{m,(m+1)}$$
is also continuous, surjective, and antipode preserving. This concludes the proof.
\end{proof}

\section{A Borsuk-Ulam theorem for discontinuous functions and the proof of Theorem \ref{thm:sn-sm-lb-DS}} \label{sec:lb}

\begin{defn}[Modulus of discontinuity]
Let $X$ be a topological space, $Y$ be a metric space, and $f:X\rightarrow Y$ be any function. Then, we define $\delta(f)$, the \emph{modulus of discontinuity of $f$} in the following way:
$$\delta(f):=\inf\{\delta\geq 0:\forall\,x\in X,\exists\,\text{an open neighborhood }U_x\text{ of }x\text{ s.t. }\diam(f(U_x))\leq\delta\}.$$
\end{defn}

\begin{remark}
Of course, $\delta(f)=0$ if and only if $f$ is continuous.
\end{remark}

It turns out that the modulus of discontinuity is a lower bound for  distortion:
\begin{proposition}\label{prop:moddis}
Let $\phi:(X,d_X)\longrightarrow (Y,d_Y)$ be a map between two metric spaces. Then, we have
$$\delta(\phi)\leq\mathrm{dis}(\phi).$$
\end{proposition}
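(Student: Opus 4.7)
The plan is to show directly that for every $\varepsilon > 0$, the value $\dis(\phi) + \varepsilon$ is admissible in the infimum defining $\delta(\phi)$; taking $\varepsilon \to 0^+$ then gives $\delta(\phi) \leq \dis(\phi)$.

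Let $D := \dis(\phi)$, which we may assume is finite (otherwise the inequality is trivial). Fix an arbitrary $\varepsilon > 0$. For each $x \in X$, I would take the open neighborhood $U_x := B_X(x, \varepsilon/2)$, the open ball of radius $\varepsilon/2$ around $x$ in $X$. The key step is to bound $\diam(\phi(U_x))$: for any $x', x'' \in U_x$, the triangle inequality in $(X, d_X)$ gives $d_X(x', x'') < \varepsilon$, and then the definition of distortion yields
\[
d_Y(\phi(x'), \phi(x'')) \;\leq\; d_X(x', x'') + \bigl|d_X(x',x'') - d_Y(\phi(x'),\phi(x''))\bigr| \;\leq\; d_X(x',x'') + D \;<\; \varepsilon + D.
\]
Taking the supremum over $x', x'' \in U_x$ gives $\diam(\phi(U_x)) \leq D + \varepsilon$.

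Since this construction works simultaneously for every $x \in X$ with the uniform bound $D + \varepsilon$, the value $D + \varepsilon$ lies in the set over which $\delta(\phi)$ is infimized, so $\delta(\phi) \leq D + \varepsilon$. As $\varepsilon > 0$ was arbitrary, we conclude $\delta(\phi) \leq D = \dis(\phi)$, as desired. There is no real obstacle here; the only subtlety is the strict versus non-strict inequalities, handled cleanly by introducing the parameter $\varepsilon$ and letting it tend to $0$.
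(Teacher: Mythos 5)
Your argument is correct and is essentially identical to the paper's proof: both take $U_x = B_X(x,\varepsilon/2)$, bound $d_Y(\phi(x'),\phi(x''))$ by $d_X(x',x'') + \dis(\phi)$ via the triangle inequality and the definition of distortion, and then let $\varepsilon \to 0$. No gaps.
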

\begin{proof}
If $\mathrm{dis}(\phi)=\infty$, then the proof is trivial. So, suppose $\mathrm{dis}(\phi)<\infty$. Now, fix arbitrary $x\in X$ and $\varepsilon>0$. Consider the open ball $U_x:=B(x,\frac{\varepsilon}{2})$. Then, for any $x',x''\in U_x$, we have
\begin{align*}
    d_Y(\phi(x'),\phi(x''))&\leq d_X(x',x'')+\vert d_X(x',x')-d_Y(\phi(x'),\phi(x'')) \vert\\
    &<\mathrm{dis}(\phi)+\varepsilon.
\end{align*}
This implies $\diam(\phi(U_x))\leq \mathrm{dis}(\phi)+\varepsilon$. Since $x$ is arbitrary, it means $\delta(\phi)\leq\mathrm{dis}(\phi)+\varepsilon$. Since $\varepsilon$ is arbitrary, we have the required inequality.
\end{proof}

The following variant of the Borsuk-Ulam theorem due to Dubins and Schwarz is the main tool for the proof of Theorem \ref{thm:sn-sm-lb-DS}. 

\begin{theorem}[{\cite[Theorem 1]{dubins1981equidiscontinuity}}]\label{thm:strBU}
For each integer $n>0$, the modulus of discontinuity of any function $f:\mathbb{B}^n\rightarrow \mathbb{S}^{n-1}$ that maps every pair of antipodal points on the boundary of $\mathbb{B}^n$ onto antipodal points on $\mathbb{S}^{n-1}$ is not less than $\zeta_{n-1}$.
\end{theorem}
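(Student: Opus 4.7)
The plan is to argue by contradiction. Assume $\delta(f) < \zeta_{n-1}$; the goal is to manufacture, out of $f$, a genuinely \emph{continuous} map $g\colon \mathbb{B}^n \to \Sp^{n-1}$ whose restriction to $\partial \mathbb{B}^n$ is antipode preserving. This would contradict the classical Borsuk--Ulam theorem, since such $g|_{\partial}$ extends continuously over $\mathbb{B}^n$ (hence is null-homotopic), while every continuous antipode-preserving self-map of $\Sp^{n-1}$ has odd, and in particular nonzero, degree.

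From $\delta(f) < \zeta_{n-1}$ and compactness of $\mathbb{B}^n$ I would extract a finite open cover $\{U_1,\dots,U_N\}$ of $\mathbb{B}^n$ with $\diam f(U_i) < \zeta_{n-1}$ for each $i$, together with a Lebesgue number $\delta_0$ for this cover. The crucial geometric input is the following lemma: any $S \subset \Sp^{n-1}$ with $\diam S < \zeta_{n-1}$ lies in an open hemisphere, i.e.\ $0 \notin \mathrm{Conv}(S)$. To prove it, suppose for contradiction $0 = \sum_{i=0}^{k}\lambda_i s_i$ with $s_i\in S$, $k\leq n$, $\lambda_i\geq 0$, $\sum \lambda_i=1$ (Carath\'eodory). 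Expanding $\big\|\sum_i\lambda_i s_i\big\|^2 = 0$ gives $\sum_i \lambda_i^2 + \sum_{i\neq j}\lambda_i\lambda_j\langle s_i,s_j\rangle = 0$; combining with $\sum_i\lambda_i^2 \geq \tfrac{1}{k+1}$ (Cauchy--Schwarz) forces $\min_{i\neq j}\langle s_i,s_j\rangle \leq -\tfrac{1}{k}$, whence some pair satisfies $d_{\Sp^{n-1}}(s_i,s_j) \geq \zeta_{k-1} \geq \zeta_{n-1}$, contradicting the diameter bound. The payoff is that, for every finite set of $\Sp^{n-1}$-points of diameter less than $\zeta_{n-1}$, the ordinary barycentric combination in $\R^n$ of those points is nonzero and can be radially projected back to $\Sp^{n-1}$.

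Next, I would pick a triangulation $T$ of $\mathbb{B}^n$ of mesh strictly less than $\delta_0$ whose restriction to $\partial\mathbb{B}^n$ is \emph{antipodally symmetric} (i.e.\ $x\mapsto -x$ permutes boundary simplices); such a $T$ is standard, e.g.\ by starting with an antipodally symmetric triangulation of $\Sp^{n-1}$ (say the boundary of the cross-polytope), coning to the origin, and taking enough barycentric subdivisions, a procedure that preserves the symmetry. Define $g(v):=f(v)$ at every vertex $v$ of $T$, and extend to each simplex $\sigma=[v_0,\ldots,v_k]$ by the radially normalized barycentric combination $g(\sum t_j v_j) := \sum t_j f(v_j)/\big\|\sum t_j f(v_j)\big\|$. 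Because each $\sigma$ has diameter less than $\delta_0$ it lies in some $U_i$, so the vertex values $\{f(v_j)\}$ have diameter less than $\zeta_{n-1}$ and the lemma makes the denominator nonzero; the pieces glue along faces to give a continuous $g\colon \mathbb{B}^n \to \Sp^{n-1}$. On boundary vertices $g(-v)=f(-v)=-f(v)=-g(v)$ by hypothesis, and antipodal symmetry of the boundary triangulation together with the equivariance of barycentric interpolation under $x\mapsto -x$ propagates this to $g(-x)=-g(x)$ for every $x\in\partial\mathbb{B}^n$, yielding the promised contradiction.

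The substantive obstacle is the sharp geometric lemma; the constant $\zeta_{n-1}=\arccos(-1/n)$ cannot be improved and it is precisely what matches a regular inscribed $n$-simplex, so the Carath\'eodory-plus-Cauchy--Schwarz calculation has to be carried out with the right index bookkeeping. The equivariant fine triangulation of $\mathbb{B}^n$ is routine but must be constructed honestly so that the mesh bound and the antipodal action on $\partial \mathbb{B}^n$ hold simultaneously; everything after that is the standard Borsuk--Ulam extension/degree argument.
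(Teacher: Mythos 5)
Your proposal is correct and is essentially the paper's own argument (the Wa{\ss}mer/Matou\v{s}ek strategy): the same key geometric fact that a vertex set whose convex hull contains the origin must contain two points at spherical distance at least $\zeta_{n-1}$ (your Carath\'eodory--Cauchy--Schwarz lemma is the contrapositive of the paper's Lemma \ref{lemma:origindiam}), the same fine boundary-antipodally-symmetric triangulation controlled by a Lebesgue number, and the same normalized piecewise-linear interpolation producing a continuous boundary-antipode-preserving map that contradicts the classical Borsuk--Ulam theorem. The only differences are organizational: you run a single contradiction from $\delta(f)<\zeta_{n-1}$ and work directly on $\mathbb{B}^n$, whereas the paper factors the argument through a combinatorial lemma on the cross-polytope (Lemma \ref{lemma:Matousek}) and transfers via the radial homeomorphism $\alpha$ -- the technical point about curvilinear simplices that your sketch glosses over but which is handled the same way.
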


In  Appendix \ref{app:proof-BU} we provide a concise self contained proof of this result based on ideas by Arnold Wa{\ss}mer; see Matou\v{s}ek \cite[page 41]{matousek2003using}.

We immediately have:

\begin{corollary}[{\label{cor:strBU}\cite[Corollary 3]{dubins1981equidiscontinuity}}]
For each integer $n>0$, the modulus of discontinuity of any function $g:\mathbb{S}^n\rightarrow \mathbb{S}^{n-1}$ which maps every pair of antipodal points on $\mathbb{S}^n$ onto antipodal points on $\mathbb{S}^{n-1}$ is not less than $\zeta_{n-1}$.
\end{corollary}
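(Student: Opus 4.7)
The plan is to reduce Corollary~\ref{cor:strBU} to Theorem~\ref{thm:strBU} by restricting the given $g$ to the closed upper hemisphere $\mathbf{H}_{\geq 0}(\Sp^n)$ and then using the standard homeomorphism $h:\mathbb{B}^n\to \mathbf{H}_{\geq 0}(\Sp^n)$ defined by $h(x_1,\dots,x_n):=(x_1,\dots,x_n,\sqrt{1-x_1^2-\cdots-x_n^2})$. This $h$ sends the boundary sphere $\partial\mathbb{B}^n$ onto the equator $\sete(\Sp^n)$, identifying $(x_1,\dots,x_n)\in \partial\mathbb{B}^n$ with $(x_1,\dots,x_n,0)\in \Sp^n$, so that in particular antipodal pairs on $\partial\mathbb{B}^n$ are sent to antipodal pairs on the equator, which are also antipodal in $\Sp^n$.

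First I would set $f:=g\circ h:\mathbb{B}^n\to \Sp^{n-1}$. For any $p\in\partial\mathbb{B}^n$, since $h(p)$ and $h(-p)$ are antipodal in $\Sp^n$, the antipode preserving hypothesis on $g$ yields $f(p)=g(h(p))=-g(h(-p))=-f(-p)$. Therefore $f$ satisfies the hypothesis of Theorem~\ref{thm:strBU}, and consequently $\delta(f)\geq \zeta_{n-1}$.

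The second step is to show $\delta(f)\leq \delta(g)$. Given any $\delta>\delta(g)$, for every $x\in\Sp^n$ pick an open neighborhood $V_x\subset \Sp^n$ of $x$ with $\diam(g(V_x))\leq \delta$. For any $p\in\mathbb{B}^n$, set $x:=h(p)$ and $U_p:=h^{-1}\bigl(V_x\cap \mathbf{H}_{\geq 0}(\Sp^n)\bigr)$. Because $h$ is a homeomorphism, $U_p$ is an open neighborhood of $p$ in $\mathbb{B}^n$, and $f(U_p)\subseteq g(V_x)$ yields $\diam(f(U_p))\leq \delta$. Hence $\delta(f)\leq \delta(g)$, which combined with the previous step gives $\delta(g)\geq \zeta_{n-1}$, as desired.

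There is no substantive obstacle; the only point requiring care is the intersection with $\mathbf{H}_{\geq 0}(\Sp^n)$ in the definition of $U_p$, which is needed precisely when $p$ lies on $\partial\mathbb{B}^n$ (i.e.\ when $x\in \sete(\Sp^n)$), so that the restricted neighborhood remains in the domain of $h^{-1}$.
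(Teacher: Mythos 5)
Your proposal is correct and follows essentially the same route as the paper: the paper also pulls $g$ back along the map $(x_1,\dots,x_n)\mapsto(x_1,\dots,x_n,\sqrt{1-x_1^2-\cdots-x_n^2})$ onto the closed upper hemisphere, checks that the composition is antipode preserving on $\partial\mathbb{B}^n$, applies Theorem~\ref{thm:strBU}, and transfers the bound back via openness of preimages, exactly as you do. The only cosmetic difference is that the paper argues via continuity of that map rather than invoking it as a homeomorphism, which makes your careful intersection with $\mathbf{H}_{\geq 0}(\Sp^n)$ unnecessary but harmless.
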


We provide a detailed proof of this result for completeness.

\begin{proof}
Consider the following map
\begin{align*}
    \Phi:\mathbb{B}^n&\longrightarrow\Sp^n\\
    (x_1,\dots,x_n)&\longmapsto\left(x_1,\dots,x_n,\sqrt{1-(x_1^2\cdots +x_n^2)}\right).
\end{align*}
Obviously, $\Phi$ is continuous and its image is $\mathbf{H}_{\geq 0}(\Sp^n)$. Now, fix an arbitrary $\delta\geq 0$ such that: $$ \mbox{($*$) for every $x\in \Sp^n$ there exists an open neighborhood $U_x$ of $x$ with $\diam(g(U_x))\leq\delta$.}$$
Now, fix arbitrary $x'\in\mathbb{B}^n$. Then, $\Phi^{-1}(U_{\Phi(x')})$ is an open neighborhood of $x'$, and $$\diam\big(g\circ\Phi(\Phi^{-1}(U_{\Phi(x')}))\big)\leq\diam(g(U_{\Phi(x')}))\leq\delta.$$
Since $x'$ is arbitrary, this means that  $\delta\geq\delta(g\circ\Phi)$. Moreover, since $g\circ\Phi$ is antipode preserving,  $\delta(g\circ\Phi)\geq\zeta_{n-1}$ by Theorem \ref{thm:strBU}. Hence, we  conclude that  $\delta\geq\zeta_{n-1}$. Finally, since $\delta$ satisfying condition ($*$) above was arbitrary, by taking the infimum we conclude that 
$$\delta(g)\geq\zeta_{n-1}$$
as we wanted.
\end{proof}

\begin{corollary}\label{cor:strBU2}
For each integer $n>0$, any function $g:\mathbb{S}^n\rightarrow \mathbb{S}^{n-1}$ which maps every pair of antipodal points on $\mathbb{S}^n$ onto antipodal points on $\mathbb{S}^{n-1}$ satisfies $\mathrm{dis}(g)\geq \zeta_{n-1}$.
\end{corollary}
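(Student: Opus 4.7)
The plan is to combine the two ingredients already established in this section: Corollary \ref{cor:strBU}, which bounds the modulus of discontinuity $\delta(g)$ from below by $\zeta_{n-1}$ for any antipode-preserving $g:\Sp^n\to\Sp^{n-1}$, and Proposition \ref{prop:moddis}, which bounds $\delta(\phi)$ from above by $\mathrm{dis}(\phi)$ for any map $\phi$ between metric spaces. Chaining these two inequalities gives $\mathrm{dis}(g)\geq\delta(g)\geq\zeta_{n-1}$, which is the claim.

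Concretely, I would first note that $\Sp^n$ and $\Sp^{n-1}$ are metric spaces (endowed with the geodesic distance), so Proposition \ref{prop:moddis} applies verbatim to $g$, yielding $\delta(g)\leq\mathrm{dis}(g)$. Next, since $g$ sends antipodal pairs to antipodal pairs by hypothesis, Corollary \ref{cor:strBU} directly gives $\delta(g)\geq\zeta_{n-1}$. Combining these two inequalities yields the desired conclusion.

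There is no real obstacle here; the corollary is essentially a one-line consequence of the preceding two results. The only thing worth remarking is that one does not need any continuity assumption on $g$: the distortion $\mathrm{dis}(g)$ is defined purely in terms of pairwise distance comparisons, and the chain $\zeta_{n-1}\leq\delta(g)\leq\mathrm{dis}(g)$ is valid for arbitrary (possibly very discontinuous) antipode-preserving maps. In particular, if $\mathrm{dis}(g)=\infty$ the statement is trivial, and otherwise Proposition \ref{prop:moddis} furnishes the needed upper bound on $\delta(g)$. This is precisely the bridge that allows the Dubins--Schwarz refinement of Borsuk--Ulam to be translated into a distortion lower bound, and hence (via the correspondence description of $\dgh$ from \eqref{eq:dgh-functions}) into the Gromov--Hausdorff lower bound of Theorem \ref{thm:sn-sm-lb-DS}.
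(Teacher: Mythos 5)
Your argument is exactly the paper's proof: Corollary \ref{cor:strBU} gives $\delta(g)\geq\zeta_{n-1}$ for any antipode-preserving $g$, and Proposition \ref{prop:moddis} gives $\delta(g)\leq\dis(g)$, so $\dis(g)\geq\zeta_{n-1}$. Correct and identical in approach to the paper's one-line proof.
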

\begin{proof}
Apply Corollary \ref{cor:strBU} and Proposition \ref{prop:moddis}.
\end{proof}

\subsection{The proof of Theorem \ref{thm:sn-sm-lb-DS}} \label{sec:helmets}
We are almost ready to prove Theorem \ref{thm:sn-sm-lb-DS} that establishes $\dgh(\Sp^m,\Sp^n)\geq\frac{1}{2}\zeta_m$ for any $0<m<n<\infty$.

For each integer $n\geq 1$, recall the natural isometric embedding of $\Sp^{n-1}$ to the equator $\sete(\Sp^n)$ of $\Sp^n$:
\begin{align*}
    \iota_{n-1}:\mathbb{S}^{n-1}&\longhookrightarrow \mathbb{S}^n\\
    (x_1,\dots,x_n)&\longmapsto (x_1,\dots,x_n,0).
\end{align*}

Also, let us define the sets $\seta(\Sp^n)\subset\Sp^n$ (which we will sometimes refer to as ``helmets") for $n\in\N$:

\begin{defn}[Definition of $\seta(\Sp^n)$]\label{def:seta}
Let
\begin{align*}
    &\seta(\mathbb{S}^0):=\{1\}\textrm{ and,}\\
    &\seta(\mathbb{S}^1):=\{(\cos{\theta},\sin{\theta})\in \mathbb{S}^1:\theta\in[0,\pi)\}.
\end{align*}
Moreover, for general $n\geq 1$,   define, inductively,
$$\seta(\mathbb{S}^n):=\mathbf{H}_{>0}(\mathbb{S}^n)\cup \iota_{n-1}(\seta(\mathbb{S}^{n-1})).$$
\end{defn}

See Figure \ref{fig:disjoint} for an illustration. Observe that, for any $n\geq 0$, $$\mbox{$\seta(\mathbb{S}^n)\cap \big(-\seta(\mathbb{S}^n)\big)=\emptyset$ \,and\, $\seta(\mathbb{S}^n)\cup \big(-\seta(\mathbb{S}^n)\big)=\mathbb{S}^n.$}$$

\begin{figure}
    \centering
    \includegraphics[width=0.6\linewidth]{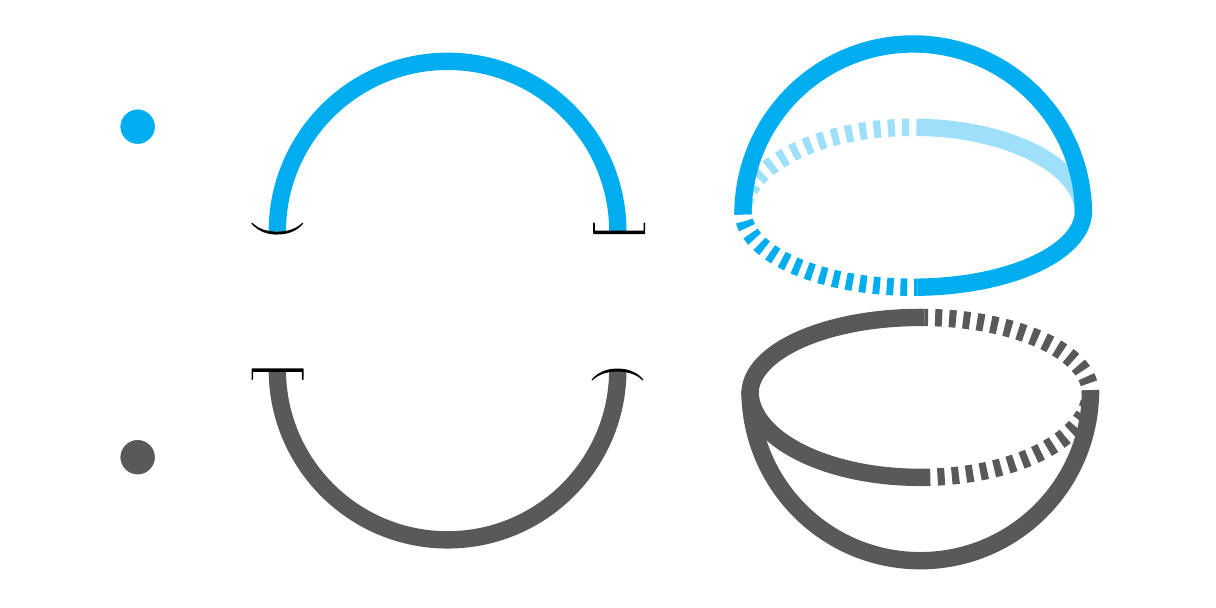}
    \caption{From left to right, the blue sets represent $\seta(\Sp^0)$, $\seta(\Sp^1)$, and $\seta(\Sp^2)$. The figure also shows their antipodes in dark grey, respectively. See Definition \ref{def:seta} for the precise definition.}
    \label{fig:disjoint}
\end{figure}

The following lemma is simple but critical. Given any map $\phi:\Sp^n\rightarrow \Sp^{n-1}$ it will permit constructing an antipode preserving map $\phi^*$ with at most the same distortion.

\begin{lemma}\label{lemma:distortion}
For any $m,n\geq 0$, let $\emptyset\neq C\subseteq \Sp^n$ satisfy $C\cap (-C)=\emptyset$ and let $\phi:C\rightarrow \Sp^m$ be any map. Then, the extension $\phi^*$ of $\phi$ to the set $C\cup(-C)$ defined  by
\begin{align*}
    {\phi^*}: C\cup(-C)&\longrightarrow \Sp^m\\
    C \ni x&\longmapsto \phi(x)\\
    -x&\longmapsto-\phi(x)
\end{align*}
is antipode preserving and satisfies $\dis({\phi^*})=\dis(\phi)$.
\end{lemma}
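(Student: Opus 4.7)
The plan is to verify the two claims of the lemma directly from the definition of $\phi^*$, using one key identity about geodesic distance on spheres, namely that for any $u,v$ in a sphere (with its round metric), $d(u,-v)=\pi-d(u,v)$, which follows from $\arccos(-\langle u,v\rangle)=\pi-\arccos\langle u,v\rangle$. This implies in particular that $d(-u,-v)=d(u,v)$, i.e.\ the antipodal map is an isometry of each $\Sp^k$.

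First I would show that $\phi^*$ is antipode preserving. This is purely a matter of unfolding the definition in the two possible cases: if $y\in C$, then $\phi^*(-y)=-\phi(y)=-\phi^*(y)$ by the second clause of the definition; if instead $y=-x$ for some $x\in C$ (so $y\in -C$, and this is unambiguous because $C\cap(-C)=\emptyset$), then $\phi^*(-y)=\phi^*(x)=\phi(x)$ and $-\phi^*(y)=-\phi^*(-x)=-(-\phi(x))=\phi(x)$.

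For the distortion equality, the inequality $\dis(\phi^*)\geq\dis(\phi)$ is trivial because $\phi$ is the restriction of $\phi^*$ to $C\times C\subseteq(C\cup(-C))^2$. For the reverse inequality $\dis(\phi^*)\leq\dis(\phi)$, I would pick an arbitrary pair $(u,v)\in(C\cup(-C))^2$ and reduce the quantity $\bigl|d_{\Sp^n}(u,v)-d_{\Sp^m}(\phi^*(u),\phi^*(v))\bigr|$ to an analogous quantity for a pair of points of $C$. There are three cases:  both $u,v\in C$ (nothing to do);  both $u,v\in -C$, in which case setting $u=-x$, $v=-y$ with $x,y\in C$ we have $d_{\Sp^n}(u,v)=d_{\Sp^n}(x,y)$ by antipodal invariance, and likewise $d_{\Sp^m}(\phi^*(u),\phi^*(v))=d_{\Sp^m}(-\phi(x),-\phi(y))=d_{\Sp^m}(\phi(x),\phi(y))$; and the mixed case, say $u=x\in C$ and $v=-y\in -C$ with $y\in C$, where the identity $d(u,-y)=\pi-d(u,y)$ applied in both $\Sp^n$ and $\Sp^m$ gives
\[
\bigl|d_{\Sp^n}(x,-y)-d_{\Sp^m}(\phi(x),-\phi(y))\bigr|=\bigl|(\pi-d_{\Sp^n}(x,y))-(\pi-d_{\Sp^m}(\phi(x),\phi(y)))\bigr|=\bigl|d_{\Sp^n}(x,y)-d_{\Sp^m}(\phi(x),\phi(y))\bigr|,
\]
which is bounded by $\dis(\phi)$. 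Taking the supremum over $(u,v)$ yields $\dis(\phi^*)\leq\dis(\phi)$.

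I do not expect any real obstacle: the whole argument is a bookkeeping exercise, and the only substantive ingredient is the antipodal formula $d(u,-v)=\pi-d(u,v)$ on round spheres, which makes the mixed case collapse to the pure case. The only point worth flagging is that the hypothesis $C\cap(-C)=\emptyset$ is used to ensure the definition of $\phi^*$ on $-C$ is unambiguous (the preimage of a point $-x\in -C$ under $x\mapsto -x$ is uniquely determined in $C$), so that $\phi^*$ is a well-defined function.
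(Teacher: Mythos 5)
Your proof is correct and follows essentially the same route as the paper's: the key step in both is the antipodal identity $d(u,-v)=\pi-d(u,v)$ (applied in both spheres), which collapses the mixed and doubly-negated cases to the case of a pair in $C$. Your proposal is slightly more explicit than the paper in spelling out well-definedness, antipode preservation, and the trivial inequality $\dis(\phi^*)\geq\dis(\phi)$, but the substance is identical.
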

\begin{proof}
$\phi^*$ is obviously antipode preserving by the definition. Now, fix arbitrary $x,x'\in C$. Then, 

\begin{align*}
    \big\vert d_{\Sp^n}(x,-x')-d_{\Sp^m}({\phi^*}(x),{\phi^*}(-x')) \big\vert&=\big\vert (\pi-d_{\Sp^n}(x,x'))-(\pi-d_{\Sp^m}(\phi(x),\phi(x'))) \big\vert\\
    &=\big\vert d_{\Sp^n}(x,x')-d_{\Sp^m}(\phi(x),\phi(x'))\vert
    \\&\leq\dis(\phi)
\end{align*}
and,
$$\vert d_{\Sp^n}(-x,-x')-d_{\Sp^m}({\phi^*}(-x),{\phi^*}(-x')) \vert=\vert d_{\Sp^n}(x,x')-d_{\Sp^m}(\phi(x),\phi(x')) \vert\leq\dis(\phi).$$

This implies $\dis({\phi^*})=\dis(\phi)$ as we wanted to prove.
\end{proof}

\begin{corollary}\label{coro:preservedistortion}
For each $n\in\Z_{>0}$ and any map $\phi:\Sp^n\rightarrow \Sp^{n-1}$ there exists an antipode preserving map  $\phi^*:\Sp^n\rightarrow \Sp^{n-1}$ such that  $\dis(\phi^*)\leq\dis(\phi)$.
\end{corollary}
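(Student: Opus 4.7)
The plan is to combine Lemma \ref{lemma:distortion} with the ``helmet'' sets $\seta(\Sp^n)$ introduced in Definition \ref{def:seta}. Recall that these sets were designed precisely so that $\seta(\Sp^n)\cap(-\seta(\Sp^n))=\emptyset$ and $\seta(\Sp^n)\cup(-\seta(\Sp^n))=\Sp^n$. This means $C:=\seta(\Sp^n)$ plays exactly the role of the set $C$ in the statement of Lemma \ref{lemma:distortion}, and its antipodal counterpart completes it to cover all of $\Sp^n$.

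First, I would restrict the given map $\phi$ to $C=\seta(\Sp^n)$, obtaining $\phi|_C:C\to\Sp^{n-1}$. Since distortion is defined as a supremum of $|d_{\Sp^n}(x,x')-d_{\Sp^{n-1}}(\phi(x),\phi(x'))|$ over pairs of points in the domain, restricting the domain can only decrease the distortion, so $\dis(\phi|_C)\leq\dis(\phi)$. Next, I would invoke Lemma \ref{lemma:distortion} to obtain the antipode preserving extension $(\phi|_C)^*:C\cup(-C)\to\Sp^{n-1}$, which by construction satisfies $\dis((\phi|_C)^*)=\dis(\phi|_C)$. Since $C\cup(-C)=\Sp^n$, this extension is defined on the whole sphere, and I would declare $\phi^*:=(\phi|_C)^*$.

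Combining the two observations gives $\dis(\phi^*)=\dis(\phi|_C)\leq\dis(\phi)$, which is the desired inequality, while antipode preservation is guaranteed by Lemma \ref{lemma:distortion}. There is no real obstacle here — the work has already been done in building $\seta(\Sp^n)$ and proving Lemma \ref{lemma:distortion}; the corollary is simply the observation that these two ingredients fit together, with the only care needed being to note that passing to the restriction cannot increase distortion.
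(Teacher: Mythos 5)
Your proof is correct and is essentially the paper's own argument: the paper likewise restricts $\phi$ to the ``helmet'' $\seta(\Sp^n)$ and applies Lemma \ref{lemma:distortion}, using that $\seta(\Sp^n)\cup(-\seta(\Sp^n))=\Sp^n$ and that restriction does not increase distortion. Nothing further is needed.
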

\begin{proof}
Consider the restriction of $\phi$ to $\seta(\Sp^n)$ and apply Lemma \ref{lemma:distortion}.
\end{proof}

Finally, we are ready to prove Theorems \ref{thm:sn-sm-lb-DS}.

\begin{proof}[Proof of Theorem \ref{thm:sn-sm-lb-DS}]
Let $0<m<n<\infty$. We first prove the second claim of Theorem \ref{thm:sn-sm-lb-DS} that $\dis(\phi)\geq\zeta_m$ for any map $\phi:\Sp^n\rightarrow\Sp^m$.  Suppose to the contrary so that there is a map $\tilde{g}:\Sp^n\rightarrow\Sp^m$ with $\dis(\tilde{g})<\zeta_m$. By restriction, this map induces a map $g:\Sp^{m+1}\rightarrow\Sp^m$ such that $\dis(g)<\zeta_m$. By applying Corollary \ref{coro:preservedistortion}, one can modify  $g$ into an antipode preserving map $\widehat{g}:\Sp^{m+1}\rightarrow \Sp^m$ with $\dis(\widehat{g})<\zeta_m$, which  contradicts Corollary \ref{cor:strBU2}. This yields the proof of the second claim of Theorem \ref{thm:sn-sm-lb-DS}.

Now, in order to prove the first claim of Theorem \ref{thm:sn-sm-lb-DS} that $\dgh(\Sp^m,\Sp^n)\geq\frac{1}{2}\zeta_m$,  suppose that $\Gamma$ is a correspondence between $\Sp^m$ and $\Sp^n$ with $\dis(\Gamma)<\zeta_m$. Pick any function $g:\Sp^n\rightarrow\Sp^m$ such that $(g(x),x)\in\Gamma$ for every $x\in\Sp^n$. This implies that $\dis(g)\leq\dis(\Gamma)<\zeta_m$, which contradicts the second claim. This proves the first claim.
\end{proof}
 
\subsection{The proof of Theorem \ref{thm:stronggeneralization}}
By carefully inspecting the proof of Theorem \ref{thm:sn-sm-lb-DS}, one can extract the much stronger Theorem \ref{thm:stronggeneralization}. 

\begin{proof}[Proof of Theorem \ref{thm:stronggeneralization}]
We will actually prove slightly stronger result. Suppose (i)  $X$ can be isometrically embedded into $\Sp^m$ and (ii) $\seta(\Sp^{m+1})$ (note that $\seta(\Sp^{m+1})\subset\seth_{\geq 0}(\Sp^{m+1})$) can be isometrically embedded into $Y$. Now, we prove that $\dgh(X,Y)\geq\frac{1}{2}\zeta_m$. Moreover, $\dis(\phi)\geq\zeta_m$ for any map $\phi:Y\rightarrow X$.

We first prove the second claim. Suppose to the contrary so that there is a map $\tilde{g}:Y\rightarrow X$ with $\dis(\tilde{g})<\zeta_m$. Then, since $\seta(\Sp^{m+1})$ is isometrically embedded in $Y$ and $X$ is isometrically embedded in $\Sp^m$ by the assumption, one can construct a map $g:\seta(\Sp^{m+1})\longrightarrow\Sp^m$ with $\dis(g)<\zeta_m$. Hence, with the aid of Lemma \ref{lemma:distortion}, one can modify this $g$ into an antipode preserving map $\widehat{g}:\Sp^{m+1}\rightarrow \Sp^m$ with $\dis(\widehat{g})<\zeta_m$, which contradicts Corollary \ref{cor:strBU2}. This yields the proof of the second claim.

Now, in order to prove the first claim, use the same argument used in the proof of Theorem \ref{thm:sn-sm-lb-DS}.
\end{proof}

\section{The proofs of Proposition \ref{prop:ub} and Proposition \ref{prop:ub-n-m}} \label{sec:proofs-ub}

To prove Proposition \ref{prop:ub} and Proposition \ref{prop:ub-n-m}, we need to define a few notions.

\begin{defn}
For any nonempty $U\subseteq\Sp^{n-1}$, we define \emph{the cone of }$U$, as the following subset  of $\Sp^n\subset \R^{n+1}$:
$$\mathcal{C}(U):=\left\{\cos\theta\cdot e_{n+1}+\sin\theta\cdot \iota_{n-1}(u)\in \mathbf{H}_{\geq 0}(\Sp^n):u\in U\text{ and }\theta\in\left[0,\frac{\pi}{2}\right]\right\}$$
where $e_{n+1}=(0,0,\cdots,0,1)\in\R^{n+1}$ is the north pole of $\Sp^n$. See Figure \ref{fig:cone}.
\end{defn}

\begin{figure}
    \centering
    \includegraphics[width=0.4\linewidth]{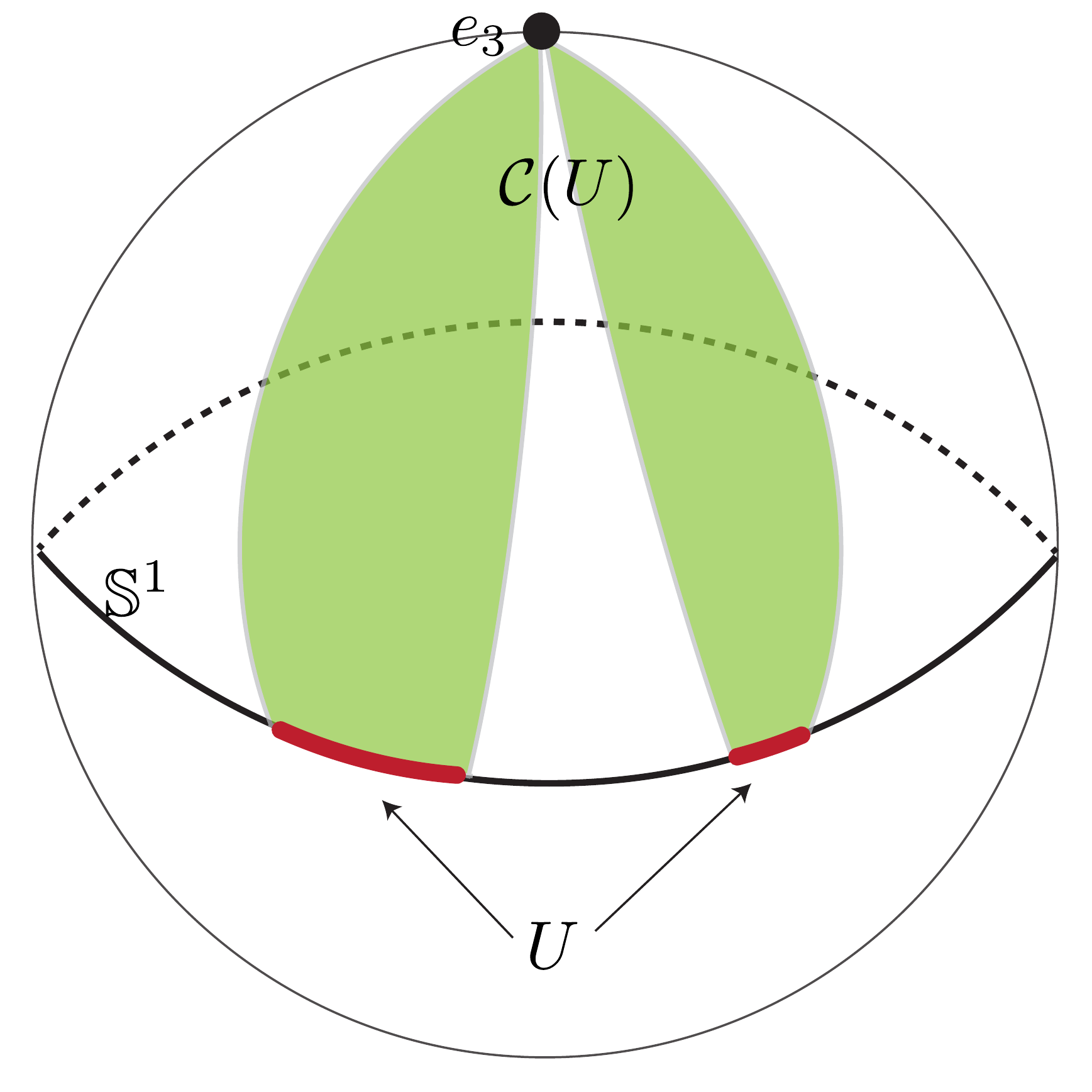}
    \caption{The cone $\mathcal{C}(U)$ for a subset $U$ of $\Sp^1$.}
    \label{fig:cone}
\end{figure}

\begin{lemma}\label{lemma:diamofcone}
For any nonempty $U\subseteq\Sp^{n-1}$,
$$\diam(\mathcal{C}(U))=\begin{cases}\frac{\pi}{2}&\text{if }\diam(U)\leq\frac{\pi}{2},\\ \diam(U)&\text{if }\diam(U)\geq\frac{\pi}{2}.\end{cases}$$
\end{lemma}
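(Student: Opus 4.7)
I will parametrize any $p\in \mathcal{C}(U)$ as $p=\cos\theta\cdot e_{n+1}+\sin\theta\cdot \iota_{n-1}(u)$ with $\theta\in[0,\pi/2]$ and $u\in U$. Since $\iota_{n-1}$ is a linear isometric embedding whose image is orthogonal to $e_{n+1}$, a direct inner-product computation for two such points $p_1,p_2$ gives
\[
\langle p_1,p_2\rangle = \cos\theta_1\cos\theta_2+\sin\theta_1\sin\theta_2\cos\alpha, \qquad \alpha:=d_{\Sp^{n-1}}(u_1,u_2)\in[0,\diam(U)],
\]
so that $d_{\Sp^n}(p_1,p_2)=\arccos F$ where $F:=\cos\theta_1\cos\theta_2+\sin\theta_1\sin\theta_2\cos\alpha$. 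Computing $\diam(\mathcal{C}(U))$ thus reduces to minimizing $F$ over $\theta_1,\theta_2\in[0,\pi/2]$ and $\alpha\in[0,\diam(U)]$, and matching the lower bound with an explicit pair.

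The analytic core is the elementary inequality
\[
F(\theta_1,\theta_2,\alpha)\;\geq\;\min\{0,\cos\alpha\} \qquad \text{for all } \theta_1,\theta_2\in[0,\pi/2],\ \alpha\in[0,\pi],
\]
which I plan to establish by splitting on the sign of $\cos\alpha$. If $\cos\alpha\geq 0$, both summands of $F$ are nonnegative and so $F\geq 0$. If $\cos\alpha<0$, I rewrite
\[
F-\cos\alpha \;=\; \cos\theta_1\cos\theta_2+(\sin\theta_1\sin\theta_2-1)\cos\alpha;
\]
the first summand is nonnegative because $\theta_i\in[0,\pi/2]$, while the second is nonnegative as a product of two nonpositive quantities (using $\sin\theta_1\sin\theta_2\leq 1$). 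Hence $F\geq \cos\alpha$ in this subcase.

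The two cases of the lemma then fall out immediately. If $\diam(U)\leq \pi/2$, then $\alpha\leq \pi/2$ forces $\cos\alpha\geq 0$, so $F\geq 0$ and $d_{\Sp^n}(p_1,p_2)\leq \pi/2$ for every pair in $\mathcal{C}(U)$; this bound is achieved by $p_1=e_{n+1}$ (the $\theta=0$ point) and any $p_2=\iota_{n-1}(u)$ with $u\in U$, so $\diam(\mathcal{C}(U))=\pi/2$. If $\diam(U)\geq \pi/2$, then $\cos\diam(U)\leq 0$ and $\min\{0,\cos\alpha\}\geq \cos\diam(U)$ throughout the admissible range of $\alpha$; hence $F\geq \cos\diam(U)$ and $d_{\Sp^n}(p_1,p_2)\leq \diam(U)$. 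This is matched by choosing $p_i=\iota_{n-1}(u_i)$ (both with $\theta_i=\pi/2$) with $d_{\Sp^{n-1}}(u_1,u_2)$ approaching $\diam(U)$, so $\diam(\mathcal{C}(U))=\diam(U)$.

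There is no genuine obstacle beyond the two-line case analysis sketched above; the only points to keep in mind are that the parametrization is not injective at $\theta=0$ (which is harmless, since we only need an achievable minimum of $F$), and that the two cases agree when $\diam(U)=\pi/2$, so the piecewise formula is continuous. Geometrically, the statement encodes the fact that the maximum separation inside the cone is realized along a polar meridian (of length $\pi/2$) when $U$ is narrow, and along the equatorial copy $\iota_{n-1}(U)\subseteq \mathcal{C}(U)$ when $U$ is wide enough that its own diameter already exceeds $\pi/2$.
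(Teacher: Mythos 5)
Your proposal is correct and follows essentially the same route as the paper: parametrize cone points, compute $\langle p_1,p_2\rangle=\cos\theta_1\cos\theta_2+\langle u_1,u_2\rangle\sin\theta_1\sin\theta_2$, and split on the sign of $\langle u_1,u_2\rangle$ to bound the distance by $\frac{\pi}{2}$ or by $d_{\Sp^{n-1}}(u_1,u_2)$. The only cosmetic differences are that you verify the case $\cos\alpha<0$ by the algebraic rearrangement $F-\cos\alpha\geq 0$ instead of the paper's monotonicity-in-$\theta$ argument, and you spell out the attainment of the diameter (via $e_{n+1}$ and the equatorial copy of $U$), which the paper leaves implicit.
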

\begin{proof}
Recall that
$$\mathcal{C}(U):=\left\{\cos\theta\cdot e_{n+1}+\sin\theta\cdot \iota_{n-1}(u)\in \mathbf{H}_{\geq 0}(\Sp^n):u\in U\text{ and }\theta\in\left[0,\frac{\pi}{2}\right]\right\}.$$
Now, for $u,v\in U$ and $\theta,\theta'\in[0,\frac{\pi}{2}]$, consider the following inner product:
$$\langle \cos\theta\cdot e_{n+1}+\sin\theta\cdot\iota_{n-1}(u),\cos\theta'\cdot e_{n+1}+\sin\theta'\cdot\iota_{n-1}(v)\rangle=\cos\theta\cos\theta'+\langle u,v\rangle\cdot\sin\theta\sin\theta'.$$

Hence, if $\langle u,v\rangle\geq 0$,
$$\langle \cos\theta\cdot e_{n+1}+\sin\theta\cdot\iota_{n-1}(u),\cos\theta'\cdot e_{n+1}+\sin\theta'\cdot\iota_{n-1}(v)\rangle\geq 0$$
so that $d_{\Sp^n}(\cos\theta\cdot e_{n+1}+\sin\theta\cdot u,\cos\theta'\cdot e_{n+1}+\sin\theta'\cdot v)\leq\frac{\pi}{2}$.

If $\langle u,v\rangle\leq 0$, $\cos\theta\cos\theta'+\langle u,v\rangle\cdot\sin\theta\sin\theta$ becomes decreasing in $\theta,\theta'$. Hence, it is minimized for $\theta=\theta'=\frac{\pi}{2}$. Therefore,
$$\langle \cos\theta\cdot e_{n+1}+\sin\theta\cdot\iota_{n-1}(u),\cos\theta'\cdot e_{n+1}+\sin\theta'\cdot\iota_{n-1}(v)\rangle\geq\langle u,v \rangle$$
so that $d_{\Sp^n}(\cos\theta\cdot e_{n+1}+\sin\theta\cdot\iota_{n-1}(u),\cos\theta'\cdot e_{n+1}+\sin\theta'\cdot\iota_{n-1}(v))\leq d_{\Sp^{n-1}}(u,v)$ which completes the proof. 
\end{proof}

\begin{defn}[Geodesic convex hull]\label{def:gch}
Given a nonempty subset $A\subset \Sp^n$, its \emph{geodesic convex hull $\mathrm{conv}_{\Sp^n}(A)$} is defined to be the smallest subset of $\Sp^n$ containing $A$ such that for any two points in the set, all minimizing geodesics between them are also contained in the set. It is clear that when $A$ is contained in an open hemisphere,  $$\mathrm{conv}_{\Sp^n}(A) = \{\Pi_{\Sp^n}(c)|\,c\in \mathrm{conv}(A)\}$$ where $\Pi_{\Sp^n}(p) := \frac{p}{\|p\|}$ for $p\neq 0$ and $\Pi_{\Sp^n}(p):=0$ otherwise.
\end{defn}


In what follows we will prove Proposition \ref{prop:ub-n-m} after proving Proposition \ref{prop:ub}. The proof of the former proposition generalizes the construction used in the proof of the latter one, and as a consequence Proposition \ref{prop:ub} (which exhibits a correspondence between $\Sp^2$ and $\Sp^1$) is a special case of Proposition \ref{prop:ub-n-m} (which constructs a correspondence between $\Sp^{m+1}$ and $\Sp^m$). 

With the goal of making the construction more understandable, we have however decided to first present a detailed proof of Proposition \ref{prop:ub} since the optimal $R_{2,1}$ correspondence constructed therein is used in the proof of Proposition \ref{prop:ub13}  in order to construct an optimal correspondence $R_{3,1}$.  After this we provide a streamlined proof of Proposition \ref{prop:ub-n-m}.

\subsection{The proof of Proposition \ref{prop:ub}}\label{sec:proof-ub}

We will find an upper bound for $\dgh(\Sp^1,\mathbf{H}_{\geq 0}(\Sp^2))$ (resp. $\dgh(\Sp^1,\Sp^2)$) by constructing a specific correspondence between $\Sp^1$ and $\mathbf{H}_{\geq 0}(\Sp^2)$ (resp. $\Sp^1$ and $\Sp^2$). This correspondence is inspired by the case $m=1$ of certain surjective maps from $\Sp^{m+1}$ to $\Sp^m$  \cite[Scholium 1]{dubins1981equidiscontinuity} developed in the course of the authors' study of the modulus of discontinuity of antipode preserving maps between spheres. In spite of the fact that these maps will in general fail to yield tight upper bounds for distortion, they still permit giving non-trivial upper bounds for $\mathfrak{g}_{m,m+1}$. This  will be explained in \S\ref{sec:proof-gen-ub}.

\begin{remark}
We provide an alternative construction of a correspondence $R$ between $\Sp^1$ and $\Sp^2$ with $\dis(R)\leq\frac{2\pi}{3}$ in Appendix \ref{sec:alternative-method}.
\end{remark}

\begin{figure}
    \centering
    \includegraphics[width=0.7\linewidth]{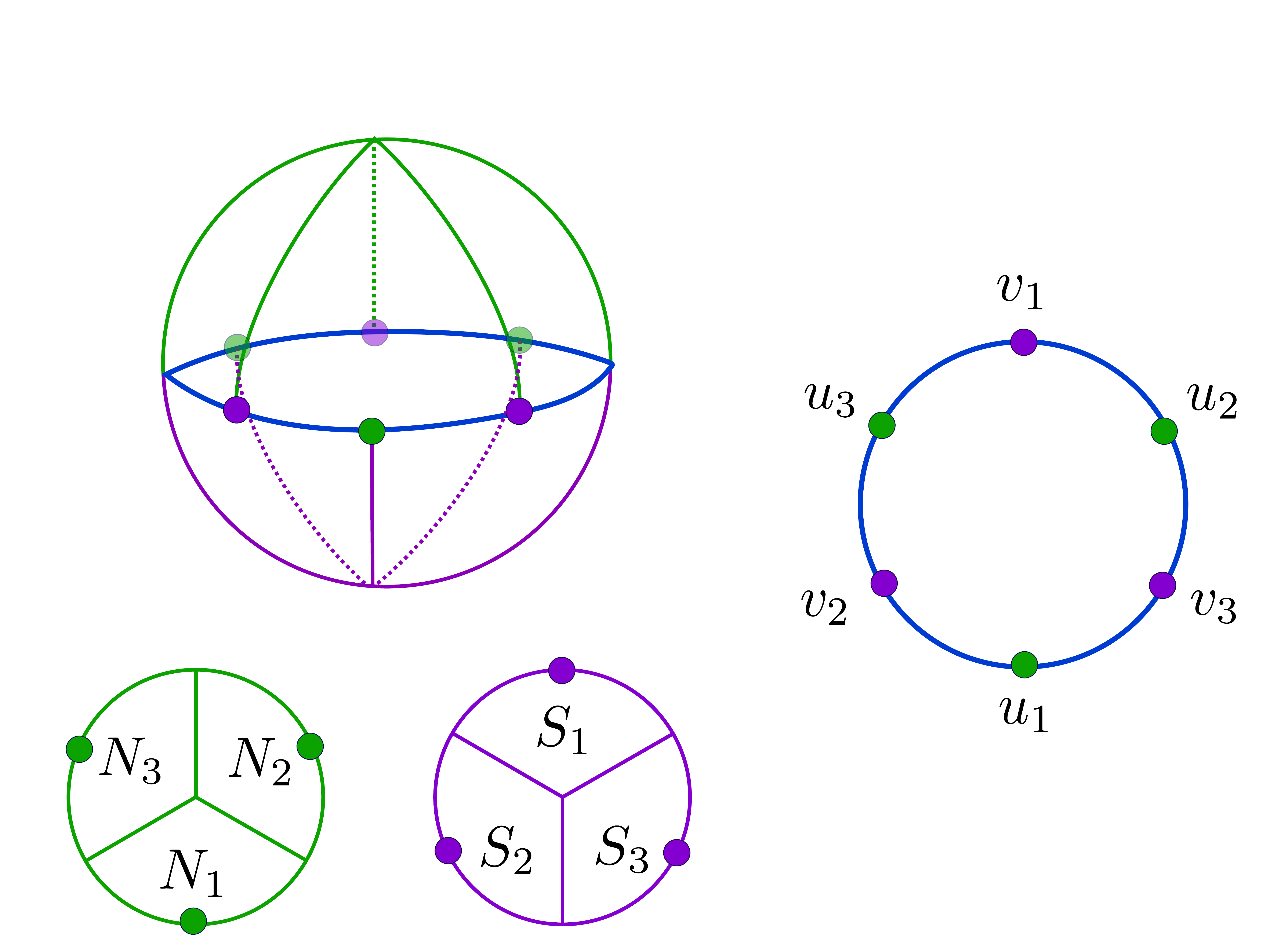}
    \caption{The surjection $\phi_{2,1}:\Sp^2\twoheadrightarrow \Sp^1$ constructed in Proposition \ref{prop:ub}. In the figure, $S_i := -N_i$ and $v_i:=-u_1$ for $i=1,2,3$. The equator of $\Sp^2$ is mapped to itself under the map (via the identity map).}
    \label{fig:phi21}
\end{figure}

\begin{proof}[Proof of Proposition \ref{prop:ub}]
We will prove both claims that there exists (1) a correspondence between $\Sp^1$ and $\mathbf{H}_{\geq0}(\Sp^2)$, and (2) a correspondence between $\Sp^1$ and $\Sp^2$, both of which have distortion at most $\frac{2\pi}{3}$ in an intertwined way.

In order to prove the first claim, note that it is enough to find a surjective map $\widetilde{\phi}_{2,1}:\mathbf{H}_{\geq 0}(\Sp^2)\twoheadrightarrow\Sp^1$ (resp. $\phi_{2,1}:\Sp^2\twoheadrightarrow\Sp^1$) such that $\dis(\widetilde{\phi}_{2,1})\leq\zeta_1=\frac{2\pi}{3}$ (resp. $\dis(\phi_{2,1})\leq\zeta_1=\frac{2\pi}{3}$) since this map gives rise to a correspondence $\widetilde{R}_{2,1}:=\mathrm{graph}(\widetilde{\phi}_{2,1})$ (resp. $R_{2,1}:=\mathrm{graph}(\phi_{2,1})$) with $\dis(\widetilde{R}_{2,1}) = \dis(\widetilde{\phi}_{2,1})\leq \zeta_1$ (resp. $\dis(R_{2,1}) = \dis(\phi_{2,1})\leq \zeta_1$).

Let
$$u_1:=(1,0,0),\,u_2:=\left(-\frac{1}{2},\frac{\sqrt{3}}{2},0\right),\text{ and }u_3:=\left(-\frac{1}{2},-\frac{\sqrt{3}}{2},0\right).$$

Note that $\{u_1,u_2,u_3\}$ are the vertices of a regular triangle inscribed in $\sete(\Sp^2)$. We divide the open upper hemisphere $\mathbf{H}_{>0}(\mathbb{S}^2)$ into three regions by using the Voronoi partitions induced by these three points. Precisely, for each $i=1,2,3$ we define the following set:
$$N_i:=\{x\in \mathbf{H}_{>0}(\mathbb{S}^2):d_{\mathbb{S}^2}(x,u_i)\leq d_{\mathbb{S}^2}(x,u_j)\, \forall j\neq i\textit{ and }d_{\mathbb{S}^2}(x,u_i)< d_{\mathbb{S}^2}(x,u_j)\, \forall j < i \}.$$

See Figure \ref{fig:phi21} for an illustration of the construction.

Observe that $\overline{N_i}=\mathcal{C}(\mathrm{Conv}_{\Sp^1}(\{\iota_1^{-1}(-u_j)\in\Sp^1:j\neq i\}))$ for each $i=1,2,3$. Since $\mathrm{Conv}_{\Sp^1}(\{\iota_1^{-1}(-u_j)\in\Sp^1:j\neq i\})$ is just the shortest geodesic between the two points $\{\iota_1(-u_j)\in\Sp^1:j\neq i\}$ with length $\zeta_1=\frac{2\pi}{3}$, $\diam(\overline{N_i})\leq\zeta_1$ by Lemma \ref{lemma:diamofcone} for any $i=1,2,3$.

We now construct a map $\widetilde{\phi}_{2,1}:\mathbf{H}_{\geq 0}(\Sp^2)\rightarrow \Sp^1$ in the following way:
\begin{align*}
    \widetilde{\phi}_{2,1}(p):=\begin{cases} \iota_1^{-1}(u_i)\textit{ if }p\in N_i,\\ \iota_1^{-1}(p)\textit{ if }p\in \sete(\Sp^2).\end{cases}
\end{align*}

Let us prove that the distortion of $\widetilde{\phi}_{2,1}$ is less than or equal to $\zeta_1$. We break the study of the value of
$$\big\vert d_{\Sp^2}(p,q)-d_{\Sp^1}(\widetilde{\phi}_{2,1}(p),\widetilde{\phi}_{2,1}(q))\big\vert$$ for $p,q\in \mathbf{H}_{\geq 0}(\Sp^2)$ into several cases:
\begin{enumerate}
    \item \textbf{Case $p\in N_i$ and $q\in N_j$}: If $i=j$, then $0\leq d_{\Sp^2}(p,q)\leq\zeta_1$ and $\widetilde{\phi}_{2,1}(p)=\widetilde{\phi}_{2,1}(q)=\iota_m^{-1}(u_i)$ so that $d_{\Sp^1}(\widetilde{\phi}_{2,1}(p),\widetilde{\phi}_{2,1}(q))=0$. Hence,
    $$\vert d_{\Sp^2}(p,q)-d_{\Sp^1}(\widetilde{\phi}_{2,1}(p),\widetilde{\phi}_{2,1}(q))\vert\leq\zeta_1.$$ If $i\neq j$, then $0\leq d_{\Sp^2}(p,q)\leq\pi$ and $d_{\Sp^1}(\widetilde{\phi}_{2,1}(p),\widetilde{\phi}_{2,1}(q))=\zeta_1$ so that
    $$\vert d_{\Sp^2}(p,q)-d_{\Sp^1}(\widetilde{\phi}_{2,1}(p),\widetilde{\phi}_{2,1}(q))\vert\leq \zeta_1.$$
    
    \item \textbf{Case $p\in N_i$ and $q\in \sete(\Sp^2)$}: Then,
    \begin{align*}
        \vert d_{\Sp^2}(p,q)-d_{\Sp^1}(\widetilde{\phi}_{2,1}(p),\widetilde{\phi}_{2,1}(q))\vert&=\vert d_{\Sp^2}(p,q)-d_{\Sp^1}(\iota_1^{-1}(u_i),\iota_1^{-1}(q))\vert\\
        &=\vert d_{\Sp^2}(p,q)-d_{\Sp^2}(u_i,q)\vert\\
        &\leq d_{\Sp^2}(p,u_i)\leq\zeta_1.
    \end{align*}
    
    \item \textbf{Case $p,q\in \sete(\Sp^2)$}: Then, $\widetilde{\phi}_{2,1}(p)=\iota_1^{-1}(p)$ and $\widetilde{\phi}_{2,1}(q)=\iota_1^{-1}(q)$. Hence,
    $$\vert d_{\Sp^2}(p,q)-d_{\Sp^1}(\widetilde{\phi}_{2,1}(p),\widetilde{\phi}_{2,1}(q))\vert=0\leq\zeta_1.$$
\end{enumerate}
This implies that  $\dis(\widetilde{\phi}_{2,1})\leq\zeta_1$. Observe that $\widetilde{\phi}_{2,1}$ is the identity on $\sete(\Sp^2)$, so $\widetilde{\phi}_{2,1}$ is surjective.

For the second claim, by applying Lemma \ref{lemma:distortion} to $\widetilde{\phi}_{2,1}\vert_{\seta(\Sp^2)}$, we construct a map $\phi_{2,1}:\Sp^2\longtwoheadrightarrow\Sp^1$ such that $\dis(\phi_{2,1})=\dis(\widetilde{\phi}_{2,1})\leq\zeta_1$. Moreover, by construction, $\phi_{2,1}$ is obviously surjective and antipode preserving.
\end{proof}

\begin{remark}
The antipode preserving property of $\phi_{2,1}$ will be useful for the proof of Proposition \ref{prop:ub13}.
\end{remark}

\subsection{The proof of Proposition \ref{prop:ub-n-m}}\label{sec:proof-gen-ub}

One can prove Proposition \ref{prop:ub-n-m} using a generalization of the approach used in the proof of Proposition \ref{prop:ub}.

\begin{remark}[Diameter of faces of geodesic simplices]\label{rmk:diamoneface}
Let $\{u_1,\dots,u_{m+2}\}$ be the vertices of a regular $(m+1)$-simplex inscribed in $\Sp^m$. Let
$$F_m:=\mathrm{Conv}_{\Sp^m}(\{u_1,\dots,u_{m+1}\}).$$
In other words, $F_m$ is just a \emph{face} of the geodesic regular  simplex inscribed in $\Sp^m$ where the length of each edge is $\zeta_m=\arccos\left(-\frac{1}{m+1}\right)$.

The diameter of $F_m$ can be determined by applying a result by Santal\'o \cite[Lemma 1]{santalo1946convex}:
\begin{align*}
    \diam(F_m)=\eta_m:=\begin{cases}\arccos\left(-\frac{m+1}{m+3}\right)&\text{for }m\text{ odd},\\\arccos\left(-\sqrt{\frac{m}{m+4}}\right)&\text{for }m\text{ even}.\end{cases}
\end{align*}
As proved by Santal\'o, this diameter is realized either by the distance between the circumcenter of the geodesic convex hull of $A_m^\mathrm{odd}:=\{u_1,\dots,u_{\frac{m+1}{2}}\}$ and the circumcenter of the geodesic convex hull of $B^\mathrm{odd}_m:=\{u_{\frac{m+3}{2}},\dots,u_{m+1}\}$ if $m$ is odd, or by the distance between the circumcenter of the geodesic convex hull of $A_m^\mathrm{even}:=\{u_1,\dots,u_{\frac{m}{2}}\}$ and the circumcenter of the geodesic convex hull of $B_m^\mathrm{even}:=\{u_{\frac{m+2}{2}},\dots,u_{m+1}\}$ if $m$ is even. See Figure \ref{fig:eta-m}.

\begin{figure}
    \centering
    \includegraphics[width=0.7\linewidth]{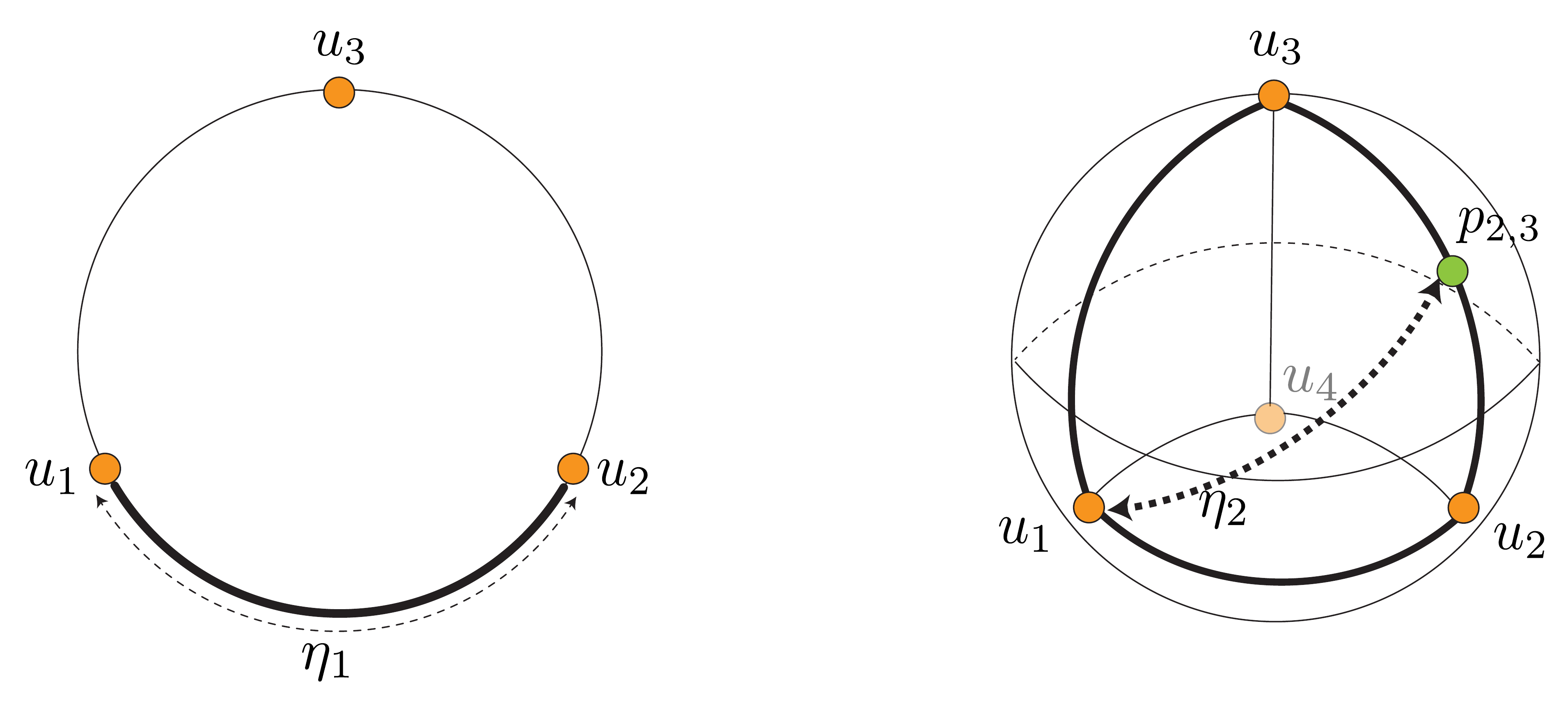}
    \caption{The diameter of a face $F_m$ of a geodesic simplex: the cases $m=1$ and $m=2$. When $m=1$, $A_1^\mathrm{odd} = \{u_1\}$ and $B_1^\mathrm{odd}=\{u_2\}$. When $m=2$ (on the right), $A_2^\mathrm{even} = \{u_1\}$,  $B_2^\mathrm{even} = \{u_2,u_3\}$ and the circumcenter of the geodesic convex hull of $B_2^\mathrm{even} $ is the point $p_{2,3}$, i.e. $\diam(F_2) = \eta_2 =  d_{\Sp^2}(u_1,p_{2,3})$.}
    \label{fig:eta-m}
\end{figure}

Observe that, in general,
$$\zeta_m\leq\eta_m\leq 2\,(\pi-\zeta_m).$$
Note that as $m$ goes to infinity, $\zeta_m$ goes to $\frac{\pi}{2}$, $\eta_m$ goes to $\pi$, and $2\,(\pi-\zeta_m)$ also goes to $\pi$.
\end{remark}

\begin{remark}\label{rem:Voronoi-cvx}
Let $\{u_1,\dots,u_{m+2}\}\subset\Sp^m$ be the vertices of a regular $(m+1)$-simplex inscribed in $\Sp^m$. Let $V_1,\dots,V_{m+2}$ be the Voronoi partition of $\Sp^m$ induced by $\{u_1,\dots,u_{m+2}\}$. Then, $\overline{V_i}=\mathrm{Conv}_{\Sp^m}(\{-u_j:j\neq i\})$ (so, $\overline{V_i}$ is congruent to $F_m$ in Remark \ref{rmk:diamoneface}) for each $i=1,\dots,m+2$. Here is a proof:

Without loss of generality, one can assume $i=1$. Observe that $$\overline{V_1}=\{x\in\Sp^m:d_{\Sp^m}(x,u_1)\leq d_{\Sp^m}(x,u_j)\,\forall j\neq 1\}.$$

Now fix arbitrary $x\in \mathrm{Conv}_{\Sp^m}(\{-u_j:j\neq 1\})$. Then, $x=\frac{v}{\Vert v\Vert}$ where $v=\sum_{j=2}^{m+2}\lambda_j (-u_j)$ and $\lambda_j$'s are non-negative coefficients such that $\sum_{j=2}^{m+2}\lambda_j=1$. Then,
$$\langle x,u_1 \rangle=\frac{1}{\Vert v \Vert}\cdot\frac{1}{m+1}\cdot\sum_{j=2}^{m+2}\lambda_j=\frac{1}{\Vert v \Vert}\cdot\frac{1}{m+1}$$
and for any $k\neq 1$,
$$\,\,\,\,\,\,\,\,\langle x,u_k \rangle=\frac{1}{\Vert v \Vert}\cdot\left(-1+\frac{1}{m+1}\cdot\sum\limits_{2\leq j\leq m+2, j\neq k}\lambda_j\right).$$ Hence, this implies $\langle x,u_1 \rangle\geq\langle x,u_k \rangle$ so that $d_{\Sp^m}(x,u_1)\leq d_{\Sp^m}(x,u_k)$ for any $k\neq 1$. Therefore, $x\in\overline{V_1}$ and $\mathrm{Conv}_{\Sp^m}(\{-u_j:j\neq 1\})\subseteq\overline{V_1}$.

For the other direction, fix arbitrary $x\in\overline{V_1}$. Since $\{-u_2,\dots,-u_{m+2}\}$ is a basis of $\R^{m+1}$, there are a unique set of coefficients $\{c_i\}_{i=2}^{m+2}$ such that $x=\sum_{i=2}^{m+2} c_i(-u_i)$. Then, one can check $c_i=\frac{m+1}{m+2}(\langle x,u_1 \rangle-\langle x,u_i \rangle)$ for $i=2\dots,m+2$ by using the fact $\sum_{i=1}^{m+2} \langle x,u_i \rangle=\langle x, \sum_{i=1}^{m+2} u_i \rangle=\langle x,0 \rangle=0$, and \cite[5.27 Theorem]{folland1999real} (the fact that $\sum_{i=1}^{m+2} u_i=0$ can be easily checked by the induction on $m$). Note that $c_i\geq 0$ since $\langle x,u_1 \rangle\geq\langle x,u_i \rangle$. Hence, if we define $$\lambda_i:=\frac{c_i}{\sum_{j=2}^{m+2}c_j}=\frac{1}{m+2}\left(1-\frac{\langle x,u_i \rangle}{\langle x,u_1 \rangle}\right)$$ for each $i=2\dots,m+2$ and $v:=\sum_{i=2}^{m+2}\lambda_i(-u_i)$, then $x=\frac{v}{\Vert v \Vert}$. Therefore, $x\in\mathrm{Conv}_{\Sp^m}(\{-u_j:j\neq 1\})$ and $\overline{V_1}\subseteq\mathrm{Conv}_{\Sp^m}(\{-u_j:j\neq 1\})$. Hence, $\overline{V_1}=\mathrm{Conv}_{\Sp^m}(\{-u_j:j\neq 1\})$ as we claimed.
\end{remark}

\begin{proof}[Proof of Proposition \ref{prop:ub-n-m}]
We construct a surjective and antipode preserving map $$\phi_{(m+1),m}:\Sp^{m+1}\longtwoheadrightarrow\Sp^m$$ with $$\mathrm{dis}(\phi_{(m+1),m})\leq\eta_m.$$

Let $\{u_1,\dots,u_{m+2}\}$ be the vertices of a regular $(m+1)$-simplex inscribed in $\sete(\Sp^{m+1})$. We divide open upper hemisphere $\mathbf{H}_{>0}(\Sp^{m+1})$ into $(m+2)$ regions by using the Voronoi partitions induced by these $(m+2)$ vertices. Precisely, for each $i=1,\dots,m+2$ we define the following set:

$$N_i:=\left\{p\in \mathbf{H}_{>0}(\Sp^{m+1})\left| \begin{array}{l}d_{\Sp^{m+1}}(p,u_i)\leq d_{\Sp^{m+1}}(p,u_j)\, \forall j\neq i,\\\hspace{.8in}\text{and}\\d_{\Sp^{m+1}}(p,u_i)< d_{\Sp^{m+1}}(p,u_j)\, \forall j < i \end{array}\right.\right\}.$$

Observe that $\overline{N_i}=\mathcal{C}(\overline{V_i})$ where $\{V_1,\dots,V_{m+2}\}$ is the Voronoi partition of $\Sp^m$ induced by $$\{\iota_m^{-1}(u_1),\dots,\iota_m^{-1}(u_{m+2})\}.$$ Hence, by Lemma \ref{lemma:diamofcone}, Remark \ref{rmk:diamoneface}, and Remark \ref{rem:Voronoi-cvx}, one  concludes that $\diam(\overline{N_i})\leq\eta_m$ for any $i=1,\dots,m+2$ .

We now construct a map $\widetilde{\phi}_{(m+1),m}:\seta(\Sp^{m+1})\rightarrow \Sp^m$ in the following way:
\begin{align*}
    \widetilde{\phi}_{(m+1),m}(p):=\begin{cases} \iota_m^{-1}(u_i)\textit{ if }p\in N_i\\ \iota_m^{-1}(p)\textit{ if }p\in \iota_m(\seta(\Sp^m)).\end{cases}
\end{align*}

In order to  prove that the distortion of $\widetilde{\phi}_{(m+1),m}$ is less than or equal to $\eta_m$ we break the study of the value of
$$\big\vert d_{\Sp^{m+1}}(p,q)-d_{\Sp^m}(\widetilde{\phi}_{(m+1),m}(p),\widetilde{\phi}_{(m+1),m}(q))\big\vert$$ for $p,q\in \seta(\Sp^{m+1})$ into several cases:
\begin{enumerate}
    \item \textbf{Case $p\in N_i$ and $q\in N_j$}: If $i=j$, then $d_{\Sp^{m+1}}(p,q)\leq\eta_m$ and $\widetilde{\phi}_{(m+1),m}(p)=\widetilde{\phi}_{(m+1),m}(q)=\iota_m^{-1}(u_i)$ so that $d_{\Sp^m}(\widetilde{\phi}_{(m+1),m}(p),\widetilde{\phi}_{(m+1),m}(q))=0$. Hence,
    $$\vert d_{\Sp^{m+1}}(p,q)-d_{\Sp^m}(\widetilde{\phi}_{(m+1),m}(p),\widetilde{\phi}_{(m+1),m}(q))\vert\leq\eta_m.$$ If $i\neq j$, then $d_{\Sp^{m+1}}(p,q)\leq\pi$ and $d_{\Sp^m}(\widetilde{\phi}_{(m+1),m}(p),\widetilde{\phi}_{(m+1),m}(q))=\zeta_m$ so that
    $$\vert d_{\Sp^{m+1}}(p,q)-d_{\Sp^m}(\widetilde{\phi}_{(m+1),m}(p),\widetilde{\phi}_{(m+1),m}(q))\vert\leq \zeta_m\leq\eta_m.$$
    
    \item \textbf{Case $p\in N_i$ and $q\in \iota_m(\seta(\Sp^m))$}: Then,
    \begin{align*}
        \vert d_{\Sp^{m+1}}(p,q)-d_{\Sp^m}(\widetilde{\phi}_{(m+1),m}(p),\widetilde{\phi}_{(m+1),m}(q))\vert&=\vert d_{\Sp^{m+1}}(p,q)-d_{\Sp^m}(\iota_m^{-1}(u_i),\iota_m^{-1}(q))\vert\\
        &=\vert d_{\Sp^{m+1}}(p,q)-d_{\Sp^{m+1}}(u_i,q)\vert\\
        &\leq d_{\Sp^{m+1}}(p,u_i)\leq\eta_m.
    \end{align*}
    
    \item \textbf{Case $p,q\in \iota_m(\seta(\Sp^m))$}: Then, $\widetilde{\phi}_{(m+1),m}(p)=p$ and $\widetilde{\phi}_{(m+1),m}(p)=q$. Hence,
    $$\vert d_{\Sp^{m+1}}(p,q)-d_{\Sp^m}(\widetilde{\phi}_{(m+1),m}(p),\widetilde{\phi}_{(m+1),m}(q))\vert=0\leq\eta_m.$$
\end{enumerate}
This implies that  $\dis(\widetilde{\phi}_{(m+1),m})\leq\eta_m$. Finally, by applying Lemma \ref{lemma:distortion} to $\widetilde{\phi}_{(m+1),m}$, we  construct the map $\phi_{(m+1),m}:\Sp^{m+1}\longtwoheadrightarrow\Sp^m$ such that $\dis(\phi_{(m+1),m})=\dis(\widetilde{\phi}_{(m+1),m})\leq\eta_m$. Moreover, by  construction,   $\phi_{(m+1),m}$ is obviously surjective and antipode preserving. Therefore,
$$\dgh(\Sp^m,\Sp^{m+1})\leq\frac{1}{2}\eta_m$$
as we required.
\end{proof}

\begin{remark}\label{rem:eta-distinct} 
Observe that, even though during the proof of Proposition \ref{prop:ub-n-m} we only established the fact $\dis(\phi_{(m+1),m})\leq\eta_m$, one can  check $\dis(\phi_{(m+1),m})$ is \emph{exactly equal} to $\eta_m$, since one can choose two points $p,q\in N_i$ such that $d_{\Sp^{m+1}}(p,q)$ is arbitrarily close to $\eta_m$.
\end{remark}        

\section{The proof of Proposition \ref{prop:ub13}}\label{sec:S1S3uppbdd}

In this section, we will prove Proposition \ref{prop:ub13} by constructing a specific correspondence between $\Sp^1$ and $\Sp^3$ with distortion less than or equal to $\zeta_1=\frac{2\pi}{3}$. The construction of this correspondence is based on the optimal correspondence $R_{2,1} = \mathrm{graph}(\phi_{2,1})$ between $\Sp^1$ and $\Sp^2$ identified in the  proof of Proposition \ref{prop:ub} given in \S\ref{sec:proof-ub} and some ideas reminiscent of the Hopf fibration. We will define a surjective map $\phi_{3,1}:\Sp^3\longtwoheadrightarrow \Sp^1$ by suitably ``rotating" the (optimal) surjection $\phi_{2,1}:\Sp^2\longtwoheadrightarrow \Sp^1$; see Figure \ref{fig:phi31}. 

\subsubsection*{Overview of the construction of $\phi_{3,1}$.}

The  diagram below  describes the construction of the map $\phi_{3,1}$ at a high level: 

\setlength\mathsurround{0pt}

$$\begin{tikzcd}
\Sp^3 \arrow[dd, "h",swap] \arrow[rr, "{\phi_{3,1}}", two heads,dotted,line width=0.75pt]                  & &   \Sp^1                                  \\
& &\\
{\Sp^2\times[0,\pi)} \arrow[rr, "\phi_{2,1}\times \mathrm{id}", two heads]  & & \Sp^1\times[0,\pi)\arrow[uu, "{T_\bullet}", two heads,swap] 
\end{tikzcd}$$

To an arbitrary $q\in\Sp^3$, we will be able to assign both a corresponding point $p_q\in\Sp^2$ and an angle $\alpha_q\in[0,\pi)$ giving rise to a map $h:\Sp^3\longrightarrow\Sp^2\times[0,\pi)$  such that $h(q):=(p_q,\alpha_q)$. Also,  $T_\bullet:\Sp^1\times [0,\pi)\longtwoheadrightarrow\Sp^1$  will be a map such that for each $\alpha\in[0,\pi),$ $T_\alpha$ is a  rotation of $\Sp^1$ by an angle $\alpha$. Then, as  described in the  diagram, for $q\in\Sp^3$, $\phi_{3,1}(q)$ will be defined as $T_{\alpha_q}\big(\phi_{2,1}(p_q)\big)$. Figures \ref{fig:phi31} and \ref{fig:hopf}  illustrate the construction.

Note that there is a certain degree of similarity between the map $\pi_1\circ h:\Sp^3\longtwoheadrightarrow\Sp^2$ (where $\pi_1$ is the canonical projection from $\Sp^2\times[0,\pi)$ to $\Sp^2$) and the ``Hopf fibration'', in the sense that the set  $(\pi_1\circ h)^{-1}(\{p,-p\})$ is isometric to $\Sp^1$ for  $p\in\Sp^2\backslash\Sp^1$ (whereas, $(\pi_1\circ h)^{-1}(\{p\})=\{p\}$ for  $p\in\Sp^1$).

\subsubsection*{Details.}
The following  coordinate representations  will be used throughout  this section:\footnote{Note that in comparison to the coordinate representation specified in \S\ref{sec:preliminaries}, here we are embedding $\Sp^1$, $\Sp^2$, and $\Sp^3$ into $\R^4$ in a certain way so that the emebddings $\Sp^1\hookrightarrow\Sp^2\hookrightarrow \Sp^3$ are also specific.}

\begin{itemize}
    \item $\Sp^1:=\{(x,y,0,0)\in\R^4:x^2+y^2=1\}$,\\
    \item $\Sp^2:=\{(x,y,z,0)\in\R^4:x^2+y^2+z^2=1\}$,\\
    \item $\Sp^3:=\{(x,y,z,w)\in\R^4:x^2+y^2+z^2+w^2=1\}$.
\end{itemize}

Also, we will use the map $\phi_{2,1}:\Sp^2\twoheadrightarrow\Sp^1$ and the regions  $N_1,N_2,N_3\subset\Sp^2$ constructed in the proof of Proposition \ref{prop:ub}, cf. \S\ref{sec:proof-ub}.

\begin{remark}\label{rmk:propsof on:hi21}
The following simple observations will be useful later. See Figure \ref{fig:phi21}.

\begin{enumerate}
    \item $\diam(\overline{N_i})\leq\zeta_1 = \frac{2\pi}{3}$ for any $i=1,2,3$. (This fact has been already mentioned during the proof of Proposition \ref{prop:ub-n-m}).
    
    \item If $p=(x,y,z,0)\in N_i$ and $q=(a,b,c,0)\in N_j$ for $(i,j)=(1,2),(2,3)\text{ or }(3,1)$ (resp. $(i,j)=(2,1),(3,2)\text{ or }(1,3)$), then $bx-ay\geq 0$ (resp. $\leq 0$) and $\phi_{2,1}(p),\phi_{2,1}(q)$ are in clockwise (resp. counterclockwise) order. 
\end{enumerate}
\end{remark}

Now, for any $\alpha\in\R$, consider the following rotation matrix:

$$T_\alpha:=\left(\begin{array}{cccc}\cos{\alpha} & -\sin{\alpha} & 0 & 0 \\ \sin{\alpha} & \cos{\alpha} & 0 & 0\\ 0 & 0 & \cos{\alpha} & -\sin{\alpha}\\ 0 & 0 & \sin{\alpha} & \cos{\alpha} \end{array} \right)$$

For any $p\in\Sp^3$, $T_\alpha p$ denotes the result of matrix multiplication by viewing $p$ as a $4$ by $1$ column vector according to the coordinate system described at the beginning of this section.

The following basic properties of these rotation matrices will be useful soon.

\begin{lemma}\label{lemma:rotationprops}
Let $\alpha,\beta\in\R$. Then,
\begin{enumerate}
    \item For any $q\in\Sp^3\backslash\Sp^1$, there are a unique $p_q\in\Sp^2\backslash\Sp^1$ and a unique $\alpha_q\in[0,\pi)$ such that $q=T_{\alpha_q} p_q$. In particular, $\alpha_q=0$ if and only if $q\in\Sp^2\backslash\Sp^1$.
    
    \item Both of $\Sp^1$ and $\Sp^3$ are invariant with respect to the action of the  rotation matrices $T_\alpha$.
    
    \item $T_\alpha\,T_\beta=T_{\alpha+\beta}$.
    
    \item $d_{\Sp^3}(T_\alpha\,p,T_\alpha\,q)=d_{\Sp^3}(p,q)$ for any $p,q\in\Sp^3$.
    
    \item $d_{\Sp^3}(T_\alpha\,p,p)=\alpha$ for any $p\in\Sp^3$ and $\alpha\in[0,\pi]$. 
    
    \item $d_{\Sp^3}(T_\alpha(-p),p)=\pi-\alpha$ for any $p\in\Sp^3$ and $\alpha\in[0,\pi]$.
\end{enumerate}
\end{lemma}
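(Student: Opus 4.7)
My plan is to verify the six properties in order, treating (2)--(6) as direct linear-algebraic consequences of the block structure of $T_\alpha$, while reserving more care for (1) which requires a choice of sign to secure uniqueness with $\alpha_q \in [0,\pi)$.

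First I would dispense with the routine items. Property (2) follows because $T_\alpha \in SO(4)$ (it preserves the standard inner product on $\R^4$, so it preserves $\Sp^3$), and a direct look at the action on $(x,y,0,0)$ shows that $\Sp^1$ is preserved too. Property (3) is block-wise multiplication: each of the two $2\times 2$ diagonal blocks of $T_\alpha$ is the standard planar rotation, and standard planar rotations add angles. For (4), since $T_\alpha$ is orthogonal, $\langle T_\alpha p, T_\alpha q\rangle=\langle p,q\rangle$, and the geodesic distance on $\Sp^3$ is $\arccos$ of the inner product. For (5), writing $p=(x,y,z,w)$ and expanding $\langle T_\alpha p,p\rangle$, the $\sin\alpha$ terms cancel pairwise (because the two blocks each contribute a $-xy\sin\alpha + yx\sin\alpha$ and similarly in $(z,w)$), leaving $(x^2+y^2+z^2+w^2)\cos\alpha = \cos\alpha$; then $\alpha\in[0,\pi]$ gives $d_{\Sp^3}(T_\alpha p,p)=\alpha$. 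Property (6) follows from (5) via $T_\alpha(-p)=-T_\alpha p$, which gives $\langle T_\alpha(-p),p\rangle = -\cos\alpha = \cos(\pi-\alpha)$.

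The main content is (1). Given $q=(a,b,c,w)\in\Sp^3\setminus\Sp^1$ we have $(c,w)\neq(0,0)$. I would search for $p_q=(x,y,z,0)\in\Sp^2\setminus\Sp^1$ (so $z\neq 0$) and $\alpha_q\in[0,\pi)$ with $q=T_{\alpha_q}p_q$. The last two coordinates of the equation read $z\cos\alpha_q=c$, $z\sin\alpha_q=w$; squaring and adding forces $z^2=c^2+w^2>0$. The requirement $\sin\alpha_q\geq 0$ (forced by $\alpha_q\in[0,\pi)$) then pins down the sign of $z$: take $z=\sqrt{c^2+w^2}$ when $w>0$, or $w=0,\,c>0$; take $z=-\sqrt{c^2+w^2}$ when $w<0$, or $w=0,\,c<0$. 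With $z$ so chosen, the pair $(\cos\alpha_q,\sin\alpha_q)=(c/z,w/z)$ lies on the unit circle with nonnegative second coordinate, so a unique $\alpha_q\in[0,\pi)$ exists. The first two coordinates then solve uniquely for $(x,y)$ by inverting the planar rotation, and $x^2+y^2+z^2=a^2+b^2+c^2+w^2=1$ places $p_q$ on $\Sp^2\setminus\Sp^1$. Uniqueness holds because once $\alpha_q\in[0,\pi)$ and $p_q\in\Sp^2\setminus\Sp^1$ are demanded, each step above leaves no freedom. Finally, $\alpha_q=0$ means $T_{\alpha_q}=\mathrm{Id}$, hence $q=p_q\in\Sp^2\setminus\Sp^1$; conversely, if $q=(a,b,c,0)\in\Sp^2\setminus\Sp^1$ then $w=0$ forces $\sin\alpha_q=0$, so $\alpha_q=0$ in $[0,\pi)$.

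The only delicate point is the sign choice for $z$ in (1); once it is made consistently with the constraint $\alpha_q\in[0,\pi)$, the rest is algebraic bookkeeping. I do not anticipate any genuine obstacle beyond being careful with the case $w=0$ (where $\alpha_q=0$ and $z$ inherits the sign of $c$).
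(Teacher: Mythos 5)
Your proof is correct and follows essentially the same route as the paper: items (2)--(6) are handled by the same orthogonality/block-rotation computations (the paper simply labels them ``obvious'' or ``direct computation''), and your treatment of (1) is the same polar decomposition of the last two coordinates $(c,w)=(z\cos\alpha_q,z\sin\alpha_q)$ that the paper uses, just with the sign analysis of $z$ and the case $w=0$ written out more explicitly.
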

\begin{proof}
\begin{enumerate}
    \item Let $q=(x',y',z',w')\in\Sp^3\backslash\Sp^1$. Since $q$ is not in $\Sp^1$, we know that $(z')^2+(w')^2>0$. Then, there exists a unique $\alpha_q\in[0,\pi)$ and $z\in\R\backslash\{0\}$ such that
    $$\left(\begin{array}{c} z' \\ w' \end{array} \right)=\left(\begin{array}{cc}\cos{\alpha_q} & -\sin{\alpha_q} \\ \sin{\alpha_q} & \cos{\alpha_q} \end{array} \right)\left(\begin{array}{c} z \\ 0 \end{array} \right)$$
    i.e. $z^2=(z')^2+(w')^2$.  Then, this $\alpha_q$ is the required angle and we choose the unique point $p_q=(x,y,z,0)\in\Sp^2\backslash\Sp^1$ so that
    $$\left(\begin{array}{c} x'\\ y'\\ z' \\ w' \end{array} \right)=\left(\begin{array}{cccc}\cos{\alpha_q} & -\sin{\alpha_q} & 0 & 0 \\ \sin{\alpha_q} & \cos{\alpha_q} & 0 & 0\\ 0 & 0 & \cos{\alpha_q} & -\sin{\alpha_q}\\ 0 & 0 & \sin{\alpha_q} & \cos{\alpha_q} \end{array} \right)\left(\begin{array}{c} x\\ y\\ z \\ 0 \end{array} \right).$$

Since $T_{\alpha_q}$ is the identity matrix when $\alpha_q=0$, then, obviously, $\alpha_q=0$ if and only if $q\in\Sp^2\backslash\Sp^1$.
    
    \item Obvious.
    
    \item Obvious.
    
    \item This item is equivalent to the condition $\langle T_\alpha p,T_\alpha q\rangle=\langle p,q \rangle$, and it can be easily checked by direct computation.
    
    \item This item is equivalent to the condition $\langle T_\alpha p, p\rangle=\cos\alpha$, and it can be easily checked by direct computation.
    
    \item This item is equivalent to the condition $\langle T_\alpha(-p), p\rangle=-\cos\alpha$, and it can be easily checked by direct computation.
\end{enumerate}
\end{proof}

\subsubsection*{Additional details and the proof of Proposition \ref{prop:ub13}.}
We need a few more definitions and technical lemmas for the proof of Proposition \ref{prop:ub13}. We in particular make the following definitions for notational convenience:
\begin{itemize}
    \item For any $p,q\in\Sp^2$,
    \begin{align*}
        E_{p,q}:[0,\pi]&\longrightarrow[-1,1]\\
        \alpha&\longmapsto\langle T_\alpha\,p,q\rangle
    \end{align*}
    
    \item For any $p,q\in\Sp^2$,
    \begin{align*}
        F_{p,q}:[0,\pi]&\longrightarrow\R\\
        \alpha&\longmapsto d_{\Sp^3}(T_\alpha\,p,q)-\alpha
    \end{align*}
    
    \item For any $p,q\in\Sp^2$,
    \begin{align*}
        G_{p,q}:[0,\pi]&\longrightarrow\R\\
        \alpha&\longmapsto d_{\Sp^3}(T_\alpha\,p,q)+\alpha
    \end{align*}
\end{itemize}

\begin{lemma}\label{lemma:propsofEFG}
For any $p=(x,y,z,0)\in\Sp^2\backslash\Sp^1$ and $q=(a,b,c,0)\in\Sp^2$,
\begin{enumerate}
    \item $E_{p,q}(\alpha)\in (-1,1)$ for any $\alpha\in (0,\pi)$.
    
    \item $(E_{p,q}'(\alpha))^2+(E_{p,q}(\alpha))^2\leq 1$ for any $\alpha\in [0,\pi]$.\footnote{Here $E'_{pq}$ denotes the derivative of $E_{p,q}.$} 
    
    \item $F_{p,q}$ is a  non-increasing function. Therefore, $-d_{\Sp^2}(p,q)\leq F_{p,q}(\alpha)\leq d_{\Sp^2}(p,q)$ for any $\alpha\in [0,\pi]$.
    
    \item $G_{p,q}$ is a  non-decreasing function. Therefore, $d_{\Sp^2}(p,q)\leq G_{p,q}(\alpha)\leq 2\pi-d_{\Sp^2}(p,q)$ for any $\alpha\in [0,\pi]$.
\end{enumerate}
\end{lemma}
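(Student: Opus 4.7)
The plan is to begin with an explicit computation. Writing out the matrix action gives
$$T_\alpha p = (x\cos\alpha - y\sin\alpha,\ x\sin\alpha + y\cos\alpha,\ z\cos\alpha,\ z\sin\alpha),$$
and pairing with $q = (a,b,c,0)$ yields the compact expression
$$E_{p,q}(\alpha) = A\cos\alpha + B\sin\alpha, \qquad A := ax+by+cz, \quad B := bx-ay.$$
From this formula, (1) is almost immediate: the last coordinate of $T_\alpha p$ is $z\sin\alpha$, and since $p \notin \Sp^1$ forces $z \neq 0$, this coordinate is nonzero for $\alpha \in (0,\pi)$; but both $q$ and $-q$ have vanishing last coordinate, so $T_\alpha p \neq \pm q$, i.e.\ $|E_{p,q}(\alpha)| < 1$.

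For (2), I will compute $E_{p,q}'(\alpha) = -A\sin\alpha + B\cos\alpha$ and note the trigonometric identity $E_{p,q}(\alpha)^2 + E_{p,q}'(\alpha)^2 = A^2 + B^2$. The crux is then to show $A^2 + B^2 \le 1$. Using the factorization $(ax+by)^2 + (bx-ay)^2 = (a^2+b^2)(x^2+y^2)$ and the unit-norm conditions $x^2+y^2+z^2=a^2+b^2+c^2=1$, a short rearrangement gives
$$1 - (A^2+B^2) = (za - cx)^2 + (zb - cy)^2 \ge 0,$$
which is the desired inequality.

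For (3) and (4), rather than differentiate the arccosine it is cleaner to use only the triangle inequality in $\Sp^3$ together with Lemma \ref{lemma:rotationprops}. For $0 \le \beta \le \alpha \le \pi$, combining items (3), (4), (5) of that lemma gives $d_{\Sp^3}(T_\alpha p, T_\beta p) = \alpha - \beta$, so the $T_\alpha$-orbit of $p$ is a unit-speed geodesic. The two symmetric triangle-inequality bounds
$$d_{\Sp^3}(T_\alpha p, q) - d_{\Sp^3}(T_\beta p, q) \le \alpha - \beta, \qquad d_{\Sp^3}(T_\beta p, q) - d_{\Sp^3}(T_\alpha p, q) \le \alpha - \beta$$
translate respectively into $F_{p,q}(\alpha) \le F_{p,q}(\beta)$ and $G_{p,q}(\beta) \le G_{p,q}(\alpha)$, establishing the monotonicity claims. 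The stated interval bounds then follow by evaluating at the endpoints: at $\alpha = 0$ we have $F_{p,q}(0) = G_{p,q}(0) = d_{\Sp^2}(p,q)$ (since $p, q \in \Sp^2$ and $\Sp^2 \hookrightarrow \Sp^3$ is an isometric embedding), while at $\alpha = \pi$ the identity $T_\pi p = -p$ and $d_{\Sp^3}(-p, q) = \pi - d_{\Sp^2}(p,q)$ yield $F_{p,q}(\pi) = -d_{\Sp^2}(p,q)$ and $G_{p,q}(\pi) = 2\pi - d_{\Sp^2}(p,q)$.

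I do not anticipate a serious obstacle: the main insight is the sum-of-squares identity in (2), and the recognition that (3) and (4) reduce cleanly to pure triangle-inequality statements once one exploits the unit-speed nature of the $T_\alpha$-orbit given by Lemma \ref{lemma:rotationprops}(5), bypassing any delicate derivative analysis at points where $E_{p,q}(\alpha) = \pm 1$.
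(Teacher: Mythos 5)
Your proposal is correct. For items (1) and (2) you are essentially doing what the paper does: the explicit formula $E_{p,q}(\alpha)=\langle p,q\rangle\cos\alpha+(bx-ay)\sin\alpha$ is the same, your exclusion of $T_\alpha p=\pm q$ via the nonvanishing fourth coordinate $z\sin\alpha$ matches the paper's appeal to $T_\alpha p\in\Sp^3\backslash\Sp^2$, and your sum-of-squares identity $1-(A^2+B^2)=(az-cx)^2+(bz-cy)^2$ is just an expanded form of the paper's cross-product bound $|bx-ay|\leq\Vert(x,y,z)\times(a,b,c)\Vert$. Where you genuinely diverge is in (3) and (4): the paper differentiates $F_{p,q}(\alpha)=\arccos(E_{p,q}(\alpha))-\alpha$ and $G_{p,q}$, using item (1) to make $F'_{p,q}=-E'_{p,q}/\sqrt{1-E_{p,q}^2}-1$ well defined on $(0,\pi)$ and item (2) to get its sign, whereas you note that $d_{\Sp^3}(T_\alpha p,T_\beta p)=\alpha-\beta$ (from Lemma \ref{lemma:rotationprops}, items (3) and (5)) and obtain both monotonicity statements from two applications of the triangle inequality, with the interval bounds coming from the endpoint values $F_{p,q}(0)=G_{p,q}(0)=d_{\Sp^2}(p,q)$, $F_{p,q}(\pi)=-d_{\Sp^2}(p,q)$, $G_{p,q}(\pi)=2\pi-d_{\Sp^2}(p,q)$, exactly as in the paper. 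Your route is calculus-free, never needs items (1)--(2) as inputs to (3)--(4), and sidesteps any care about differentiability of $\arccos$ where $E_{p,q}=\pm1$; it also works verbatim for any isometric one-parameter family with $d(T_\alpha p,p)=\alpha$. The paper's derivative computation, on the other hand, keeps everything tied to the explicit formula for $E_{p,q}$, which is the quantity actually reused in the subsequent distance estimates (e.g.\ Lemma \ref{lemma:rotationdistprops}). Both arguments are complete and correct.
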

\begin{proof}
\begin{enumerate}
    \item Suppose not so that $E_{p,q}(\alpha)=\pm 1$. This implies that $T_\alpha p=q\text{ or }-q\in\Sp^2$, but that cannot be true because $T_\alpha p\in \Sp^3\backslash\Sp^2$ by Lemma \ref{lemma:rotationprops} item (1) and because of the range of $\alpha$. So, it is contradiction hence we have $E_{p,q}(\alpha)\in (-1,1)$ as we required.
    
    \item As a result of direct computation, we know that
    $$E_{p,q}(\alpha)=\langle p,q \rangle\cos{\alpha}+(bx-ay)\sin{\alpha}.$$
    Here, observe that $bx-ay$ is the 3rd coordinate of the cross product $(x,y,z)\times(a,b,c)$. In particular, this implies $\vert bx-ay \vert\leq\Vert (x,y,z)\times(a,b,c) \Vert=\sin\beta$ where $\langle p,q \rangle=\cos\beta$. Therefore,
    \begin{align*}
    (E_{p,q}'(\alpha))^2+(E_{p,q}(\alpha))^2&=\langle p,q \rangle^2+(bx-ay)^2\leq \cos^2\beta+\sin^2\beta=1.
    \end{align*}
    
    \item Note that $F_{p,q}(\alpha)=\arccos(E_{p,q}(\alpha))-\alpha$. Hence, for any $\alpha\in (0,\pi)$, 
    \begin{align*}
        F_{p,q}'(\alpha)=-\frac{E_{p,q}'(\alpha)}{\sqrt{1-(E_{p,q}(\alpha))^2}}-1.
    \end{align*}
    Observe that this expression is well-defined by (1). Also, by (2),
    \begin{align*}
        (E_{p,q}'(\alpha))^2+(E_{p,q}(\alpha))^2\leq 1&\Leftrightarrow -E_{p,q}'(\alpha)\leq\sqrt{1-(E_{p,q}(\alpha))^2}\\
        &\Leftrightarrow F_{p,q}'(\alpha)=-\frac{E_{p,q}'(\alpha)}{\sqrt{1-(E_{p,q}(\alpha))^2}}-1\leq 0.
    \end{align*}
    Hence, $F_{p,q}$ is a non-increasing function. Also, since $F_{p,q}(0)=d_{\Sp^2}(p,q)$ and $F_{p,q}(\pi)=d_{\Sp^3}(T_\pi p,q)-\pi=d_{\Sp^2}(-p,q)-\pi=(\pi-d_{\Sp^2}(p,q))-\pi=-d_{\Sp^2}(p,q)$,
    $$-d_{\Sp^2}(p,q)\leq F_{p,q}(\alpha)\leq d_{\Sp^2}(p,q).$$
    
    \item Note that $G_{p,q}(\alpha)=\arccos(E_{p,q}(\alpha))+\alpha$. Hence, for any $\alpha\in (0,\pi)$, 
    \begin{align*}
        G_{p,q}'(\alpha)=-\frac{E_{p,q}'(\alpha)}{\sqrt{1-(E_{p,q}(\alpha))^2}}+1.
    \end{align*}
    Observe that this expression is well-defined by equation (1). Also, by equation  (2),
    \begin{align*}
        (E_{p,q}'(\alpha))^2+(E_{p,q}(\alpha))^2\leq 1&\Leftrightarrow E_{p,q}'(\alpha)\leq\sqrt{1-(E_{p,q}(\alpha))^2}\\
        &\Leftrightarrow G_{p,q}'(\alpha)=-\frac{E_{p,q}'(\alpha)}{\sqrt{1-(E_{p,q}(\alpha))^2}}+1\geq 0.
    \end{align*}
    Hence, $G_{p,q}$ is non-decreasing function. Also, since $G_{p,q}(0)=d_{\Sp^2}(p,q)$ and $G_{p,q}(\pi)=d_{\Sp^3}(T_\pi p,q)+\pi=d_{\Sp^2}(-p,q)+\pi=(\pi-d_{\Sp^2}(p,q))+\pi=2\pi-d_{\Sp^2}(p,q)$,
    $$d_{\Sp^2}(p,q)\leq G_{p,q}(\alpha)\leq 2\pi-d_{\Sp^2}(p,q).$$
\end{enumerate}
\end{proof}

\begin{lemma}\label{lemma:rotationdistprops}
For any $p=(x,y,z,0),q=(a,b,c,0)\in\Sp^2\backslash\Sp^1$,
\begin{enumerate}
    \item If $p\in N_i$ and $q\in N_j$ for $(i,j)=(1,2),(2,3)\text{ or }(3,1)$, then we have
    $$d_{\Sp^3}(T_{\frac{2\pi}{3}}p,q)\leq\frac{2\pi}{3}.$$
    
    \item If $p\in N_i$ and $q\in N_j$ for $(i,j)=(2,1),(3,2)\text{ or }(1,3)$, then we have
    $$d_{\Sp^3}(T_{\frac{\pi}{3}}p,q)\geq\frac{\pi}{3}.$$
\end{enumerate}
\end{lemma}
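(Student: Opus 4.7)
My plan is to work in spherical coordinates on $\Sp^2$. Since $N_i\subseteq\mathbf{H}_{>0}(\Sp^2)$, I can write $p=(\sin\theta_p\cos\phi_p,\sin\theta_p\sin\phi_p,\cos\theta_p,0)$ and similarly for $q$, with $\theta_p,\theta_q\in[0,\pi/2)$. Setting $\psi:=\phi_q-\phi_p$, $s:=\sin\theta_p\sin\theta_q$, and $c:=\cos\theta_p\cos\theta_q$, a direct calculation from the definition of $T_\alpha$ (and using $\cos(A-B)=\cos A\cos B+\sin A\sin B$) yields
\[
\langle T_\alpha p,q\rangle \;=\; s\cos(\alpha-\psi)\,+\,c\cos\alpha.
\]
The one algebraic inequality I will use repeatedly is $s+c=\cos(\theta_p-\theta_q)\leq 1$, which is valid since $\theta_p-\theta_q\in(-\pi/2,\pi/2)$.

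Next I would exploit the obvious $\mathbb{Z}/3$-symmetry of the configuration. Let $R$ be the $\R^4$-rotation by $2\pi/3$ about the $z$-axis, i.e.\ the orthogonal matrix acting as a $2\pi/3$ rotation on the $xy$-plane and as the identity on the $zw$-plane. Then $R$ is an isometry of $\Sp^3$, it commutes with $T_\alpha$ (both decompose as independent rotations of the $xy$- and $zw$-planes, and $R$'s $zw$-factor is the identity), and it sends $u_i$ to $u_{i+1}$ cyclically, so it carries each $N_i$ onto $N_{i+1}$ up to boundary tie-breaks (which are irrelevant since all the quantities being estimated are continuous in $p,q$). Therefore it suffices to prove item (1) for $(i,j)=(1,2)$ and item (2) for $(i,j)=(2,1)$; the other two cases in each item follow by replacing $(p,q)$ with $(Rp,Rq)$ and invoking $d_{\Sp^3}(T_\alpha Rp,Rq)=d_{\Sp^3}(T_\alpha p,q)$.

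For item (1) with $p\in N_1$, $q\in N_2$: the Voronoi definition forces $\phi_p\in[-\pi/3,\pi/3]$ and $\phi_q\in(\pi/3,\pi]$, hence $\psi\in(0,4\pi/3]$ and $\tfrac{2\pi}{3}-\psi\in[-\tfrac{2\pi}{3},\tfrac{2\pi}{3})$, an interval on which $\cos(\cdot)\geq-\tfrac12$. Substituting $\alpha=\tfrac{2\pi}{3}$ in the identity above and invoking $s+c\leq 1$ then gives
\[
\langle T_{2\pi/3}\,p,q\rangle \;=\; s\cos\!\bigl(\tfrac{2\pi}{3}-\psi\bigr)-\tfrac{c}{2}\;\geq\;-\tfrac{s}{2}-\tfrac{c}{2}\;\geq\;-\tfrac12,
\]
so $d_{\Sp^3}(T_{2\pi/3}\,p,q)\leq\arccos(-\tfrac12)=\tfrac{2\pi}{3}$.

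For item (2) with $p\in N_2$, $q\in N_1$: one now has $\phi_p\in(\pi/3,\pi]$ and $\phi_q\in[-\pi/3,\pi/3]$, so $\psi\in[-\tfrac{4\pi}{3},0)$ and $\tfrac{\pi}{3}-\psi\in(\tfrac{\pi}{3},\tfrac{5\pi}{3}]$, an interval on which $\cos(\cdot)\leq\tfrac12$. Specializing the formula to $\alpha=\tfrac{\pi}{3}$ yields
\[
\langle T_{\pi/3}\,p,q\rangle \;=\; s\cos\!\bigl(\tfrac{\pi}{3}-\psi\bigr)+\tfrac{c}{2}\;\leq\;\tfrac{s}{2}+\tfrac{c}{2}\;\leq\;\tfrac12,
\]
whence $d_{\Sp^3}(T_{\pi/3}\,p,q)\geq\arccos(\tfrac12)=\tfrac{\pi}{3}$. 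The one really delicate step in this scheme is pinning down the correct $\phi$-ranges from the tie-breaking rule in the definition of $N_i$ and checking that $R$ indeed cycles the cells as claimed; after that bookkeeping, the proof is just the short trigonometric estimate above. Notably this avoids relying on the sign of $bx-ay$ from Remark \ref{rmk:propsofphi21}(2), which as stated there can fail in boundary configurations.
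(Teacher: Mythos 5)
Your proof is correct, and it takes a genuinely different route from the paper's. The paper dispatches both items in two lines by combining the formula $E_{p,q}(\alpha)=\langle p,q\rangle\cos\alpha+(bx-ay)\sin\alpha$ (from the proof of Lemma \ref{lemma:propsofEFG}) with the sign claim $bx-ay\geq 0$ (resp.\ $\leq 0$) of item (2) of Remark \ref{rmk:propsof on:hi21}, together with $\langle p,q\rangle\leq 1$. You instead read the admissible azimuthal ranges directly off the Voronoi definition of the $N_i$, reduce to the index pairs $(1,2)$ and $(2,1)$ via the order-three rotation that commutes with $T_\alpha$, and bound $\cos(\alpha-\psi)$ over the resulting $\psi$-interval, closing with $s+c=\cos(\theta_p-\theta_q)\leq 1$. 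The computational skeleton is shared --- your identity $\langle T_\alpha p,q\rangle=s\cos(\alpha-\psi)+c\cos\alpha$ is just the paper's $E_{p,q}$ in polar form, since $\langle p,q\rangle=s\cos\psi+c$ and $bx-ay=s\sin\psi$ --- but the decisive inequality is justified differently, and your justification is the more robust one. Indeed, your parenthetical criticism is accurate and even understated: since $bx-ay=s\sin\psi$ and $\psi$ ranges over $(0,\tfrac{4\pi}{3}]$ for $p\in N_1$, $q\in N_2$, the quantity $bx-ay$ is strictly negative whenever $\psi>\pi$, which already happens for $p$ \emph{interior} to $N_1$ (take $\phi_p$ slightly above $-\tfrac{\pi}{3}$) and $q$ \emph{interior} to $N_2$ (take $\phi_q$ slightly below $\pi$). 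In that regime the paper's intermediate step $E_{p,q}(\tfrac{2\pi}{3})\geq-\tfrac12\langle p,q\rangle$ fails, whereas your bound $\cos(\tfrac{2\pi}{3}-\psi)\geq-\tfrac12$ on $[-\tfrac{2\pi}{3},\tfrac{2\pi}{3})$ covers it; the lemma itself survives, and your argument supplies the missing justification. What the paper's version buys is brevity and reuse of machinery already in place for the proof of Proposition \ref{prop:ub13}; what yours buys is a self-contained derivation of the cell geometry, a symmetry reduction that eliminates the mod-$2\pi$ bookkeeping, and correctness at the extreme configurations where the bound is actually tight ($\psi=\tfrac{4\pi}{3}$, $s\to1$). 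Two small points to tidy: at the north pole (which lies in $N_1$) the angle $\phi$ is undetermined, but there $s=0$ and your estimates go through for any choice of representative; and your rotation reduction should be stated as: $R$ maps the open Voronoi cells onto one another exactly, each $N_i$ lies in the closure of its open cell, and the inequalities being non-strict pass to the limit --- which is precisely the continuity argument you sketch.
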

\begin{proof}
\begin{enumerate}
    \item First, observe that $bx-ay\geq 0$ by the item (2) of Remark \ref{rmk:propsof on:hi21}. Hence, 
    \begin{align*}
        E_{p,q}\left(\frac{2\pi}{3}\right)=\langle T_{\frac{2\pi}{3}} p,q \rangle&=-\frac{1}{2}\langle p,q \rangle+\frac{\sqrt{3}}{2}(bx-ay)\\
        &\geq -\frac{1}{2}\langle p,q \rangle\geq -\frac{1}{2}.
    \end{align*}
    Therefore,
    $$d_{\Sp^3}(T_{\frac{2\pi}{3}}p,q)=\arccos \left(E_{p,q}\left(\frac{2\pi}{3}\right)\right)\leq\arccos\left(-\frac{1}{2}\right)=\frac{2\pi}{3}.$$
    
    \item The proof of this case is  similar to the proof of the case (1) of this Lemma, so we omit it.
\end{enumerate}
\end{proof}

\begin{figure}
	\centering
    \includegraphics[width=0.6\textheight]{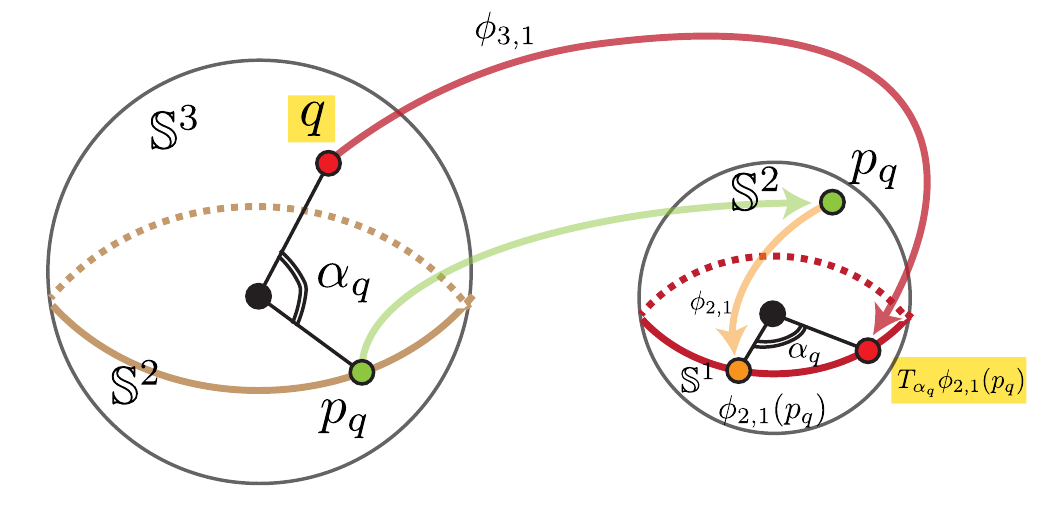}
\caption{The definition of $\phi_{3,1}$: given $q\in \Sp^3\backslash\Sp^2$ there exists a unique angle $\alpha_q\in(0,\pi)$ and unique point $p_q \in \Sp^2\backslash\Sp^1$ such that $q = T_{\alpha_q} p_q$. Then, we consider the point $\phi_{2,1}(p_q)\in\Sp^1$ and define $\phi_{3,1}(q):= T_{\alpha_q}\phi_{2,1}(p_q)$. That $\phi_{3,1}(q)\in \Sp^1$ follows from Lemma \ref{lemma:rotationprops} item (2).}
\label{fig:phi31}
\end{figure}

\begin{proof}[Proof of Proposition \ref{prop:ub13}]
Note that it is enough to find a surjective map $\phi_{3,1}:\Sp^3\twoheadrightarrow\Sp^1$ such that $\dis(\phi_{3,1})\leq\zeta_1=\frac{2\pi}{3}$ since this map gives rise to a correspondence $R_{3,1}:=\mathrm{graph}(\phi_{3,1})$ with $\dis(R_{3,1}) = \dis(\phi_{3,1})\leq \zeta_1$.

We  construct the required surjective map \mbox{$\phi_{3,1}:\Sp^3\longtwoheadrightarrow\Sp^1$}  as follows:
\begin{align*}
    q&\longmapsto\begin{cases}\phi_{2,1}(q)&\textit{ if }q\in\Sp^2 \\ T_{\alpha_q} \phi_{2,1}(p_q)&\textit{ if }q\in\Sp^3\backslash\Sp^2\text{ and }q=T_{\alpha_q}\, p_q\\
    & \quad\textit{for the \emph{unique} such  }\alpha_q\in(0,\pi)\textit{ and }p_q\in\Sp^2\backslash\Sp^1.\end{cases}
\end{align*}

Note that $\phi_{3,1}$ is surjective, since $\phi_{3,1}\vert_{\Sp^2}=\phi_{2,1}$ and $\phi_{2,1}$ is surjective.

See Figures \ref{fig:phi31} and \ref{fig:hopf} for an explanation of the construction of the map $\phi_{3,1}$.

\begin{figure}
    \centering
    \includegraphics[width = 0.7\linewidth]{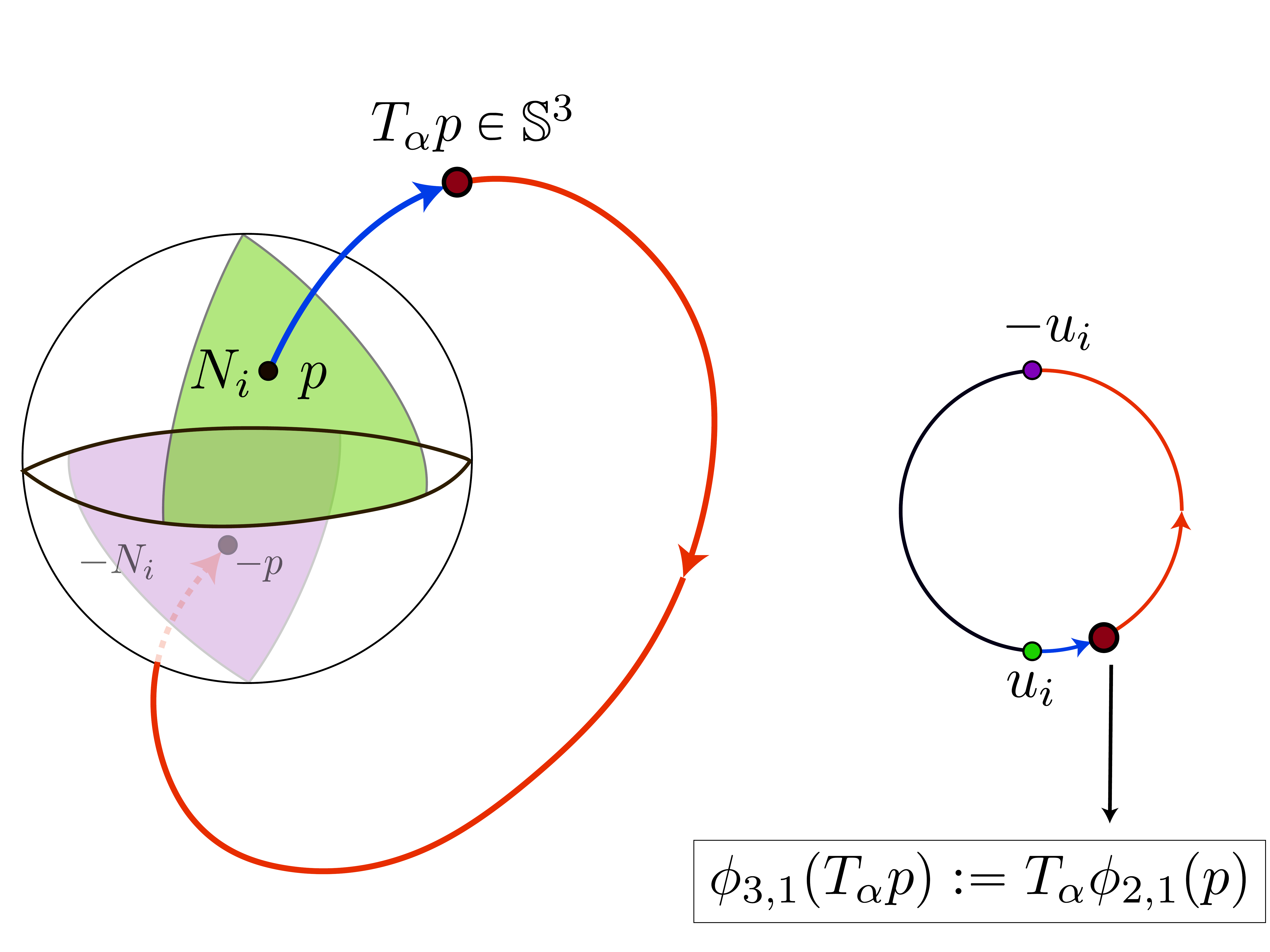}
    \caption{The definition of the map $\phi_{3,1}$ via the map $\phi_{2,1}$. The point $T_\alpha p$ on $\Sp^3$ is mapped to the point $T_\alpha \phi_{2,1}(p)$ on $\Sp^1$. The antipode preserving map $\phi_{2,1}$ maps the whole region $N_i$ to the point $u_i$.}
    \label{fig:hopf}
\end{figure}

Let us  now verify that
$$\vert d_{\Sp^3}(q,q')-d_{\Sp^1}(\phi_{3,1}(q),\phi_{3,1}(q'))\vert\leq\zeta_1$$
for every $q,q'\in\Sp^3$. Without loss of generality, we can assume that $q=T_\alpha p$, $q'=T_\beta p'$ for some $p,p'\in\Sp^2$ and $0\leq\beta\leq\alpha<\pi$. Then,
\begin{align*}
    \vert d_{\Sp^3}(q,q')-d_{\Sp^1}(\phi_{3,1}(q),\phi_{3,1}(q'))\vert&=\vert d_{\Sp^3}(T_\alpha p,T_\beta p')-d_{\Sp^1}(T_\alpha \phi_{2,1}(p),T_\beta \phi_{2,1}(p'))\vert\\
    &=\vert d_{\Sp^3}(T_{(\alpha-\beta)} p,p')-d_{\Sp^1}(T_{(\alpha-\beta)} \phi_{2,1}(p),\phi_{2,1}(p'))\vert
\end{align*}

Hence, it is enough to prove
\begin{equation}\label{eq:phi31dist}
    \big\vert d_{\Sp^3}(T_\alpha p,q)-d_{\Sp^1}(T_\alpha \phi_{2,1}(p),\phi_{2,1}(q))\big\vert\leq\zeta_1
\end{equation}

for any $p,q\in\Sp^2$ and $\alpha\in[0,\pi)$.

If $p\in\Sp^1$, then $\phi_{2,1}(p)=p$. Hence,
\begin{align*}
    \vert d_{\Sp^3}(T_\alpha p,q)-d_{\Sp^1}(T_\alpha \phi_{2,1}(p),\phi_{2,1}(q))\vert&=\vert d_{\Sp^3}(T_\alpha p,q)-d_{\Sp^1}(T_\alpha p,\phi_{2,1}(q))\vert\\
    &\leq d_{\Sp^3}(q,\phi_{2,1}(q))\leq\zeta_1
\end{align*}
where the last inequality holds by item (1) of Remark \ref{rmk:propsof on:hi21}. One can carry out a similar computation if $q\in\Sp^1$. So, let's assume $p=(x,y,z,0),q=(a,b,c,0)\in\Sp^2\backslash\Sp^1$. Furthermore, since $\phi_{2,1}$ is antipode preserving, it is enough to check  inequality (\ref{eq:phi31dist}) only for $p,q\in \mathbf{H}_{>0}(\Sp^2)$. We do this by following the same idea as in the proof of Lemma \ref{lemma:distortion}.

We do a case by case analysis.
\begin{enumerate}
    \item \textbf{Case $p\in N_i$ and $q\in N_j$ for $(i,j)=(1,2),(2,3)\text{ or }(3,1)$}: By  item (2) of Remark \ref{rmk:propsof on:hi21}, the two points $\phi_{2,1}(p)$ and $\phi_{2,1}(q)$ are in clockwise order. Hence,
    \begin{align*}
        d_{\Sp^1}(T_\alpha \phi_{2,1}(p),\phi_{2,1}(q))=\begin{cases}\frac{2\pi}{3}-\alpha&\text{if }\alpha\in\left[0,\frac{2\pi}{3}\right]\\ \alpha-\frac{2\pi}{3}&\text{if }\alpha\in\left[\frac{2\pi}{3},\pi\right).\end{cases}
    \end{align*}
    
    Consider first the case when $\alpha\in\left[0,\frac{2\pi}{3}\right]$. We have to prove that 
    $$-\frac{2\pi}{3}\leq d_{\Sp^3}(T_\alpha p,q)-\left(\frac{2\pi}{3}-\alpha\right)\leq\frac{2\pi}{3}.$$
    Equivalently, we have to prove
    $$0\leq G_{p,q}(\alpha)\leq\frac{4\pi}{3}.$$
    The left-hand side inequality is obvious since $G_{p,q}(\alpha)\geq d_{\Sp^2}(p,q)\geq 0$ by Lemma \ref{lemma:propsofEFG} item (4). The right-hand side inequality is true by Lemma \ref{lemma:propsofEFG} item (4) and Lemma \ref{lemma:rotationdistprops} item (1).

    Next, consider the case when $\alpha\in\left[\frac{2\pi}{3},\pi\right)$. We have to prove
    $$-\frac{2\pi}{3}\leq d_{\Sp^3}(T_\alpha p,q)-\left(\alpha-\frac{2\pi}{3}\right)\leq\frac{2\pi}{3}.$$
    Equivalently, we have to prove
    $$-\frac{4\pi}{3}\leq F_{p,q}(\alpha)\leq 0.$$
    The leftmost  inequality is obvious since $F_{p,q}(\alpha)\geq -d_{\Sp^2}(p,q)\geq-\frac{4\pi}{3}$ by Lemma \ref{lemma:propsofEFG} item (3). The right-hand side inequality is true by Lemma \ref{lemma:propsofEFG} item (3) and Lemma \ref{lemma:rotationdistprops} item (1).
    
    \item \textbf{Case $p\in N_i$ and $q\in N_j$ for $(i,j)=(2,1),(3,2)\text{ or }(1,3)$}: Almost the same as the case (1) except we use the item (2) of Lemma \ref{lemma:rotationdistprops}.
    
    \item \textbf{Case $p,q\in N_i$ for $i=1,2,3$}: In this case, $d_{\Sp^1}(T_\alpha \phi_{2,1}(p),\phi_{2,1}(q))=\alpha$ since $\phi_{2,1}(p)=\phi_{2,1}(q)$ and Lemma \ref{lemma:rotationprops} item (5). Hence, we have to show
    $$-\frac{2\pi}{3}\leq d_{\Sp^3}(T_\alpha p,q)-\alpha=F_{p,q}(\alpha) \leq \frac{2\pi}{3}.$$
    But, this is obvious by the item (1) of Remark \ref{rmk:propsof on:hi21} and the item (3) of Lemma \ref{lemma:propsofEFG}.
\end{enumerate}

So, indeed $\dis(\phi_{3,1})\leq\zeta_1$ as we wanted.
\end{proof}

\section{The proof of Proposition \ref{prop:ub23}}\label{sec:proof-prop-ub23}

In this section we provide a construction of an optimal correspondence, $R_{3,2}$, between $\Sp^3$ and $\Sp^2$. The structure of this correspondence is  different from those of the ones described in the proofs of Propositions \ref{prop:ub} and \ref{prop:ub-n-m}. As a matter of fact, as Remark \ref{rem:eta-distinct} mentions, the distortion of the surjection $\phi_{(m+1),m}:\Sp^{m+1}\longtwoheadrightarrow\Sp^m$ constructed in Proposition \ref{prop:ub-n-m} is \emph{exactly equal} to $\eta_m$. Since $\zeta_2<\eta_2$ this means that a different construction is required for the case $m=2$.

Let $u_1,u_2,u_3,$ and $u_4$ be the vertices of a regular tetrahedron inscribed in $\Sp^2$ (i.e., $\langle u_i,u_j \rangle=-\frac{1}{3}=\cos\zeta_2$ for any $i\neq j$). We consider:
\begin{center}
\begin{tabular}{ l l }
 $u_1=(1,0,0),$ & $u_2=\left(-\frac{1}{3},\frac{2\sqrt{2}}{3},0\right),$\\ 
 $u_3=\left(-\frac{1}{3},-\frac{\sqrt{2}}{3},\frac{\sqrt{2}}{\sqrt{3}}\right),$ & $u_4=\left(-\frac{1}{3},-\frac{\sqrt{2}}{3},-\frac{\sqrt{2}}{\sqrt{3}}\right).$ 
\end{tabular}
\end{center}

Now, let $V_1,V_2,V_3,\text{ and }V_4\subset\Sp^2$ be the Voronoi partition of $\Sp^2$ induced by $u_1,u_2,u_3,$ and $u_4$. Then, for each $i$, $\overline{V_i}$ is the spherical convex hull of the set $\{-u_j\in\Sp^2:j\in\{1,2,3,4\}\backslash\{i\}\}$. Let $$r:=\arccos\left(\frac{2\sqrt{2}}{3}\right).$$ For  $i\neq j\in\{1,2,3,4\}$, let $u_{i,j}$ be the point on the shortest geodesic between $u_i$ and $-u_j$ such that $d_{\Sp^2}(u_i,u_{i,j})=r$. See Figure \ref{fig:Nidecription} for an illustration of $V_1$. 

\begin{figure}
\begin{center}
\includegraphics[width=0.6\linewidth]{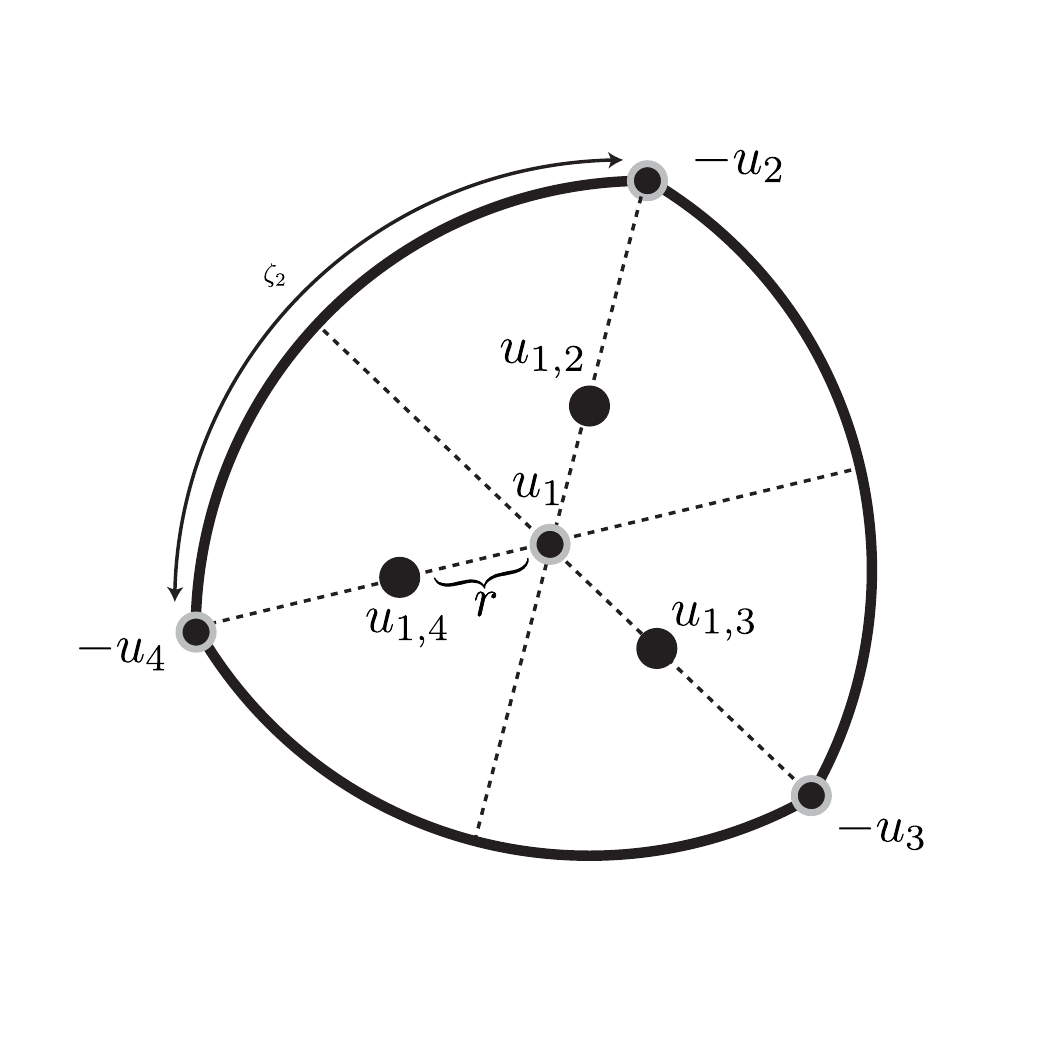}
\end{center}
\caption{Illustration of $V_i$ for $i=1$. All the sides of the spherical triangle $V_1$ (determined by the three points $-u_2$, $-u_3$, and $-u_4$) have the same length $\zeta_2$.\label{fig:Nidecription}}
\end{figure}

\begin{remark}\label{rmk:cptuij}
One can directly compute the following coordinates: 
\begin{center}
\begin{tabular}{ l l l }
 $u_{1,2}=\left(\frac{2\sqrt{2}}{3},-\frac{1}{3},0\right),$ & $u_{1,3}=\left(\frac{2\sqrt{2}}{3},\frac{1}{6},-\frac{1}{2\sqrt{3}}\right),$ & $u_{1,4}=\left(\frac{2\sqrt{2}}{3},\frac{1}{6},\frac{1}{2\sqrt{3}}\right),$ \\ 
 $u_{2,1}=\left(-\frac{4\sqrt{2}}{9},\frac{7}{9},0\right),$ & $u_{2,3}=\left(-\frac{\sqrt{2}}{9},\frac{17}{18},-\frac{1}{2\sqrt{3}}\right),$ & $u_{2,4}=\left(-\frac{\sqrt{2}}{9},\frac{17}{18},\frac{1}{2\sqrt{3}}\right).$   
\end{tabular}
\end{center}

\end{remark}

\begin{lemma}\label{lemma:distuijs}
For any $i\neq j\in\{1,2,3,4\}$, the following  hold:
\begin{enumerate}
    \item $\langle u_{i,k},u_{i,l} \rangle=\frac{5}{6}$ for any $k\neq l\in{1,2,3,4}\backslash\{i\}$.
    
    \item $\langle u_{i,k},u_{j,k} \rangle=\frac{5}{54}$ for any $k\in{1,2,3,4}\backslash\{i,j\}$.
    
    \item $\langle u_{i,k},u_{j,l} \rangle=-\frac{2}{27}$ for any $k\neq l\in{1,2,3,4}\backslash\{i,j\}$.
    
    \item $\langle u_{i,k},u_{j,i} \rangle=-\frac{25}{54}$ for any $k\in{1,2,3,4}\backslash\{i,j\}$.
    
    \item $\langle u_{i,j},u_{j,i} \rangle=-\frac{23}{27}$.
    
    \item $\langle u_i,u_{j,k} \rangle=-\frac{\sqrt{2}}{9}$ for any $k\in{1,2,3,4}\backslash\{i,j\}$.
    
    \item $\langle u_i,u_{j,i} \rangle=-\frac{4\sqrt{2}}{9}$.
\end{enumerate}
\end{lemma}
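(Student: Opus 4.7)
The plan is to obtain a single closed-form expression for each $u_{i,j}$ as a linear combination of $u_i$ and $u_j$, and then reduce every inner product in the lemma to a bilinear computation using only the known identities $\langle u_i,u_i\rangle=1$ and $\langle u_i,u_j\rangle=-\tfrac13$ (for $i\neq j$). The key geometric input is the following standard parametrization: if $p,q\in\Sp^2$ are distinct and not antipodal, then the unit-speed geodesic from $p$ to $q$ is
\[
\gamma(t)=\frac{\sin(L-t)}{\sin L}\,p+\frac{\sin t}{\sin L}\,q,\qquad L:=d_{\Sp^2}(p,q).
\]
Applying this with $p=u_i$, $q=-u_j$ (so $\cos L=\langle u_i,-u_j\rangle=\tfrac13$ and hence $\sin L=\tfrac{2\sqrt 2}{3}$) at $t=r=\arccos(\tfrac{2\sqrt 2}{3})$, and using $\cos r=\tfrac{2\sqrt 2}{3}$, $\sin r=\tfrac13$, we get
\[
\sin(L-r)=\sin L\cos r-\cos L\sin r=\tfrac{8}{9}-\tfrac{1}{9}=\tfrac{7}{9},
\]
so that
\[
u_{i,j}\;=\;\alpha\,u_i-\beta\,u_j,\qquad\text{with}\qquad \alpha:=\tfrac{7\sqrt 2}{12},\ \beta:=\tfrac{\sqrt 2}{4}.
\]
A quick sanity check against Remark \ref{rmk:cptuij} (e.g.\ substituting the explicit $u_1,u_2$ to recover $u_{1,2}=(\tfrac{2\sqrt 2}{3},-\tfrac13,0)$) confirms the formula.

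From here everything reduces to bookkeeping. I would record once and for all the constants
\[
\alpha^2=\tfrac{49}{72},\quad \alpha\beta=\tfrac{7}{24},\quad \beta^2=\tfrac{1}{8},
\]
and then, for each of the seven items, write $u_{i,k}=\alpha u_i-\beta u_k$ and $u_{j,l}=\alpha u_j-\beta u_l$ and expand $\langle u_{i,k},u_{j,l}\rangle$ by bilinearity. Each cross term is either $1$ (when the two indices coincide) or $-\tfrac13$ (when they differ), so every item collapses to a single rational expression in $\alpha^2,\alpha\beta,\beta^2$. For instance, item (1) gives $\alpha^2+\tfrac{2}{3}\alpha\beta-\tfrac{1}{3}\beta^2=\tfrac{49}{72}+\tfrac{14}{72}-\tfrac{3}{72}=\tfrac{5}{6}$; item (5) gives $-\tfrac{1}{3}\alpha^2-2\alpha\beta-\tfrac{1}{3}\beta^2=-\tfrac{49}{216}-\tfrac{135}{216}=-\tfrac{23}{27}$; and items (6)--(7) reduce to the linear expressions $(\beta-\alpha)/3$ and $-\alpha/3-\beta$, which evaluate to $-\tfrac{\sqrt 2}{9}$ and $-\tfrac{4\sqrt 2}{9}$ respectively. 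The other items are analogous, each with a different sign pattern coming from whether an index of the first vector coincides with the privileged index of the second.

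There is really no obstacle of substance here: the only conceptual step is deriving the single formula $u_{i,j}=\alpha u_i-\beta u_j$, after which the seven identities are parallel exercises in bilinearity. The mild care required is in tracking which of the four indices $i,j,k,l$ are forced to be distinct in each item, so that each cross term $\langle u_\bullet,u_\bullet\rangle$ is correctly identified as $1$ or $-\tfrac13$; enumerating the items in the order (1),(2),(3),(4),(5),(6),(7) as above keeps the case analysis transparent and self-checking.
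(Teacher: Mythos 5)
Your proposal is correct, and I verified the key numbers: the slerp parametrization with $\cos L=\tfrac13$, $\sin L=\tfrac{2\sqrt2}{3}$, $\cos r=\tfrac{2\sqrt2}{3}$, $\sin r=\tfrac13$ indeed gives $u_{i,j}=\tfrac{7\sqrt2}{12}\,u_i-\tfrac{\sqrt2}{4}\,u_j$ (which is a unit vector with $\langle u_{i,j},u_i\rangle=\cos r$, and reproduces the coordinates in Remark \ref{rmk:cptuij}), and expanding by bilinearity with the Gram data $\langle u_i,u_j\rangle=-\tfrac13$ yields exactly the seven values $\tfrac56,\ \tfrac{5}{54},\ -\tfrac{2}{27},\ -\tfrac{25}{54},\ -\tfrac{23}{27},\ -\tfrac{\sqrt2}{9},\ -\tfrac{4\sqrt2}{9}$. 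However, your route is genuinely different from the paper's: the paper simply reduces by the tetrahedral symmetry to the case $i=1$, $j=2$ and then reads off all inner products from the explicit coordinate list recorded in Remark \ref{rmk:cptuij}, whereas you derive a single coordinate-free closed form $u_{i,j}=\alpha u_i-\beta u_j$ and never touch coordinates (instead, your formula re-derives them as a sanity check). The paper's argument is shorter given that the coordinates were already computed for other purposes; yours is more structural, makes the dependence on the Gram matrix of the simplex transparent, and would generalize with no extra work to the analogous points on $\Sp^m$ for a regular $(m+1)$-simplex, with $\alpha,\beta$ expressed through $\cos\zeta_m$ and the chosen radius $r$. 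The only points of care in your argument — that $u_i$ and $-u_j$ are neither equal nor antipodal (so the minimizing geodesic is unique) and that $r<L$ (so $u_{i,j}$ lies on the segment) — both hold since $\langle u_i,-u_j\rangle=\tfrac13$ and $\arccos\bigl(\tfrac{2\sqrt2}{3}\bigr)<\arccos\bigl(\tfrac13\bigr)$, so there is no gap.
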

\begin{proof}
By symmetry, without loss of generality one can assume $i=1$ and $j=2$. Then, use the coordinate values given in Remark \ref{rmk:cptuij}.
\end{proof}

Next, for each $i$, let $\{V_{i,j}\subset V_i:j\in\{1,2,3,4\}\backslash\{i\}\}$ be the Voronoi partition of $V_i$ induced by $\{u_{i,j}\in V_i:j\in\{1,2,3,4\}\backslash\{i\}\}$.\vspace{\baselineskip}

From now on, in this section, we will identify $\Sp^2$ with $\sete(\Sp^3)\subset\Sp^3$. Then, obviously $$\mathbf{H}_{\geq 0}(\Sp^3)=\mathcal{C}(V_1)\cup \mathcal{C}(V_2)\cup \mathcal{C}(V_3)\cup \mathcal{C}(V_4).$$ Moreover, for any $i\in\{1,2,3,4\}$ and $\alpha\in\left[0,\frac{\pi}{2}\right]$, we  divide $\mathcal{C}(V_i)$ in the following way:

\begin{align*}
    & \mathcal{C}_\alpha^\mathrm{top}(V_i):=\{p\in \mathcal{C}(V_i):d_{\Sp^{n+1}}(e_4,p)\leq\alpha\},\\
    & \mathcal{C}_\alpha^\mathrm{bot}(V_i):=\{p\in \mathcal{C}(V_i):d_{\Sp^{n+1}}(e_4,p)>\alpha\},\\
    & \mathcal{C}_\alpha^\mathrm{bot}(V_{i,j}):=\{p\in \mathcal{C}(V_i):d_{\Sp^{n+1}}(e_4,p)>\alpha\text{ and }\Omega(p)\in V_{i,j}\}\text{ for any }j\in\{1,2,3,4\}\backslash\{i\}.
\end{align*}
where
\begin{align*}
    \Omega:\mathbf{H}_{\geq 0}(\Sp^3)\backslash\{e_4\}&\longrightarrow \sete(\Sp^3)=\Sp^2\\
    (x,y,z,w)&\longmapsto\frac{1}{\sqrt{1-w^2}}(x,y,z,0)
\end{align*}
is the orthogonal projection onto the equator. Then, obviously $$\mathcal{C}(V_i)=\mathcal{C}_\alpha^\mathrm{top}(V_i)\cup\bigcup_{j\in\{1,2,3,4\}\backslash\{i\}}\mathcal{C}_\alpha^\mathrm{bot}(V_{i,j})$$ for each $i\in\{1,2,3,4\}$. See Figure \ref{fig:CV1} (\subref{fig:CNitopbot}) and Figure \ref{fig:CV1}  (\subref{fig:CNibot}) for  illustrations of $\mathcal{C}_\alpha^\mathrm{top}(V_1)$, $\mathcal{C}_\alpha^\mathrm{bot}(V_1)$, $\mathcal{C}_\alpha^\mathrm{bot}(V_{1,2})$, $\mathcal{C}_\alpha^\mathrm{bot}(V_{1,3})$, and $\mathcal{C}_\alpha^\mathrm{bot}(V_{1,4})$.

\begin{figure}
\centering
  \begin{subfigure}[b]{0.4\textwidth}
  \centering
    \includegraphics[width=\textwidth]{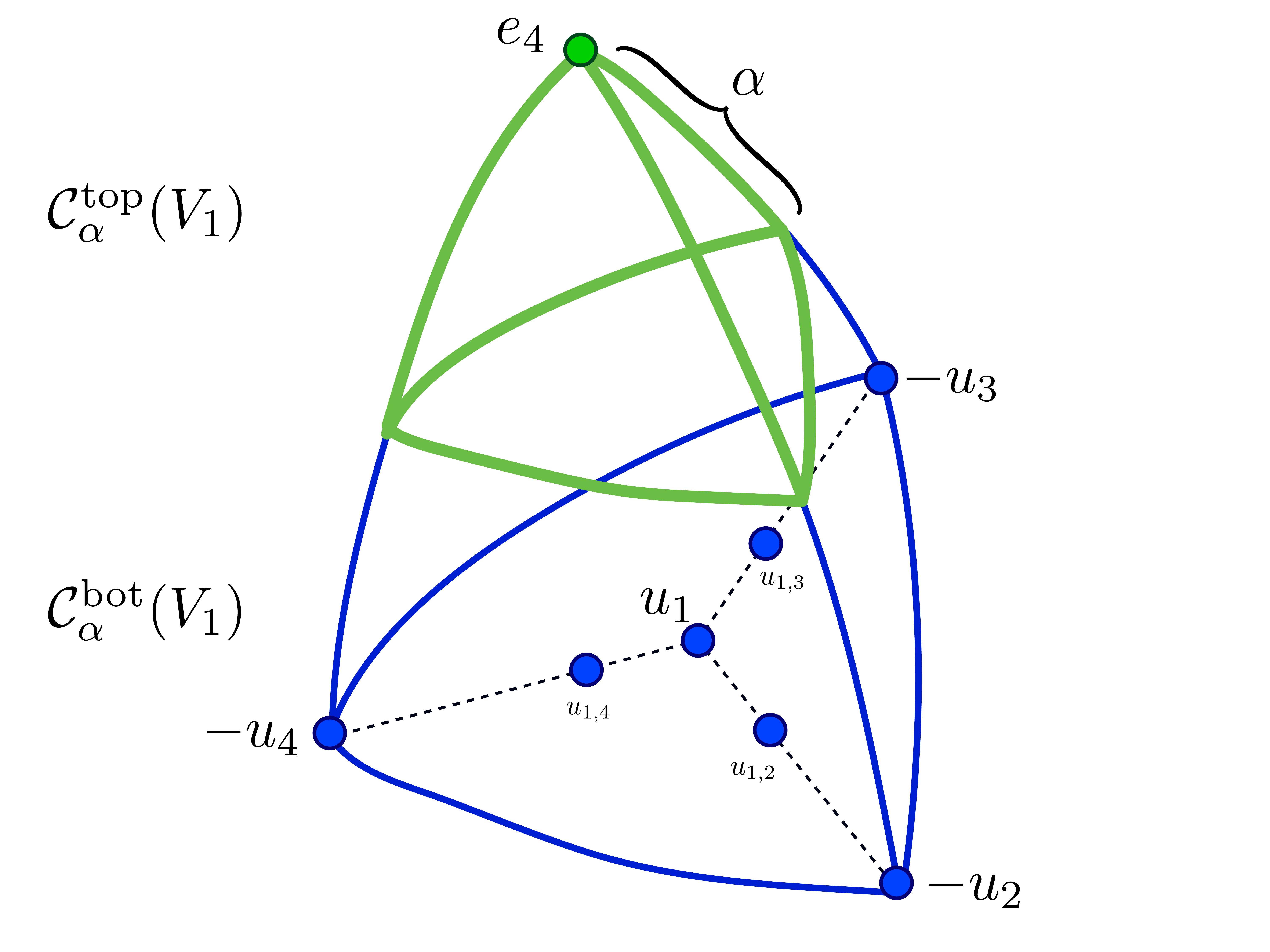}
    \caption{$\mathcal{C}_\alpha^\mathrm{top}(V_1)$ and  $\mathcal{C}_\alpha^\mathrm{bot}(V_1)$}
    \label{fig:CNitopbot}
  \end{subfigure}
  \begin{subfigure}[b]{0.4\textwidth}
  \centering
    \includegraphics[width=\textwidth]{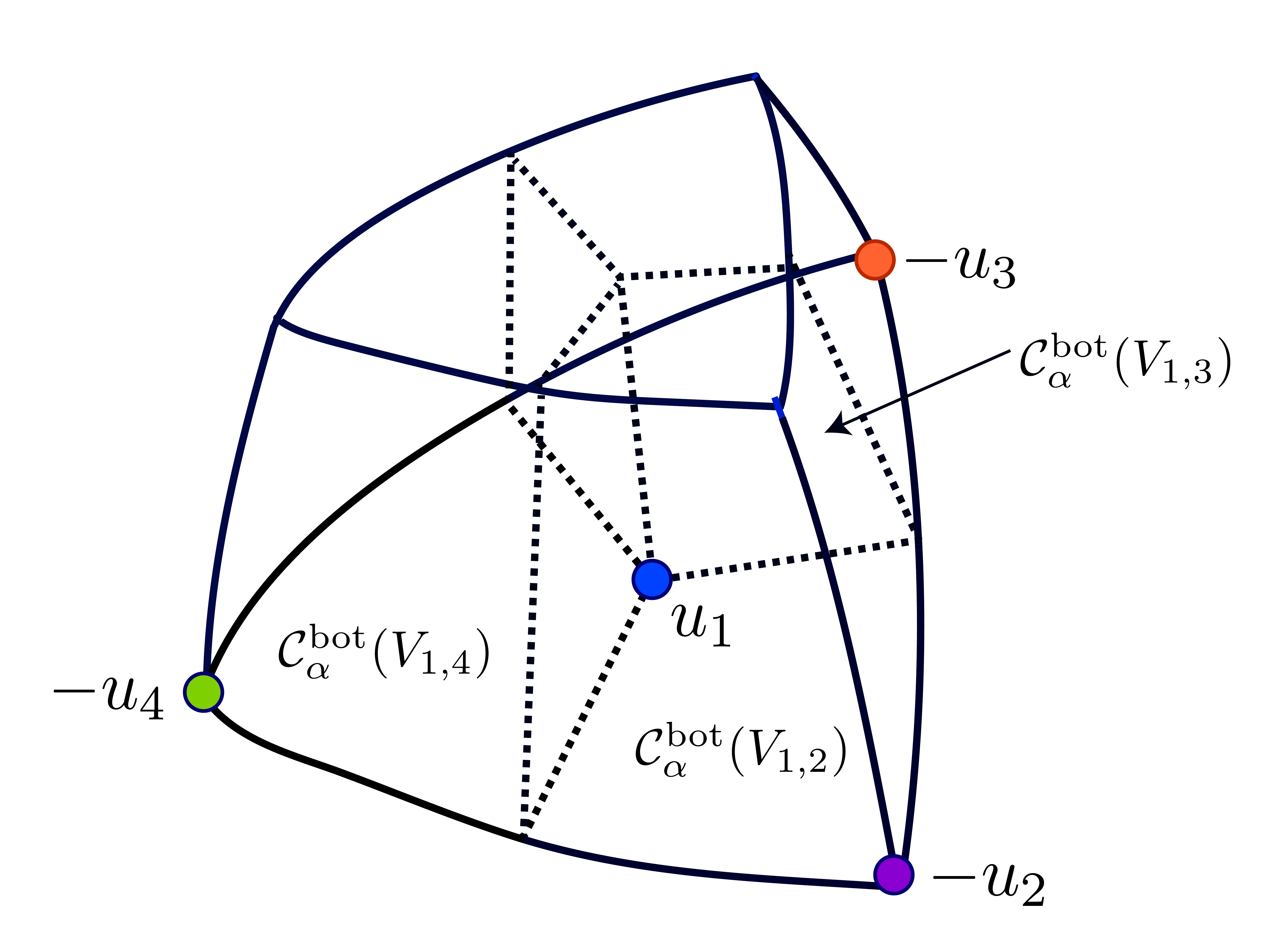}
    \caption{\mbox{$\mathcal{C}_\alpha^\mathrm{bot}(V_{1,2})$, $\mathcal{C}_\alpha^\mathrm{bot}(V_{1,3})$, and  $\mathcal{C}_\alpha^\mathrm{bot}(V_{1,4})$}}
    \label{fig:CNibot}
  \end{subfigure}
  \caption{The regions into which $\mathcal{C}(V_1)$ is split.}\label{fig:CV1}
\end{figure}

\begin{lemma}\label{lemma:S3S2techineq}
For $p,q\in \mathbf{H}_{\geq 0}(\Sp^3)$, the following inequalities hold:
\begin{enumerate}
    \item If $p,q\in \mathcal{C}_\alpha^\mathrm{top}(V_i)$ for some $i\in\{1,2,3,4\}$, then
$$\langle p,q\rangle\geq\cos^2\alpha-\frac{1}{\sqrt{3}}\sin^2\alpha=\left(1+\frac{1}{\sqrt{3}}\right)\cos^2\alpha-\frac{1}{\sqrt{3}}.$$
In particular, this is equivalent to $$d_{\Sp^3}(p,q)\leq\arccos\left(\left(1+\frac{1}{\sqrt{3}}\right)\cos^2\alpha-\frac{1}{\sqrt{3}}\right).$$

    \item If $p\in \mathcal{C}_\alpha^\mathrm{top}(V_i)$ and $q\in \mathcal{C}_\alpha^\mathrm{bot}(V_{j,i})$ for some $i\neq j\in\{1,2,3,4\}$, then
$$\langle p,q\rangle\leq\sqrt{\frac{2}{3}\cos^2\alpha+\frac{1}{3}}.$$
In particular, this is equivalent to $$d_{\Sp^3}(p,q)\geq\arccos\left(\sqrt{\frac{2}{3}\cos^2\alpha+\frac{1}{3}}\right).$$

    \item If $p\in \mathcal{C}_\alpha^\mathrm{bot}(V_{i,k})$ and $q\in \mathcal{C}_\alpha^\mathrm{bot}(V_{j,i})$ for some $i\neq j\in\{1,2,3,4\}$ and $k\in\{1,2,3,4\}\backslash\{i,j\}$, then
$$\langle p,q\rangle\leq\left(1-\frac{1}{\sqrt{3}}\right)\cos^2\alpha+\frac{1}{\sqrt{3}}$$
In particular, this is equivalent to the condition $$d_{\Sp^3}(p,q)\geq\arccos\left(\left(1-\frac{1}{\sqrt{3}}\right)\cos^2\alpha+\frac{1}{\sqrt{3}}\right).$$

    \item If $p\in \mathcal{C}_\alpha^\mathrm{bot}(V_{i,j})$ and $q\in \mathcal{C}_\alpha^\mathrm{bot}(V_{j,i})$ for some $i\neq j\in\{1,2,3,4\}$, then
$$\langle p,q\rangle\leq\cos^2\alpha.$$
In particular, this is equivalent to
$$d_{\Sp^3}(p,q)\geq\arccos(\cos^2\alpha).$$
\end{enumerate}
\end{lemma}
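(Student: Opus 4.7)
The plan is to parametrize any $p \in \mathbf{H}_{\geq 0}(\Sp^3)\setminus\{e_4\}$ as $p = \cos\theta_p\cdot e_4 + \sin\theta_p\cdot v_p$ with $\theta_p = d_{\Sp^3}(e_4, p) \in [0, \pi/2]$ and $v_p = \Omega(p) \in \Sp^2 = \sete(\Sp^3)$, and analogously for $q$. Then
\[ \langle p, q\rangle = \cos\theta_p\cos\theta_q + \sin\theta_p\sin\theta_q\,\langle v_p, v_q\rangle, \]
while the $\mathcal{C}_\alpha^{\mathrm{top}}$ condition translates to $\theta_p\le\alpha$ and the $\mathcal{C}_\alpha^{\mathrm{bot}}$ condition to $\theta_p>\alpha$, in both cases combined with the requirement that $v_p$ lies in the appropriate Voronoi (sub)region of $\Sp^2$. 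Each case of the lemma thus reduces to first bounding the scalar $c := \langle v_p, v_q\rangle$ on the relevant product of Voronoi subregions, and then optimizing $\cos\theta_1\cos\theta_2 + c\sin\theta_1\sin\theta_2$ over the admissible angle ranges.

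The first stage is to prove the following $\Sp^2$ inequalities that back each case: (a) for case (1), $\langle v, w\rangle \geq -1/\sqrt{3}$ whenever $v, w\in\overline{V_i}$, which is immediate from $\diam(\overline{V_i})=\eta_2=\arccos(-1/\sqrt{3})$ (Remark~\ref{rmk:diamoneface}); (b) for case (2), $\langle v, w\rangle \leq 1/\sqrt{3}$ whenever $v\in\overline{V_i},\,w\in\overline{V_{j,i}}$ with $i\neq j$; (c) for case (3), $\langle v, w\rangle \leq 1/\sqrt{3}$ whenever $v\in\overline{V_{i,k}},\,w\in\overline{V_{j,i}}$ with $k\in\{1,2,3,4\}\setminus\{i,j\}$; and (d) for case (4), $\langle v, w\rangle \leq 0$ whenever $v\in\overline{V_{i,j}},\,w\in\overline{V_{j,i}}$ with $i\neq j$. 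Since $\langle\cdot,\cdot\rangle$ is bilinear in the ambient $\R^3$ and each Voronoi subregion is spherically convex, the extremal configurations reduce to finitely many vertex-vertex, vertex-edge, and edge-edge pairings. The vertices of $\overline{V_{i,j}}$ are $u_i$, $-u_j$, and the two geodesic midpoints of the edges of $\overline{V_i}$ incident to $-u_j$, and the relevant pairings are computable from Lemma~\ref{lemma:distuijs} together with $\langle u_i, u_j\rangle = -1/3$; the symmetries of the inscribed regular tetrahedron shrink the case analysis considerably.

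The second stage plugs each geometric fact into an elementary trigonometric estimate. For case (1), $f(\theta_1,\theta_2,c):=\cos\theta_1\cos\theta_2 + c\sin\theta_1\sin\theta_2$ is non-increasing in each $\theta_i$ on $[0,\alpha]$ when $c<1$ (and increasing in $c$ when $\sin\theta_1\sin\theta_2>0$), so its minimum on $[0,\alpha]^2\times[-1/\sqrt{3},1]$ is $\cos^2\alpha - \sin^2\alpha/\sqrt{3}$, attained at $(\alpha,\alpha,-1/\sqrt{3})$. For cases (2) and (3), I would split on the sign of $c$: when $c\le 0$ the estimate $\langle p,q\rangle\le \cos\theta_1\cos\theta_2$ yields $<\cos\alpha$ in case (2) and $<\cos^2\alpha$ in case (3), both of which are dominated by the claimed bounds. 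When $c>0$, case (2) follows from the Cauchy-Schwarz bound $\langle p,q\rangle\le \sqrt{\cos^2\theta_2 + c^2\sin^2\theta_2}$ applied to $(\cos\theta_1,\sin\theta_1)\cdot(\cos\theta_2,c\sin\theta_2)$; this is decreasing in $\theta_2$ for $c^2<1$, hence bounded by $\sqrt{\cos^2\alpha + c^2\sin^2\alpha}\le \sqrt{(2/3)\cos^2\alpha + 1/3}$ via (b). Case (3) uses the product-to-sum identity $\cos\theta_1\cos\theta_2 + c\sin\theta_1\sin\theta_2 = \tfrac{1+c}{2}\cos(\theta_1-\theta_2) + \tfrac{1-c}{2}\cos(\theta_1+\theta_2)$ combined with $\cos(\theta_1+\theta_2)\le \cos(2\alpha)=2\cos^2\alpha-1$ (valid since $\theta_1+\theta_2 > 2\alpha$ and $\le\pi$), which simplifies to $c\sin^2\alpha + \cos^2\alpha$ and then, via $c\le 1/\sqrt{3}$ from (c), to $(1-1/\sqrt{3})\cos^2\alpha + 1/\sqrt{3}$. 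Finally, (d) gives $c\le 0$ in case (4), so $\langle p,q\rangle \le \cos\theta_1\cos\theta_2 < \cos^2\alpha$.

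The main obstacle is the rigorous verification of (b), (c), (d). Because each $\overline{V_{i,j}}$ is a spherical quadrilateral rather than a triangle, a clean list of critical vertex pairs may not by itself suffice, and edge-edge configurations must be ruled out. A convenient reformulation I would try is to use the basis identity $x = \tfrac{3}{4}\sum_{s=1}^4 \langle u_s, x\rangle u_s$, valid for all $x\in\R^3$ because $\sum_s u_s u_s^T = \tfrac{4}{3}I$ (itself a consequence of $\sum_s u_s = 0$ and $\langle u_s, u_t\rangle = -1/3$ for $s\neq t$). This rewrites $\langle v, w\rangle = \tfrac{3}{4}\sum_{s=1}^4 \langle u_s, v\rangle\langle u_s, w\rangle$ and, in the linear coordinates $(\langle u_s, \cdot\rangle)_{s=1}^4$ (which satisfy $\sum_s \langle u_s,\cdot\rangle = 0$), each condition $v\in\overline{V_i}$, $w\in\overline{V_{j,i}}$, etc., becomes a small system of linear inequalities; the bound then follows from a tractable quadratic optimization in these coordinates.
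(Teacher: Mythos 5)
Your proposal follows essentially the same route as the paper's proof: write $p=\cos\theta_p\, e_4+\sin\theta_p\,\iota_2(v_p)$, reduce each case to bounding $c:=\langle v_p,v_q\rangle$ on the relevant Voronoi (sub)regions of $\Sp^2$, and then optimize $\cos\theta_1\cos\theta_2+c\sin\theta_1\sin\theta_2$ over the admissible angle ranges; the constants $-\frac{1}{\sqrt{3}}$, $\frac{1}{\sqrt{3}}$, $0$ you isolate in (a)--(d) are exactly the ones the paper uses. Your elementary steps are correct and in places cleaner than the paper's: the Cauchy--Schwarz bound in case (2) replaces the paper's explicit critical-point computation (its $\theta_0$ with $\tan\theta_0=c\tan\alpha$), and the product-to-sum identity in case (3) replaces its two-step monotonicity argument; both yield the same intermediate bounds $\sqrt{\cos^2\alpha+c^2\sin^2\alpha}$ and $\cos^2\alpha+c\sin^2\alpha$.

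Two remarks. First, in case (1) your stated justification is off: $f(\theta_1,\theta_2,c)=\cos\theta_1\cos\theta_2+c\sin\theta_1\sin\theta_2$ is \emph{not} non-increasing in the $\theta_i$ for all $c<1$ (for $c>0$, $\theta_1$ small and $\theta_2$ moderate one has $\partial f/\partial\theta_1=-\sin\theta_1\cos\theta_2+c\cos\theta_1\sin\theta_2>0$). The conclusion is right, but the argument should go in the order the paper uses: first minimize in $c$ (the coefficient $\sin\theta_1\sin\theta_2$ is nonnegative, so replace $c$ by $-\frac{1}{\sqrt{3}}$), and only then, with $c\le 0$ fixed, is the function decreasing in each angle, so the minimum over $[0,\alpha]^2$ is at $(\alpha,\alpha)$. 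Second, concerning your ``main obstacle'': the paper does not verify (b)--(d) in any more detail than you do; it asserts $\langle x,y\rangle\le\frac{1}{\sqrt{3}}$ for $x\in V_i$, $y\in V_{j,i}$ (remarking only that the value is attained when $x$ is the midpoint of the edge between $-u_k$ and $-u_l$ and $y=u_j$) and asserts $\langle x,y\rangle\le 0$ in case (4). Note also that your (c) follows at once from (b) since $\overline{V_{i,k}}\subseteq\overline{V_i}$, which is how the paper handles it. So your plan, including the tight-frame identity $\sum_s u_su_s^{T}=\frac{4}{3}I$ as a device to check these inner-product bounds, is if anything more explicit about the remaining work than the published argument; just treat the reduction to boundary configurations as a minimum-distance problem between convex spherical polygons rather than claiming the maximum of the bilinear form occurs at vertices (on a strictly convex sphere every point of a region is an extreme point of its convex hull, so the vertex reduction is not automatic).
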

\begin{proof}
We  express $p$ and $q$ in the following way:
$$p=\cos\theta\cdot e_4+\sin\theta\cdot \iota_2(x), q=\cos\theta'\cdot e_4+\sin\theta'\cdot \iota_2(y)$$
where $e_4=(0,0,0,1)$ for some $\theta,\theta'\in \left[0,\frac{\pi}{2}\right]$ and $x,y\in\Sp^2$. Then,
$$\langle p,q \rangle=\cos\theta\cos\theta'+\langle x,y\rangle\sin\theta\sin\theta'.$$

\begin{enumerate}
    \item If $p,q\in \mathcal{C}_\alpha^\mathrm{top}(V_i)$ for some $i\in\{1,2,3,4\}$: Then we can assume $x,y\in V_i$ and $\theta,\theta'\in [0,\alpha]$. Hence,
    \begin{align*}
    \langle p,q \rangle&\geq\cos\theta\cos\theta'-\frac{1}{\sqrt{3}}\sin\theta\sin\theta'\quad\left(\because\langle x,y\rangle\geq-\frac{1}{\sqrt{3}}\text{ by Remark \ref{rmk:diamoneface}}\right)\\
    &\geq \cos^2\alpha-\frac{1}{\sqrt{3}}\sin^2\alpha=\left(1+\frac{1}{\sqrt{3}}\right)\cos^2\alpha-\frac{1}{\sqrt{3}}.
    \end{align*}
    where the second inequality holds since $\cos\theta\cos\theta'+\langle x,y\rangle\sin\theta\sin\theta'$ is decreasing in both of $\theta$ and $\theta'$.

    \item If $p\in \mathcal{C}_\alpha^\mathrm{top}(V_i)$ and $q\in \mathcal{C}_\alpha^\mathrm{bot}(V_{j,i})$ for some $i\neq j\in\{1,2,3,4\}$: Then we can assume $x\in V_i$, $y\in V_{j,i}$, $\theta\in [0,\alpha]$, and $\theta'\in \left[\alpha,\frac{\pi}{2}\right]$. Now, consider two cases separately.

If $\langle x,y \rangle\leq 0$, then $\cos\theta\cos\theta'+\langle x,y\rangle\sin\theta\sin\theta'$ is decreasing with respect to both of $\theta$ and $\theta'$. Hence,
$$\langle p,q \rangle \leq \cos 0\cos\alpha+\langle x,y \rangle\sin 0\sin\alpha=\cos\alpha.$$

If $\langle x,y \rangle\geq 0$, observe that
$$\langle p,q \rangle=(1-\langle x,y \rangle)\cos\theta\cos\theta'+\langle x,y \rangle\cos(\theta'-\theta).$$
If we view $\theta'$ as a variable on $\left[\alpha,\frac{\pi}{2}\right]$,

\begin{align*}
    &\frac{\partial}{\partial\theta'}\big((1-\langle x,y \rangle)\cos\theta\cos\theta'+\langle x,y \rangle\cos(\theta'-\theta)\big)\\
    &=-(1-\langle x,y \rangle)\cos\theta\sin\theta'-\langle x,y \rangle\sin(\theta'-\theta)\leq 0.
\end{align*}

Hence, $\langle p,q \rangle$ is maximized when $\theta'=\alpha$. So, $\langle p,q \rangle\leq\cos\theta\cos\alpha+\langle x,y \rangle\sin\theta\sin\alpha$. Now, if we view $\theta$ as a variable and take a derivative,
$$\frac{\partial}{\partial\theta}\big(\cos\theta\cos\alpha+\langle x,y \rangle\sin\theta\sin\alpha\big)=-\sin\theta\cos\alpha+\langle x,y \rangle\cos\theta\sin\alpha.$$

One can easily check that
$$-\sin\theta\cos\alpha+\langle x,y \rangle\cos\theta\sin\alpha=\begin{cases}\geq 0&\text{if }\theta'\in[0,\theta_0]\\ \leq 0&\text{if }\theta'\in[\theta_0,\alpha]\end{cases}$$
where $\theta_0$ is the unique critical point satisfying $\tan\theta_0=\langle x,y \rangle\tan\alpha$. Hence, $\cos\theta\cos\alpha+\langle x,y \rangle\sin\theta\sin\alpha$ is maximized when $\theta=\theta_0$. Hence,

$$\langle p,q \rangle\leq\cos\theta\cos\alpha+\langle x,y \rangle\sin\theta\sin\alpha\leq \sqrt{\cos^2\alpha+\langle x,y \rangle^2\sin^2\alpha}.$$

Note that $\langle x,y \rangle\leq \frac{1}{\sqrt{3}}$ since $x\in V_i$ and $y\in V_{ji}$ (this value $\frac{1}{\sqrt{3}}$ can be achieved when $x$ is the midpoint of $-u_k,-u_l$ for $k\neq l\in\{1,2,3,4\}\backslash\{i,j\}$ and $y=u_j$). Hence, one can conclude,
$$\langle p,q \rangle\leq\sqrt{\cos^2\alpha+\frac{1}{3}\sin^2\alpha}=\sqrt{\frac{2}{3}\cos^2\alpha+\frac{1}{3}}.$$
Since obviously $\cos\alpha\leq\sqrt{\cos^2\alpha+\frac{1}{3}\sin^2\alpha}=\sqrt{\frac{2}{3}\cos^2\alpha+\frac{1}{3}}$, this completes the proof of this case.

    \item If $p\in \mathcal{C}_\alpha^\mathrm{bot}(V_{i,k})$ and $q\in \mathcal{C}_\alpha^\mathrm{bot}(V_{j,i})$ for some $i\neq j\in\{1,2,3,4\}$ and $k\in\{1,2,3,4\}\backslash\{i,j\}$: Then one can assume $x\in V_{i,k}$, $y\in V_{j,i}$, and $\theta,\theta'\in \left[\alpha,\frac{\pi}{2}\right]$. Now, consider two cases separately.

If $\langle x,y \rangle\leq 0$, then $\cos\theta\cos\theta'+\langle x,y\rangle\sin\theta\sin\theta'$ is decreasing with respect to both of $\theta$ and $\theta'$. Hence,
$$\langle p,q \rangle \leq \cos^2\alpha+\langle x,y \rangle\sin^2\alpha\leq\cos^2\alpha.$$

If $\langle x,y \rangle\geq 0$, without loss of generality, one can assume $\theta\geq\theta'$. Also, observe that
$$\langle p,q \rangle=(1-\langle x,y \rangle)\cos\theta\cos\theta'+\langle x,y \rangle\cos(\theta-\theta').$$
If we view $\theta$ as a variable on $\left[\theta',\frac{\pi}{2}\right]$,
\begin{align*}
    &\frac{\partial}{\partial\theta}\big((1-\langle x,y \rangle)\cos\theta\cos\theta'+\langle x,y \rangle\cos(\theta-\theta')\big)\\
    &=-(1-\langle x,y \rangle)\sin\theta\cos\theta'-\langle x,y \rangle\sin(\theta-\theta')\leq 0.
\end{align*}

Hence, $\langle p,q \rangle$ is maximized when $\theta=\theta'$. So, $\langle p,q \rangle\leq\cos^2\theta'+\langle x,y \rangle\sin^2\theta'$. Now, if we view $\theta'$ as a variable and take a derivative,
$$\frac{\partial}{\partial\theta'}\big(\cos^2\theta'+\langle x,y \rangle\sin^2\theta'\big)=-2(1-\langle x,y \rangle) \cos\theta'\sin\theta'\leq 0.$$
Therefore, $\cos^2\theta'+\langle x,y \rangle\sin^2\theta'$ is maximized when $\theta'=\alpha$. Hence, $\langle p,q \rangle\leq \cos^2\alpha+\langle x,y \rangle\sin^2\alpha$. Note that $\langle x,y \rangle \leq \frac{1}{\sqrt{3}}$ as in the proof of the previous case. Hence, finally we get $\langle p,q \rangle\leq \cos^2\alpha+\frac{1}{\sqrt{3}}\sin^2\alpha=\left(1-\frac{1}{\sqrt{3}}\right)\cos^2\alpha+\frac{1}{\sqrt{3}}$. Since $\cos^2\alpha$ is obviously smaller than $\cos^2\alpha+\frac{1}{\sqrt{3}}\sin^2\alpha=\left(1-\frac{1}{\sqrt{3}}\right)\cos^2\alpha+\frac{1}{\sqrt{3}}$, this completes the proof of this case.

    \item If $p\in \mathcal{C}_\alpha^\mathrm{bot}(V_{i,j})$ and $q\in \mathcal{C}_\alpha^\mathrm{bot}(V_{j,i})$ for some $i\neq j\in\{1,2,3,4\}$: Then one can assume $x\in V_{i,j}$, $y\in V_{j,i}$, and $\theta,\theta'\in \left[\alpha,\frac{\pi}{2}\right]$. Since $\langle x,y \rangle\leq 0$ always in this case, $\cos\theta\cos\theta'+\langle x,y\rangle\sin\theta\sin\theta'$ is decreasing with respect to both of $\theta$ and $\theta'$. Hence, $\langle p,q \rangle$ is maximized when $\theta=\theta'=\alpha$. Therefore,
$$\langle p,q \rangle \leq \cos^2\alpha+\langle x,y \rangle\sin^2\alpha\leq\cos^2\alpha$$
as we wanted.
\end{enumerate}
\end{proof}

Finally, we are ready to construct the following map:
\begin{align*}
    \widetilde{\phi}_{3,2}^\alpha: \mathbf{H}_{>0}(\Sp^3)&\longrightarrow \Sp^2\\
    p&\longmapsto\begin{cases}u_i&\text{if }p\in \mathcal{C}_\alpha^\mathrm{top}(V_i)\text{ for some }i\in\{1,2,3,4\}\\ u_{i,j}&\text{if }p\in \mathcal{C}_\alpha^\mathrm{bot}(V_{i,j})\text{ for some }i\neq j\in\{1,2,3,4\}\end{cases}
\end{align*}

\begin{proposition}\label{prop:distofhalfphi32}
For $\alpha\in\left[0,\frac{\pi}{2}\right]$ such that $\cos^2\alpha\in\left[\frac{\sqrt{3}-1}{3+\sqrt{3}},\frac{7}{9}\right]$,
$$\dis(\widetilde{\phi}_{3,2}^\alpha)\leq\zeta_2.$$
\end{proposition}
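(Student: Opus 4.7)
My plan is to establish the distortion bound $\dis(\widetilde{\phi}^\alpha_{3,2}) \leq \zeta_2$ by an exhaustive case analysis on the pair of regions containing $p,q \in \mathbf{H}_{>0}(\Sp^3)$. The partition of the domain consists of $16$ pieces (four top regions $\mathcal{C}_\alpha^\mathrm{top}(V_i)$ and twelve bottom sub-regions $\mathcal{C}_\alpha^\mathrm{bot}(V_{i,j})$), but the tetrahedral symmetry acting on the labels $\{1,2,3,4\}$ reduces the analysis to a small number of representatives. For each representative pair I would compute the image distance $D := d_{\Sp^2}(\widetilde{\phi}^\alpha_{3,2}(p), \widetilde{\phi}^\alpha_{3,2}(q))$ explicitly using Lemma \ref{lemma:distuijs}, and then compare it with $d_{\Sp^3}(p,q)$ using either Lemma \ref{lemma:S3S2techineq} or the diameter bound $\diam(\mathcal{C}(V_i)) = \zeta_2$ coming from Lemma \ref{lemma:diamofcone} (since $\diam(V_i) = \zeta_2 > \pi/2$).

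The first simplification is that whenever $D \in [\pi - \zeta_2,\zeta_2]$, the distortion inequality holds trivially from $0 \leq d_{\Sp^3}(p,q) \leq \pi$. From Lemma \ref{lemma:distuijs} this automatic range absorbs all image pairs of the forms $(u_i, u_j)$, $(u_i, u_{j,k})$ with $i,j,k$ pairwise distinct, $(u_{i,k}, u_{j,k})$ with $i \neq j$, and $(u_{i,k}, u_{j,l})$ with $i,j,k,l$ all distinct. Among the remaining pairs, those for which $p$ and $q$ share the same cone $\mathcal{C}(V_i)$ require only an upper bound on $d_{\Sp^3}(p,q)$. The critical sub-case is $p, q \in \mathcal{C}_\alpha^\mathrm{top}(V_i)$ (so $D = 0$), where Lemma \ref{lemma:S3S2techineq}(1) yields
\[
d_{\Sp^3}(p,q) \;\leq\; \arccos\!\Big(\big(1+\tfrac{1}{\sqrt{3}}\big)\cos^2\alpha - \tfrac{1}{\sqrt{3}}\Big),
\]
and the right-hand side equals $\zeta_2 = \arccos(-1/3)$ precisely when $\cos^2\alpha = \tfrac{\sqrt{3}-1}{3+\sqrt{3}}$, which is exactly the lower bound of the hypothesis. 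The other same-cone cases follow from $d_{\Sp^3}(p,q) \leq \zeta_2 \leq D + \zeta_2$.

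The cross-cone cases requiring a genuine lower bound on $d_{\Sp^3}(p,q)$ are precisely the three ``antipodal-type'' image pairs $(u_i, u_{j,i})$, $(u_{i,k}, u_{j,i})$, and $(u_{i,j}, u_{j,i})$ whose image distances exceed $\zeta_2$ (Lemma \ref{lemma:distuijs} gives their inner products as $-4\sqrt{2}/9$, $-25/54$, and $-23/27$). I would apply items (2), (3), and (4) of Lemma \ref{lemma:S3S2techineq}, respectively, and reduce each to an inequality of the form $d_{\Sp^3}(p,q) \geq D - \zeta_2$. The cosine subtraction formula then converts these into upper bounds on $\cos^2\alpha$; the binding computation is the third one,
\[
\cos\!\Big(\arccos(-\tfrac{23}{27}) - \arccos(-\tfrac{1}{3})\Big) \;=\; \tfrac{23}{81} + \tfrac{10\sqrt{2}}{27}\cdot\tfrac{2\sqrt{2}}{3} \;=\; \tfrac{63}{81} \;=\; \tfrac{7}{9},
\]
producing the upper constraint $\cos^2\alpha \leq \tfrac{7}{9}$ in the hypothesis. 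A parallel but shorter calculation for the first antipodal case gives only $\cos^2\alpha \leq 5/6$, and the middle case gives a numerically looser constraint, so both are automatically implied. The main obstacle is purely organizational: one must use the tetrahedral symmetry to systematically reduce every pair of regions to one of the representatives above, confirm that the trivial-range argument truly covers all the remaining image pairs, and verify that the two equalities $\cos^2\alpha = \tfrac{\sqrt{3}-1}{3+\sqrt{3}}$ and $\cos^2\alpha = \tfrac{7}{9}$ are simultaneously the binding constraints produced by the construction.
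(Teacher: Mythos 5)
Your overall strategy is the same as the paper's: a case analysis over the $16$ regions, reduction by tetrahedral symmetry, the ``trivial range'' observation for image distances in $[\pi-\zeta_2,\zeta_2]$, and the use of Lemma \ref{lemma:distuijs} together with items (1)--(4) of Lemma \ref{lemma:S3S2techineq} for the remaining pairs. Your identification of the binding constraints is also correct: item (1) forces $\cos^2\alpha\geq\frac{\sqrt{3}-1}{3+\sqrt{3}}$, item (4) together with $\cos\bigl(\arccos(-\tfrac{23}{27})-\zeta_2\bigr)=\tfrac{7}{9}$ forces $\cos^2\alpha\leq\tfrac{7}{9}$, and the constraints coming from items (2) and (3) (roughly $\cos^2\alpha\leq\tfrac56$ and a looser one) are indeed implied. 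All of this matches the paper's proof.

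However, there is a genuine gap in your treatment of the same-cone cases. You assert that $\diam(V_i)=\zeta_2$, hence $\diam(\mathcal{C}(V_i))=\zeta_2$, and that ``the other same-cone cases follow from $d_{\Sp^3}(p,q)\leq\zeta_2\leq D+\zeta_2$.'' This is false: $\overline{V_i}$ is congruent to the face $F_2$ of Remark \ref{rmk:diamoneface}, whose \emph{edge length} is $\zeta_2$ but whose \emph{diameter} is $\eta_2=\arccos\bigl(-\tfrac{1}{\sqrt{3}}\bigr)\approx 2.186>\zeta_2\approx 1.911$ (realized from a vertex to the midpoint of the opposite edge), so $\diam(\mathcal{C}(V_i))=\eta_2$ by Lemma \ref{lemma:diamofcone}. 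Concretely, two points of $\mathcal{C}(V_1)$ lying in different bottom sub-regions (or one in the top part and one in the bottom) can be at distance strictly greater than $\zeta_2$, so your blanket bound does not close those cases; indeed the fact that the plain Voronoi construction only achieves distortion $\eta_2$ (Remark \ref{rem:eta-distinct}) is exactly why the top/bottom refinement and the upper constraint on $\cos^2\alpha$ are needed. The repair is what the paper does: for $p\in\mathcal{C}_\alpha^\mathrm{top}(V_i)$, $q\in\mathcal{C}_\alpha^\mathrm{bot}(V_{i,j})$ use $d_{\Sp^3}(p,q)-D\leq\eta_2-\arccos\bigl(\tfrac{2\sqrt{2}}{3}\bigr)<\zeta_2$; for $p\in\mathcal{C}_\alpha^\mathrm{bot}(V_{i,k})$, $q\in\mathcal{C}_\alpha^\mathrm{bot}(V_{i,l})$ with $k\neq l$ use $d_{\Sp^3}(p,q)-D\leq\eta_2-\arccos\bigl(\tfrac56\bigr)<\zeta_2$; and for two points in the same $\mathcal{C}_\alpha^\mathrm{bot}(V_{i,j})$ verify separately that $\diam\bigl(\mathcal{C}_\alpha^\mathrm{bot}(V_{i,j})\bigr)=\tfrac{\pi}{2}<\zeta_2$. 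With those three estimates inserted, your argument becomes complete and coincides with the paper's.
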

\begin{proof}
We need to check
$$\vert d_{\Sp^3}(p,q)-d_{\Sp^2}(\widetilde{\phi}_{3,2}^\alpha(p),\widetilde{\phi}_{3,2}^\alpha(q))\vert\leq\zeta_2$$
for any $p,q\in \mathbf{H}_{> 0}(\Sp^3)$. We carry out a  case-by-case analysis.

\begin{enumerate}
    \item If $p,q\in \mathcal{C}(V_i)$ for some $i\in\{1,2,3,4\}$: Without loss of generality, one can assume $i=1$. Then, $d_{\Sp^2}(\widetilde{\phi}_{3,2}^\alpha(p),\widetilde{\phi}_{3,2}^\alpha(q))\leq\diam(\{u_1,u_{1,2},u_{1,3},u_{1,4}\})=\arccos\frac{5}{6}<\zeta_2$ by the first item of Lemma \ref{lemma:distuijs}. Therefore,
    $$d_{\Sp^2}(\widetilde{\phi}_{3,2}^\alpha(p),\widetilde{\phi}_{3,2}^\alpha(q))-d_{\Sp^3}(p,q)\leq\arccos\frac{5}{6}<\zeta_2.$$
    So, it is enough to prove $d_{\Sp^3}(p,q)-d_{\Sp^2}(\widetilde{\phi}_{3,2}^\alpha(p),\widetilde{\phi}_{3,2}^\alpha(q))\leq\zeta_2$. But for this direction, we need more subtle case-by-case analysis.
    \begin{enumerate}
        \item If $p,q\in \mathcal{C}_\alpha^\mathrm{top}(V_1)$: Then $\widetilde{\phi}_{3,2}^\alpha(p)=\widetilde{\phi}_{3,2}^\alpha(q)=u_1$. Also, by the item (1) of Lemma \ref{lemma:S3S2techineq} and the choice of $\alpha$,
        $$d_{\Sp^3}(p,q)\leq\arccos\left(\left(1+\frac{1}{\sqrt{3}}\right)\cos^2\alpha-\frac{1}{\sqrt{3}}\right)\leq\zeta_2.$$
        Hence,
        $$d_{\Sp^3}(p,q)-d_{\Sp^2}(\widetilde{\phi}_{3,2}^\alpha(p),\widetilde{\phi}_{3,2}^\alpha(q))=d_{\Sp^3}(p,q)\leq\zeta_2$$
        as we wanted.
        
        \item If $p\in \mathcal{C}_\alpha^\mathrm{top}(V_1)$ and $q\in \mathcal{C}_\alpha^\mathrm{bot}(V_1)$: In this case, $\widetilde{\phi}_{3,2}^\alpha(p)=u_1$ and $\widetilde{\phi}_{3,2}^\alpha(q)=u_{1,j}$ for some $j\in\{2,3,4\}$. Therefore,
        $$d_{\Sp^3}(p,q)-d_{\Sp^2}(\widetilde{\phi}_{3,2}^\alpha(p),\widetilde{\phi}_{3,2}^\alpha(q))\leq \arccos\left(-\frac{1}{\sqrt{3}}\right)-\arccos\left(\frac{2\sqrt{2}}{3}\right)<\zeta_2.$$
        
        \item If $p,q\in \mathcal{C}_\alpha^\mathrm{bot}(V_1)$:
        \begin{enumerate}
            \item If $p,q\in \mathcal{C}_\alpha^\mathrm{bot}(V_{1,j})$ for some $j\in\{2,3,4\}$: Then $\widetilde{\phi}_{3,2}^\alpha(p)=\widetilde{\phi}_{3,2}^\alpha(q)=u_{1,j}$. Also, it is easy to check the diameter of $\mathcal{C}_\alpha^\mathrm{bot}(V_{1,j})$ is $\frac{\pi}{2}$. Hence,
        $$d_{\Sp^3}(p,q)-d_{\Sp^2}(\widetilde{\phi}_{3,2}^\alpha(p),\widetilde{\phi}_{3,2}^\alpha(q))=d_{\Sp^3}(p,q)\leq\frac{\pi}{2}<\zeta_2.$$
            
            \item If $p\in \mathcal{C}_\alpha^\mathrm{bot}(V_{1,k})$ and $p\in \mathcal{C}_\alpha^\mathrm{bot}(V_{1,l})$ for some $k\neq l\in\{2,3,4\}$: Then, $$d_{\Sp^2}(\widetilde{\phi}_{3,2}^\alpha(p),\widetilde{\phi}_{3,2}^\alpha(q))=d_{\Sp^2}(u_{1,k},u_{1,l})=\arccos\left(\frac{5}{6}\right)$$
            by the item (1) of Lemma \ref{lemma:distuijs}. Therefore,
            $$d_{\Sp^3}(p,q)-d_{\Sp^2}(\widetilde{\phi}_{3,2}^\alpha(p),\widetilde{\phi}_{3,2}^\alpha(q))\leq \arccos\left(-\frac{1}{\sqrt{3}}\right)-\arccos\left(\frac{5}{6}\right)<\zeta_2.$$
        \end{enumerate}
    \end{enumerate}
    
    \item If $p\in \mathcal{C}(V_i)$ and $q\in \mathcal{C}(V_j)$ for some $i\neq j\in\{1,2,3,4\}$: Without loss of generality, one can assume $i=1$ and $j=2$. Then, by Lemma \ref{lemma:distuijs}, $d_{\Sp^2}(\widetilde{\phi}_{3,2}^\alpha(p),\widetilde{\phi}_{3,2}^\alpha(q)) \geq \arccos\left(\frac{5}{54}\right)>\arccos\left(\frac{1}{3}\right)$. Therefore,
    $$d_{\Sp^3}(p,q)-d_{\Sp^2}(\widetilde{\phi}_{3,2}^\alpha(p),\widetilde{\phi}_{3,2}^\alpha(q))<\pi-\arccos\left(\frac{1}{3}\right)=\zeta_2.$$
    So, it is enough to prove $d_{\Sp^2}(\widetilde{\phi}_{3,2}^\alpha(p),\widetilde{\phi}_{3,2}^\alpha(q))-d_{\Sp^3}(p,q)\leq\zeta_2$. But for this direction, we need more subtle case-by-case analysis.
    
    \begin{enumerate}
        \item If $p\in \mathcal{C}_\alpha^\mathrm{top}(V_1)$ and $q\in \mathcal{C}_\alpha^\mathrm{top}(V_2)$: Then, $\widetilde{\phi}_{3,2}^\alpha(p)=u_1$ and $\widetilde{\phi}_{3,2}^\alpha(q)=u_2$ so that $d_{\Sp^2}(\widetilde{\phi}_{3,2}^\alpha(p),\widetilde{\phi}_{3,2}^\alpha(q))=d_{\Sp^2}(u_1,u_2)=\zeta_2$. So, obviously $$d_{\Sp^2}(\widetilde{\phi}_{3,2}^\alpha(p),\widetilde{\phi}_{3,2}^\alpha(q))-d_{\Sp^3}(p,q)\leq\zeta_2.$$
        
        \item If $p\in \mathcal{C}_\alpha^\mathrm{top}(V_1)$ and $q\in \mathcal{C}_\alpha^\mathrm{bot}(V_2)$:
        \begin{enumerate}
            \item If $q\in \mathcal{C}_\alpha^\mathrm{bot}(V_{2,j})$ for some $j\in\{3,4\}$: Then, by the item (6) of Lemma \ref{lemma:distuijs}, $d_{\Sp^2}(\widetilde{\phi}_{3,2}^\alpha(p),\widetilde{\phi}_{3,2}^\alpha(q))=d_{\Sp^3}(u_1,u_{2,j})=\arccos\left(-\frac{\sqrt{2}}{9}\right)$. Hence,
            $$d_{\Sp^2}(\widetilde{\phi}_{3,2}^\alpha(p),\widetilde{\phi}_{3,2}^\alpha(q))-d_{\Sp^3}(p,q)\leq\arccos\left(-\frac{\sqrt{2}}{9}\right)<\zeta_2.$$
            
            \item If $q\in \mathcal{C}_\alpha^\mathrm{bot}(V_{2,1})$: Then, $d_{\Sp^2}(\widetilde{\phi}_{3,2}^\alpha(p),\widetilde{\phi}_{3,2}^\alpha(q))=d_{\Sp^2}(u_1,u_{2,1})=\arccos\left(-\frac{4\sqrt{2}}{9}\right)$ by the item (7) of Lemma \ref{lemma:distuijs}. Moreover, by the item (2) Lemma \ref{lemma:S3S2techineq} and the choice of $\alpha$,
            $$d_{\Sp^3}(p,q)\geq\arccos\left(\sqrt{\frac{2}{3}\cos^2\alpha+\frac{1}{3}}\right)>\arccos\left(\frac{2\sqrt{2}}{3}\right).$$
            It implies,
            $$d_{\Sp^2}(\widetilde{\phi}_{3,2}^\alpha(p),\widetilde{\phi}_{3,2}^\alpha(q))-d_{\Sp^3}(p,q)<\arccos\left(-\frac{4\sqrt{2}}{9}\right)-\arccos\left(\frac{2\sqrt{2}}{3}\right)=\zeta_2.$$
        \end{enumerate}
        
        \item If $p\in \mathcal{C}_\alpha^\mathrm{bot}(V_1)$ and $q\in \mathcal{C}_\alpha^\mathrm{bot}(V_2)$: Considering symmetry, there are basically four subcases.
        \begin{enumerate}
            \item If $p\in \mathcal{C}_\alpha^\mathrm{bot}(V_{1,3})$ and $q\in \mathcal{C}_\alpha^\mathrm{bot}(V_{2,3})$: Then, by the item (2) of Lemma \ref{lemma:distuijs}, $d_{\Sp^2}(\widetilde{\phi}_{3,2}^\alpha(p),\widetilde{\phi}_{3,2}^\alpha(q))=d_{\Sp^2}(u_{1,3},u_{2,3})=\arccos\left(\frac{5}{54}\right)$. Hence,
            $$d_{\Sp^2}(\widetilde{\phi}_{3,2}^\alpha(p),\widetilde{\phi}_{3,2}^\alpha(q))-d_{\Sp^3}(p,q)\leq\arccos\left(\frac{5}{54}\right)<\zeta_2.$$
            
            \item If $p\in \mathcal{C}_\alpha^\mathrm{bot}(V_{1,3})$ and $q\in \mathcal{C}_\alpha^\mathrm{bot}(V_{2,4})$: Then, by the item (3) of Lemma \ref{lemma:distuijs}, $d_{\Sp^2}(\widetilde{\phi}_{3,2}^\alpha(p),\widetilde{\phi}_{3,2}^\alpha(q))=d_{\Sp^2}(u_{1,3},u_{2,4})=\arccos\left(-\frac{2}{27}\right)$. Hence,
            $$d_{\Sp^2}(\widetilde{\phi}_{3,2}^\alpha(p),\widetilde{\phi}_{3,2}^\alpha(q))-d_{\Sp^3}(p,q)\leq\arccos\left(-\frac{2}{27}\right)<\zeta_2.$$
            
            \item If $p\in \mathcal{C}_\alpha^\mathrm{bot}(V_{1,3})$ and $q\in \mathcal{C}_\alpha^\mathrm{bot}(V_{2,1})$: Then, by the item (4) of Lemma \ref{lemma:distuijs}, $d_{\Sp^2}(\widetilde{\phi}_{3,2}^\alpha(p),\widetilde{\phi}_{3,2}^\alpha(q))=d_{\Sp^2}(u_{1,3},u_{2,1})=\arccos\left(-\frac{25}{54}\right)$. Moreover, by the item (3) of Lemma \ref{lemma:S3S2techineq} and the choice of $\alpha$,
            $$d_{\Sp^3}(p,q)\geq\arccos\left(\left(1-\frac{1}{\sqrt{3}}\right)\cos^2\alpha+\frac{1}{\sqrt{3}}\right)>\arccos\left(-\frac{25}{54}\right)-\zeta_2.$$
            Hence,
            $$d_{\Sp^2}(\widetilde{\phi}_{3,2}^\alpha(p),\widetilde{\phi}_{3,2}^\alpha(q))-d_{\Sp^3}(p,q)<\zeta_2.$$
            
            \item If $p\in \mathcal{C}_\alpha^\mathrm{bot}(V_{1,2})$ and $q\in \mathcal{C}_\alpha^\mathrm{bot}(V_{2,1})$: Then, by the item (5) of Lemma \ref{lemma:distuijs}, $d_{\Sp^2}(\widetilde{\phi}_{3,2}^\alpha(p),\widetilde{\phi}_{3,2}^\alpha(q))=d_{\Sp^2}(u_{1,2},u_{2,1})=\arccos\left(-\frac{23}{27}\right)$. Moreover, by the item (4) of Lemma \ref{lemma:S3S2techineq} and the choice of $\alpha$,
            $$d_{\Sp^3}(p,q)\geq\arccos(\cos^2\alpha)\geq\arccos\left(\frac{7}{9}\right).$$
            Hence,
            $$d_{\Sp^2}(\widetilde{\phi}_{3,2}^\alpha(p),\widetilde{\phi}_{3,2}^\alpha(q))-d_{\Sp^3}(p,q)\leq \arccos\left(-\frac{23}{27}\right)-\arccos\left(\frac{7}{9}\right)=\zeta_2.$$
        \end{enumerate}
    \end{enumerate}
\end{enumerate}
This concludes the proof.
\end{proof}

\begin{lemma}\label{lemma:pphipuppbdd}
For any $p\in \mathbf{H}_{>0}(\Sp^3)$, $d_{\Sp^3}(p,\widetilde{\phi}_{3,2}^\alpha(p))\leq\frac{\pi}{2}$.
\end{lemma}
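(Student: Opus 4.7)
The plan is to split into the two cases defining $\widetilde{\phi}_{3,2}^\alpha$. For $p\in\mathbf{H}_{>0}(\Sp^3)$, write $p = \cos\theta\cdot e_4 + \sin\theta\cdot\iota_2(x)$ with $\theta = d_{\Sp^3}(e_4,p)\in[0,\pi/2)$ and $x = \Omega(p)\in\Sp^2$. Since every value of $\widetilde{\phi}_{3,2}^\alpha$ lies in $\sete(\Sp^3)=\Sp^2$ (so has vanishing fourth coordinate), the inner product decomposes as $\langle p, \widetilde{\phi}_{3,2}^\alpha(p)\rangle = \sin\theta\cdot\langle x, \widetilde{\phi}_{3,2}^\alpha(p)\rangle$. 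Because $\sin\theta\geq 0$, the desired bound $d_{\Sp^3}(p,\widetilde{\phi}_{3,2}^\alpha(p))\leq\pi/2$ reduces in each case to showing $\langle x, \widetilde{\phi}_{3,2}^\alpha(p)\rangle\geq 0$.

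In the first case ($p\in\mathcal{C}_\alpha^{\mathrm{top}}(V_i)$, so $\widetilde{\phi}_{3,2}^\alpha(p) = u_i$ and $x\in V_i$), the Voronoi condition gives $\langle x,u_i\rangle\geq\langle x,u_j\rangle$ for every $j\neq i$. Summing these three inequalities and invoking the identity $\sum_{k=1}^{4}u_k = 0$ (the centroid of the regular tetrahedron is the origin), which rewrites as $\sum_{j\neq i}u_j = -u_i$, yields $3\langle x,u_i\rangle\geq -\langle x,u_i\rangle$, and therefore $\langle x,u_i\rangle\geq 0$.

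In the second case ($p\in\mathcal{C}_\alpha^{\mathrm{bot}}(V_{i,j})$, so $\widetilde{\phi}_{3,2}^\alpha(p) = u_{i,j}$), the strategy is to first observe that $u_{i,j}$ itself belongs to $\mathcal{C}_\alpha^{\mathrm{bot}}(V_{i,j})$: indeed $u_{i,j}\in\sete(\Sp^3)$ gives $d_{\Sp^3}(e_4, u_{i,j}) = \pi/2 > \alpha$ (the hypothesis $\cos^2\alpha\geq(\sqrt{3}-1)/(3+\sqrt{3})>0$ excludes $\alpha=\pi/2$), and trivially $\Omega(u_{i,j}) = u_{i,j}\in V_{i,j}$. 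Combined with the observation noted in sub-case (1)(c)(i) of the proof of Proposition \ref{prop:distofhalfphi32} that $\diam(\mathcal{C}_\alpha^{\mathrm{bot}}(V_{i,j})) = \pi/2$, this immediately gives $d_{\Sp^3}(p, u_{i,j})\leq\pi/2$.

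The only substantive ingredient is the diameter bound $\diam(\mathcal{C}_\alpha^{\mathrm{bot}}(V_{i,j}))\leq \pi/2$ used above, and this is where the main work lies. Via the cone expansion $\langle p,q\rangle = \cos\theta\cos\theta' + \sin\theta\sin\theta'\langle \Omega(p),\Omega(q)\rangle$ with all four trigonometric factors non-negative on $[0,\pi/2]$, the bound reduces to $\diam_{\Sp^2}(V_{i,j})\leq\pi/2$. The latter follows by a direct coordinate computation based on Remark \ref{rmk:cptuij}: $V_{i,j}$ is a geodesically convex spherical quadrilateral whose four vertices ($u_i$, $-u_j$, and the midpoints of the two edges of $V_i$ incident to $-u_j$) have all six pairwise inner products non-negative (with equality realized only between the two midpoints), so a non-negative-combination argument (using Definition \ref{def:gch}) propagates non-negativity of $\langle p,q\rangle$ to every pair of points in $V_{i,j}$.
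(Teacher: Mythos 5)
Your proof is correct, and its first step is exactly the paper's reduction: write $p=\cos\theta\,e_4+\sin\theta\,\iota_2(x)$, use that the image point lies on the equator to get $\langle p,\widetilde{\phi}_{3,2}^\alpha(p)\rangle=\sin\theta\,\langle x,\widetilde{\phi}_{3,2}^\alpha(p)\rangle$, and reduce everything to $\langle x,\widetilde{\phi}_{3,2}^\alpha(p)\rangle\geq 0$. Where you diverge is in how you verify this non-negativity, which the paper dispatches with ``it is easy to check'' (claiming $\langle u_1,x\rangle\geq\frac13$ and $\langle u_{1,j},x\rangle\geq\frac{\sqrt2}{9}$ for $x\in V_1$). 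For the top case your summation trick --- Voronoi inequalities plus $\sum_k u_k=0$ giving $4\langle x,u_i\rangle\geq 0$ --- is clean and arguably nicer than a coordinate check. For the bottom case you take a heavier route than needed: you pass through $\diam\big(\mathcal{C}_\alpha^{\mathrm{bot}}(V_{i,j})\big)\leq\frac{\pi}{2}$, which forces you to identify $\overline{V_{i,j}}$ as the geodesic convex hull of the quadrilateral with vertices $u_i$, $-u_j$ and the two edge midpoints and to check six inner products; that identification is true but is itself a nontrivial assertion (the midpoints do not appear in Remark \ref{rmk:cptuij}, and one must actually locate the bisector arcs of the sub-Voronoi partition), so it deserves more justification than ``a direct coordinate computation.'' The intended lighter argument, implicit in the paper, is to bound $\langle x,u_{i,j}\rangle$ for \emph{all} $x\in\overline{V_i}$: by Remark \ref{rem:Voronoi-cvx}, $\overline{V_i}=\mathrm{Conv}_{\Sp^2}(\{-u_k:k\neq i\})$, and by Lemma \ref{lemma:distuijs} items (6)--(7) one has $\langle u_{i,j},-u_k\rangle\in\{\tfrac{\sqrt2}{9},\tfrac{4\sqrt2}{9}\}>0$, so the same non-negative-combination argument you use at the end already gives $\langle x,u_{i,j}\rangle\geq 0$ with no need to analyze $V_{i,j}$ or its vertices (and no need for the side discussion of $\alpha<\frac{\pi}{2}$ and the membership $u_{i,j}\in\mathcal{C}_\alpha^{\mathrm{bot}}(V_{i,j})$, which in any case is moot when $\alpha=\frac{\pi}{2}$ since that case is vacuous on $\mathbf{H}_{>0}(\Sp^3)$). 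One cosmetic point shared with the paper: at $p=e_4$ the projection $\Omega(p)$ is undefined, but there $d_{\Sp^3}(e_4,u)=\frac{\pi}{2}$ for any equatorial $u$, so the bound holds trivially.
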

\begin{proof}
Without loss of generality, one can assume $p\in\mathcal{C}(V_1)$. Then, one can express $p$ in the following way: $p=\cos\theta\cdot e_4+\sin\theta\cdot \iota_2(x)$ where $e_4=(0,0,0,1)$ for some $\theta\in \left[0,\frac{\pi}{2}\right]$ and $x\in V_1$. Moreover, since $\widetilde{\phi}_{3,2}^\alpha(p)\in\{u_1,u_{1,2},u_{1,3},u_{1,4}\}$, we have
$$\langle p,\widetilde{\phi}_{3,2}^\alpha(p) \rangle=\langle x,\widetilde{\phi}_{3,2}^\alpha(p) \rangle\cdot\sin\theta.$$
Also, it is easy to check $\langle x,\widetilde{\phi}_{3,2}^\alpha(p) \rangle\geq 0$ (more precisely, $\langle u_1,x \rangle\geq\frac{1}{3}$ and $\langle u_{1,j},x \rangle\geq\frac{\sqrt{2}}{9}$ for any $x\in N_1$, $j\neq 1$). This implies $\langle p,\widetilde{\phi}_{3,2}^\alpha(p) \rangle\geq 0$ hence we have the required inequality.
\end{proof}

We are now ready to prove Proposition \ref{prop:ub23}.

\begin{proof}[Proof of Proposition \ref{prop:ub23}]
Note that it is enough to find a surjective map $\phi_{3,2}:\Sp^3\twoheadrightarrow\Sp^2$ such that $\dis(\phi_{3,2})\leq\zeta_2$ since this map gives rise to the correspondence $R_{3,2}:=\mathrm{graph}(\phi_{3,2})$ with $\dis(R_{3,2}) = \dis(\phi_{3,2})\leq \zeta_2$.

Let
\begin{align*}
    \widehat{\phi}_{3,2}^\alpha: \seta(\Sp^3)&\longrightarrow\Sp^2\\
    p&\longmapsto\begin{cases}\widetilde{\phi}_{3,2}^\alpha(p)&\text{if }p\in \mathbf{H}_{>0}(\Sp^3)\\ p&\text{if }p\in \iota_2(\seta(\Sp^2)).\end{cases}
\end{align*}
We claim that $\dis(\widehat{\phi}_{3,2}^\alpha)=\dis(\widetilde{\phi}_{3,2}^\alpha)$. To check this, it is enough to show that
$$\vert d_{\Sp^3}(p,q)-d_{\Sp^2}(\widehat{\phi}_{3,2}^\alpha(p),\widehat{\phi}_{3,2}^\alpha(q))\vert\leq\zeta_2$$
for any $p\in \mathbf{H}_{>0}(\Sp^3)$ and $q\in\iota_2(\seta(\Sp^2))$. But, this is true since
$$\vert d_{\Sp^3}(p,q)-d_{\Sp^2}(\widehat{\phi}_{3,2}^\alpha(p),\widehat{\phi}_{3,2}^\alpha(q))\vert=\vert d_{\Sp^3}(p,q)-d_{\Sp^2}(\widehat{\phi}_{3,2}^\alpha(p),q)\vert\leq d_{\Sp^3}(p,\widehat{\phi}_{3,2}^\alpha(p)),$$
and $d_{\Sp^3}(p,\widehat{\phi}_{3,2}^\alpha(p))=d_{\Sp^3}(p,\widetilde{\phi}_{3,2}^\alpha(p))\leq\frac{\pi}{2}<\zeta_2$ for any $p\in \mathbf{H}_{>0}(\Sp^3)$ by Lemma \ref{lemma:pphipuppbdd} Hence, $\dis(\widehat{\phi}_{3,2}^\alpha)=\dis(\widetilde{\phi}_{3,2}^\alpha)$ as we wanted. Finally, apply Lemma \ref{lemma:distortion} to construct a surjective map $\phi_{3,2}:\Sp^3\longtwoheadrightarrow\Sp^2$. Then,
$$\dis(\phi_{3,2})=\dis(\widehat{\phi}_{3,2}^\alpha)=\dis(\widetilde{\phi}_{3,2}^\alpha)\leq\zeta_2$$ by Proposition \ref{prop:distofhalfphi32}, as we wanted.
\end{proof}

\section{The Gromov-Hausdorff distance between spheres with  Euclidean metric} \label{sec:dgh-eucl}

For any non-empty subset $X\subseteq \Sp^n$, let $X_\mathrm{E}$ denote the metric space with the inherited Euclidean metric.
 In particular, $\Sp^n_{\mathrm{E}}$ will denote the unit sphere with the Euclidean metric $d_{\mathrm{E}}$ inherited from $\R^{n+1}$. A natural  question is, what is the value of $$\mathfrak{g}^\mathrm{E}_{m,n}:=\dgh(\Sp^m_{\mathrm{E}},\Sp^n_{\mathrm{E}})$$ for $0\leq m < n \leq\infty$? We found that, interestingly, these values do always not directly follow from those of $\mathfrak{g}_{m,n}$.

Any  correspondence $R$ between $\Sp^m$ and $\Sp^n$ can of course be regarded as a correspondence between $\Sp^m_{\mathrm{E}}$ and $\Sp^n_{\mathrm{E}}$. Throughout this section, let $\dis(R)$ denote the distortion with respect to the geodesic metric (as usual), and let $\dis_{\mathrm{E}}(R)$ denote the distortion with respect to the Euclidean metric.

The following  are direct extensions of parallel results for spheres with geodesic distance: 

\begin{remark}\label{remark:ubEuclid}
As in Remark \ref{remark:ub}, for all $0\leq m\leq n\leq\infty$,
$$\dgh(\Sp^m_{\mathrm{E}},\Sp^n_{\mathrm{E}})\leq 1.$$
\end{remark}

\begin{lemma}\label{lemma:lsEuclid}
For any integer $m\geq 1$ and any finite metric space $P$ with cardinality at most $m+1$ we have $\dgh(\Sp^m_{\mathrm{E}},P)\geq 1$.
\end{lemma}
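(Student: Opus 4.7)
The plan is to adapt the proof of Lemma~\ref{lemma:ls} verbatim, replacing the geodesic distance by the Euclidean one and noting that antipodal points of $\Sp^m$ sit at Euclidean distance $2$ (rather than $\pi$).

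First, I would fix an arbitrary correspondence $R \subseteq \Sp^m \times P$. As in the proof of Lemma~\ref{lemma:ls}, for each $p \in P$ let $R(p):=\{z\in \Sp^m\,|\,(z,p)\in R\}$. Since $R$ is a correspondence, $\{\overline{R(p)}\subseteq \Sp^m:p\in P\}$ is a closed cover of $\Sp^m$ of cardinality at most $m+1$. Theorem~\ref{thm:ls} (Lyusternik--Schnirelmann) then yields some $p_0 \in P$ such that $\overline{R(p_0)}$ contains a pair of antipodal points $x, -x \in \Sp^m$. In the Euclidean metric, $d_\mathrm{E}(x,-x)=2$, so $\diam_\mathrm{E}\bigl(\overline{R(p_0)}\bigr) \geq 2$.

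Next, I would conclude via the standard bound
\[
\dis_\mathrm{E}(R) \;\geq\; \max_{p\in P}\diam_\mathrm{E}\bigl(R(p)\bigr) \;=\; \max_{p\in P}\diam_\mathrm{E}\bigl(\overline{R(p)}\bigr) \;\geq\; 2,
\]
where the equality uses that Euclidean diameter is unchanged under taking closures. Since $R$ was arbitrary, equation~(\ref{eqn:dghaltdef}) (with $\dis$ replaced by $\dis_\mathrm{E}$) gives $\dgh(\Sp^m_\mathrm{E},P)\geq 1$.

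There is no real obstacle here: the argument is structurally identical to that of Lemma~\ref{lemma:ls}, and the only arithmetic change is that the chordal distance between antipodes is $2$ instead of $\pi$, which accounts for the lower bound $1$ rather than $\pi/2$.
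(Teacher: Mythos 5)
Your proposal is correct and is essentially the paper's own argument: the paper proves this lemma by invoking the Lyusternik--Schnirelmann covering argument from Lemma \ref{lemma:ls} verbatim, with the only change being that antipodal points are at Euclidean distance $2$, giving $\dis_{\mathrm{E}}(R)\geq 2$ and hence $\dgh(\Sp^m_{\mathrm{E}},P)\geq 1$. Your added remark that the Euclidean diameter is unchanged under closure correctly fills in the small step the paper leaves implicit.
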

\begin{proof}
Fix an arbitrary correspondence $R$ between $\Sp^m_{\mathrm{E}}$ and $P$. Then, one can prove that $\dis_{\mathrm{E}}(R)\geq 2$ as in the proof of Lemma \ref{lemma:ls} (via the aid of Lyusternik-Schnirelmann Theorem). Since $R$ is arbitrary, one can conclude $\dgh(\Sp^m_{\mathrm{E}},P)\geq 1$.
\end{proof}

\begin{corollary}\label{coro:sinfty-finiteEuclid}
Let $R$ be any correspondence between a finite metric space $P$ and $\Sp^\infty_{\mathrm{E}}$. Then, $\dis_{\mathrm{E}}(R)\geq 2$. In particular, $\dgh(P,\Sp^\infty_{\mathrm{E}})\geq 1$.
\end{corollary}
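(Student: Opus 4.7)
The plan is to mimic the proof of Corollary \ref{coro:sinfty-finite} almost verbatim, replacing geodesic distance with Euclidean distance and using the fact that antipodal points in any finite-dimensional sphere are at Euclidean distance exactly $2$.

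Concretely, given any correspondence $R\subseteq P\times \Sp^\infty$, I would for each $p\in P$ set $R(p):=\{z\in \Sp^\infty:(p,z)\in R\}$. Taking closures, the family $\{\overline{R(p)}\}_{p\in P}$ is a closed cover of $\Sp^\infty$ by $|P|$ sets. Choose any isometric embedding $\Sp^{|P|-1}\hookrightarrow \Sp^\infty$ (e.g., via the first $|P|$ coordinates). Intersecting the cover with $\Sp^{|P|-1}$ yields a closed cover of $\Sp^{|P|-1}$ by $|P|=(|P|-1)+1$ sets, so the Lyusternik--Schnirelmann theorem (Theorem \ref{thm:ls}) supplies some $p_0\in P$ and a pair of antipodal points $x,-x\in \Sp^{|P|-1}\subseteq \Sp^\infty$ with $(p_0,x),(p_0,-x)\in R$.

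Since $d_{\mathrm{E}}(x,-x)=2$ and $d_P(p_0,p_0)=0$, the definition of distortion gives
\[
\dis_{\mathrm{E}}(R)\;\geq\;\bigl|d_{\mathrm{E}}(x,-x)-d_P(p_0,p_0)\bigr|\;=\;2.
\]
Since $R$ was arbitrary, we conclude $\dgh(P,\Sp^\infty_{\mathrm{E}})=\tfrac{1}{2}\inf_R\dis_{\mathrm{E}}(R)\geq 1$, using the Kalton--Ostrovskii characterization from equation (\ref{eqn:dghaltdef}).

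The only potential subtlety is making sure the isometric embedding $\Sp^{|P|-1}\hookrightarrow \Sp^\infty$ preserves antipodes and is compatible with the restriction of the closed cover; both conditions are immediate for the canonical embedding via the first $|P|$ coordinates. Thus, no new ideas beyond those already used in Corollary \ref{coro:sinfty-finite} are required, and the numerical change from $\pi$ to $2$ is simply the Euclidean diameter of the sphere.
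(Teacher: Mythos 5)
Your argument is correct and is essentially the paper's own proof: the paper handles this corollary by pointing back to the geodesic case (Corollary \ref{coro:sinfty-finite}), which is exactly your restriction of the induced closed cover to $\Sp^{|P|-1}\subset\Sp^\infty$ followed by Lyusternik--Schnirelmann, with $\pi$ replaced by the Euclidean antipodal distance $2$. The only nitpick is that Theorem \ref{thm:ls} places the antipodal pair in the closure $\overline{R(p_0)}\cap\Sp^{|P|-1}$ rather than in $R(p_0)$ itself, but since a set and its closure have the same diameter this still yields $\dis_{\mathrm{E}}(R)\geq\diam_{\mathrm{E}}(R(p_0))\geq 2$, so the conclusion stands.
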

\begin{proof}
See the proof of Corollary \ref{coro:sinfty-finite}.
\end{proof}

\begin{proposition}\label{prop:tb-sinftyEuclid}
Let $X$ be any totally bounded metric space. Then $\dgh(X,\Sp^\infty_{\mathrm{E}})\geq 1$.
\end{proposition}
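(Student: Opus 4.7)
The proof proposal is to mirror precisely the argument used for the geodesic version in Proposition \ref{prop:tb-sinfty}, substituting the Euclidean analogues of the ingredients already established earlier in \S\ref{sec:dgh-eucl}. The strategy is to approximate the totally bounded space $X$ by finite $\varepsilon$-nets, apply Corollary \ref{coro:sinfty-finiteEuclid} to get a lower bound for the Gromov-Hausdorff distance from each net to $\Sp^\infty_\mathrm{E}$, and then pass to the limit using the triangle inequality for $\dgh$.

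More concretely, the plan is as follows. First I would fix an arbitrary $\varepsilon>0$ and, using total boundedness of $X$, select a finite subset $P_\varepsilon\subset X$ that is an $\varepsilon$-net for $X$ (with respect to its ambient metric). By the standard fact recalled in the basic definitions section, viewing $P_\varepsilon$ as a metric subspace of $X$ yields $\dgh(X,P_\varepsilon)\leq\varepsilon$. Next, I would apply the triangle inequality for the Gromov-Hausdorff distance to obtain
\[
\dgh(X,\Sp^\infty_\mathrm{E})\;\geq\;\dgh(P_\varepsilon,\Sp^\infty_\mathrm{E})-\dgh(X,P_\varepsilon)\;\geq\;\dgh(P_\varepsilon,\Sp^\infty_\mathrm{E})-\varepsilon.
\]
Since $P_\varepsilon$ is a finite metric space, Corollary \ref{coro:sinfty-finiteEuclid} gives $\dgh(P_\varepsilon,\Sp^\infty_\mathrm{E})\geq 1$, so that $\dgh(X,\Sp^\infty_\mathrm{E})\geq 1-\varepsilon$. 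Finally, letting $\varepsilon\to 0^+$ yields $\dgh(X,\Sp^\infty_\mathrm{E})\geq 1$, which is the desired conclusion.

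There is no substantial obstacle in this proof; the entire content has been packaged into Corollary \ref{coro:sinfty-finiteEuclid}, which in turn depends on the Euclidean version of the Lyusternik–Schnirelmann argument (Lemma \ref{lemma:lsEuclid}) that handles the finite case. The only small point to be careful about is that the approximating net $P_\varepsilon$ is considered with the restricted metric from $X$ rather than being declared abstractly, so that the bound $\dgh(X,P_\varepsilon)\leq\varepsilon$ is legitimate; this is standard and follows directly from the $\delta$-net property mentioned after the definition of $\dgh$ at the start of the paper.
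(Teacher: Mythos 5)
Your proposal is correct and is exactly the argument the paper intends: the paper's proof of this proposition simply says to follow the proof of Proposition \ref{prop:tb-sinfty}, i.e.\ approximate $X$ by a finite $\varepsilon$-net, invoke Corollary \ref{coro:sinfty-finiteEuclid} for the net, and conclude via the triangle inequality by letting $\varepsilon\to 0^+$. Nothing is missing.
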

\begin{proof}
Follow the idea of the proof of Proposition \ref{prop:tb-sinfty}.
\end{proof}

\begin{proposition}
For any $n\geq 1$, $\dgh(\Sp^0_{\mathrm{E}},\Sp^n_{\mathrm{E}})=1$.
\end{proposition}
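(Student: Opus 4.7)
The plan is to establish the equality by combining the universal upper bound for Euclidean spheres with the Lyusternik--Schnirelmann-based lower bound that has just been derived. This is the exact Euclidean analogue of Proposition \ref{prop:s0-sd}, and the two ingredients needed have already been set up in the immediately preceding lemmas.

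First, I invoke Remark \ref{remark:ubEuclid}, which gives the universal upper bound
\[
\dgh(\Sp^0_{\mathrm{E}},\Sp^n_{\mathrm{E}})\leq 1
\]
for every $n\geq 1$. Next, I observe that $\Sp^0_{\mathrm{E}}$ is a two-point metric space, so it qualifies as a finite metric space of cardinality at most $n+1$ whenever $n\geq 1$. Applying Lemma \ref{lemma:lsEuclid} with $m=n$ and $P=\Sp^0_{\mathrm{E}}$ therefore yields
\[
\dgh(\Sp^n_{\mathrm{E}},\Sp^0_{\mathrm{E}})\geq 1.
\]
Combining these two inequalities gives the claimed equality.

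There is essentially no obstacle: the heavy lifting has been done in Lemma \ref{lemma:lsEuclid}, whose proof goes through a correspondence-induced closed cover of $\Sp^n$ by at most $n+1$ sets and invokes the Lyusternik--Schnirelmann theorem to find antipodes in a common block, producing two points in $\Sp^n_{\mathrm{E}}$ at Euclidean distance $2$ that are matched to a single point of $P$. The only thing worth highlighting in the write-up is that $|\Sp^0_{\mathrm{E}}|=2 \leq n+1$ for all $n\geq 1$, so the hypothesis of Lemma \ref{lemma:lsEuclid} is satisfied in the full stated range.
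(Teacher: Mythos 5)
Your proposal is correct and is essentially identical to the paper's argument, which likewise combines Remark \ref{remark:ubEuclid} for the upper bound with Lemma \ref{lemma:lsEuclid} (applied to the two-point space $\Sp^0_{\mathrm{E}}$, noting $2\leq n+1$) for the lower bound. Nothing is missing.
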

\begin{proof}
Apply Remark \ref{remark:ubEuclid} and Lemma \ref{lemma:lsEuclid}.
\end{proof}

\begin{proposition}
For any integer $m\geq 0$, $\dgh(\Sp^m_{\mathrm{E}},\Sp^\infty_{\mathrm{E}})=1$.
\end{proposition}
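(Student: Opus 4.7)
The plan is to mimic the proof of Proposition \ref{prop:sinfty-sd}, whose Euclidean analogues have already been assembled just above: Remark \ref{remark:ubEuclid} provides the universal upper bound $\dgh(\Sp^m_\mathrm{E},\Sp^\infty_\mathrm{E})\leq 1$, and Proposition \ref{prop:tb-sinftyEuclid} supplies the matching lower bound for totally bounded spaces. Since $\Sp^m_\mathrm{E}$ is compact (as a closed bounded subset of $\R^{m+1}$) for every $m<\infty$, it is in particular totally bounded, so both inputs apply to it.

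The execution is therefore a one-line sandwich. First I would observe that by Remark \ref{remark:ubEuclid},
\[
\dgh(\Sp^m_\mathrm{E},\Sp^\infty_\mathrm{E})\leq 1.
\]
Then, noting that $\Sp^m_\mathrm{E}$ is totally bounded, Proposition \ref{prop:tb-sinftyEuclid} yields the reverse inequality
\[
\dgh(\Sp^m_\mathrm{E},\Sp^\infty_\mathrm{E})\geq 1,
\]
and combining the two gives the claim.

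There is essentially no obstacle left: all the real work was done in establishing Proposition \ref{prop:tb-sinftyEuclid}, which in turn rested on Corollary \ref{coro:sinfty-finiteEuclid} and the Lyusternik--Schnirelmann-based Lemma \ref{lemma:lsEuclid}. If one wanted to spell out the totally-bounded step, it would just be the standard $\varepsilon$-net/triangle-inequality argument used in the proof of Proposition \ref{prop:tb-sinfty}: pick a finite $\varepsilon$-net $P_\varepsilon\subset\Sp^m_\mathrm{E}$, apply Corollary \ref{coro:sinfty-finiteEuclid} to get $\dgh(\Sp^\infty_\mathrm{E},P_\varepsilon)\geq 1$, and then use $\dgh(\Sp^m_\mathrm{E},P_\varepsilon)\leq\varepsilon$ together with the triangle inequality for $\dgh$ and the arbitrariness of $\varepsilon>0$. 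But since that reasoning is already packaged in Proposition \ref{prop:tb-sinftyEuclid}, the proof proper is simply the two-sided bound above.
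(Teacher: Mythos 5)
Your proof is correct and matches the paper's argument exactly: the paper also proves this by combining Remark \ref{remark:ubEuclid} (upper bound $1$) with Proposition \ref{prop:tb-sinftyEuclid} applied to the totally bounded space $\Sp^m_{\mathrm{E}}$ (lower bound $1$). Nothing is missing.
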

\begin{proof}
Apply Remark \ref{remark:ubEuclid} and Proposition \ref{prop:tb-sinftyEuclid}.
\end{proof}

The following lemma permits bounding $\dis_\mathrm{E}(R)$ via $\dis(R)$:

\begin{lemma}\label{lemma:disEucGeo}
Let $0\leq m < n \leq\infty$, and let $R$ be an arbitrary non-empty relation between $\Sp^m_{\mathrm{E}}$ and $\Sp^n_{\mathrm{E}}$. Then,
$$\dis_{\mathrm{E}}(R)\leq 2\sin\left(\frac{\dis(R)}{2}\right).$$
\end{lemma}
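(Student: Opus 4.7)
The plan is to reduce everything to a pointwise inequality relating the chord length to the arc length via the identity $d_{\mathrm{E}}(p,p')=2\sin(d_{\Sp^k}(p,p')/2)$ recalled in the introduction, which remains valid also when $k=\infty$ since $\Sp^\infty\subset\ell^2$ is endowed with the same $\arccos\langle\cdot,\cdot\rangle$ formula.

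First, I would prove the following real-variable fact: for all $s,t\in[0,\pi]$,
\[
\bigl|2\sin(s/2)-2\sin(t/2)\bigr|\ \leq\ 2\sin\!\left(\tfrac{|s-t|}{2}\right).
\]
Assume without loss of generality that $s\ge t$. The sum-to-product identity gives
\[
\sin(s/2)-\sin(t/2)=2\cos\!\left(\tfrac{s+t}{4}\right)\sin\!\left(\tfrac{s-t}{4}\right),
\]
while $\sin((s-t)/2)=2\sin((s-t)/4)\cos((s-t)/4)$. Since $\sin((s-t)/4)\ge 0$, the desired inequality is equivalent to $\cos((s+t)/4)\le \cos((s-t)/4)$. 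But both $(s+t)/4$ and $(s-t)/4$ lie in $[0,\pi/2]$ where $\cos$ is non-increasing, and $(s+t)/4\ge (s-t)/4$ because $t\ge 0$; so this reduces to the obvious inequality, completing the proof of the pointwise bound.

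Next, I would pick any two pairs $(x,y),(x',y')\in R$ and set $s:=d_{\Sp^m}(x,x')\in[0,\pi]$ and $t:=d_{\Sp^n}(y,y')\in[0,\pi]$. By the chord-arc formula,
\[
\bigl|d_{\mathrm{E}}(x,x')-d_{\mathrm{E}}(y,y')\bigr|=\bigl|2\sin(s/2)-2\sin(t/2)\bigr|\leq 2\sin\!\left(\tfrac{|s-t|}{2}\right).
\]
Because $|s-t|\leq\dis(R)$ and $\dis(R)\leq\pi$ (distortions of correspondences on spheres of diameter $\pi$), the argument $|s-t|/2$ lies in $[0,\pi/2]$ where $\sin$ is monotonically non-decreasing, so the right-hand side is bounded by $2\sin(\dis(R)/2)$. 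Taking the supremum over all pairs $(x,y),(x',y')\in R$ then yields $\dis_{\mathrm{E}}(R)\leq 2\sin(\dis(R)/2)$, as required.

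There is no substantive obstacle here: the only non-trivial step is the trigonometric inequality, and as outlined above it reduces in one line to the monotonicity of cosine on $[0,\pi/2]$.
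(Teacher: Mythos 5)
Your proof is correct and takes essentially the same approach as the paper: both reduce the claim to the pointwise inequality $\big|d_{\mathrm{E}}(x,x')-d_{\mathrm{E}}(y,y')\big|\leq 2\sin\big(\tfrac{1}{2}\vert d_{\Sp^m}(x,x')-d_{\Sp^n}(y,y')\vert\big)$ via the chord--arc formula and then conclude by the monotonicity of $\sin$ on $[0,\pi/2]$ together with $\dis(R)\leq\pi$. The only difference is cosmetic: you obtain the pointwise bound symmetrically from the sum-to-product identity, while the paper writes $\sin\big(\tfrac{s}{2}\big)=\sin\big(\tfrac{s-t}{2}+\tfrac{t}{2}\big)$, expands by angle addition, and proves the two one-sided inequalities separately.
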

\begin{proof}
First of all, note that $\dis(R):=\sup_{(x,y),(x',y')\in R}\vert d_{\Sp^m}(x,x')-d_{\Sp^n}(y,y')\vert\leq\pi$, since both  $\diam(\Sp^m)$ and $\diam(\Sp^n)$ are at most $\pi$. Fix arbitrary $(x,y),(x',y')\in R$. Then, 
\begin{align*}
    d_{\mathrm{E}}(x,x')&=2\sin\left(\frac{d_{\Sp^m}(x,x')}{2}\right)\\
    &=2\sin\left(\frac{d_{\Sp^m}(x,x')}{2}-\frac{d_{\Sp^n}(y,y')}{2}+\frac{d_{\Sp^n}(y,y')}{2}\right)\\
    &=2\sin\left(\frac{d_{\Sp^m}(x,x')}{2}-\frac{d_{\Sp^n}(y,y')}{2}\right)\cos\left(\frac{d_{\Sp^n}(y,y')}{2}\right)\\
    &\qquad+2\cos\left(\frac{d_{\Sp^m}(x,x')}{2}-\frac{d_{\Sp^n}(y,y')}{2}\right)\sin\left(\frac{d_{\Sp^n}(y,y')}{2}\right)\\
    &\leq 2\sin\left(\frac{\vert d_{\Sp^m}(x,x')-d_{\Sp^n}(y,y')\vert}{2}\right)+2\sin\left(\frac{d_{\Sp^n}(y,y')}{2}\right)\\
    &=2\sin\left(\frac{\vert d_{\Sp^m}(x,x')-d_{\Sp^n}(y,y')\vert}{2}\right)+d_{\mathrm{E}}(y,y').
\end{align*}
where the inequality follows since $\cos\left(\frac{d_{\Sp^m}(x,x')}{2}-\frac{d_{\Sp^n}(y,y')}{2}\right)\in[0,1]$.

Hence,
$$d_{\mathrm{E}}(x,x')-d_{\mathrm{E}}(y,y')\leq 2\sin\left(\frac{\vert d_{\Sp^m}(x,x')-d_{\Sp^n}(y,y')\vert}{2}\right).$$
Similarly, one can also prove
$$d_{\mathrm{E}}(y,y')-d_{\mathrm{E}}(x,x')\leq 2\sin\left(\frac{\vert d_{\Sp^m}(x,x')-d_{\Sp^n}(y,y')\vert}{2}\right).$$
Therefore, we have
$$\vert d_{\mathrm{E}}(x,x')-d_{\mathrm{E}}(y,y')\vert\leq 2\sin\left(\frac{\vert d_{\Sp^m}(x,x')-d_{\Sp^n}(y,y')\vert}{2}\right).$$
Since $(x,y),(x',y') \in R$ were arbitrary, this leads to the required conclusion.
\end{proof}

\begin{corollary}\label{cor:EucGeo}
For any $0\leq m < n \leq \infty$:

\begin{enumerate}

    \item $\dgh(\Sp^m_{\mathrm{E}},\Sp^n_{\mathrm{E}})\leq\sin\big(\dgh(\Sp^m,\Sp^n)\big).$
    
    \item In more generality, for any $X\subseteq \Sp^m$ and  $Y\subseteq \Sp^n$, $\dgh(X_\mathrm{E},Y_\mathrm{E})\leq \sin\big(\dgh(X,Y)\big).$
\end{enumerate}
\end{corollary}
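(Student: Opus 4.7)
The plan is to deduce both items directly from Lemma \ref{lemma:disEucGeo}, noting that item (1) is just the special case $X=\Sp^m$, $Y=\Sp^n$ of item (2), so I would focus on proving item (2).

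First I would observe that the proof of Lemma \ref{lemma:disEucGeo} is entirely pointwise: the only fact it uses is the identity $d_{\mathrm{E}}(x,x')=2\sin\!\big(d_{\Sp^m}(x,x')/2\big)$ for points on the ambient sphere, together with the sine addition formula and the bound $d_{\Sp^m}(x,x')\leq\pi$. These ingredients continue to hold when $x,x'$ are taken from any subset $X\subseteq\Sp^m$ (and similarly for $Y\subseteq\Sp^n$), since $X$ inherits both its geodesic and Euclidean distances from the ambient sphere. Hence the same argument yields, for any non-empty relation $R\subseteq X\times Y$,
\[
\dis_{\mathrm{E}}(R)\leq 2\sin\!\left(\frac{\dis(R)}{2}\right).
\]

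Next I would translate this pointwise bound into the Gromov--Hausdorff statement using the Kalton--Ostrovskii formula (\ref{eqn:dghaltdef}). Fix $\varepsilon>0$ and pick a correspondence $R_\varepsilon$ between $X$ and $Y$ with $\dis(R_\varepsilon)\leq 2\,\dgh(X,Y)+\varepsilon$. Since $R_\varepsilon$ is also a correspondence between $X_{\mathrm{E}}$ and $Y_{\mathrm{E}}$, the displayed inequality yields
\[
2\,\dgh(X_{\mathrm{E}},Y_{\mathrm{E}})\leq\dis_{\mathrm{E}}(R_\varepsilon)\leq 2\sin\!\left(\dgh(X,Y)+\frac{\varepsilon}{2}\right).
\]
Letting $\varepsilon\downarrow 0$ and using the continuity of $\sin$ gives $\dgh(X_{\mathrm{E}},Y_{\mathrm{E}})\leq \sin\!\big(\dgh(X,Y)\big)$, which is item (2), and item (1) follows by specialization.

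I do not anticipate a genuine obstacle here; the one small subtlety is that $\sin$ is not monotone on all of $[0,\pi]$, but the quantity $\dgh(X,Y)+\varepsilon/2$ is bounded by $\pi/2+\varepsilon/2$ (by the general bound $\dgh\leq\frac{1}{2}\max\{\diam(X),\diam(Y)\}\leq\pi/2$ from Remark \ref{remark:ub}), and on this range the passage to the limit is harmless because $\sin$ is continuous. In fact, if $\dgh(X,Y)=\pi/2$ then the right-hand side is $1$, which is a trivial upper bound in view of Remark \ref{remark:ubEuclid}, so the statement remains correct without any monotonicity assumption.
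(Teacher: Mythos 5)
Your proposal is correct and follows essentially the same route the paper intends: Corollary \ref{cor:EucGeo} is meant to be read off from Lemma \ref{lemma:disEucGeo} (whose pointwise proof indeed applies verbatim to relations between subsets $X\subseteq\Sp^m$, $Y\subseteq\Sp^n$) by applying it to near-optimal correspondences and passing to the infimum via (\ref{eqn:dghaltdef}). Your handling of the $\sin$ non-monotonicity edge case (using $\dgh(X,Y)\leq\frac{\pi}{2}$ and the trivial bound when equality holds) is a fine way to close the only small gap in that limiting argument.
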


\begin{corollary}\label{cor:sn-sm-ubEuclid}
$\dgh(\Sp^m_{\mathrm{E}},\Sp^n_{\mathrm{E}})<1$, for all $0<m\neq n<\infty$.
\end{corollary}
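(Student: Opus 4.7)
The plan is to chain together two results already at our disposal: Theorem \ref{thm:sn-sm-ub} (giving the strict geodesic bound $\dgh(\Sp^m,\Sp^n)<\frac{\pi}{2}$ for $0<m<n<\infty$) and item (1) of Corollary \ref{cor:EucGeo} (translating geodesic distortion bounds into Euclidean ones via $\sin$). Without loss of generality we may assume $m<n$, since $\dgh$ is symmetric.

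First, I would invoke Theorem \ref{thm:sn-sm-ub} to get the strict inequality $\dgh(\Sp^m,\Sp^n)<\tfrac{\pi}{2}$. Then I would apply item (1) of Corollary \ref{cor:EucGeo}, which gives
\[
\dgh(\Sp^m_{\mathrm{E}},\Sp^n_{\mathrm{E}})\leq \sin\bigl(\dgh(\Sp^m,\Sp^n)\bigr).
\]
Finally, since $\sin$ is strictly increasing on $[0,\tfrac{\pi}{2}]$ and $\sin(\tfrac{\pi}{2})=1$, the previous strict inequality is preserved:
\[
\sin\bigl(\dgh(\Sp^m,\Sp^n)\bigr)<\sin\!\left(\tfrac{\pi}{2}\right)=1,
\]
so $\dgh(\Sp^m_{\mathrm{E}},\Sp^n_{\mathrm{E}})<1$, as claimed.

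There is no real obstacle here; the corollary is an immediate combination of the geodesic strict upper bound with the $\sin$-comparison between geodesic and Euclidean distortion. All the substantive work was done earlier: the construction of the antipode preserving continuous surjection $\psi_{m,n}:\Sp^m\twoheadrightarrow\Sp^n$ (which powers Theorem \ref{thm:sn-sm-ub}) and the trigonometric inequality $d_{\mathrm{E}}(x,x')=2\sin(d_{\Sp^m}(x,x')/2)$ used in Lemma \ref{lemma:disEucGeo}.
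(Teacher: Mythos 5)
Your proof is correct and follows exactly the paper's own argument: the paper proves this corollary by invoking Corollary \ref{cor:EucGeo} together with Theorem \ref{thm:sn-sm-ub}, which is precisely your chain (with the monotonicity of $\sin$ on $[0,\frac{\pi}{2}]$ made explicit). Nothing further is needed.
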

\begin{proof}
Invoke Corollary \ref{cor:EucGeo} and Theorem \ref{thm:sn-sm-ub}.
\end{proof}

Given the above, and the fact that we have proved that  $\mathfrak{g}_{1,2} = \frac{\pi}{3}$ and $\mathfrak{g}_{2,3}=\frac{\zeta_2}{2}$,  one might expect that $\mathfrak{g}_{1,2}^\mathrm{E} = \dgh(\Sp^1_{\mathrm{E}},\Sp^{2}_{\mathrm{E}})=\sin\left(\frac{\pi}{3}\right) =\frac{\sqrt{3}}{2}$ and similarly that  $\mathfrak{g}_{2,3}^\mathrm{E} = \frac{\sqrt{2}}{\sqrt{3}}.$  However, rather surprisingly, we were able to construct a correspondence $R_\mathrm{E}$ between $\Sp^1_{\mathrm{E}}$ and $\mathbf{H}_{\geq 0}(\Sp^2_{\mathrm{E}})$ such that $\dis_{\mathrm{E}}(R_\mathrm{E})<\sqrt{3}$ (cf. Proposition \ref{prop:dgh-s12-s2pe} below and its proof in \S\ref{sec:proof-hepta}).  This correspondence then naturally induces a function $\phi_\mathrm{E}:\mathbf{A}(\Sp^2_\mathrm{E})\rightarrow \Sp^1_\mathrm{E}$ from the ``helmet" on $\Sp^2_\mathrm{E}$ into $\Sp^1_{\mathrm{E}}$ also with $\dis_\mathrm{E}(\phi_\mathrm{E})<\sqrt{3}.$ 

\begin{proposition}\label{prop:dgh-s12-s2pe}
$\dgh\big(\Sp_\mathrm{E}^1,\mathbf{H}_{\geq 0}(\Sp_\mathrm{E}^2)\big)<\frac{\sqrt{3}}{2}.$
\end{proposition}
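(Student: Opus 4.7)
The plan is to adapt the blueprint of Proposition \ref{prop:ub}: produce an explicit surjection $\phi_\mathrm{E}\colon \mathbf{H}_{\geq 0}(\Sp^2)\twoheadrightarrow \Sp^1$ that is the identity on the equator $\sete(\Sp^2)\cong \Sp^1$ and that collapses each cell of a suitable partition of $\mathbf{H}_{>0}(\Sp^2)$ to a chosen representative point on $\Sp^1$. Taking $R_\mathrm{E}:=\mathrm{graph}(\phi_\mathrm{E})$ automatically gives a correspondence (the equator maps onto $\Sp^1$ by the identity), so the task reduces to engineering a partition and a set of representatives for which $\dis_\mathrm{E}(R_\mathrm{E})<\sqrt{3}$.

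Before attempting a refinement, I would first explain why the three-cell construction of Proposition \ref{prop:ub} is \emph{not} already strict in the Euclidean metric: two distinct extreme configurations both attain Euclidean distortion exactly $\sqrt{3}$. First, the antipodes $-u_j,-u_k$ (for $j,k\neq i$) both lie in $\overline{N_i}$ and sit at Euclidean distance $\sqrt{3}$, but collapse to the single point $u_i$. Second, for any two distinct cells $N_i,N_j$, pairs $(p,q)$ with $p\in N_i$ and $q\in N_j$ can be taken arbitrarily close to the common north pole, so $d_\mathrm{E}(p,q)\to 0$ while $d_\mathrm{E}(u_i,u_j)=\sqrt{3}$. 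A successful refinement must therefore simultaneously shrink the Euclidean diameter of every cell below $\sqrt{3}$ \emph{and} prevent non-adjacent cells from sharing arbitrarily close points.

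The refinement I would attempt is a seven-region partition built from a regular heptagon: place $u_1,\dots,u_7$ as the vertices of a regular heptagon inscribed in $\Sp^1$, and partition $\mathbf{H}_{>0}(\Sp^2)$ into seven ``lune-like'' cells $W_1,\dots,W_7$ separated from the north pole by a small polar cap around $N$. The cap is the key ingredient that enforces the second condition above: it insulates the seven lune cells from sharing a common point, so any two distinct $W_i,W_j$ remain a strictly positive Euclidean distance apart. The cap itself is either absorbed into one designated $W_i$ or treated as an eighth region with a carefully chosen representative on $\Sp^1$. Following the pattern of the proofs of Propositions \ref{prop:ub} and \ref{prop:ub23}, the verification will split into four families of cases: pairs in the same cell, pairs in adjacent cells, pairs in non-adjacent cells (now well-separated), and cell-versus-equator pairs.

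The hard part will be the quantitative optimization: the cap radius $\alpha$ and the cap's representative on $\Sp^1$ must be tuned so that \emph{all} of the resulting case-bounds stay simultaneously strictly below $\sqrt{3}$. These computations rely on the identity $d_\mathrm{E}(x,y)=2\sin\bigl(d_{\Sp^n}(x,y)/2\bigr)$ together with explicit coordinates of the $u_i$ and of the cell boundaries; the strict improvement over $\sqrt{3}$ is ultimately possible thanks to the strict concavity of $\sin$ on $[0,\pi/2]$, which opens a gap whenever no single pair realizes the Euclidean-distortion extremal via a geodesic-distortion extremal. Once admissible parameters are pinned down, one produces an explicit constant $D<\sqrt{3}$ witnessing $\dis_\mathrm{E}(R_\mathrm{E})\leq D$, which yields $\dgh(\Sp^1_\mathrm{E},\mathbf{H}_{\geq 0}(\Sp^2_\mathrm{E}))\leq D/2<\tfrac{\sqrt{3}}{2}$.
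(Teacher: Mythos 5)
Your strategy fails for a structural reason, not a computational one, and no tuning of the heptagonal cells, the polar cap, or the representatives can rescue it. You insist that $\phi_\mathrm{E}\colon\mathbf{H}_{\geq 0}(\Sp^2)\twoheadrightarrow\Sp^1$ restrict to the identity on the equator (this is how your graph becomes a correspondence). But the closed hemisphere is homeomorphic to $\mathbb{B}^2$ via orthogonal projection, which fixes the boundary circle, and the identity on the equator sends antipodal boundary points to antipodal points of $\Sp^1$. Hence $\phi_\mathrm{E}$, transported to $\mathbb{B}^2$, is exactly a map of the type covered by the Dubins--Schwarz theorem (Theorem \ref{thm:strBU} with $n=2$); its proof (Lemmas \ref{lemma:Matousek} and \ref{lemma:origindiam}) produces, inside the image of an arbitrarily small neighborhood of some point, two points of $\Sp^1$ with inner product at most $-\tfrac12$, i.e.\ at Euclidean distance at least $\sqrt{3}$. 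So the modulus of discontinuity of $\phi_\mathrm{E}$, measured with the Euclidean metric on the target, is at least $\sqrt{3}$, and by Proposition \ref{prop:moddis} we get $\dis_{\mathrm{E}}(\phi_\mathrm{E})=\dis_{\mathrm{E}}(\mathrm{graph}(\phi_\mathrm{E}))\geq\sqrt{3}$ (this is the same Euclidean Borsuk--Ulam phenomenon recorded in Corollary \ref{cor:strBU2EC} and in the remark on extending Lemma \ref{lemma:distortion}). Your construction therefore can only ever yield the non-strict bound $\dgh\leq\tfrac{\sqrt{3}}{2}$. Note also that your concavity heuristic points the wrong way: Lemma \ref{lemma:disEucGeo} bounds Euclidean distortion from above by $2\sin(\cdot/2)$ of geodesic distortion; it does not let a geodesic-optimal map become strictly better in the Euclidean metric.

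The paper's proof (\S\ref{sec:proof-hepta}, following Bogdanov) evades precisely this obstruction by abandoning single-valuedness and, in particular, by not fixing the equator. It takes two antipodal regular heptagons $u_1,\dots,u_7$ and $v_i=-u_i$ on $\Sp^1$, covers $\mathbf{H}_{\geq 0}(\Sp^2_\mathrm{E})$ by seven explicit regions $A_1,\dots,A_7$ with anchor points $a_i\in A_i$, and sets $R_\mathrm{E}$ to be the union of the blocks $\{u_i\}\times A_i$ together with the blocks pairing the arc of the antipodal heptagon between $v_{i+3}$ and $v_{i+4}$ (which, because $7$ is odd, contains $u_i$) with the single interior point $a_i$. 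Surjectivity onto $\Sp^1$ comes from these seven arcs, not from an equator-fixing map; the relation is not the graph of a function in either direction, and no selection from it is antipode preserving on the equator, so the $\sqrt{3}$ obstruction simply does not apply. The verification is then a finite case analysis against the chord lengths $\rho_k=\sqrt{2-2\cos(k\pi/7)}$, with seven explicit strict separation and diameter conditions playing the role you assigned to cap-tuning. Your heptagon instinct is close to the actual construction, but the essential missing idea is this two-sided, arc-to-point structure rather than a refined partition beneath a surjection that is the identity on the equator.
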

This  proposition was motivated by Ilya Bogdanov's answer \cite{bogdanov} to a Math Overflow question regarding the Gromov-Hausdorff distance between $\Sp^1_\mathrm{E}$ and the unit disk in $\R^2$. 

We now discuss the possibility that the  correspondence  $R_\mathrm{E}$ described above permits proving that, in fact, $\dgh(\Sp^1_\mathrm{E},\Sp^2_\mathrm{E})<\frac{\sqrt{3}}{2}$ via extending $R_\mathrm{E}$ into a correspondence between $\Sp^2_\mathrm{E}$ and $\Sp^1_\mathrm{E}$ much in the same way that we did so in the case of spheres with their geodesic distance (cf. Lemma \ref{lemma:distortion}).

By the same method of proof as that of Corollary \ref{cor:strBU2} (giving the lower bound $\dis(g)\geq \zeta_n$ for any antipode preserving map $g:\Sp^n\rightarrow \Sp^{n-1}$) one obtains the following Euclidean analogue:

\begin{corollary}\label{cor:strBU2EC}
For each integer $n>0$, any function $g:\Sp^n_{\mathrm{E}}\rightarrow\Sp^{n-1}_{\mathrm{E}}$ which maps every pair of antipodal points on $\Sp^n_{\mathrm{E}}$ onto antipodal points on $\Sp^{n-1}_{\mathrm{E}}$ satisfies $\dis_{\mathrm{E}}(g)\geq\sqrt{2+\frac{2}{n}}$.
\end{corollary}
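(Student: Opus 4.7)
The plan is to mimic the proof of Corollary \ref{cor:strBU2} verbatim, replacing the geodesic modulus of discontinuity by its Euclidean counterpart and tracking the change of metric via the elementary identity $d_{\mathrm{E}}(x,y)=2\sin\!\big(d_{\Sp^n}(x,y)/2\big)$. Concretely, I would first record an Euclidean version of Proposition \ref{prop:moddis}: for any map $\phi$ between metric spaces, $\delta(\phi)\leq\dis(\phi)$, where both quantities are computed with respect to the codomain metric used. The argument is identical to the one given for Proposition \ref{prop:moddis} (pick $U_x=B(x,\varepsilon/2)$ and estimate) and makes no use of the geodesic structure, so it applies to $g:\Sp^n_{\mathrm{E}}\rightarrow\Sp^{n-1}_{\mathrm{E}}$ to yield $\delta_{\mathrm{E}}(g)\leq\dis_{\mathrm{E}}(g)$.

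The second step is to transfer the Dubins--Schwarz-type lower bound (Corollary \ref{cor:strBU}) from the geodesic setting to the Euclidean one. Since $\Sp^n_{\mathrm{E}}$ and $\Sp^n$ share the same underlying topology, open neighborhoods appearing in the definition of the modulus of discontinuity are the same in both settings; only the diameter of the image set changes. For any $A\subseteq\Sp^{n-1}$, the map $\theta\mapsto 2\sin(\theta/2)$ is monotone on $[0,\pi]$ and therefore
\[
\diam_{\mathrm{E}}(A)=2\sin\!\left(\frac{\diam(A)}{2}\right),
\]
where $\diam(A)$ is the geodesic diameter. Consequently $\delta_{\mathrm{E}}(g)=2\sin\!\big(\delta(g)/2\big)$. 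Applying Corollary \ref{cor:strBU} to $g$ (viewed as a map into $\Sp^{n-1}$) gives $\delta(g)\geq\zeta_{n-1}=\arccos(-1/n)$, and the half-angle identity yields
\[
\delta_{\mathrm{E}}(g)\geq 2\sin\!\left(\frac{\zeta_{n-1}}{2}\right)=2\sqrt{\frac{1-\cos\zeta_{n-1}}{2}}=\sqrt{2+\frac{2}{n}}.
\]

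Combining the two steps gives $\dis_{\mathrm{E}}(g)\geq\delta_{\mathrm{E}}(g)\geq\sqrt{2+2/n}$, which is the claim. There is no genuine obstacle here; the only point that might require care is the initial identity $\diam_{\mathrm{E}}(A)=2\sin(\diam(A)/2)$, which relies only on monotonicity of $2\sin(\cdot/2)$ on $[0,\pi]$ (and the fact that geodesic distances on $\Sp^{n-1}$ never exceed $\pi$). Note also that the constant $\sqrt{2+2/n}$ obtained here is precisely the Euclidean edge length of a regular $n$-simplex inscribed in $\Sp^{n-1}$, so this bound is the exact Euclidean analogue of the geodesic lower bound $\zeta_{n-1}$ in Corollary \ref{cor:strBU2}.
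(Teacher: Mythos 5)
Your proposal is correct and follows essentially the route the paper intends: the paper proves Corollary \ref{cor:strBU2EC} by "the same method as Corollary \ref{cor:strBU2}", i.e.\ combining the Dubins--Schwarz bound on the modulus of discontinuity with Proposition \ref{prop:moddis} (which is already stated for arbitrary metric spaces, so no separate Euclidean version is needed), and your conversion step $\diam_{\mathrm{E}}(A)=2\sin\left(\diam(A)/2\right)$, valid by monotonicity of $2\sin(\cdot/2)$ on $[0,\pi]$, is a clean way to transfer the geodesic bound $\delta(g)\geq\zeta_{n-1}$ into $\delta_{\mathrm{E}}(g)\geq 2\sin(\zeta_{n-1}/2)=\sqrt{2+\tfrac{2}{n}}$. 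All steps check out, including the half-angle computation using $\cos\zeta_{n-1}=-1/n$.
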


\begin{remark}[Extending Lemma \ref{lemma:distortion} to the case of spheres with Euclidean metric] Lemma \ref{lemma:distortion} was instrumental in our quest for lower bounds for the Gromov-Hausdorff distance between spheres with the geodesic distance. It is natural to attempt to obtain a suitable version of that result to the case of the Euclidean metric. However, there is a caveat. Indeed, one should \emph{not} expect to be able to prove a version in which $\dis_\mathrm{E}(\phi^*)$ is \emph{equal} to $\dis(\phi)$ where $\phi:\seta(\Sp^n_{\mathrm{E}})\rightarrow\Sp^m_{\mathrm{E}}$ and $\phi^*$ is its antipode preserving extension obtained via the ``helmet trick"  (as described in the statement of Lemma \ref{lemma:distortion}). If this was the case, then the antipode preserving extension $\phi^*_\mathrm{E}$ of the function $\phi_\mathrm{E}$ mentioned above would satisfy \begin{equation}
\label{eq:dis-phi-euclidean}    
\dis_\mathrm{E}(\phi^*_\mathrm{E})<\sqrt{3}.\end{equation}

However, note that Corollary \ref{cor:strBU2EC} implies that, in the case of spheres with Euclidean distance, any antipode preserving map $\psi:\Sp^{m+1}_\mathrm{E}\rightarrow \Sp^m_\mathrm{E}$  must satisfy $\dis_\mathrm{E}(\psi)\geq \sqrt{2+\frac{2}{m+1}}$. In particular, it must be that $\dis_\mathrm{E}(\psi)\geq \sqrt{3}$ for any antipode preserving map $\psi:\Sp^2_\mathrm{E}\rightarrow \Sp^1_\mathrm{E}$, and this would contradict equation (\ref{eq:dis-phi-euclidean}). 

Still, as we describe next, there is a suitable generalization of Lemma \ref{lemma:distortion} which yields nontrivial lower bounds (cf. Proposition \ref{prop:sn-sm-lb-DSEuclid}).
\end{remark}

\begin{lemma}\label{lemma:Eucliddistgap}
If $\vert a-b \vert=:\delta\in[0,2]$ for some $a,b\in [0,2]$, then
$$\left\vert\sqrt{4-a^2}-\sqrt{4-b^2}\right\vert\leq\sqrt{\delta(4-\delta)},$$
and the inequality is tight.
\end{lemma}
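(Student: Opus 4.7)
The plan is to reduce the inequality to a simple trigonometric comparison via an angle substitution. Since $a, b \in [0, 2]$, I would write $a = 2\sin\alpha$ and $b = 2\sin\beta$ for unique $\alpha, \beta \in [0, \pi/2]$, so that $\sqrt{4-a^2} = 2\cos\alpha$ and $\sqrt{4-b^2} = 2\cos\beta$. By symmetry I may assume $\alpha \geq \beta$; set $\sigma := \frac{\alpha+\beta}{2}$ and $\tau := \frac{\alpha-\beta}{2}$. Then $\sigma, \tau \in [0, \pi/2]$, and crucially $\sigma + \tau = \alpha \leq \pi/2$.

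Next, the sum-to-product identities give
\[
\delta = 2\sin\alpha - 2\sin\beta = 4\cos\sigma\,\sin\tau,\qquad \sqrt{4-b^2} - \sqrt{4-a^2} = 2\cos\beta - 2\cos\alpha = 4\sin\sigma\,\sin\tau.
\]
Both sides of the target inequality are non-negative, so I would square and expand: the claim becomes
\[
16\sin^2\sigma\,\sin^2\tau \;\leq\; \delta(4-\delta) = 16\cos\sigma\,\sin\tau - 16\cos^2\sigma\,\sin^2\tau.
\]
Grouping the $\sin^2\tau$ terms on the left and invoking $\sin^2\sigma + \cos^2\sigma = 1$ collapses this to $\sin^2\tau \leq \cos\sigma\,\sin\tau$. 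If $\sin\tau = 0$ both sides vanish; otherwise I can divide by $\sin\tau > 0$ to obtain the clean inequality $\sin\tau \leq \cos\sigma$.

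The last inequality is the heart of the argument, and follows at once from the constraint $\sigma + \tau \leq \pi/2$: indeed $\tau \leq \pi/2 - \sigma$ yields $\sin\tau \leq \sin(\pi/2 - \sigma) = \cos\sigma$. No step should be a real obstacle; the only points demanding care are asserting the correct sign before squaring (handled by the WLOG choice $a \geq b$) and noticing that the substitution range $\alpha, \beta \in [0, \pi/2]$ is exactly what forces $\sigma + \tau \leq \pi/2$, which is the property powering the final comparison. For tightness, equality propagates back through every step precisely when $\alpha = \pi/2$, i.e., $a = 2$; the choice $a = 2$, $b = 2 - \delta$ then gives $\bigl|\sqrt{0} - \sqrt{4-(2-\delta)^2}\bigr| = \sqrt{4\delta - \delta^2}$, matching the bound.
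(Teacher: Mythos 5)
Your proof is correct, and it takes a genuinely different route from the paper's. The paper proceeds purely algebraically: it multiplies by the conjugate to write $\left\vert\sqrt{4-a^2}-\sqrt{4-b^2}\right\vert=\vert a-b\vert\cdot\frac{a+b}{\sqrt{4-a^2}+\sqrt{4-b^2}}$, then (assuming $a\geq b$, so $b\leq 2-\delta$) bounds the numerator by $a+b\leq 4-\delta$ and the denominator from below by $\sqrt{4-b^2}\geq\sqrt{4\delta-\delta^2}$, which collapses to $\sqrt{\delta(4-\delta)}$ in one line; tightness is checked on the same pair $a=2$, $b=2-\delta$. You instead substitute $a=2\sin\alpha$, $b=2\sin\beta$ with $\alpha,\beta\in[0,\pi/2]$, use sum-to-product to write both $\delta=4\cos\sigma\sin\tau$ and the left-hand side as $4\sin\sigma\sin\tau$, and after squaring reduce everything via $\sin^2\sigma+\cos^2\sigma=1$ to the clean inequality $\sin\tau\leq\cos\sigma$, which is exactly the constraint $\alpha=\sigma+\tau\leq\pi/2$. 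All steps check out (the WLOG fixes the signs before squaring, and the division by $\sin\tau$ is handled separately when $\tau=0$), and your equality analysis recovers the same extremal pair. The trade-off: the paper's conjugate trick is shorter, while your substitution is well adapted to how the lemma is actually used in \S\ref{sec:dgh-eucl} --- $2\sin\alpha$ is a chord of the unit circle and $2\cos\alpha$ the complementary chord to the antipode --- so it makes both the geometric content and the equality case $\alpha=\pi/2$ (i.e.\ $a=2$) transparent.
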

\begin{proof}
The claim is obvious if $\delta=0$. Henceforth, we will assume that $\delta>0$. Observe that,
\begin{align*}
    \left\vert\sqrt{4-a^2}-\sqrt{4-b^2}\right\vert&=\frac{\vert a^2-b^2\vert}{\sqrt{4-a^2}+\sqrt{4-b^2}}=\vert a-b\vert\cdot\frac{a+b}{\sqrt{4-a^2}+\sqrt{4-b^2}}\\
    &\leq\delta\cdot\frac{4-\delta}{\sqrt{4\delta-\delta^2}}=\sqrt{\delta(4-\delta)}
\end{align*}
as we wanted. Finally, the equality holds if $a=2,b=2-\delta$ or $a=2-\delta,b=2$.
\end{proof}

\begin{lemma}\label{lemma:distortionEuclid}
For any $m,n\geq 0$, let $\emptyset\neq C\subseteq \Sp^n_{\mathrm{E}}$ satisfy $C\cap (-C)=\emptyset$ and let the set $\phi:C\rightarrow \Sp^m_{\mathrm{E}}$ be any map. Then, the extension $\phi^*$ of $\phi$ to the set $C\cup(-C)$ defined  by
\begin{align*}
    {\phi^*}: C\cup(-C)&\longrightarrow \Sp^m\\
    C \ni x&\longmapsto \phi(x)\\
    -x&\longmapsto-\phi(x)
\end{align*}
is antipode preserving and satisfies $\dis_{\mathrm{E}}({\phi^*})\leq\sqrt{\dis_{\mathrm{E}}(\phi)(4-\dis_{\mathrm{E}}(\phi))}$.
\end{lemma}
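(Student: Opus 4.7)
The plan is to mimic the proof of the geodesic analogue Lemma \ref{lemma:distortion}, using the Euclidean antipodal identity
\[
d_{\mathrm{E}}(x,-x')=\sqrt{4-d_{\mathrm{E}}(x,x')^2}\qquad\text{for }x,x'\in\Sp^k_{\mathrm{E}},
\]
which follows immediately from $\|x-x'\|^2+\|x+x'\|^2=4$ on the unit sphere, in place of the simpler relation $d_{\Sp^k}(x,-x')=\pi-d_{\Sp^k}(x,x')$. Antipode preservation of $\phi^*$ is immediate from the definition, so the content is entirely the distortion bound.

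Next I would fix $p,q\in C\cup(-C)$ and split into cases according to which of $\pm C$ each belongs to. When $p$ and $q$ lie on the same side (both in $C$, or both in $-C$), joint negation is a Euclidean isometry, so one gets $|d_{\mathrm{E}}(\phi^*(p),\phi^*(q))-d_{\mathrm{E}}(p,q)|\leq \dis_{\mathrm{E}}(\phi)$ directly. When they lie on opposite sides, say $p\in C$ and $q=-q'$ with $q'\in C$, the identity above yields
\[
\big|d_{\mathrm{E}}(\phi^*(p),\phi^*(q))-d_{\mathrm{E}}(p,q)\big|=\left|\sqrt{4-a^2}-\sqrt{4-b^2}\right|
\]
with $a:=d_{\mathrm{E}}(\phi(p),\phi(q'))$ and $b:=d_{\mathrm{E}}(p,q')$. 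Since $|a-b|\leq \dis_{\mathrm{E}}(\phi)$, applying Lemma \ref{lemma:Eucliddistgap} with $\delta:=|a-b|$ gives precisely the square-root bound I want, pointwise.

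To upgrade this to a uniform bound I would invoke the following elementary facts: (i) $\dis_{\mathrm{E}}(\phi)\leq 2$ because all distances on unit spheres lie in $[0,2]$; (ii) on $[0,2]$ the map $\delta\mapsto\delta(4-\delta)$ is non-decreasing, so the pointwise estimate $\sqrt{\delta(4-\delta)}$ is controlled by $\sqrt{\dis_{\mathrm{E}}(\phi)\,(4-\dis_{\mathrm{E}}(\phi))}$; and (iii) the same-side bound $\dis_{\mathrm{E}}(\phi)$ is itself dominated by $\sqrt{\dis_{\mathrm{E}}(\phi)\,(4-\dis_{\mathrm{E}}(\phi))}$ precisely when $\dis_{\mathrm{E}}(\phi)\leq 2$. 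Combining the two cases then yields the claimed inequality. There is no real obstacle beyond bookkeeping; the one conceptual point is recognizing that the right substitution into Lemma \ref{lemma:Eucliddistgap} is $(a,b)=(d_{\mathrm{E}}(\phi(p),\phi(q')),d_{\mathrm{E}}(p,q'))$, after which monotonicity of $\delta(4-\delta)$ on $[0,2]$ does the remaining work.
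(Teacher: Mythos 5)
Your proposal is correct and follows essentially the same route as the paper: reduce the antipodal case to the identity $d_{\mathrm{E}}(x,-x')=\sqrt{4-d_{\mathrm{E}}(x,x')^2}$, apply Lemma \ref{lemma:Eucliddistgap} with $\delta=|a-b|$, handle same-side pairs by the isometry of joint negation, and conclude via $\dis_{\mathrm{E}}(\phi)\leq\sqrt{\dis_{\mathrm{E}}(\phi)(4-\dis_{\mathrm{E}}(\phi))}$ for $\dis_{\mathrm{E}}(\phi)\leq 2$. In fact you spell out the monotonicity of $\delta\mapsto\delta(4-\delta)$ on $[0,2]$, a step the paper uses only implicitly.
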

\begin{proof}
$\phi^*$ is obviously antipode preserving by its definition. Now, fix arbitrary $x,x'\in C$. Then, 
\begin{align*}
    &\big\vert d_{\mathrm{E}}(x,-x')-d_{\mathrm{E}}({\phi^*}(x),{\phi^*}(-x')) \big\vert\\
    &=\left\vert \sqrt{4-(d_{\mathrm{E}}(x,x'))^2}-\sqrt{4-(d_{\mathrm{E}}(\phi(x),\phi(x')))^2} \right\vert\\
    &\leq\sqrt{\big\vert d_{\mathrm{E}}(x,x')-d_{\mathrm{E}}(\phi(x),\phi(x')) \big\vert(4-\big\vert d_{\mathrm{E}}(x,x')-d_{\mathrm{E}}(\phi(x),\phi(x')) \big\vert)}\\
    &\leq\sqrt{\dis_{\mathrm{E}}(\phi)(4-\dis_{\mathrm{E}}(\phi))}
\end{align*}
and,
$$\vert d_{\mathrm{E}}(-x,-x')-d_{\mathrm{E}}({\phi^*}(-x),{\phi^*}(-x')) \vert=\vert d_{\mathrm{E}}(x,x')-d_{\mathrm{E}}(\phi(x),\phi(x')) \vert\leq\dis_{\mathrm{E}}(\phi).$$
Hence,
$$\dis_{\mathrm{E}}(\phi^*)\leq\max\left\{\dis_{\mathrm{E}}(\phi),\sqrt{\dis_{\mathrm{E}}(\phi)(4-\dis_{\mathrm{E}}(\phi))}\right\}=\sqrt{\dis_{\mathrm{E}}(\phi)(4-\dis_{\mathrm{E}}(\phi))}$$
as we wanted to prove.
\end{proof}

\begin{corollary}\label{coro:preservedistortionEuclid}
For each $n\in\Z_{>0}$ and any map $\phi:\Sp^n_{\mathrm{E}}\rightarrow \Sp^{n-1}_{\mathrm{E}}$ there exists an antipode preserving map  $\phi^*:\Sp^n_{\mathrm{E}}\rightarrow \Sp^{n-1}_{\mathrm{E}}$ such that  $\dis_{\mathrm{E}}({\phi^*})\leq\sqrt{\dis_{\mathrm{E}}(\phi)(4-\dis_{\mathrm{E}}(\phi))}$.
\end{corollary}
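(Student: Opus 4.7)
The plan is to mimic exactly the strategy used in Corollary \ref{coro:preservedistortion} (the geodesic case), replacing the role of Lemma \ref{lemma:distortion} by its Euclidean counterpart Lemma \ref{lemma:distortionEuclid}. Concretely, I would set $C := \seta(\Sp^n)$, the ``helmet'' of Definition \ref{def:seta}, which satisfies $C \cap (-C) = \emptyset$ and $C \cup (-C) = \Sp^n$, and work with the restriction $\phi_0 := \phi|_{C} : C \to \Sp^{m-1}_{\mathrm{E}}$ (where here we take $m=n$ to match the statement).

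First, since $\phi_0$ is obtained from $\phi$ by restricting its domain, we trivially have
\[
\dis_{\mathrm{E}}(\phi_0) \,\leq\, \dis_{\mathrm{E}}(\phi).
\]
Next, I would apply Lemma \ref{lemma:distortionEuclid} to $\phi_0$ in order to obtain an antipode preserving extension $\phi^* := (\phi_0)^* : C \cup (-C) = \Sp^n \to \Sp^{n-1}_{\mathrm{E}}$ satisfying
\[
\dis_{\mathrm{E}}(\phi^*) \,\leq\, \sqrt{\dis_{\mathrm{E}}(\phi_0)\bigl(4-\dis_{\mathrm{E}}(\phi_0)\bigr)}.
\]

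The one small technical point I would need to verify is that the bound $\sqrt{t(4-t)}$ is monotone in $t$ over the relevant range, so that the inequality $\dis_{\mathrm{E}}(\phi_0) \leq \dis_{\mathrm{E}}(\phi)$ can be propagated to the final estimate. Since $\diam(\Sp^{k}_{\mathrm{E}}) = 2$ for every $k\geq 0$, any distortion between subsets of spheres (with Euclidean metric) is bounded above by $2$; thus both $\dis_{\mathrm{E}}(\phi_0)$ and $\dis_{\mathrm{E}}(\phi)$ lie in $[0,2]$, the interval on which $t\mapsto \sqrt{t(4-t)}$ is non-decreasing (its derivative $(4-2t)/(2\sqrt{t(4-t)})$ is non-negative precisely when $t\leq 2$). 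Combining this monotonicity with the two displayed inequalities yields
\[
\dis_{\mathrm{E}}(\phi^*) \,\leq\, \sqrt{\dis_{\mathrm{E}}(\phi)\bigl(4-\dis_{\mathrm{E}}(\phi)\bigr)},
\]
which is the claim. I do not anticipate a genuine obstacle here: all the heavy lifting has already been done in Lemma \ref{lemma:distortionEuclid}, and the only subtlety is the monotonicity check above, which is immediate from $\diam(\Sp^n_{\mathrm{E}}) = 2$.
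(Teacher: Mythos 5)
Your proposal is correct and follows essentially the same route as the paper: restrict $\phi$ to the helmet $\seta(\Sp^n)$ and apply Lemma \ref{lemma:distortionEuclid}. The extra monotonicity check (that $t\mapsto\sqrt{t(4-t)}$ is non-decreasing on $[0,2]$, where both distortions live since $\diam(\Sp^k_{\mathrm{E}})=2$) is a sound and worthwhile detail that the paper leaves implicit.
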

\begin{proof}
Consider the restriction of $\phi$ to the ``helmet" $\seta(\Sp^n)$ (cf. \S\ref{sec:helmets}) and apply Lemma \ref{lemma:distortionEuclid}.
\end{proof}

\begin{proposition}\label{prop:sn-sm-lb-DSEuclid}
For all integers $0<m<n$,
$$\dgh(\Sp^m_{\mathrm{E}},\Sp^n_{\mathrm{E}})\geq \frac{1}{2}\left(2-\sqrt{2-\frac{2}{m+1}}\right)\geq \frac{1}{2}.$$ 
\end{proposition}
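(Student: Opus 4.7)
The plan is to mirror the proof of Theorem \ref{thm:sn-sm-lb-DS}, replacing Corollary \ref{cor:strBU2} by its Euclidean counterpart Corollary \ref{cor:strBU2EC}, and the geodesic helmet extension Lemma \ref{lemma:distortion} by its Euclidean counterpart Corollary \ref{coro:preservedistortionEuclid}. The essential novelty is that the helmet extension in the Euclidean setting is no longer distortion-preserving but introduces the multiplicative loss $\delta\mapsto\sqrt{\delta(4-\delta)}$, so the lower bound on $\dgh$ emerges as the smaller root of a quadratic rather than a linear expression.

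Fix an arbitrary $\epsilon > \dgh(\Sp^m_{\mathrm{E}},\Sp^n_{\mathrm{E}})$. By Corollary \ref{cor:sn-sm-ubEuclid} we may assume $\epsilon < 1$. Pick a correspondence $R$ between $\Sp^m_{\mathrm{E}}$ and $\Sp^n_{\mathrm{E}}$ with $\dis_{\mathrm{E}}(R) < 2\epsilon < 2$, and any selector $\tilde g:\Sp^n_{\mathrm{E}}\to\Sp^m_{\mathrm{E}}$ satisfying $(\tilde g(y),y)\in R$ for all $y$; the graph of $\tilde g$ is contained in $R$, so $\dis_{\mathrm{E}}(\tilde g)\leq\dis_{\mathrm{E}}(R) < 2\epsilon$. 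Restricting $\tilde g$ along the canonical isometric embedding $\Sp^{m+1}_{\mathrm{E}}\hookrightarrow\Sp^n_{\mathrm{E}}$ yields a map $g:\Sp^{m+1}_{\mathrm{E}}\to\Sp^m_{\mathrm{E}}$ with $\dis_{\mathrm{E}}(g) < 2\epsilon$.

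Next, apply Corollary \ref{coro:preservedistortionEuclid} to obtain an antipode preserving map $\widehat g:\Sp^{m+1}_{\mathrm{E}}\to\Sp^m_{\mathrm{E}}$ with
$$\dis_{\mathrm{E}}(\widehat g)\leq\sqrt{\dis_{\mathrm{E}}(g)\bigl(4-\dis_{\mathrm{E}}(g)\bigr)} < \sqrt{2\epsilon(4-2\epsilon)},$$
where the last step uses the monotonicity of $t\mapsto t(4-t)$ on $[0,2]$ together with $\dis_{\mathrm{E}}(g) < 2\epsilon < 2$. On the other hand, Corollary \ref{cor:strBU2EC} forces $\dis_{\mathrm{E}}(\widehat g)\geq\sqrt{2+\frac{2}{m+1}}$. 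Combining these and squaring gives $2+\frac{2}{m+1} < 2\epsilon(4-2\epsilon)$, i.e.
$$4\epsilon^2-8\epsilon+2+\frac{2}{m+1} < 0,$$
whose smaller root (in $\epsilon$) is $1-\frac{1}{2}\sqrt{2-\frac{2}{m+1}}$. Hence $\epsilon > \frac{1}{2}\bigl(2-\sqrt{2-\frac{2}{m+1}}\bigr)$. Letting $\epsilon\downarrow\dgh(\Sp^m_{\mathrm{E}},\Sp^n_{\mathrm{E}})$ delivers the first stated inequality. The auxiliary inequality $\frac{1}{2}(2-\sqrt{2-\frac{2}{m+1}})\geq\frac{1}{2}$ reduces to $\sqrt{2-\frac{2}{m+1}}\leq 1$, which holds (with equality) exactly at $m=1$---the case of principal interest in view of Proposition \ref{prop:dgh-s12-s2pe}.

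The computation is essentially routine; the only point requiring a touch of care is the monotonicity step bounding $\sqrt{\dis_{\mathrm{E}}(g)(4-\dis_{\mathrm{E}}(g))}$ by $\sqrt{2\epsilon(4-2\epsilon)}$, which is precisely why the reduction to $\epsilon<1$ is made at the outset so that $2\epsilon$ lies in the increasing regime $[0,2]$ of $t\mapsto t(4-t)$.
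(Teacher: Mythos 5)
Your argument is correct and is essentially the paper's own proof: the paper argues by contradiction, assuming $\dgh(\Sp^m_{\mathrm{E}},\Sp^n_{\mathrm{E}})<\frac{1}{2}\left(2-\sqrt{2-\frac{2}{m+1}}\right)$, restricting a selector of a correspondence to an isometric copy of $\Sp^{m+1}_{\mathrm{E}}$, and then applying Corollary \ref{coro:preservedistortionEuclid} followed by Corollary \ref{cor:strBU2EC}; your $\epsilon$-plus-quadratic formulation is the same computation, since with $s=\sqrt{2-\frac{2}{m+1}}$ one has $(2-s)(2+s)=4-s^2=2+\frac{2}{m+1}$. Your side remark about the trailing inequality is apt: $\frac{1}{2}\left(2-\sqrt{2-\frac{2}{m+1}}\right)\geq\frac{1}{2}$ holds only when $m=1$, and the paper's proof (like yours) establishes only the first inequality, so for $m\geq 2$ the displayed chain should be read as yielding the lower bound $\frac{1}{2}\left(2-\sqrt{2-\frac{2}{m+1}}\right)\geq 1-\frac{\sqrt{2}}{2}$ rather than $\frac{1}{2}$.
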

\begin{proof} 
Suppose to the contrary  that $\dgh(\Sp^m_{\mathrm{E}},\Sp^n_{\mathrm{E}})<\frac{1}{2}\left(2-\sqrt{2-\frac{2}{m+1}}\right)$. This implies that there exist a correspondence $\Gamma$ between $\Sp^m_{\mathrm{E}}$ and $\Sp^n_{\mathrm{E}}$ such that $\dis_{\mathrm{E}}(\Gamma)<\frac{1}{2}\left(2-\sqrt{2-\frac{2}{m+1}}\right)$. Moreover, since $n\geq m+1$, $\Sp^{m+1}_{\mathrm{E}}$ can be isometrically embedded in $\Sp^n_{\mathrm{E}}$, so we are able to construct a map $g:\Sp^{m+1}_{\mathrm{E}}\rightarrow \Sp^m_{\mathrm{E}}$ in the following way: for each $x\in \Sp^{m+1}_{\mathrm{E}}\subseteq \Sp^n_{\mathrm{E}}$, choose $g(x)\in \Sp^m_{\mathrm{E}}$ such that $(g(x),x)\in\Gamma$. Then, $\dis_{\mathrm{E}}(g)<\left(2-\sqrt{2-\frac{2}{m+1}}\right)$ as well. By applying Corollary \ref{coro:preservedistortionEuclid}, one can modify this $g$ into an antipode preserving map $\widehat{g}:\Sp^{m+1}_{\mathrm{E}}\rightarrow \Sp^m_{\mathrm{E}}$ with $$\dis_{\mathrm{E}}(\hat{g})\leq\sqrt{\dis_{\mathrm{E}}(g)(4-\dis_{\mathrm{E}}(g))}<\sqrt{2+\frac{2}{m+1}}$$ which contradicts Corollary \ref{cor:strBU2EC}.
\end{proof}

Note that in contrast to the case of geodesic distances (where the upper bound given by Proposition \ref{prop:ub} and the lower bound given by Theorem \ref{thm:sn-sm-lb-DS} agree when $m=1$ and $n=2$), Proposition \ref{prop:sn-sm-lb-DSEuclid} yields $\mathfrak{g}^\mathrm{E}_{1,2}\geq\frac{1}{2}$ which is strictly smaller than the upper bound $\frac{\sqrt{3}}{2}$ provided by Corollary \ref{cor:EucGeo} and Proposition \ref{prop:ub}.

\subsection{The proof of Proposition \ref{prop:dgh-s12-s2pe}}\label{sec:proof-hepta}
The proof will be based on a geometric construction which is illustrated in Figures \ref{fig:S1heptagon} and \ref{fig:S2sevenregion}.
\begin{proof}
To prove the claim, note that it is enough to construct a correspondence  $R_\mathrm{E}$ between $\Sp^1_{\mathrm{E}}$ and $\mathbf{H}_{\geq 0}(\Sp_\mathrm{E}^2)$ such that $\dis_{\mathrm{E}}(R_\mathrm{E})<\sqrt{3}$.

Firstly, let $u_1,\dots,u_7$ be the vertices of a regular heptagon inscribed in $\Sp^1$. Let $v_i:=-u_i$ for $i=1,\dots,7$. See Figure \ref{fig:S1heptagon} for a description.

Secondly, divide $\mathbf{H}_{\geq 0}(\Sp_E^2)$ into seven regions $A_1,\dots,A_7$ as in Figure \ref{fig:S2sevenregion}. The precise ``disjointification" (on the boundary) of the seven regions is not relevant to the analysis that follows, as it is easy to check.

Now, choose $a_i\in A_i$ for each $i=1,\dots,7$ in the following way, where $\alpha$ is some number which is very close to $\frac{\sqrt{3}}{2}$ but still strictly smaller than $\frac{\sqrt{3}}{2}$ (for example, choose $\alpha=0.866$):

\begin{align*}
    a_1&=\left(\sqrt{1-(\sqrt{1-\alpha^2}+2-\sqrt{3})^2},\sqrt{1-\alpha^2}+2-\sqrt{3},0\right)\approx(0.640511,0.767949,0),\\
    a_2&=\left(0,\sqrt{1-\alpha^2}+2-\sqrt{3},\sqrt{1-(\sqrt{1-\alpha^2}+2-\sqrt{3})^2}\right)\approx(0,0.767949,0.640511),\\
    a_3&=(0,\sqrt{1-\alpha^2},\alpha)\approx(0,0.5,0.866),\\ a_4&=(0,0,1),\\ a_5&=\left(0,-(\sqrt{1-\alpha^2}+\rho_6-\sqrt{3}),\sqrt{1-(\sqrt{1-\alpha^2}+\rho_6-\sqrt{3})^2}\right)\approx(0,-0.717805,0.696244),\\
    a_6&=\left(0,-\big(\sqrt{1-\alpha^2}+(\rho_6-\sqrt{3})+(\rho_5-\sqrt{3})\big),\sqrt{1-\big(\sqrt{1-\alpha^2}+(\rho_6-\sqrt{3})+(\rho_5-\sqrt{3})\big)^2}\right)\\
    &\approx(0,-0.787692,0.616069),\\
    a_7&=\left(\sqrt{1-\big(\sqrt{1-\alpha^2}+(\rho_6-\sqrt{3})+(\rho_5-\sqrt{3})\big)^2},-\big(\sqrt{1-\alpha^2}+(\rho_6-\sqrt{3})+(\rho_5-\sqrt{3})\big),0\right)\\
    &\approx(0.616069,-0.787692,0),
\end{align*}

\begin{figure}[ht]
    \centering
    \includegraphics[width=0.65\linewidth]{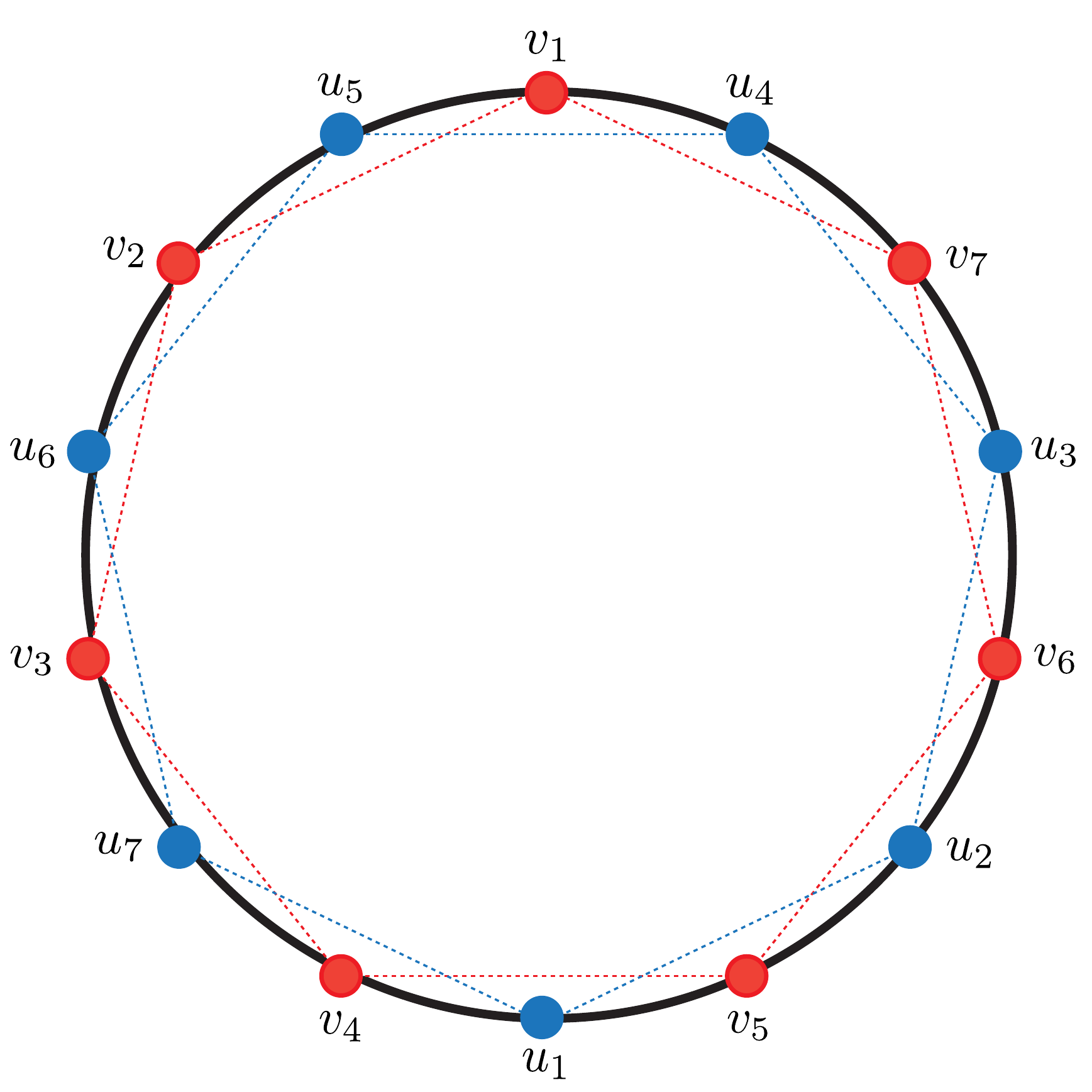}
    \caption{The points $v_1,\ldots, v_7$ and $u_1,\ldots,u_7$. These arise from two antipodal regular heptagons inscribed in of $\Sp^1$.}
    \label{fig:S1heptagon}
\end{figure}

\begin{figure}[ht]
    \centering
        \includegraphics[width=0.65\linewidth]{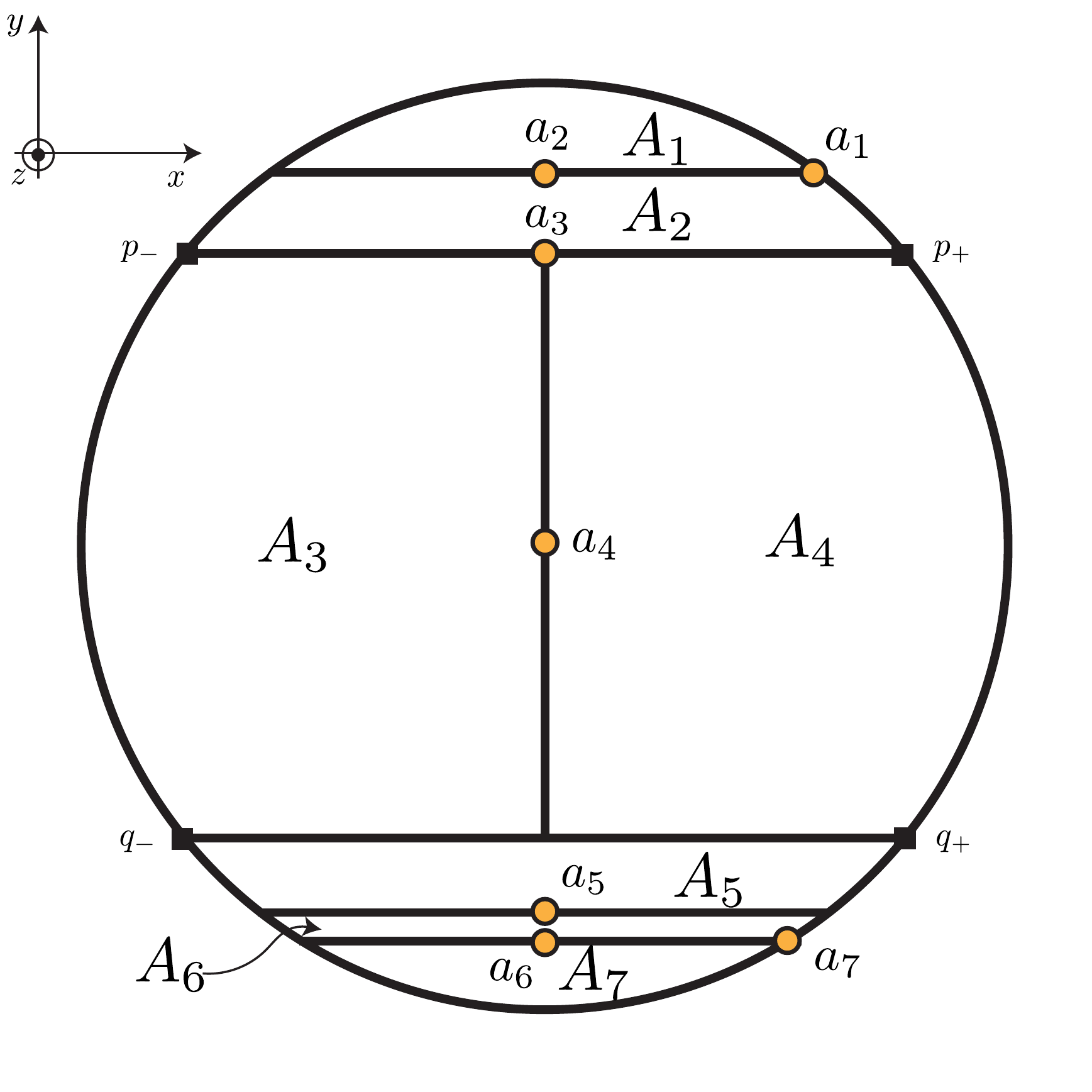}
    \caption{View from above of the seven regions  $A_1,\ldots,A_7$ of $\mathbf{H}_{\geq 0}(\Sp^2_E)$. All lines shown in the figure (which are projections of circular arcs) are  aligned with the either the $x$ or $y$ axis. Also, $p_{\pm}=(\pm\alpha,\sqrt{1-\alpha^2},0)$ and $q_{\pm}=(\pm\alpha,-\sqrt{1-\alpha^2},0)$.}
    \label{fig:S2sevenregion}
\end{figure}

where $$\rho_k:=\sqrt{2-2\cos\left(\frac{k\pi}{7}\right)} \,\,\,\mbox{for $k\in\{1,\ldots,7\}$}.$$

One can directly check that the following seven conditions are satisfied:
\begin{enumerate}
    \item $d_{\mathrm{E}}(A_i,A_j)>\rho_6-\sqrt{3}$ for any $i,j\in\{1,\dots,7\}$ with $\vert i-j \vert=3$.
    
    \item $d_{\mathrm{E}}(a_i,a_j)>\rho_6-\sqrt{3}$ for any $i,j\in\{1,\dots,7\}$ with $\vert i-j \vert=2$.
    
    \item $d_{\mathrm{E}}(a_i,a_j)>2-\sqrt{3}$ for any $i,j\in\{1,\dots,7\}$ with $\vert i-j \vert=3$.
    
    \item $d_{\mathrm{E}}(A_i,a_j)>\rho_5-\sqrt{3}$ for any $i,j\in\{1,\dots,7\}$ with $\vert i-j \vert=2$.
    
    \item $d_{\mathrm{E}}(A_i,a_j)>2-\sqrt{3}$ for any $i,j\in\{1,\dots,7\}$ with $\vert i-j \vert=3$.
    
    \item $\diam(A_i)<\sqrt{3}$ for any $i\in\{1,\dots,7\}$.
    
    \item $d_{\mathrm{E}}(a_i,a_j)<\sqrt{3}$ for any $i,j\in\{1,\dots,7\}$ with $\vert i-j \vert=1$.
\end{enumerate}

In what follows, for two points $v,w\in \Sp^1$ with $d_\mathrm{E}(v,w)<2$, $\arc{v w}$  will denote the (unique) shortest circular arc determined by these two points. 

Now, we define a correspondence $R_\mathrm{E}$ in the following way:
$$R_\mathrm{E}:=\bigcup_{i=1}^7\{(u_i,y):y\in A_i\}\cup\bigcup_{i=1}^7\{(x,a_i):x\in\arc{v_{i+3}v_{i+4}}\}.$$
We now prove that $\dis_{\mathrm{E}}(R_\mathrm{E})<\sqrt{3}$:

First, let us prove that $$\sup_{(x,y),(x',y')\in R_\mathrm{E}}(d_\mathrm{E}(x,x')-d_\mathrm{E}(y,y'))<\sqrt{3}.$$ For this we  carry out a case by case analysis.

\begin{enumerate}
    \item If $(x,y),(x',y')\in\{u_i\}\times A_i$ for some $i\in\{1,\dots,7\}$: Obvious, since $d_\mathrm{E}(x,x')=d_\mathrm{E}(u_i,u_i)=0$.
    
    \item If $(x,y)\in\{u_i\}\times A_i$ and $(x',y')\in\{u_j\}\times A_j$ for some $i,j\in\{1,\dots,7\}$ with $\vert i-j\vert=1$: Obvious, since $d_\mathrm{E}(x,x')=d_\mathrm{E}(u_i,u_j)=\rho_2<\sqrt{3}$.
    
    \item If $(x,y)\in\{u_i\}\times A_i$ and $(x',y')\in\{u_j\}\times A_j$ for some $i,j\in\{1,\dots,7\}$ with $\vert i-j\vert=2$: Obvious, since $d_\mathrm{E}(x,x')=d_\mathrm{E}(u_i,u_j)=\rho_4<\sqrt{3}$.
    
    \item If $(x,y)\in\{u_i\}\times A_i$ and $(x',y')\in\{u_j\}\times A_j$ for some $i,j\in\{1,\dots,7\}$ with $\vert i-j\vert=3$: Observe that $d_\mathrm{E}(x,x')=d_\mathrm{E}(u_i,u_j)=\rho_6>\sqrt{3}$. However, since $d_{\mathrm{E}}(A_i,A_j)>\rho_6-\sqrt{3}$ by  condition (1) above, we have $d_\mathrm{E}(x,x')-d_\mathrm{E}(y,y')<\sqrt{3}$.
    
    \item If $(x,y),(x',y')\in \arc{v_{i+3}v_{i+4}}\times\{a_i\}$ for some $i\in\{1,\dots,7\}$: Obvious, since $d_\mathrm{E}(x,x')\leq\diam(\arc{v_{i+3}v_{i+4}})=\rho_2<\sqrt{3}$.
    
    \item If $(x,y)\in \arc{v_{i+3}v_{i+4}}\times\{a_i\}$ and $(x',y')\in \arc{v_{j+3}v_{j+4}}\times\{a_j\}$ for some $i,j\in\{1,\dots,7\}$ with $\vert i-j\vert=1$: Obvious, since $d_\mathrm{E}(x,x')\leq\diam(\arc{v_{i+3}v_{i+4}}\cup\arc{v_{j+3}v_{j+4}})=\rho_4<\sqrt{3}$.
    
    \item If $(x,y)\in \arc{v_{i+3}v_{i+4}}\times\{a_i\}$ and $(x',y')\in \arc{v_{j+3}v_{j+4}}\times\{a_j\}$ for some $i,j\in\{1,\dots,7\}$ with $\vert i-j\vert=2$: Observe that $d_\mathrm{E}(x,x')\leq\diam(\arc{v_{i+3}v_{i+4}}\cup\arc{v_{j+3}v_{j+4}})=\rho_6>\sqrt{3}$. However, since $d_{\mathrm{E}}(y,y')=d_{\mathrm{E}}(a_i,a_j)>\rho_6-\sqrt{3}$ by  condition (2) above, we have $d_\mathrm{E}(x,x')-d_\mathrm{E}(y,y')<\sqrt{3}$.
    
    \item If $(x,y)\in \arc{v_{i+3}v_{i+4}}\times\{a_i\}$ and $(x',y')\in \arc{v_{j+3}v_{j+4}}\times\{a_j\}$ for some $i,j\in\{1,\dots,7\}$ with $\vert i-j\vert=3$: Since $d_{\mathrm{E}}(y,y')=d_{\mathrm{E}}(a_i,a_j)>2-\sqrt{3}$ by  condition (3) above, we have $d_\mathrm{E}(x,x')-d_\mathrm{E}(y,y')<2-(2-\sqrt{3})=\sqrt{3}$.
    
    \item If $(x,y)\in\{u_i\}\times A_i$ and $(x',y')\in \arc{v_{i+3}v_{i+4}}\times\{a_i\}$ for some $i\in\{1,\dots,7\}$: Note that $u_i\in\arc{v_{i+3}v_{i+4}}$. Hence, $d_\mathrm{E}(x,x')=d_\mathrm{E}(u_i,x')\leq\diam(\arc{v_{i+3}v_{i+4}})<\sqrt{3}$. So, we have $d_\mathrm{E}(x,x')-d_\mathrm{E}(y,y')<\sqrt{3}$.
    
    \item If $(x,y)\in\{u_i\}\times A_i$ and $(x',y')\in \arc{v_{j+3}v_{j+4}}\times\{a_j\}$ for some $i,j\in\{1,\dots,7\}$ with $\vert i-j \vert=1$: Obvious, since $d_\mathrm{E}(x,x')=d_\mathrm{E}(u_i,x')\leq\diam(\{u_i\}\cup\arc{v_{j+3}v_{j+4}})=\rho_3<\sqrt{3}$.
    
    \item If $(x,y)\in\{u_i\}\times A_i$ and $(x',y')\in \arc{v_{j+3}v_{j+4}}\times\{a_j\}$ for some $i,j\in\{1,\dots,7\}$ with $\vert i-j \vert=2$: Observe that $d_\mathrm{E}(x,x')=d_\mathrm{E}(u_i,x')\leq\diam(\{u_i\}\cup\arc{v_{j+3}v_{j+4}})=\rho_5>\sqrt{3}$. However, since $d_\mathrm{E}(A_i,a_j)>\rho_5-\sqrt{3}$ by  condition (4) above, we have $d_\mathrm{E}(x,x')-d_\mathrm{E}(y,y')<\sqrt{3}$.
    
    \item If $(x,y)\in\{u_i\}\times A_i$ and $(x',y')\in \arc{v_{j+3}v_{j+4}}\times\{a_j\}$ for some $i,j\in\{1,\dots,7\}$ with $\vert i-j \vert=3$: Since $d_\mathrm{E}(A_i,a_j)>2-\sqrt{3}$ by  condition (5) above, we have $d_\mathrm{E}(x,x')-d_\mathrm{E}(y,y')<\sqrt{3}$.
\end{enumerate}

Next, we prove $$\sup_{(x,y),(x',y')\in R_\mathrm{E}}(d_\mathrm{E}(y,y')-d_\mathrm{E}(x,x'))<\sqrt{3},$$
for which need to do a case-by case analysis.

\begin{enumerate}
    \item If $(x,y),(x',y')\in\{u_i\}\times A_i$ for some $i\in\{1,\dots,7\}$: Since $\diam(A_i)<\sqrt{3}$ by  condition (6) above, we have $d_\mathrm{E}(y,y')<\sqrt{3}$ so that $d_\mathrm{E}(y,y')-d_\mathrm{E}(x,x')<\sqrt{3}$.
    
    \item If $(x,y)\in\{u_i\}\times A_i$ and $(x',y')\in\{u_j\}\times A_j$ for some $i,j\in\{1,\dots,7\}$ with $\vert i-j\vert=1$: Obvious, since $d_\mathrm{E}(x,x')=d_\mathrm{E}(u_i,u_j)=\rho_2$ and $d_\mathrm{E}(y,y')-d_\mathrm{E}(x,x')\leq 2-\rho_2<\sqrt{3}$.
    
    \item If $(x,y)\in\{u_i\}\times A_i$ and $(x',y')\in\{u_j\}\times A_j$ for some $i,j\in\{1,\dots,7\}$ with $\vert i-j\vert=2$: Obvious, since $d_\mathrm{E}(x,x')=d_\mathrm{E}(u_i,u_j)=\rho_4$ and $d_\mathrm{E}(y,y')-d_\mathrm{E}(x,x')\leq 2-\rho_4<\sqrt{3}$.
    
    \item If $(x,y)\in\{u_i\}\times A_i$ and $(x',y')\in\{u_j\}\times A_j$ for some $i,j\in\{1,\dots,7\}$ with $\vert i-j\vert=3$: Obvious, since $d_\mathrm{E}(x,x')=d_\mathrm{E}(u_i,u_j)=\rho_6$ and $d_\mathrm{E}(y,y')-d_\mathrm{E}(x,x')\leq 2-\rho_6<\sqrt{3}$.
    
    \item If $(x,y),(x',y')\in \arc{v_{i+3}v_{i+4}}\times\{a_i\}$ for some $i\in\{1,\dots,7\}$: Obvious, since $d_\mathrm{E}(x,x')=d_\mathrm{E}(a_i,a_i)=0$.
    
    \item If $(x,y)\in \arc{v_{i+3}v_{i+4}}\times\{a_i\}$ and $(x',y')\in \arc{v_{j+3}v_{j+4}}\times\{a_j\}$ for some $i,j\in\{1,\dots,7\}$ with $\vert i-j\vert=1$: Since $d_\mathrm{E}(a_i,a_j)<\sqrt{3}$ by  condition (7) above, we have $d_\mathrm{E}(y,y')-d_\mathrm{E}(x,x')=d_\mathrm{E}(a_i,a_j)-d_\mathrm{E}(x,x')<\sqrt{3}$.
    
    \item If $(x,y)\in \arc{v_{i+3}v_{i+4}}\times\{a_i\}$ and $(x',y')\in \arc{v_{j+3}v_{j+4}}\times\{a_j\}$ for some $i,j\in\{1,\dots,7\}$ with $\vert i-j\vert=2$: Obvious, since $d_\mathrm{E}(x,x')\geq\rho_2$. Hence, $d_\mathrm{E}(y,y')-d_\mathrm{E}(x,x')\leq 2-\rho_2<\sqrt{3}$.
    
    \item If $(x,y)\in \arc{v_{i+3}v_{i+4}}\times\{a_i\}$ and $(x',y')\in \arc{v_{j+3}v_{j+4}}\times\{a_j\}$ for some $i,j\in\{1,\dots,7\}$ with $\vert i-j\vert=3$: Obvious, since $d_\mathrm{E}(x,x')\geq\rho_4$. Hence, $d_\mathrm{E}(y,y')-d_\mathrm{E}(x,x')\leq 2-\rho_4<\sqrt{3}$.
    
    \item If $(x,y)\in\{u_i\}\times A_i$ and $(x',y')\in \arc{v_{i+3}v_{i+4}}\times\{a_i\}$ for some $i\in\{1,\dots,7\}$: Since $a_i\in A_i$ and $\diam(A_i)<\sqrt{3}$ by  condition (6),  we have $d_\mathrm{E}(y,y')-d_\mathrm{E}(x,x')<\sqrt{3}$.
    
    \item If $(x,y)\in\{u_i\}\times A_i$ and $(x',y')\in \arc{v_{j+3}v_{j+4}}\times\{a_j\}$ for some $i,j\in\{1,\dots,7\}$ with $\vert i-j \vert=1$: Observe that $d_\mathrm{E}(x,x')\geq\rho_1$. Hence, we have $d_\mathrm{E}(y,y')-d_\mathrm{E}(x,x')\leq 2-\rho_1<\sqrt{3}$.
    
    \item If $(x,y)\in\{u_i\}\times A_i$ and $(x',y')\in \arc{v_{j+3}v_{j+4}}\times\{a_j\}$ for some $i,j\in\{1,\dots,7\}$ with $\vert i-j \vert=2$: Observe that $d_\mathrm{E}(x,x')\geq\rho_3$. Hence, we have $d_\mathrm{E}(y,y')-d_\mathrm{E}(x,x')\leq 2-\rho_3<\sqrt{3}$.
    
    \item If $(x,y)\in\{u_i\}\times A_i$ and $(x',y')\in \arc{v_{j+3}v_{j+4}}\times\{a_j\}$ for some $i,j\in\{1,\dots,7\}$ with $\vert i-j \vert=3$: Observe that $d_\mathrm{E}(x,x')\geq\rho_5$. Hence, we have $d_\mathrm{E}(y,y')-d_\mathrm{E}(x,x')\leq 2-\rho_5<\sqrt{3}$.
\end{enumerate}
Hence, $\dis_{\mathrm{E}}(R_\mathrm{E})<\sqrt{3}$ as we required. This completes the proof.
\end{proof}

%
%
%
%


\appendix

\section{A succinct proof of Theorem \ref{thm:strBU}}\label{app:proof-BU}
In this subsection we provide a proof of Theorem \ref{thm:strBU} following a strategy suggested by Matou\v{s}ek  in \cite[page 41]{matousek2003using} and due to Arnold Wa{\ss}mer.

\begin{lemma}\label{lemma:origindiam}
If a simplex contains $0\in\mathbb{R}^n$ and has all vertices on $\mathbb{S}^{n-1}$, then there are vertices $u$ and $v$ of the simplex such that $d_{\mathbb{S}^{n-1}}(u,v)\geq \zeta_{n-1}$.
\end{lemma}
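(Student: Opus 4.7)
\medskip

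\noindent\textbf{Proof proposal.} Let $v_1,\dots,v_k$ be the vertices of the simplex; these lie on $\mathbb{S}^{n-1}$, and since they are affinely independent and the ambient space is $\mathbb{R}^n$, we have $k\leq n+1$. Because $0$ belongs to the simplex, there exist $\lambda_1,\dots,\lambda_k\geq 0$ with $\sum_i \lambda_i=1$ and $\sum_i \lambda_i v_i=0$. Set
$$m:=\min_{i\neq j}\langle v_i,v_j\rangle.$$
The plan is to show $m\leq -\tfrac1n$, which translates to $d_{\mathbb{S}^{n-1}}(v_i,v_j)\geq \arccos(-\tfrac1n)=\zeta_{n-1}$ for some pair $(i,j)$.

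The key computation is to expand $0=\bigl\|\sum_i \lambda_i v_i\bigr\|^2$. Using $\|v_i\|=1$ this yields
$$0=\sum_{i}\lambda_i^2 + \sum_{i\neq j}\lambda_i\lambda_j\,\langle v_i,v_j\rangle.$$
Since $\langle v_i,v_j\rangle\geq m$ and $\lambda_i\lambda_j\geq 0$, bounding the cross term below gives
$$\sum_i\lambda_i^2\leq -m\sum_{i\neq j}\lambda_i\lambda_j = -m\left(1-\sum_i \lambda_i^2\right),$$
where in the last step I used $\bigl(\sum_i\lambda_i\bigr)^2=1$. Note $m<1$ (otherwise all vertices would coincide, contradicting that the simplex contains $0$ and has vertices on the sphere), so rearranging yields
$$(1-m)\sum_i\lambda_i^2 \leq -m.$$

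The final step uses the Cauchy--Schwarz/power-mean inequality $\sum_i \lambda_i^2 \geq \tfrac{1}{k}\bigl(\sum_i\lambda_i\bigr)^2=\tfrac1k\geq \tfrac{1}{n+1}$, which combined with $1-m>0$ produces
$$\frac{1-m}{n+1}\leq -m,$$
i.e. $1\leq -nm$, whence $m\leq -\tfrac1n$. Choosing a pair $(u,v)=(v_i,v_j)$ that realizes the minimum completes the proof.

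The argument is essentially routine: the only mild subtleties are (i) the bound $k\leq n+1$ coming from the simplex structure (needed to use $\sum\lambda_i^2\geq 1/(n+1)$), and (ii) ensuring $1-m>0$ before dividing. I do not anticipate any significant obstacle.
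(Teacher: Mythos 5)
Your proposal is correct and follows essentially the same route as the paper's proof (itself taken from Dubins--Schwarz): both expand $0=\bigl\|\sum_i\lambda_i v_i\bigr\|^2$ for a convex combination representing the origin and combine it with the bound $\sum_i\lambda_i^2\geq\frac{1}{n+1}$, which is equivalent to the paper's inequality $\sum_i\lambda_i^2\geq\frac{1}{n}\sum_{i\neq j}\lambda_i\lambda_j$ obtained from $\sum_{i\neq j}(\lambda_i-\lambda_j)^2\geq 0$. The only difference is presentational: you isolate $m=\min_{i\neq j}\langle v_i,v_j\rangle$ and solve for it, whereas the paper applies a pigeonhole argument to the nonpositive sum $\sum_{i\neq j}\lambda_i\lambda_j\bigl(\langle u_i,u_j\rangle+\tfrac1n\bigr)$.
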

\begin{proof}
We give the proof here  for the completeness -- the proof is basically the same as that  of \cite[Lemma 1]{dubins1981equidiscontinuity}. Let $u_1,\dots,u_{n+1}$ be (not necessarily distinct) vertices of a simplex such that their convex hull contains the origin $0\in\R^n$. Therefore, there are nonnegative numbers $\lambda_1,\dots,\lambda_{n+1}$ such that $\sum_{i=1}^{n+1} \lambda_n=1$ and $0=\sum_{i=1}^{n+1} \lambda_iu_i$. Then,
\begin{align*}
    0=\left\Vert\sum_{i=1}^{n+1} \lambda_iu_i\right\Vert^2=\sum_{i\neq j}\lambda_i\lambda_j\langle u_i,u_j\rangle+\sum_{i=1}^{n+1} \lambda_i^2.
\end{align*}
Moreover, since $0\leq \sum_{i\neq j}(\lambda_i-\lambda_j)^2=2n\sum_{i=1}^{n+1}\lambda_i^2-2\sum_{i\neq j}\lambda_i\lambda_j$, we have $$\sum_{i=1}^{n+1}\lambda_i^2\geq\frac{1}{n}\sum_{i\neq j}\lambda_i\lambda_j.$$
Hence, we have
$$0\geq\sum_{i\neq j}\lambda_i\lambda_j\left(\langle u_i,u_j\rangle+\frac{1}{n}\right).$$
Thus, there must be some distinct $i$ and $j$ such that $\langle u_i,u_j\rangle\leq-\frac{1}{n}$ so that
$$d_{\mathbb{S}^{n-1}}(u_i,u_j)\geq\arccos\left(-\frac{1}{n}\right)=\zeta_{n-1}.$$
\end{proof}

Below, the notation $V(T)$ for a triangulation $T$ of the cross-polytope $\widehat{\mathbb{B}}^n$ will denote its set of vertices.

\begin{lemma}\label{lemma:Matousek}
Let $T$ be a triangulation of the cross-polytope $\widehat{\mathbb{B}}^n$ which is antipodally symmetric at the boundary (i.e., if $\Delta\subset\partial\,\widehat{\mathbb{B}}^n$ is a simplex in $T$, then $-\Delta\subset\partial\,\widehat{\mathbb{B}}^n$ is also in $T$), and let $g:V(T)\rightarrow \mathbb{S}^{n-1}$  be a mapping that satisfies $g(-v)=-g(v)\in \mathbb{S}^{n-1}$ for all vertices $v\in V(T)$ lying on the boundary of $\widehat{\mathbb{B}}^n$. Then, there exist vertices $u,v\in V(T)$ with $d_{\mathbb{S}^{n-1}}(g(u),g(v))\geq \zeta_{n-1}$.
\end{lemma}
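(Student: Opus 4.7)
The plan is to extend $g$ linearly across each simplex of $T$ to obtain a piecewise linear map $\tilde{g}:\widehat{\mathbb{B}}^n\to\mathbb{R}^n$, then argue that $0$ must lie in the image of $\tilde{g}$, and finally apply Lemma \ref{lemma:origindiam} to the simplex whose image contains $0$.

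First I would define $\tilde{g}$: on each simplex $\sigma\in T$ with vertices $v_0,\ldots,v_k$, set $\tilde{g}\big(\sum_i\lambda_iv_i\big):=\sum_i\lambda_i\,g(v_i)$ for barycentric coordinates $\lambda_i\geq 0$, $\sum_i\lambda_i=1$. The boundary $\partial\widehat{\mathbb{B}}^n$ is a PL $(n-1)$-sphere, and the restriction $\tilde{g}\vert_{\partial\widehat{\mathbb{B}}^n}$ is antipode preserving: indeed, the antipodal symmetry of $T$ on $\partial\widehat{\mathbb{B}}^n$ pairs every boundary simplex $\Delta$ with $-\Delta$, the hypothesis $g(-v)=-g(v)$ holds at each boundary vertex, and linearity inside each boundary simplex propagates this to all points of the boundary.

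The main step will be to show that $0\in\tilde{g}(\widehat{\mathbb{B}}^n)$. I would proceed by contradiction: if $0$ is avoided, then $h:=\tilde{g}/\|\tilde{g}\|$ is a well-defined continuous map $\widehat{\mathbb{B}}^n\to\mathbb{S}^{n-1}$ whose restriction to the boundary $\partial\widehat{\mathbb{B}}^n\cong\mathbb{S}^{n-1}$ is antipode preserving. By the classical Borsuk-Ulam theorem, any antipode preserving continuous self-map of $\mathbb{S}^{n-1}$ has odd degree and hence is not null-homotopic; but the existence of the extension $h$ to the contractible ball $\widehat{\mathbb{B}}^n$ forces $h\vert_{\partial\widehat{\mathbb{B}}^n}$ to be null-homotopic, a contradiction.

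Granted that $0\in\tilde{g}(\widehat{\mathbb{B}}^n)$, there exists some simplex $\sigma\in T$ with $0\in\tilde{g}(\sigma)$. Because $\tilde{g}\vert_\sigma$ is affine and sends the vertices $v_0,\ldots,v_k$ of $\sigma$ to points $g(v_0),\ldots,g(v_k)\in\mathbb{S}^{n-1}$, the set $\tilde{g}(\sigma)$ is a convex hull of finitely many points of $\mathbb{S}^{n-1}$ containing the origin. Lemma \ref{lemma:origindiam} then yields two of these vertices, $g(u)$ and $g(v)$, at spherical distance at least $\zeta_{n-1}$, as required. The most delicate point will be the rigorous verification that $h\vert_{\partial\widehat{\mathbb{B}}^n}$ is indeed a genuine antipode preserving self-map of $\mathbb{S}^{n-1}$ (i.e., identifying $\partial\widehat{\mathbb{B}}^n$ with $\mathbb{S}^{n-1}$ via a map compatible with the antipodal action); once this identification is set up, the Borsuk-Ulam degree obstruction closes the argument.
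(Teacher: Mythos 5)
Your proposal is correct and follows essentially the same route as the paper's proof: a piecewise-linear extension of $g$ over $T$, normalization to produce a boundary-antipodal map into $\Sp^{n-1}$ and hence a contradiction with the Borsuk--Ulam theorem if the origin were missed, and then Lemma \ref{lemma:origindiam} applied to the vertices of a simplex whose image contains $0$. The only cosmetic difference is that you derive the contradiction from the odd-degree/null-homotopy form of Borsuk--Ulam after identifying $\partial\widehat{\mathbb{B}}^n$ with $\Sp^{n-1}$ (an odd homeomorphism such as radial projection settles the "delicate point" you flag), whereas the paper composes with the explicit bi-Lipschitz homeomorphism $\alpha:\widehat{\mathbb{B}}^n\rightarrow\mathbb{B}^n$ and invokes the equivalent statement that no continuous map $\mathbb{B}^n\rightarrow\Sp^{n-1}$ can be antipode preserving on the boundary.
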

\begin{proof}
By Lemma \ref{lemma:origindiam} it is enough to show that some simplex $\{v_1,\dots,v_m\}$ of $T$ satisfies $$0\in \mathrm{Conv}(g(v_1),g(v_2),\dots,g(v_m)).$$ Suppose not, then one can construct the continuous map $\phi:\widehat{\mathbb{B}}^n\rightarrow\mathbb{R}^n\backslash\{0\}$ such that $\phi(a_1u_1+\dots+a_mu_m):=a_1g(u_1)+\dots+a_mg(u_m)$ where $\{u_1,\dots,u_m\}$ is a simplex of $T$, $a_1,\dots,a_m\in [0,1]$, and $\sum_{i=1}^m a_i=1$. Next, one can construct the continuous map $\widehat{\phi}:\widehat{\mathbb{B}}^n\rightarrow\Sp^{n-1}$ such that $\widehat{\phi}(x):=\frac{\phi(x)}{\Vert \phi(x) \Vert}$ for each $x\in\widehat{\mathbb{B}}^n$. Moreover, this map $\widehat{\phi}$ is antipode preserving on the boundary since if $x\in\partial\,\widehat{\mathbb{B}}^n$ satisfies $x=a_1v_1+\dots+a_mv_m$ where $\{v_1,\dots,v_m\}$ is a simplex of $\partial\,\widehat{\mathbb{B}}^n$, $\phi(x)=a_1g(v_1)+\dots+a_mg(v_m)$ and $\phi(-x)=a_1g(-v_1)+\dots+a_mg(-v_m)$ so that $\phi(-x)=-\phi(x)$. This is contradiction to the classical Borsuk-Ulam theorem since $\widehat{\phi}\circ\alpha^{-1}:\mathbb{B}^n\rightarrow\mathbb{S}^{n-1}$ is continuous and antipode preserving on the boundary where (below, for a vector $v$ by $\|v\|_1$ we note its 1-norm):
\begin{align*}
    \alpha:\widehat{\mathbb{B}}^n&\longrightarrow\mathbb{B}^n\\
    x&\longmapsto
    \begin{cases} (0,\dots,0) & \textit{ if }x=(0,\dots,0) \\ x\,\frac{\|x\|_1}{\|x\|}& \textit{ otherwise}\end{cases}
\end{align*}
is the natural bi-Lipschitz homeomorphism between $\widehat{\mathbb{B}}^n$ and $\mathbb{B}^n$  from the unit cross-polytope to the  closed unit ball).
\end{proof}

Now we are ready to prove Theorem \ref{thm:strBU}.

\begin{proof}[Proof of Theorem \ref{thm:strBU}]
Let $f:\mathbb{B}^n\rightarrow \mathbb{S}^{n-1}$ be a map that is antipode preserving on the boundary of $\mathbb{B}^n$. Now, fix arbitrary $\delta\geq 0$ such that for any $x\in \mathbb{B}^n$, there exists an open neighborhood $U_x$ of $x$ with $\diam(f(U_x))\leq\delta$. Fix $\varepsilon>0$ smaller than the Lebesgue number of the open covering $\{U_x\}_{x\in\mathbb{B}^n}$.

Let $\alpha:\widehat{\mathbb{B}}^n\longrightarrow\mathbb{B}^n$ be the natural (fattening) homeomorphism used in the proof of Lemma \ref{lemma:Matousek}. One can construct a triangulation $T$ of $\widehat{\mathbb{B}}^n$ satisfying the following two properties.
\begin{enumerate}
    \item $T$ is antipodally symmetric on the boundary of $\widehat{\mathbb{B}}^n$.
    \item $T$ is fine enough so that $\Vert \alpha(u)-\alpha(v) \Vert\leq\varepsilon$ for any two adjacent vertices $u$ and $v$.
\end{enumerate}
Then, by  Lemma \ref{lemma:Matousek}, there exist adjacent vertices $u,v$ such that $d_{\mathbb{S}^{n-1}}(f\circ\alpha(u),f\circ\alpha(v))\geq \zeta_{n-1}$. Choose $x=\alpha(u)$ and $y=\alpha(v)$. Because of the choice of $\varepsilon$, both $x$ and $y$ are contained in some $U_x$. Hence, $\delta\geq\diam(f(U_x))\geq\zeta_{n-1}$ which concludes as in the proof of Corollary \ref{cor:strBU}.
\end{proof}

\section{The Gromov-Hausdorff distance between a sphere and an interval}\label{app:katz-lb}

To make this paper self-contained, we include a proof of the following proposition.

\begin{proposition}\label{thm:sphereinterval}
Let $n$ be any positive integer. Then $\dis(f)\geq \frac{2\pi}{3}$ for any function $f:\Sp^n\longrightarrow \R$.

As a consequence, 
$\dgh(\Sp^n,I)\geq\frac{\pi}{3}$ for any interval $I\subseteq\R.$
\end{proposition}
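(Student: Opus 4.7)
My plan is to first reduce to the one-dimensional case and then establish the sharp distortion bound on $\Sp^1$ via a configurational argument, from which the Gromov--Hausdorff corollary is immediate.

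\textbf{Step 1 (reduction to $n=1$).} Any great circle $C\subset\Sp^n$ is isometric to $\Sp^1$, so given any $f:\Sp^n\longrightarrow\R$, the restriction $f|_C$ satisfies $\dis(f|_C)\leq\dis(f)$. Hence it suffices to establish $\dis(g)\geq\frac{2\pi}{3}$ for every $g:\Sp^1\to\R$.

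\textbf{Step 2 (the one-dimensional core).} I would argue by contradiction: suppose $D:=\dis(g)<\frac{2\pi}{3}$. The antipodal constraint gives $|g(x)-g(-x)|\geq d_{\Sp^1}(x,-x)-D=\pi-D>\frac{\pi}{3}>0$ for every $x$, so the sign $\sigma(x):=\mathrm{sign}(g(x)-g(-x))\in\{\pm 1\}$ is everywhere well-defined. Inscribe a regular hexagon with vertices $P_0,\dots,P_5\in\Sp^1$ at angles $\tfrac{k\pi}{3}$, so that $P_{k+3}=-P_k$ and $d_{\Sp^1}(P_i,P_j)=\tfrac{\pi}{3}\min(|i-j|,6-|i-j|)$, and set $a_k:=g(P_k)$. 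Pigeonhole applied to the three antipodal signs $\sigma_k:=\mathrm{sign}(a_k-a_{k+3})$ forces at least two to agree, and after possibly replacing $g$ by $-g$ and applying a cyclic relabeling we may take $\sigma_0=\sigma_1=+1$, yielding the strong lower bounds
$$a_0-a_3\geq\pi-D,\qquad a_1-a_4\geq\pi-D,$$
in addition to the twelve distortion constraints $\bigl||a_i-a_j|-d_{\Sp^1}(P_i,P_j)\bigr|\leq D$ on the remaining pairs.

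I would then exploit identities of the form $(a_i-a_{i+3})+(a_j-a_{j+3})=(a_i-a_{j+3})+(a_j-a_{i+3})$, which rewrite a sum of antipodal differences (each $\geq\pi-D$) as a sum of shorter-distance differences (each bounded above by $\tfrac{2\pi}{3}+D$ or $\tfrac{\pi}{3}+D$). Combining these identities with a sign-propagation / ordering analysis on the case split for $\sigma_2\in\{\pm 1\}$, I would force a total ordering on $a_0,\dots,a_5$ that violates one of the distance-$\tfrac{\pi}{3}$ adjacency constraints, contradicting $D<\tfrac{2\pi}{3}$.

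\textbf{Step 3 (the Gromov--Hausdorff corollary).} For any correspondence $R$ between $\Sp^n$ and an interval $I\subseteq\R$, an arbitrary choice $\phi(x)\in\{r\in I:(x,r)\in R\}$ for each $x\in\Sp^n$ defines a map $\phi:\Sp^n\to I\subset\R$ with $\dis(\phi)\leq\dis(R)$. By the first assertion $\dis(R)\geq\dis(\phi)\geq\tfrac{2\pi}{3}$, whence $\dgh(\Sp^n,I)=\tfrac{1}{2}\inf_R\dis(R)\geq\tfrac{\pi}{3}$.

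\textbf{Main obstacle.} Extracting the sharp constant $\tfrac{2\pi}{3}$ in Step 2 is the delicate point: na\"ive discontinuous Borsuk--Ulam applied to the sign map $\sigma:\Sp^1\to\Sp^0$ yields only $D\geq\tfrac{\pi}{2}$, while simple 3-point ``excess'' arguments on an inscribed equilateral triangle give only $D\geq\tfrac{2\pi}{9}$. The hexagon analysis sketched above must be pushed through carefully; alternatively, one may invoke the filling-radius technique of \cite[Lemma 2.3]{katz2020torus} together with $\mathrm{FillRad}(\Sp^1)=\tfrac{\pi}{3}$, or the exact computation of $\dgh(\Sp^1,[0,\lambda])$ in \cite[Theorem 4.10]{ji2021gromov}.
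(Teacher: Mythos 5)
Your Step 1 (restriction to a great circle) and Step 3 (passing from distortion of functions to distortion of correspondences) are fine and coincide with what the paper does. The problem is Step 2, which is the entire content of the statement, and the plan you sketch there cannot be completed: a contradiction from the six hexagon values alone does not exist. Indeed, the constraint system you write down (antipodal differences at least $\pi-D$, together with the remaining pairwise distortion bounds) is feasible for some $D<\frac{2\pi}{3}$. Concretely, in units of $\frac{\pi}{3}$ take
$a_0=0$, $a_1=1.5$, $a_2=3$, $a_3=1.3$, $a_4=2.8$, $a_5=1.7$.
Then every antipodal pair differs by $1.3>1$, and checking all fifteen pairs the worst error is $1.8$ units, attained on the distance-$\frac{2\pi}{3}$ pairs such as $(P_1,P_3)$; that is, this assignment has distortion $\frac{3\pi}{5}<\frac{2\pi}{3}$ with respect to the hexagon metric. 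So no amount of ``sign-propagation / ordering analysis'' on $a_0,\dots,a_5$ can force a violation when $D\in\big(\tfrac{3\pi}{5},\tfrac{2\pi}{3}\big)$: the sharp constant $\frac{2\pi}{3}$ is simply not visible from any fixed finite subset of $\Sp^1$ (restricting to an $\varepsilon$-net can only certify a bound of the form $\frac{2\pi}{3}-\delta(\varepsilon)$), and your fallback of ``invoking'' Katz or Ji--Tuzhilin is a citation, not a proof.

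What is genuinely needed is a limiting argument that uses all of $\Sp^1$, and this is how the paper proceeds (adapting the proof of \cite[Lemma 2.3]{katz2020torus}): fix $\varepsilon>0$, take an antipodally symmetric triangulation of $\Sp^1$ with mesh $\leq\varepsilon$, and let $\widetilde f$ be the piecewise-linear interpolation of $f$ on its vertices. The classical (continuous) Borsuk--Ulam theorem applied to $\widetilde f$ produces $x$ with $\widetilde f(x)=\widetilde f(-x)$; if $x$ lies on the edge $\arc{pq}$, the value intervals spanned by $\{f(p),f(q)\}$ and $\{f(-p),f(-q)\}$ intersect, and choosing the endpoint $r\in\{p,q\}$ closer in value to $f(-p)$ gives $\vert f(-p)-f(r)\vert\leq\tfrac{1}{2}(\dis(f)+\varepsilon)$, whence $\pi-\varepsilon\leq d_{\Sp^1}(-p,r)\leq\dis(f)+\vert f(-p)-f(r)\vert\leq\tfrac{3}{2}\dis(f)+\tfrac{\varepsilon}{2}$ and $\dis(f)\geq\tfrac{2\pi}{3}-\varepsilon$ for every $\varepsilon$. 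The key ingredients your sketch is missing are exactly these: the interpolation step that makes the topological theorem applicable to a discontinuous $f$, and the ``half of the value interval'' estimate that upgrades the naive bound $\tfrac{\pi}{2}$ to the sharp $\tfrac{2\pi}{3}$.
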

\begin{proof}
Note that it is enough to prove the claim for $n=1$. We adapt an argument from the proof of \cite[Lemma 2.3]{katz2020torus}.

Fix an arbitrary $\varepsilon>0$. Consider an antipodally symmetric triangulation of $\Sp^1$ with  vertex set $V\subset\Sp^1$ such that $d_{\Sp^1}(p,q)\leq\varepsilon$ for any two adjacent vertices $p,q\in V$. Then, let $\widetilde{f}:\Sp^1\longrightarrow I$ be the linear interpolation of $f\vert_V:V\longrightarrow I$. Now, by the classical Borsuk-Ulam theorem, there exists $x\in\Sp^1$ such that $\widetilde{f}(x)=\widetilde{f}(-x)$. Let $p,q\in V$ be  such that $x\in\arc{pq}$. Then $I\cap J\neq\emptyset$ where $I$ is the closed interval between $f(p)$ and $f(q)$, and $J$ is the closed interval between $f(-p)$ and $f(-q)$ (since $I$ and $J$ both contain $\widetilde{f}(x)=\widetilde{f}(-x)$). Without loss of generality, we can assume that $f(-p)\in I$. Now, let $$r:=\begin{cases}p&\text{if }\vert f(-p)-f(p) \vert\leq\vert f(-p)-f(q) \vert\\ q&\text{if }\vert f(-p)-f(p) \vert>\vert f(-p)-f(q) \vert.\end{cases}$$
Then, 
$\vert f(-p)-f(r) \vert\leq\frac{\mathrm{length}(I)}{2}\leq\frac{1}{2}(\dis(f)+\varepsilon).$
Hence,
$$\pi-\varepsilon\leq d_{\Sp^1}(-p.r)\leq\dis(f)+\vert f(-p)-f(r) \vert\leq \frac{3}{2}\dis(f)+\frac{1}{2}\varepsilon,$$
so that $\dis(f)\geq\frac{2\pi}{3}-\varepsilon$. The conclusion follows.
\end{proof}

\section{Regular polygons and $\Sp^1$.}\label{sec:polys}

In this appendix we compute the distance between regular polygons and also between the circle and a regular polygon.

The following map from metric spaces to metric spaces will be useful below. For a metric space $(X,d_X)$, consider the pseudo ultrametric space $(X,u_X)$ where $u_X:X\times X\rightarrow \R$ is defined by 
$$(x,x')\mapsto u_X(x,x'):=\inf\bigg\{\max_{0\leq i \leq n-1} d_X(x_i,x_{i+1}):\,x=x_0,\ldots,x_n=x' \text{ for some }n\geq 1\bigg\}.$$
Now, define $\mathbf{U}(X)$ to be the quotient metric space induced by $(X,u_X)$ under the equivalence $x\sim x'$ if and only if $u_X(x,x')=0$. One then has the following, whose proof we omit:

\begin{proposition}\label{prop:geod-um}
For any path connected metric space $X$ it holds that $\mathbf{U}(X)=\ast.$
\end{proposition}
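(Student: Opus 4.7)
The plan is to show that $u_X(x,x')=0$ for every pair $x,x'\in X$, from which it follows immediately that the equivalence relation identifying points at $u_X$-distance zero collapses all of $X$ to a single class, yielding $\mathbf{U}(X)=\ast$.

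Fix arbitrary $x,x'\in X$ and $\varepsilon>0$. First, I would invoke path connectedness to produce a continuous path $\gamma:[0,1]\rightarrow X$ with $\gamma(0)=x$ and $\gamma(1)=x'$. Since $[0,1]$ is compact, $\gamma$ is uniformly continuous, so there exists $\delta>0$ such that $|s-t|<\delta$ implies $d_X(\gamma(s),\gamma(t))<\varepsilon$.

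Next, I would pick an integer $n\geq 1$ with $1/n<\delta$ and define the chain $x_i:=\gamma(i/n)$ for $i=0,1,\ldots,n$. By construction $x_0=x$, $x_n=x'$, and $d_X(x_i,x_{i+1})<\varepsilon$ for each $i$, so that directly from the definition of $u_X$ one obtains $u_X(x,x')\leq\max_i d_X(x_i,x_{i+1})<\varepsilon$. Since $\varepsilon>0$ was arbitrary, $u_X(x,x')=0$, hence $x\sim x'$. As $x,x'$ were arbitrary, the quotient metric space $\mathbf{U}(X)$ has a single point.

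There is no significant obstacle here; the argument is essentially a standard application of uniform continuity of a path on a compact interval to produce arbitrarily fine $\varepsilon$-chains. The only minor point to be mindful of is to invoke $u_X$ on an $\varepsilon$-chain (rather than trying to use the path directly, since $u_X$ is defined in terms of finite sequences).
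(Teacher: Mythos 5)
Your argument is correct and complete: uniform continuity of a path on the compact interval $[0,1]$ yields, for every $\varepsilon>0$, a finite chain $x=\gamma(0),\gamma(1/n),\dots,\gamma(1)=x'$ with consecutive distances below $\varepsilon$, so $u_X(x,x')=0$ for all $x,x'$ and the quotient is a single point. The paper explicitly omits its proof of this proposition, and what you wrote is exactly the standard argument one would supply in its place, so there is nothing to add beyond the (tacit) assumption that $X$ is nonempty.
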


We also have the following result establishing that $\mathbf{U}:\mathcal{M}_b\rightarrow \mathcal{M}_b$ is $1$-Lipschitz:

\begin{theorem}[\cite{carlsson2010characterization}]\label{thm:stab-um}
For all bounded metric spaces $X$ and $Y$ one has 
$$\dgh(X,Y)\geq\dgh(\mathbf{U}(X),\mathbf{U}(Y)).$$
\end{theorem}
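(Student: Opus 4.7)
The plan is to show that the operator $\mathbf{U}$ is non-expansive with respect to $\dgh$ by transferring correspondences from $(X,Y)$ to $(\mathbf{U}(X),\mathbf{U}(Y))$. Concretely, I would prove that any correspondence $R$ between $X$ and $Y$ induces a correspondence $R'$ between $\mathbf{U}(X)$ and $\mathbf{U}(Y)$ with $\dis(R') \leq \dis(R)$. Infimizing over all $R$ and applying the correspondence formulation of $\dgh$ from equation~\eqref{eqn:dghaltdef} then gives the result.

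The heart of the argument is the following chain-lifting estimate: for every $(x,y),(x',y') \in R$,
$$|u_X(x,x') - u_Y(y,y')| \leq \dis(R).$$
I would prove this by a symmetric argument. Fix any chain $x = x_0, x_1, \ldots, x_n = x'$ in $X$. Using that $R$ is a correspondence, select, for each $1 \leq i \leq n-1$, a point $y_i \in Y$ with $(x_i,y_i) \in R$, and set $y_0 := y$, $y_n := y'$. Applying the definition of distortion to each consecutive pair $(x_i,y_i),(x_{i+1},y_{i+1}) \in R$ gives $d_Y(y_i,y_{i+1}) \leq d_X(x_i,x_{i+1}) + \dis(R)$, so
$$\max_{0 \leq i \leq n-1} d_Y(y_i,y_{i+1}) \leq \max_{0 \leq i \leq n-1} d_X(x_i,x_{i+1}) + \dis(R).$$
Since $y_0, \ldots, y_n$ is a chain from $y$ to $y'$ in $Y$, taking the infimum over all chains in $X$ yields $u_Y(y,y') \leq u_X(x,x') + \dis(R)$; swapping the roles of $X$ and $Y$ completes the bound.

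Next I would check that $u_X$ satisfies the strong (ultra) triangle inequality, since concatenating two chains and using $\max(\max,\max) = \max$ shows $u_X(x,x'') \leq \max(u_X(x,x'),u_X(x',x''))$. Consequently $(X,u_X)$ is an ultra-pseudo-metric space, and the quotient $\mathbf{U}(X)$ inherits a well-defined ultrametric satisfying $d_{\mathbf{U}(X)}([x],[x']) = u_X(x,x')$ for any representatives. I would then define
$$R' := \{([x],[y]) \in \mathbf{U}(X) \times \mathbf{U}(Y) : (x,y) \in R\}.$$
Surjectivity of the compositions $R \twoheadrightarrow X \twoheadrightarrow \mathbf{U}(X)$ and $R \twoheadrightarrow Y \twoheadrightarrow \mathbf{U}(Y)$ shows $R'$ is a correspondence, and the chain-lifting estimate immediately yields $\dis(R') \leq \dis(R)$.

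The only nontrivial step is the chain-lifting estimate; all other verifications (the ultrametric character of $u_X$, well-definedness of $R'$ on equivalence classes, and the passage from $\dis(R') \leq \dis(R)$ to $\dgh(\mathbf{U}(X),\mathbf{U}(Y)) \leq \dgh(X,Y)$) are routine bookkeeping. In particular, no continuity or compactness assumption on $X,Y$ is needed beyond boundedness, matching the generality claimed in the statement.
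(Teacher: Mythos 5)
Your proposal is correct, and the chain-lifting estimate $|u_X(x,x')-u_Y(y,y')|\leq\dis(R)$ together with the induced correspondence $R'$ is exactly the right mechanism; note that the paper does not prove this statement itself but quotes it from \cite{carlsson2010characterization}, where the argument is essentially the one you give (transfer a correspondence, bound the distortion of the induced ultrametrics via chains, then apply the correspondence formulation of $\dgh$). So your proof matches the standard one and needs no repair.
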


For each integer $n\geq 3$, let $P_n$ be the regular polygon with $n$ vertices inscribed in $\Sp^1$. We also let $P_2=\mathbb{S}^0$.   Furthermore, we endow $P_n$ with the restriction of the geodesic distance on $\mathbb{S}^1$. We then have:

\begin{proposition}[$\dgh$ between $\Sp^1$ and inscribed regular polygons]\label{prop:s1-pn}
For all $n\geq 2$, we have that  $$\dgh(\mathbb{S}^1,P_n) = \frac{\pi}{n}.$$
\end{proposition}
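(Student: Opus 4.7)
\textbf{Proof plan for Proposition \ref{prop:s1-pn}.} The plan is to prove the two matching inequalities separately, using the $\delta$-net observation for the upper bound and the $\mathbf{U}$-functor together with its stability (Theorem \ref{thm:stab-um}) for the lower bound.

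\textbf{Upper bound.} First I would observe that the vertices $P_n$ of a regular $n$-gon inscribed in $\Sp^1$ form a $\frac{\pi}{n}$-net for $\Sp^1$: adjacent vertices are at geodesic distance $\frac{2\pi}{n}$, so every point of $\Sp^1$ lies within $\frac{\pi}{n}$ of some vertex. Invoking the standard fact recalled in \S\ref{sec:preliminaries} that $\dgh(X,A)\leq\delta$ whenever $A\subseteq X$ is a $\delta$-net, this immediately gives $\dgh(\Sp^1,P_n)\leq\frac{\pi}{n}$.

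\textbf{Lower bound.} For the matching lower bound, I would apply Theorem \ref{thm:stab-um} to reduce the problem to computing $\dgh(\mathbf{U}(\Sp^1),\mathbf{U}(P_n))$. Since $\Sp^1$ is path connected, Proposition \ref{prop:geod-um} gives $\mathbf{U}(\Sp^1)=\ast$. Next I would compute $\mathbf{U}(P_n)$ explicitly: for any two distinct vertices $p,p'\in P_n$ with $n\geq 3$, the chain of consecutive vertices along the shorter arc joining $p$ to $p'$ witnesses that $u_{P_n}(p,p')\leq\frac{2\pi}{n}$, and since any chain must include at least one step between two distinct vertices (whose minimal separation is $\frac{2\pi}{n}$), we obtain $u_{P_n}(p,p')=\frac{2\pi}{n}$. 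For $n=2$ we have $P_2=\Sp^0$ and $u_{P_2}$ coincides with the original distance $\pi$. In either case $\diam(\mathbf{U}(P_n))=\frac{2\pi}{n}$, so
\[
\dgh(\Sp^1,P_n)\;\geq\;\dgh\big(\mathbf{U}(\Sp^1),\mathbf{U}(P_n)\big)\;=\;\dgh\big(\ast,\mathbf{U}(P_n)\big)\;=\;\frac{1}{2}\diam(\mathbf{U}(P_n))\;=\;\frac{\pi}{n},
\]
using the identity $\dgh(\ast,Y)=\frac{1}{2}\diam(Y)$ recalled in \S\ref{sec:preliminaries}.

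\textbf{Remark on obstacles.} Neither step is technically difficult once the right tool is identified; the main conceptual point is to recognize that $\mathbf{U}$ is the natural invariant for distinguishing a path-connected space from one assembled out of ``gaps,'' collapsing the former to a point while preserving (in a quantitative way) the minimal hop-length of the latter. One minor bookkeeping issue is to verify the formula $\mathbf{U}(P_n)$ separately for the edge case $n=2$, where $P_2$ itself is already ultrametric (trivially, having only two points), so that $\mathbf{U}(P_2)=P_2$ and both bounds collapse to $\frac{\pi}{2}$; this is precisely what Proposition \ref{prop:s0-sd} predicts.
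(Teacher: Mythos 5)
Your proposal is correct and follows essentially the same route as the paper: the upper bound via $\dgh(\Sp^1,P_n)\leq \dh(\Sp^1,P_n)=\frac{\pi}{n}$ (the $\delta$-net fact), and the lower bound via Theorem \ref{thm:stab-um} together with $\mathbf{U}(\Sp^1)=\ast$ and the observation that $\mathbf{U}(P_n)$ is the $n$-point space with all nonzero distances $\frac{2\pi}{n}$, so that $\dgh(\ast,\mathbf{U}(P_n))=\frac{1}{2}\diam(\mathbf{U}(P_n))=\frac{\pi}{n}$. Your explicit verification of $u_{P_n}$ and of the edge case $n=2$ just fills in details the paper leaves implicit.
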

\begin{proof}
That $\dgh(\mathbb{S}^1,P_n)\geq \frac{\pi}{n}$ can be obtained as follows: by Theorem \ref{thm:stab-um}, $$\dgh(\Sp^1,P_n)\geq \dgh(\mathbf{U}(\Sp^1),\mathbf{U}(P_n)).$$
But, since $\mathbf{U}(\Sp^1) = \ast$ by Proposition \ref{prop:geod-um}, and $\mathbf{U}(P_n)$ is isometric to the metric space over $n$ points with all non-zero pairwise distances equal to $\frac{2\pi}{n}$, from the above inequality and equation (\ref{eq:easy}) we have $\dgh(\Sp^1,P_n)\geq \frac{1}{2} \diam(\mathbf{U}(P_n)) =  \frac{\pi}{n}$. The inequality $\dgh(\mathbb{S}^1,P_n)\leq \frac{\pi}{n}$ follows from the fact that $\dgh(\Sp^1,P_{n})\leq \dh(\Sp^1,P_n) = \frac{\pi}{n}$.
\end{proof}

Note that if $\Sp^1$ and $P_n$ as both endowed with the Euclidean distance (respectively denoted by $\Sp^1_\mathrm{E}$ and $(P_n)_\mathrm{E}$), then in analogy with Proposition \ref{prop:s1-pn}, we have the following proposition which 
 solves a question posed in \cite{adams2018vietoris}. The proof is slightly different from that of Proposition \ref{prop:s1-pn}. 

\begin{proposition}\label{prop-pn-eucl}
For all $n\geq 2$, we have that $\dgh(\Sp^1_\mathrm{E},(P_n)_\mathrm{E}) = \sin\left(\frac{\pi}{n}\right).$
\end{proposition}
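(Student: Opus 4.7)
The plan is to mirror the strategy of Proposition \ref{prop:s1-pn}, combining the contraction-onto-ultrametric trick for the lower bound with an explicit nearest-vertex correspondence for the upper bound, but with a sharper trigonometric estimate since the naive Hausdorff bound $\dh(\Sp^1_\mathrm{E},(P_n)_\mathrm{E})=2\sin(\pi/(2n))$ obtained from the canonical embedding into $\R^2$ strictly exceeds the target value $\sin(\pi/n)$.

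For the lower bound, I would invoke Theorem \ref{thm:stab-um} to write $\dgh(\Sp^1_\mathrm{E},(P_n)_\mathrm{E})\geq \dgh(\mathbf{U}(\Sp^1_\mathrm{E}),\mathbf{U}((P_n)_\mathrm{E}))$. Since $\Sp^1_\mathrm{E}$ is path connected, Proposition \ref{prop:geod-um} yields $\mathbf{U}(\Sp^1_\mathrm{E})=\ast$. To compute $\mathbf{U}((P_n)_\mathrm{E})$, note that the minimum Euclidean distance between two distinct vertices of $P_n$ is exactly $2\sin(\pi/n)$, attained by consecutive vertices; hence any chain of vertices linking $v_i\neq v_j$ must contain at least one step of length $\geq 2\sin(\pi/n)$, and the chain along consecutive vertices realizes this bound. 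Therefore $u_{(P_n)_\mathrm{E}}(v_i,v_j)=2\sin(\pi/n)$ for all $i\neq j$, so $\mathbf{U}((P_n)_\mathrm{E})$ is the $n$-point space with all nonzero distances equal to $2\sin(\pi/n)$. Applying equation (\ref{eq:easy}) gives $\dgh(\ast,\mathbf{U}((P_n)_\mathrm{E}))=\tfrac{1}{2}\diam(\mathbf{U}((P_n)_\mathrm{E}))=\sin(\pi/n)$.

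For the upper bound, I would partition $\Sp^1$ into the Voronoi arcs $V_1,\ldots,V_n$ determined by the vertices of $P_n$ (so each $V_i$ is an arc of geodesic length $2\pi/n$ centered at $v_i$) and let $\phi\colon\Sp^1_\mathrm{E}\twoheadrightarrow(P_n)_\mathrm{E}$ send $V_i$ to $v_i$. Given $p\in V_i$ and $q\in V_j$, write $\theta_{pq},\theta_{ij}\in[0,\pi]$ for the respective angular distances; the triangle inequality in the geodesic metric yields $|\theta_{pq}-\theta_{ij}|\leq 2\pi/n$. The case $i=j$ is immediate: $d_\mathrm{E}(\phi(p),\phi(q))=0$ and $d_\mathrm{E}(p,q)=2\sin(\theta_{pq}/2)\leq 2\sin(\pi/n)$. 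For $i\neq j$ one has $\theta_{ij}\geq 2\pi/n$, and using the sum-to-product identity,
\[
\bigl|d_\mathrm{E}(p,q)-d_\mathrm{E}(\phi(p),\phi(q))\bigr|
=4\left|\sin\tfrac{\theta_{pq}-\theta_{ij}}{4}\right|\cos\tfrac{\theta_{pq}+\theta_{ij}}{4}\leq 4\sin\tfrac{\pi}{2n}\cdot\cos\tfrac{\pi}{2n}=2\sin\tfrac{\pi}{n},
\]
where the bound on the first factor uses $|\theta_{pq}-\theta_{ij}|/4\leq\pi/(2n)$ and on the second uses $(\theta_{pq}+\theta_{ij})/4\geq\theta_{ij}/4\geq\pi/(2n)$ together with monotonicity of $\cos$ on $[0,\pi/2]$. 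Hence $\dis_\mathrm{E}(\phi)\leq 2\sin(\pi/n)$, so $\dgh(\Sp^1_\mathrm{E},(P_n)_\mathrm{E})\leq\sin(\pi/n)$, and combining with the lower bound gives equality.

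The only delicate point is the sharp estimate in the upper bound: a direct triangle-inequality argument $|d_\mathrm{E}(p,q)-d_\mathrm{E}(\phi(p),\phi(q))|\leq d_\mathrm{E}(p,\phi(p))+d_\mathrm{E}(q,\phi(q))\leq 4\sin(\pi/(2n))$ only yields $\dgh\leq 2\sin(\pi/(2n))$, which is strictly larger than $\sin(\pi/n)$. The product-form identity together with the observation that distinct vertices of $P_n$ are separated by angle $\geq 2\pi/n$ extracts the crucial extra factor of $\cos(\pi/(2n))$ that compresses the bound down to $\sin(\pi/n)$.
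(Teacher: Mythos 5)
Your proof is correct and follows essentially the same route as the paper: the lower bound via $\mathbf{U}$ together with Proposition \ref{prop:geod-um}, Theorem \ref{thm:stab-um} and equation (\ref{eq:easy}), and the upper bound via the nearest-vertex (Voronoi) correspondence $R=\bigcup_i V_i\times\{u_i\}$. The only difference is cosmetic: the paper verifies $\dis_{\mathrm{E}}(R)\leq 2\sin(\pi/n)$ by checking two extremal trigonometric inequalities, whereas you obtain the same bound uniformly from the sum-to-product identity $2\sin\tfrac{\theta}{2}-2\sin\tfrac{\theta'}{2}=4\cos\tfrac{\theta+\theta'}{4}\sin\tfrac{\theta-\theta'}{4}$ together with $|\theta_{pq}-\theta_{ij}|\leq 2\pi/n$ and $\theta_{ij}\geq 2\pi/n$, which is a valid (and arguably cleaner) way to carry out the same verification.
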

 
\begin{proof} One can prove $\dgh(\mathbb{S}^1_{\mathrm{E}},(P_n)_{\mathrm{E}})\geq\sin\left(\frac{\pi}{n}\right)$ by invoking $\mathbf{U}$  as in the proof of Proposition \ref{prop:s1-pn}. In order to prove $\dgh(\mathbb{S}^1_{\mathrm{E}},(P_n)_{\mathrm{E}})\leq\sin\left(\frac{\pi}{n}\right)$, let us construct a specific correspondence $R$ between $\mathbb{S}^1_{\mathrm{E}}$ and $(P_n)_{\mathrm{E}}$. Let $u_1,\dots,u_n$ be the vertices of $(P_n)_{\mathrm{E}}$, and $V_1,\dots,V_n$ be the Voronoi regions of $\Sp^1$ induced by $u_1,\dots,u_n$. Now, let
$$R:=\bigcup_{i=1}^n V_i\times\{u_i\}.$$
Then, we claim $\dis_{\mathrm{E}}(R)\leq 2\sin\left(\frac{\pi}{n}\right)$. To prove this, it is enough to check the following two conditions via standard trigonometric identities:
\begin{enumerate}
    \item $2\sin\left(\frac{k\pi}{n}\right)-2\sin\left(\frac{(k-1)\pi}{n}\right)\leq 2\sin\left(\frac{\pi}{n}\right)$ for $1\leq k \leq\lfloor\frac{n}{2}\rfloor$.
    \item $2-2\sin\left(\frac{\lfloor\frac{n}{2}\rfloor\pi}{n}\right)\leq 2\sin\left(\frac{\pi}{n}\right)$.
\end{enumerate}
 Hence, $\dgh(\mathbb{S}^1_{\mathrm{E}},(P_n)_{\mathrm{E}})\leq\sin\left(\frac{\pi}{n}\right)$ as we required.
\end{proof}

We now pose the following question and provide partial information about it in Proposition \ref{prop:dgh-polygons-supra-diag}:
\begin{question}
Determine, for all $m,n\in\N$ the value of $\mathfrak{p}_{m,n}:=\dgh(P_m,P_n).$
\end{question}

\begin{remark}
By simple arguments which we omit one can prove that $\mathfrak{p}_{2,3}=\frac{\pi}{3}$, $\mathfrak{p}_{2,4} = \frac{\pi}{4}$, $\mathfrak{p}_{2,5} = \frac{2\pi}{5}$ and $\mathfrak{p}_{2,6} = \frac{\pi}{3}$. Also Proposition \ref{prop:s1-pn} indicates that $\mathfrak{p}_{2,n}$ tends to $\frac{\pi}{2}$ as $n\rightarrow \infty$. Then, these calculations imply that $n\mapsto \mathfrak{p}_{2,n}$ is \emph{not} monotonically increasing towards $\frac{\pi}{2}$; cf. Question \ref{question:mono}.
\end{remark}

\begin{proposition}\label{prop:dgh-polygons-supra-diag}
For any integer $0<m<\infty$, $\mathfrak{p}_{m,m+1}=\frac{\pi}{m+1}$.
\end{proposition}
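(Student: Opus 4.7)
The plan is to establish matching upper and lower bounds of $\frac{\pi}{m+1}$ via an explicit correspondence and via the functor $\mathbf{U}$, respectively.

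\textbf{Upper bound.} I will construct a surjective map $\psi: P_{m+1} \twoheadrightarrow P_m$ of distortion exactly $\frac{2\pi}{m+1}$; by Remark \ref{rem:dgh-dis} this gives $\dgh(P_m,P_{m+1}) \leq \frac{\pi}{m+1}$. Identify the vertices of $P_n$ with $\Z/n\Z$, where the geodesic distance is $d_n(i,j) = \frac{2\pi}{n}\min(|i-j|, n-|i-j|)$. Define $\psi(j):=j$ for $0\leq j\leq m-1$ and $\psi(m):=m-1$. The estimate $\dis(\psi) \leq \frac{2\pi}{m+1}$ reduces to a routine case analysis on pairs $(i,j)\in (\Z/(m+1)\Z)^2$: (a) when both $i,j\in\{0,\dots,m-1\}$, one checks directly that $|d_m(i,j)-d_{m+1}(i,j)| \leq \frac{2\pi}{m+1}$ by splitting according to whether $|i-j|\leq m/2$ or not; (b) when one index equals $m$, one compares $d_m(i,m-1)$ with $d_{m+1}(i,m)$, and a short calculation shows the difference is maximized when $i=m-1$, where it equals exactly $\frac{2\pi}{m+1}$ (here $\psi(m)=\psi(m-1)=m-1$ so the $P_m$-distance is $0$ while the $P_{m+1}$-distance is $\frac{2\pi}{m+1}$). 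I expect this verification to be tedious but entirely mechanical; the small numerical check at $m=2$ (giving the optimal correspondence of Remark \ref{rem:compareS1HS^2}-style for $P_2\sim \Sp^0$ and $P_3$) and $m=3$ serves as a useful sanity check.

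\textbf{Lower bound.} This is where the functor $\mathbf{U}$ pays off. Since $P_n$ can be traversed from any vertex to any other by a sequence of adjacent-vertex steps each of length $\frac{2\pi}{n}$, the ultrametric $u_{P_n}$ takes the value $\frac{2\pi}{n}$ on every pair of distinct vertices, so $\mathbf{U}(P_n)$ is isometric to the equilateral metric space $E_n^{2\pi/n}$ on $n$ points with all non-zero pairwise distances equal to $\frac{2\pi}{n}$. By the contractivity of $\mathbf{U}$ (Theorem \ref{thm:stab-um}),
$$\dgh(P_m, P_{m+1}) \;\geq\; \dgh\bigl(\mathbf{U}(P_m), \mathbf{U}(P_{m+1})\bigr) \;=\; \dgh\bigl(E_m^{2\pi/m}, E_{m+1}^{2\pi/(m+1)}\bigr).$$
Now for any correspondence $R$ between $E_m^{2\pi/m}$ and $E_{m+1}^{2\pi/(m+1)}$, since the second space has strictly more points than the first, by the pigeonhole principle there exists $x$ in the first space related to two distinct points $y_1,y_2$ in the second. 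Then $|0 - \tfrac{2\pi}{m+1}| = \tfrac{2\pi}{m+1}$ is a contribution to $\dis(R)$, so $\dis(R)\geq \frac{2\pi}{m+1}$ and hence $\dgh\bigl(E_m^{2\pi/m}, E_{m+1}^{2\pi/(m+1)}\bigr) \geq \frac{\pi}{m+1}$.

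\textbf{Main obstacle.} The only nontrivial part is keeping track of the cases in the distortion calculation for $\psi$; in particular, verifying that the only pair attaining $\frac{2\pi}{m+1}$ is $(m-1,m)$. The lower bound is essentially immediate once one notes that $\mathbf{U}$ collapses the ``connected-via-short-hops'' structure of each $P_n$ into an equilateral space, at which point pigeonhole suffices.
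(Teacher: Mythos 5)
Your proof is correct and takes essentially the same route as the paper: your map $\psi$ (identity on the shared vertices, with the extra vertex of $P_{m+1}$ collapsed onto the last vertex of $P_m$) is exactly the paper's correspondence, and the distortion computation you sketch is the one the paper carries out, with worst case $\frac{2\pi}{m+1}$. For the lower bound the paper applies the pigeonhole argument directly to $P_m$ and $P_{m+1}$ (some vertex of $P_m$ must be related to two distinct vertices of $P_{m+1}$, which are at distance at least $\frac{2\pi}{m+1}$), so your detour through $\mathbf{U}$ is valid but superfluous here.
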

\begin{proof}
First, let us prove that  $\mathfrak{p}_{m,m+1}\leq\frac{\pi}{m+1}$. We construct a correspondence $R$ between $P_m$ and $P_{m+1}$ such that $\dis(R)\leq\frac{2\pi}{m+1}$. Let $u_1,\dots,u_m$ be the vertices of $P_m$ and $v_1,\dots,v_m,v_{m+1}$ be the vertices of $P_{m+1}$. Consider the correspondence
$$R:=\bigcup_{i=1}^m \{(u_m,v_m)\}\cup\{(u_m,v_{m+1})\}.$$
Then, for any $i,j\in\{1,\dots,m\}$,
\begin{align*}
    &\vert d_{\Sp^1}(u_i,u_j)-d_{\Sp^1}(v_i,v_j)\vert\\
    &=\left\vert\frac{2\pi}{m}\cdot\min\{\vert i-j\vert,m-\vert i-j\vert\}-\frac{2\pi}{m+1}\cdot\min\{\vert i-j\vert,m+1-\vert i-j\vert\}\right\vert\\
    &=\left\vert\frac{2\pi k}{m}-\frac{2\pi k}{m+1}\right\vert\text{ or }\left\vert\frac{2\pi k}{m}-\frac{2\pi (k+1)}{m+1}\right\vert\,\,\,\left(\text{for some }0\leq k\leq\left\lfloor\frac{m}{2}\right\rfloor\right)\\
    &=\frac{2\pi k}{m(m+1)}\text{ or }\frac{2\pi}{m+1}\left(1-\frac{k}{m}\right)\,\,\,\left(\text{for some }0\leq k\leq\left\lfloor\frac{m}{2}\right\rfloor\right)\\
    &\leq\frac{2\pi}{m+1}.
\end{align*}
Also, for any $i\in\{1,\dots,m\}$,
\begin{align*}
    &\vert d_{\Sp^1}(u_i,u_m)-d_{\Sp^1}(v_i,v_{m+1})\vert\\
    &=\left\vert\frac{2\pi}{m}\cdot\min\{m-i,i\}-\frac{2\pi}{m+1}\cdot\min\{m+1-i,i\}\right\vert\\
    &=\left\vert\frac{2\pi k}{m}-\frac{2\pi k}{m+1}\right\vert\text{ or }\left\vert\frac{2\pi k}{m}-\frac{2\pi (k+1)}{m+1}\right\vert\,\,\,\left(\text{for some }0\leq k\leq\left\lfloor\frac{m}{2}\right\rfloor\right)\\
    &=\frac{2\pi k}{m(m+1)}\text{ or }\frac{2\pi}{m+1}\left(1-\frac{k}{m}\right)\,\,\,\left(\text{for some }0\leq k\leq\left\lfloor\frac{m}{2}\right\rfloor\right)\\
    &\leq\frac{2\pi}{m+1}.
\end{align*}
Hence, one  concludes that  $\dis(R)\leq\frac{2\pi}{m+1}$ as we wanted.\vspace{\baselineskip}

Next, let us prove that  $\mathfrak{p}_{m,m+1}\geq\frac{\pi}{m+1}$. Fix an arbitrary correspondence $R$ between $P_m$ and $P_{m+1}$. Then, there must be a vertex $u_i$ of $P_m$, and two vertices $v_j,v_k$ of $P_{m+1}$ such that $(u_i,v_j),(u_i,v_k)\in R$. Hence,
$$\dis(R)\geq\vert d_{\Sp^1}(u_i,u_i)-d_{\Sp^1}(v_j,v_k)\vert=\frac{2\pi}{m+1}.$$
Since $R$ is arbitrary, one  concludes that $\mathfrak{p}_{m,m+1}\geq\frac{\pi}{m+1}$ as we wanted.
\end{proof}

\section{The proof of Proposition \ref{prop:ub} -- an alternative method}\label{sec:alternative-method}

We construct for $\Sp^1$ and $\Sp^2$,  both a finite cover by closed sets and associated finite set of points $\mathbb{X}\subset \Sp^1$ and $\mathbb{Y}\subset \Sp^2$. These will be the main ingredients for constructing a correspondence between with  distortion at most $\frac{2\pi}{3}$.

\subsection{$\mathbb{X}$ and a cover of $\Sp^1$ }
Consider points $\mathbb{X}=\{x_1,x_2,x_3,x_4,x_5,x_6\}$ on $\Sp^1$ given (in clockwise order) by the vertices of an inscribed regular hexagon. We denote for each $i=1,\ldots,6$ by $A_i$ the set of those points on $\Sp^1$ whose (geodesic) distance to $x_i$ is at most $\frac{\pi}{6}$. The interiors of these six closed sets constitute non-intersecting arcs of length $\frac{\pi}{6}$; see Figure \ref{fig:S1}. Then, the sets $\{A_1,\ldots,A_6\}$ cover $\Sp^1$ and all have diameter $\diam(A_i)=\frac{\pi}{3}$. The interpoint distances between all points in $\mathbb{X}$ is given by the  matrix in equation  (\ref{eq:dX}).

\begin{figure}
\begin{center}
\includegraphics[width=0.5\linewidth]{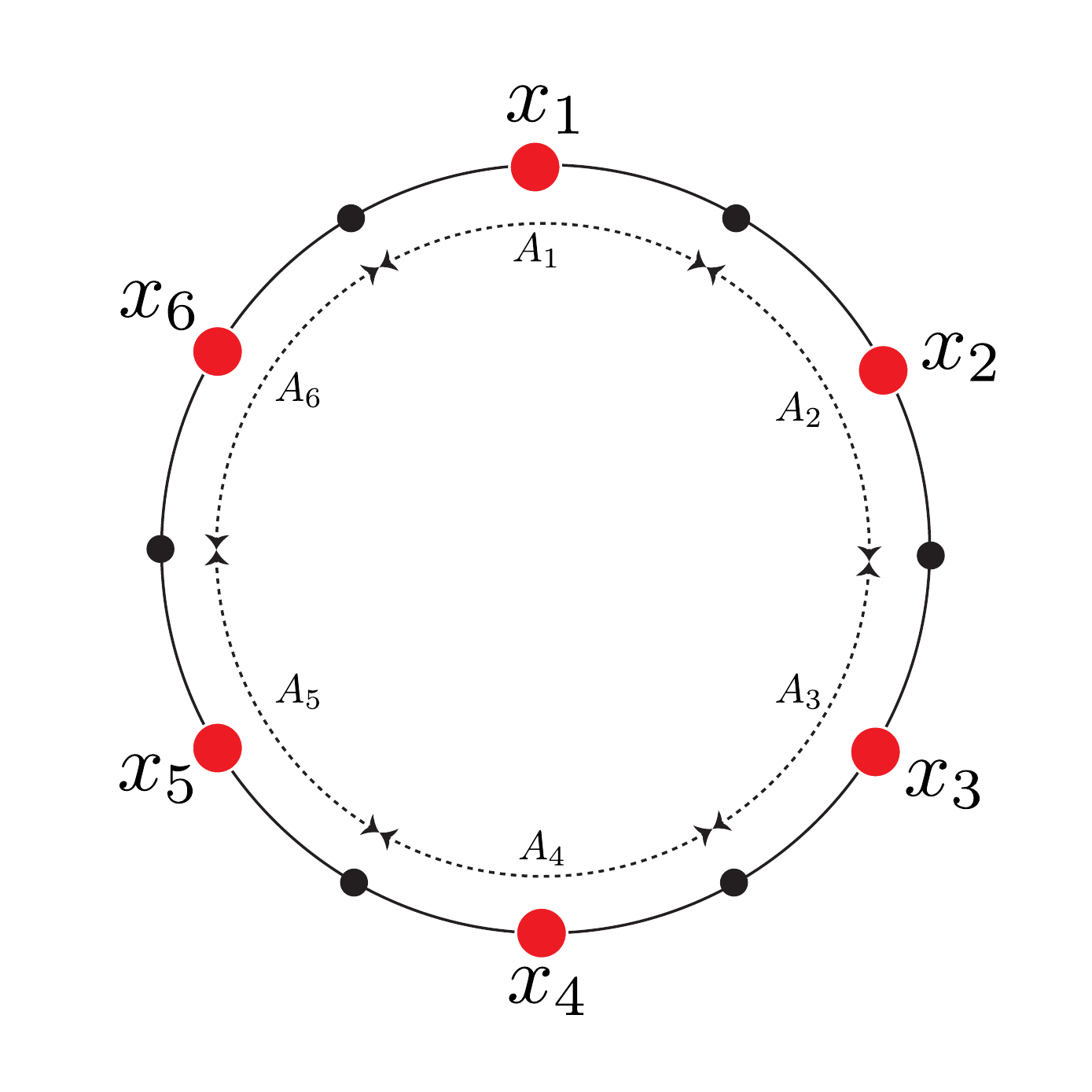}
\end{center}
\caption{Closed cover $\{A_1,\ldots,A_6\}$ of $\Sp^1$ and the set of centers $\mathbb{X}$. \label{fig:S1}}
\end{figure}

\begin{equation}\label{eq:dX}
\big(\!\big(d_{\Sp^1}(x_i,x_j)\big)\!\big) = \bbordermatrix{
~ & x_1 & x_2 & x_3 & x_4 & x_5 & x_6 \cr
x_1 & 0 & \frac{\pi}{3} & \frac{2\pi}{3} & \pi & \frac{2\pi}{3} & \frac{\pi}{3}\cr
x_2 & \ast & 0 & \frac{\pi}{3}  & \frac{2\pi}{3} & \pi & \frac{\pi}{3}\cr
x_3 & \ast & \ast &  0 & \frac{\pi}{3}  & \frac{2\pi}{3} & \pi\cr
x_4 & \ast & \ast &  \ast & 0 & \frac{\pi}{3}  & \frac{2\pi}{3} \cr
x_5 & \ast & \ast &  \ast & \ast & 0 & \frac{\pi}{3} \cr
x_6 & \ast & \ast &  \ast & \ast & \ast & 0 \cr
}\end{equation}

\subsection{$\mathbb{Y}$ and the cover of $\Sp^2$}
Consider the parametrization of $\Sp^2\subset \R^3$ via polar coordinates $\varphi\in [0,2\pi]$ and $\theta\in[0,\pi]$ given by $$(\varphi,\theta)\mapsto \Phi(\varphi,\theta):=\big(\cos(\varphi)\sin(\theta),\sin(\varphi)\sin(\theta),\cos(\theta)\big) \in \R^3.$$

Consider the closed subsets $C_1,C_2,C_3,C_4,C_5,$ and $C_6$ of $[0,2\pi]\times [0,\pi]$ given by:
\begin{itemize}
\item $C_1=[0,\frac{2\pi}{3}]\times[0,\frac{\pi}{2}].$ 
\item $C_2=[\frac{\pi}{3},\pi]\times[\frac{\pi}{2},\pi].$ 
\item $C_3=[\frac{2\pi}{3},\frac{4\pi}{3}]\times[0,\frac{\pi}{2}].$ 
\item $C_4=[\pi,\frac{5\pi}{3}]\times[\frac{\pi}{2},\pi].$ 
\item $C_5=[\frac{4\pi}{3},2\pi]\times[0,\frac{\pi}{2}].$ 
\item $C_6=[\frac{5\pi}{3},\frac{7\pi}{3}]\times[\frac{\pi}{2},\pi].$ 
\end{itemize}

Notice that $[0,2\pi]\times [0,\pi]\subset \bigcup_{i=1}^6 C_i.$ Let $\theta_0:=\arcsin(1/\sqrt{3}).$ Notice that $\theta_0\in[\frac{\pi}{6},\frac{\pi}{4}]$. Define the following points in the $(\varphi,\theta)$ plane:
\begin{itemize}
\item $c_1=(\frac{\pi}{3},\theta_0).$ 
\item $c_2= (\frac{2\pi}{3},\pi-\theta_0).$ 
\item $c_3= (\pi,\theta_0).$ 
\item $c_4= (\frac{4\pi}{3},\pi-\theta_0).$ 
\item $c_5= (\frac{5\pi}{3},\theta_0).$ 
\item $c_6= (2\pi,\pi-\theta_0).$ 
\end{itemize}
Notice that $c_i\in C_i$ for each $i\in\{1,\ldots,6\}.$ The sets $C_i$ and the points $c_i$ are depicted in Figure \ref{fig:S2}.

Now, for each $i\in\{1,\ldots,6\}$ define $B_i = \Phi(C_i)$ and 
$y_i = \Phi(c_i)$. Let $\mathbb{Y}=\{y_1,\ldots,y_6\}$. By construction, the sets $B_1,\ldots,B_6$ form a closed cover of $\Sp^2$ such that $\diam(B_i)= \frac{2\pi}{3}$ for all $i$. Also, by direct computation we find the interpoint geodesic distances between points in $\mathbb{Y}$ to be:

\begin{equation}\label{eq:dm2} \big(\!\big(d_{\Sp^2}(y_i,y_j)\big)\!\big) = \bbordermatrix{
~ & y_1 & y_2 & y_3 & y_4 & y_5 & y_6 \cr
y_1 & 0 & \frac{2\pi}{3} & \frac{\pi}{3} & \pi & \frac{\pi}{3} & \frac{2\pi}{3}\cr
y_2 & \ast & 0 & \frac{2\pi}{3}  & \frac{\pi}{3} & \pi & \frac{\pi}{3}\cr
y_3 & \ast & \ast &  0 & \frac{2\pi}{3}  & \frac{\pi}{3} & \pi\cr
y_4 & \ast & \ast &  \ast & 0 & \frac{2\pi}{3}  & \frac{\pi}{3} \cr
y_5 & \ast & \ast &  \ast & \ast & 0 & \frac{2\pi}{3} \cr
y_6 & \ast & \ast &  \ast & \ast & \ast & 0 \cr
}
\end{equation}

\begin{figure}
\begin{center}
\includegraphics[width=0.5\linewidth]{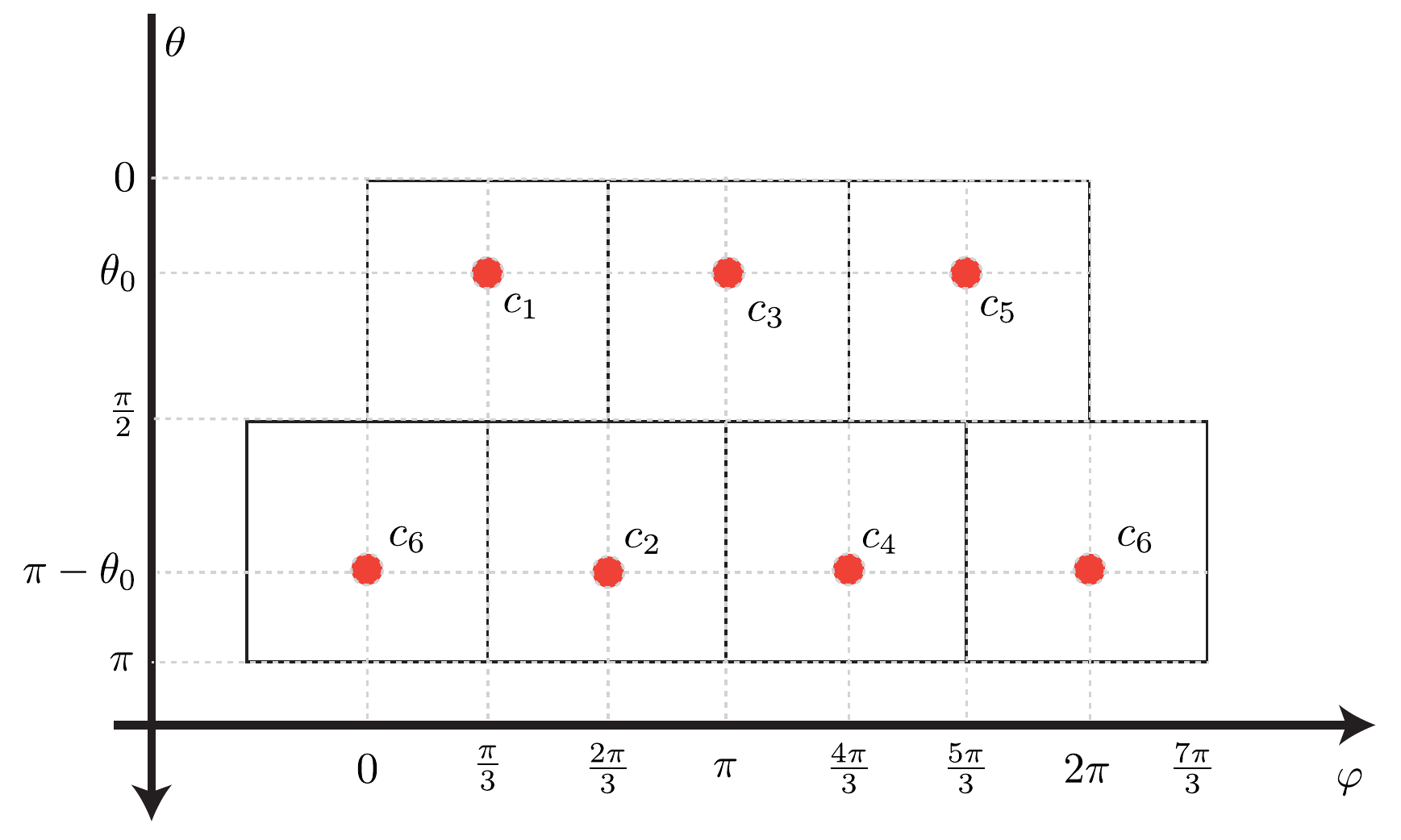}
\end{center}
\caption{Spherical coordinate representation of the closed cover $\{B_1,\ldots,B_6\}$ of $\Sp^2$ and the corresponding center points $\mathbb{Y}$. \label{fig:S2}}
\end{figure}

\subsection{The correspondence and its distortion}

Let 

\begin{equation}\label{eq:R}
R:=\bigcup_{i=1}^6 A_i\times\{y_i\} \cup\bigcup_{i=1}^6 \{x_i\}\times B_i \subset \Sp^1\times \Sp^2.
\end{equation}
Since $\{A_i,\,i=1,\ldots,6\}$ and $\{B_i,\,i=1,\ldots,6\}$  cover $\Sp^1$ and $\Sp^2$, respectively, it follows that $R$ is a correspondence between  $\Sp^1$ and $\Sp^2.$

\begin{proposition}\label{prop:S13altcrspdc}
$\dis(R)\leq\frac{2\pi}{3}$.
\end{proposition}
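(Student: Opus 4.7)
The plan is to verify the bound $\dis(R) \leq \tfrac{2\pi}{3}$ by a case analysis on which of the two ``types'' of pairs each element of $R$ belongs to. Writing $R = R_\mathrm{I} \cup R_\mathrm{II}$ with $R_\mathrm{I} := \bigcup_{i=1}^6 A_i \times \{y_i\}$ and $R_\mathrm{II} := \bigcup_{j=1}^6 \{x_j\} \times B_j$, the distortion must be controlled across three combinations of pairs: both in $R_\mathrm{I}$, both in $R_\mathrm{II}$, or one from each. The key preliminary observation, obtained by directly comparing the interpoint distance matrices in equations (\ref{eq:dX}) and (\ref{eq:dm2}), is that $|d_{\Sp^1}(x_i, x_j) - d_{\Sp^2}(y_i, y_j)| \leq \tfrac{\pi}{3}$ for all $i, j$: letting $k := \min(|i-j|, 6-|i-j|) \in \{0,1,2,3\}$, this difference vanishes when $k \in \{0, 3\}$ and equals exactly $\tfrac{\pi}{3}$ when $k \in \{1,2\}$.

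For two pairs $(x, y_i), (x', y_j) \in R_\mathrm{I}$, the triangle inequality combined with $\diam(A_\ell) = \tfrac{\pi}{3}$ immediately yields
$$|d_{\Sp^1}(x, x') - d_{\Sp^2}(y_i, y_j)| \leq d_{\Sp^1}(x, x_i) + d_{\Sp^1}(x', x_j) + |d_{\Sp^1}(x_i, x_j) - d_{\Sp^2}(y_i, y_j)| \leq \tfrac{\pi}{6} + \tfrac{\pi}{6} + \tfrac{\pi}{3} = \tfrac{2\pi}{3}.$$
For two pairs $(x_i, y), (x_j, y') \in R_\mathrm{II}$, I would split on the cyclic distance $k$. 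The subcases $k \in \{0,1,2\}$ follow quickly from $\diam(B_\ell) = \tfrac{2\pi}{3}$ and the range $d_{\Sp^2}(y, y') \in [0, \pi]$. The subcase $k = 3$ is more delicate: here $d_{\Sp^1}(x_i, x_j) = \pi$, so one must show $d_{\Sp^2}(y, y') \geq \tfrac{\pi}{3}$ for all $y \in B_i$, $y' \in B_{i+3}$. I would verify this directly using the parameterization $\Phi$: the $\varphi$-coordinates of $C_i$ and $C_{i+3}$ differ by at least $\tfrac{\pi}{3}$ modulo $2\pi$ and their $\theta$-coordinates lie in opposite closed hemispheres, which forces $\langle y, y' \rangle \leq \tfrac{1}{2}$ via the identity $\langle y, y' \rangle = \cos(\varphi - \varphi')\sin\theta\sin\theta' + \cos\theta\cos\theta'$.

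The main obstacle lies in the mixed case: one pair $(x, y_i) \in R_\mathrm{I}$ and the other $(x_j, z) \in R_\mathrm{II}$. A naive triangle-inequality argument passing through the common point $(x_j, y_j) \in R_\mathrm{I} \cap R_\mathrm{II}$ loses a factor of two (one would incur both the radius $\tfrac{\pi}{6}$ of $A_j$ and the full diameter $\tfrac{2\pi}{3}$ of $B_j$) and only yields $\tfrac{4\pi}{3}$. Instead, I would perform a direct case analysis on $(i,j)$, exploiting the cyclic symmetry of the construction to fix $i = 1$ and vary $j \in \{1, \ldots, 6\}$. For each such $j$, the actual range of $d_{\Sp^2}(y_1, z)$ as $z$ varies over $B_j$ must be computed (not merely the crude bound through $\max_{z \in B_j} d_{\Sp^2}(y_j, z)$): its extrema are attained on the boundary of the rectangle $C_j$ and reduce to explicit optimization of inner products $\langle y_1, \Phi(\varphi, \theta) \rangle$ along edges. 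Combined with the range $d_{\Sp^1}(x, x_j) \in [d_{\Sp^1}(x_1, x_j) - \tfrac{\pi}{6}, d_{\Sp^1}(x_1, x_j) + \tfrac{\pi}{6}]$, each of the six subcases can then be checked by routine trigonometry and shown to satisfy the bound $\tfrac{2\pi}{3}$.
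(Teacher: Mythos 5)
Your decomposition of $R$ into the two families $\bigcup_i A_i\times\{y_i\}$, $\bigcup_j\{x_j\}\times B_j$ and the mixed pairs is exactly the paper's split into the quantities $\alpha$, $\beta$, $\gamma$, and your treatment of the first two families is correct and complete. The triangle-inequality argument for two type-I pairs (using $|d_{\Sp^1}(x_i,x_j)-d_{\Sp^2}(y_i,y_j)|\le\tfrac{\pi}{3}$ together with the radius $\tfrac{\pi}{6}$ of the arcs $A_i$) is, if anything, cleaner than the paper's interval checks, and your inner-product estimate $\langle y,y'\rangle\le\tfrac12$ in the antipodal case $k=3$ actually supplies a justification of the separation $d_{\Sp^2}(y,y')\ge\tfrac{\pi}{3}$ for $y\in B_i$, $y'\in B_{i+3}$ which the paper only asserts (it also follows from $B_{i+3}=-B_i$ together with $\diam(B_i)=\tfrac{2\pi}{3}$).

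The gap is in the mixed case, which is precisely where the paper's proof does all of its nontrivial work. You correctly observe that the naive estimate through the common pair $(x_j,y_j)$ fails and that one must determine the true range of $d_{\Sp^2}(y_1,z)$ over $z\in B_j$, but you then dispose of the six subcases ``by routine trigonometry'' without carrying out any of them. These are exactly the estimates with no slack: for $j=3$ one needs $\min_{z\in B_3}d_{\Sp^2}(y_1,z)\ge\tfrac{\pi}{6}$, and the minimum is \emph{exactly} $\tfrac{\pi}{6}$ (the paper gets it by minimizing the chordal distance to $y_1$ along the nearest edge of $C_3$); for $j=2$ one needs $\max_{z\in B_2}d_{\Sp^2}(y_1,z)\le\tfrac{5\pi}{6}$, and again equality holds (e.g.\ via $-B_2=B_5$ and the same $\tfrac{\pi}{6}$ computation); for $j=4$ the clean route is $B_4=-B_1$, giving $d_{\Sp^2}(y_1,z)\ge\pi-\diam(B_1)=\tfrac{\pi}{3}$, which is the paper's antipodal trick. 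Until these ranges are actually computed, your argument for the mixed pairs is an outline rather than a proof, since asserting the conclusion there is asserting the hardest part of the proposition. A minor additional imprecision: your claim that the extrema of $z\mapsto d_{\Sp^2}(y_1,z)$ on $B_j$ are attained on $\partial C_j$ fails for $j=1$ and $j=4$, where $y_1\in B_1$ and $-y_1=y_4\in B_4$ are interior points; this is harmless for the bound, but it shows the boundary optimization has to be set up with some care.
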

\begin{proof}[Proof of Proposition \ref{prop:S13altcrspdc}]
We'll prove that $\dis(R)\leq \frac{2\pi}{3}.$ Note that $\dis(R) = \max(\alpha,\beta,\gamma),$
where
$$\alpha:=\max_{i,j}\max_{\substack{a\in A_i \\ a'\in A_j}}\big|d_{\Sp^1}(a,a')-d_{\Sp^2}(y_i,y_j)\big|,$$
$$\beta:=\max_{i,j}\max_{\substack{b\in B_i \\ b'\in B_j}}\big|d_{\Sp^1}(x_i,x_j)-d_{\Sp^2}(b,b')\big|,$$
$$\gamma:=\max_{i,j}\max_{\substack{a\in A_i \\ b\in B_j}}\big|d_{\Sp^1}(a,x_j)-d_{\Sp^2}(b,y_i)\big|.$$

In what follows we use the following notation: for each $i,j\in\{1,2,3,4,5,6\}$ we write
$$\alpha_{i,j} := \max_{\substack{a\in A_i \\ a'\in A_j}}\big|d_{\Sp^1}(a,a')-d_{\Sp^2}(y_i,y_j)\big|,$$
$$\beta_{i,j} := \max_{\substack{b\in B_i \\ b'\in B_j}}\big|d_{\Sp^1}(x_i,x_j)-d_{\Sp^2}(b,b')\big|,$$
$$\gamma_{i,j}:=\max_{\substack{a\in A_i \\ b\in B_j}}\big|d_{\Sp^1}(a,x_j)-d_{\Sp^2}(b,y_i)\big|,$$
so that $\alpha=\max_{i,j}\alpha_{i,j},$ $\beta=\max_{i,j}\beta_{i,j},$ and $\gamma=\max_{i,j}\gamma_{i,j}.$

In what follows we will be using the following simple fact repeated times:
\begin{fact}
Let $I=[m,M]$ and $I'=[m',M']$ be intervals on $\R_+$. Then, for all $t\in I$ and $t'\in I'$ we have
$$\max(m-M',m'-M)\leq |t-t'|\leq \max(M-m',M'-m).$$
\end{fact}

\begin{claim}
$\alpha\leq \frac{2\pi}{3}.$
\end{claim}

\begin{proof}
We now verify that $\alpha_{i,j}\leq \frac{2\pi}{3}$ for all $i,j$. Notice that by symmetry it suffices to do so for $i=1$ and $j\in\{1,2,3,4\}.$

\begin{description}

\item[($\alpha_{1,1}$)]  Follows from $\diam(A_1)=\frac{\pi}{3}$.

\item[($\alpha_{1,2}$)]  Notice that for all $a\in A_1$ and $a'\in A_2$, $d_{\Sp^1}(a,a')\in[0,\frac{2\pi}{3}].$ Since $d_{\Sp^2}(y_1,y_2)=\frac{2\pi}{3}$ it follows that $\alpha_{1,2}\leq \frac{2\pi}{3}.$

\item[($\alpha_{1,3}$)]  For all $a\in A_1$ and $a'\in A_3$, $d_{\Sp^1}(a,a')\in[\frac{\pi}{3},\pi].$ Since $d_{\Sp^2}(y_1,y_3)=\frac{\pi}{3}$ it follows that $\alpha_{1,3}\leq \frac{2\pi}{3}.$

\item[($\alpha_{1,4}$)]  For all $a\in A_1$ and $a'\in A_4$, $d_{\Sp^1}(a,a')\in[\frac{2\pi}{3},\pi].$ Since $d_{\Sp^2}(y_1,y_4)=\pi$ it follows that $\alpha_{1,4}\leq \frac{\pi}{3}.$
\end{description}
\end{proof}

\begin{claim}\label{claim:beta}
$\beta\leq \frac{2\pi}{3}$.
\end{claim}

\begin{proof} Notice that by symmetry it suffices to do so for $i=1$ and $j\in\{1,2,3,4\}.$
\begin{description}

\item[($\beta_{1,1}$)]  Follows from $\diam(B_1)=\frac{2\pi}{3}$.

\item[($\beta_{1,2}$)]  Notice that for all $b\in A_1$ and $b'\in B_2$, $d_{\Sp^2}(b,b')\in[0,\pi].$ Since $d_{\Sp^1}(x_1,x_2)=\frac{\pi}{3}$ it follows that $\beta_{1,2}\leq \frac{2\pi}{3}.$

\item[($\beta_{1,3}$)]  For all $b\in B_1$ and $b'\in B_3$, $d_{\Sp^2}(b,b')\in[0,\frac{2\pi}{3}].$ Since $d_{\Sp^1}(x_1,x_3)=\frac{2\pi}{3}$ it follows that $\beta_{1,3}\leq \frac{2\pi}{3}.$

\item[($\beta_{1,4}$)]  For all $b\in B_1$ and $b'\in B_4$, $d_{\Sp^2}(b,b')\in[\frac{\pi}{3},\pi].$ Since $d_{\Sp^1}(x_1,x_4)=\pi$ it follows that $\beta_{1,4}\leq \frac{2\pi}{3}.$
\end{description}
\end{proof}

\begin{claim}
$\gamma\leq \frac{2\pi}{3}$.
\end{claim}
\begin{proof}

Notice that by symmetry we can fix $i=1$. Also, by periodicity, the cases ($i=1$, $j=2$)  and  ($i=1$, $j=6$)  are isomorphic. Similarly, the cases ($i=1$, $j=3$) and ($i=1$, $j=5$) are also isomorphic. Thus, it is enough to check the cases ($i=1$, $j$), for $j=1,2,3$ and $4$.

\begin{description}

\item[($\gamma_{1,1}$)] In this case notice that for all $a\in A_1$, $d_{\Sp^1}(a,x_1)\in[0,\frac{\pi}{3}]$, and for all $b\in B_1$, $d_{\Sp^2}(b,y_1)\in[0,\frac{2\pi}{3}].$ From this it follows that $\gamma_{1,1}\in[0,\frac{2\pi}{3}].$

\item[($\gamma_{1,2}$)] Notice that for $a\in A_1$ and $b\in B_2$, $d_{\Sp^1}(a,x_2)\in[\frac{\pi}{6},\frac{\pi}{2}]$ and $d_{\Sp^2}(b,y_1)\in[\frac{\pi}{2}-\theta_0,\pi-\theta_0]\subset[\frac{\pi}{4},\frac{5\pi}{6}]$. Thus, $\gamma_{1,2}\leq \frac{2\pi}{3}$.

\item[($\gamma_{1,3}$)] The case $\gamma_{1,3}$ is solved as follows. Parametrize the set $B_3$ via spherical coordinates and consider the squared euclidean distance function: 
$$f(\theta)=\|y_1-p(\theta)\|^2,\,\,\mbox{$\theta\in[0,\frac{\pi}{2}]$},$$
where  $y_1=\big(\sin(\theta_0),0,\cos(\theta_0)\big)$, $\theta_0 = \arcsin(1/\sqrt{3})$, 
$$p(\theta) = \big(\sin(\theta)\cos(\frac{\pi}{3}),\sin(\theta)\sin(\frac{\pi}{3}),\cos(\theta)\big)\,\,\mbox{for $\theta\in[0,\frac{\pi}{2}].$}$$ Then, for all $\theta\in[0,\frac{\pi}{2}],$
$$f(\theta)=2-\left(2\sqrt{\frac{2}{3}}\cos(\theta)+\frac{1}{\sqrt{3}}\sin(\theta)\right).$$

The minimizer of $f$ in $[0,\frac{\pi}{2}]$ is $2\arctan(3-2\sqrt{2})$ and the minimum value is $2-\sqrt{3}.$ The corresponding spherical distance is $\frac{\pi}{6}.$ This means that for all $b\in B_3$, $d_{\Sp^2}(b,y_1)\in[\frac{\pi}{6},\pi].$ On the other hand, for all $a\in A_1$, $d_{\Sp^1}(a,x_3)\in[\frac{\pi}{2},\frac{5\pi}{6}].$ Thus, $\gamma_{1,3}\leq \frac{2\pi}{3}$.

\item[($\gamma_{1,4}$)] 
Now, let $\mu:\Sp^d\rightarrow \Sp^d$ be the antipodal map. Since for all $b\in B_4$ one has $d_{\Sp^d}(b,y_1)+d_{\Sp^d}(b,\mu(y_1))=\pi$ and by construction $y_4 = \mu(y_1),$ we have $$\pi\geq d_{\Sp^d}(b,y_1)\geq \pi - \max\{d_{\Sp^d}(b,y_4),\,b\in B_4\} \geq \pi-\diam(B_4) = \frac{\pi}{3}.$$ Thus, for all $b\in B_4$, $d_{\Sp^2}(b,y_1)\in[\frac{\pi}{3},\pi]$. Finally, note that for $a\in A_1$, $d_{\Sp^1}(a,x_4)\in[\pi,\frac{5\pi}{6}].$  Thus, $\gamma_{1,4} \leq \frac{2\pi}{3}.$

\end{description}
\end{proof}
\end{proof}

\end{document}